\newtheorem{theorem}{Theorem}
\newtheorem{lemma}{Lemma}
\newtheorem{corollary}{Corollary}
\newtheorem{proposition}{Proposition}
\theoremstyle{definition}
\newtheorem{remark}{Remark}
\newtheorem{definition}{Definition}
\newtheorem{assumption}{Assumption}
\def\R{\mathbb{R}}
\def\E{\mathbb{E}}
\def\P{\mathbb{P}}
\def\Cov{\mathrm{Cov}}
\def\Var{\mathrm{Var}}
\def\rank{\mathrm{rank}}
\def\hf{\hat{f}}
\def\hbeta{\hat{\beta}}
\def\cF{\mathcal{F}}
\def\cG{\mathcal{G}}
\newcommand*\rel@kern[1]{\kern#1\dimexpr\macc@kerna}
\newcommand*\widebar[1]{%
  \begingroup
  \def\mathaccent##1##2{%
    \rel@kern{0.8}%
    \overline{\rel@kern{-0.8}\macc@nucleus\rel@kern{0.2}}%
    \rel@kern{-0.2}%
  }%
  \macc@depth\@ne
  \let\math@bgroup\@empty \let\math@egroup\macc@set@skewchar
  \mathsurround\z@ \frozen@everymath{\mathgroup\macc@group\relax}%
  \macc@set@skewchar\relax
  \let\mathaccentV\macc@nested@a
  \macc@nested@a\relax111{#1}%
  \endgroup
}
\newcommand{\<}{\langle}
\renewcommand{\>}{\rangle}
\def\om{\overline{m}}
\def\tbSigma{\tilde{\boldsymbol \Sigma}}
\def\sabs{\mbox{\tiny \rm abs}}
\def\sequi{\mbox{\tiny \rm equi}}
\def\slat{\mbox{\tiny \rm lat}}
\def\sop{\mbox{\tiny \rm op}}
\def\Proj{{\sf P}}
\def\oF{{\overline F}}
\def\eps{{\varepsilon}}
\def\id{{{I}}}
\def\bh{{h}}
\def\m1{{m^{(1)}}}
\def\m2{{m^{(2)}}}
\def\bm{{\boldsymbol m}}
\def\hf{\hat{f}}
\def\bE{{\boldsymbol E}}
\def\tB{\widetilde{B}}
\def\tQ{\widetilde{Q}}
\def\tX{\widetilde{X}}
\def\tbQ{\widetilde{\boldsymbol Q}}
\def\tbX{\widetilde{\boldsymbol X}}
\def\bDelta{{\boldsymbol{\Delta}}}
\def\tlambda{\tilde{\lambda}}
\def\bbeta{{{\beta}}}
\def\btheta{{\boldsymbol{\theta}}}
\def\bfe{{\boldsymbol{e}}}
\def\bSigma{{{\Sigma}}}
\def\tbw{\tilde{{\boldsymbol w}}}
\def\tbW{\tilde{{W}}}
\def\bC{{\boldsymbol{C}}}
\def\bQ{{{Q}}}
\def\bV{{\boldsymbol{V}}}
\def\bS{{\boldsymbol{S}}}
\def\bzero{{\mathbf 0}}
\def\cF{{\mathcal F}}
\def\cG{{\mathcal G}}
\def\tZ{\tilde{Z}}
\def\tz{\tilde{z}}
\def\tbz{\tilde{\boldsymbol z}}
\def\op{\mbox{\tiny\rm op}}
\def\naturals{{\mathbb N}}
\def\reals{{\mathbb R}}
\def\disk{{\mathbb D}}
\def\complex{{\mathbb C}}
\def\normal{{ N}}
\def\sT{{T}}
\def\bv{{\boldsymbol{v}}}
\def\bz{{{z}}}
\def\bx{{\boldsymbol{x}}}
\def\bA{{A}}
\def\bB{{B}}
\def\bU{{U}}
\def\etab{{\eta}}
\def\de{{\rm d}}
\def\tbA{\tilde{{A}}}
\def\tbB{\tilde{\boldsymbol{B}}}
\def\tbX{\tilde{{X}}}
\def\bX{{X}}
\def\bW{{W}}
\def\prob{{\mathbb P}}
\def\E{{\mathbb E}}
\def\<{\langle}
\def\>{\rangle}
\def\Tr{{\sf Tr}}
\def\rank{{\rm rank}}
\def\cv{{*}}
\def\bw{{{w}}}
\def\bu{{\boldsymbol{u}}}
\def\b0{{\boldsymbol{0}}}
\def\hG{\widehat{G}}
\def\Var{{\rm Var}}
\def\bfone{{\boldsymbol 1}}
\def\bF{{F}}
\def\bG{{G}}
\def\bR{{\boldsymbol R}}
\def\br{{\boldsymbol r}}
\def\bfzero{\boldsymbol{0}}
\DeclareSymbolFont{rsfs}{U}{rsfs}{m}{n}
\DeclareSymbolFontAlphabet{\mathscrsfs}{rsfs}
\def\cuB{\mathscrsfs{B}}
\def\cuV{\mathscrsfs{V}}
\def\cuE{\mathscrsfs{E}}
\def\bbD{{\mathbb{D}}}
\def\hH{\widehat{H}}
\def\om{\overline{m}}
\def\oX{\overline{X}}
\def\tx{\tilde{x}}
\def\tu{\tilde{u}}
\renewcommand\Re{{\operatorname{Re}}}
\renewcommand\Im{{\operatorname{Im}}}
\def\hSigma{\hat\Sigma}
\def\snr{\mathrm{SNR}}
\def\one{\mathds{1}}
\def\cv{\mathrm{CV}}
\def\gcv{\mathrm{GCV}}
\title{Surprises in High-Dimensional Ridgeless Least Squares Interpolation}  
\author{Trevor Hastie \and Andrea Montanari$^*$ \and Saharon Rosset \and 
  Ryan J. Tibshirani\thanks{Corresponding authors.}}  
\date{}
\begin{document}
\maketitle

\begin{abstract}
Interpolators---estimators that achieve zero training error---have attracted
growing attention in machine learning, mainly because state-of-the art neural
networks appear to be models of this type. In this paper, we study minimum
$\ell_2$ norm (``ridgeless'') interpolation in high-dimensional least squares 
regression. We consider two different models for the feature distribution: a
linear model, where the feature vectors $x_i \in \R^p$ are obtained by
applying a linear transform to a vector of i.i.d.\ entries, $x_i = \Sigma^{1/2}
z_i$ (with $z_i \in \R^p$); and a nonlinear model, where the feature vectors
are obtained by passing the input through a random one-layer neural network,
$x_i =  \varphi(W z_i)$ (with $z_i \in \R^d$, $W \in \R^{p \times d}$ a
matrix of i.i.d.\ entries, and $\varphi$ an activation function acting
componentwise on $W z_i$). We recover---in a precise quantitative way---several
phenomena that have been observed in large-scale neural networks and kernel
machines, including the ``double descent'' behavior of the prediction risk, and
the potential benefits of overparametrization.  
\end{abstract}

\section{Introduction}

Modern deep learning models involve a huge number of parameters.  In many
applications, current practice suggests that we should design
the network to be sufficiently complex so that the model (as trained, typically,
by gradient descent) interpolates the data, i.e., achieves zero training
error. Indeed, in a thought-provoking experiment, \citet{zhang2016understanding}
showed that state-of-the-art deep neural network architectures are complex
enough that they can be trained to interpolate the data even when the actual
labels are replaced by entirely random ones.  

Despite their enormous complexity, deep neural networks are frequently observed
to generalize well in practice. At first sight, this seems to defy conventional
statistical wisdom: interpolation (vanishing training error) is commonly taken to
be a proxy for overfitting, poor generalization (large gap between training
and test error), and hence large test error.  In an insightful series of papers,
\citet{belkin2018understand,belkin2018does,belkin2018reconciling} pointed out
that these concepts are in general distinct, and interpolation does not
contradict generalization. For example, recent work has investigated interpolation ---via
kernel ridge regression--- in reproducing kernel Hilbert spaces \citep{liang2018just,ghorbani2019linearized}.
While in low dimension a positive regularization is needed to achieve good interpolation, in certain high
dimensional settings interpolation can be nearly optimal. 

In this paper, we investigate these phenomena in the context of simple linear models.
We assume to be given i.i.d. data $(y_i,x_i)$, $i\le n$, with $x_i\in\reals^p$ a feature vector and
$y_i\in \reals$ a response variable. These are distributed according to the model (see Section \ref{sec:prelim}
for further definitions)
\begin{gather}
\label{eq:data_x}
(x_i, \epsilon_i) \sim P_x \times P_\epsilon, \quad i=1,\ldots,n, \\ 
\label{eq:data_y}
y_i = x_i^T \beta + \epsilon_i, \quad i=1,\ldots,n,
\end{gather}
where, $P_x$ is a 
distribution on $\R^p$ such that $\E(x_i)=0$, $\Cov(x_i)=\Sigma$, and
$P_\epsilon$ is a distribution on $\R$ such that $\E(\epsilon_i)=0$,
$\Var(\epsilon_i)=\sigma^2$.

We estimate $\beta$ by linear regression.
Since our focus is on the overparametrized regime $p>n$,
the usual least square objective does not have a unique minimizer, and needs to be regularized.
We consider two approaches:  min-norm regression, which estimates $\beta$ by the
least squares solution with minimum $\ell_2$ norm; and ridge regression, which penalizes a coefficients
vector $\beta$ by its $\ell_2$ norm square $\|\beta\|_2^2$. We denote these estimates by $\hbeta$ and $\hbeta_{\lambda}$
($\lambda$ being the regularization parameter), and note that $\lim_{\lambda\to 0}\hbeta_{\lambda}=\hbeta$.
If the design matrix has full row rank, which is generically the case for $p>n$, the min-norm estimator is an interpolator,
namely $x_i^T\hbeta=y_i$ for all $i\le n$.
In order to evaluate these methods, we will study the prediction risk at a new (unseen) test point $(y_0,x_0)$.

We study  the model \eqref{eq:data_y}  in the proportional 
regime  $p\asymp n$,  with a special focus on the overparametrized
case $p>n$.
Our main contribution is to show that, by considering different choices of the features distribution $P_x$, we
 can reproduce a  number of statistically interesting phenomena that have emerged in the context of deep learning.

From a technical perspective, our main results are: Theorems \ref{thm:risk_gen} and \ref{thm:risk_ridge},
which assume the linear model
$x_i=\Sigma^{1/2} z_i$ with $z_i$ a vector with independent  coordinates;
and Theorem \ref{cor:purely_nonlinear}, which assumes a  nonlinear model
$x_i = \varphi(Wz_i)$ with $z_i\sim N(0,I_d)$. While the linear model
has already a attracted significant
amount of work (see Section \ref{sec:Related} for an overview), Theorems \ref{thm:risk_gen} and  \ref{thm:risk_ridge}
provide a more accurate non-asymptotic approximation of the prediction risk, as
compared to available results in the literature.

The prediction risk depends on the  geometry of the pair $(\Sigma,\beta)$. We consider
a few different choices for this geometry, which are broadly motivated by our objective to understand
overparametrized models, and specialize our formulas to these special cases:
\begin{enumerate}
\item \emph{Isotropic features.} This is the simplest case, in which  $\Sigma=I_p$ and therefore
  ---as we will see--- the asymptotic risk depends on $\beta$ only through its norm $\|\beta\|_2$. This simple model
  captures some interesting features of overparametrization, but misses others.
  
  We first consider a well specified case in which $x_i \in\reals^p$ and we regress against $x_i$. We then pass to a  misspecified case,
 in which  the model \eqref{eq:data_y} holds for covariates  $x_i\in\reals^{p+q}$, but we regress only against the first $p$ covariates.  
\item \emph{Latent space features.} In the overparametrized regime, it is natural to
  assume that both the covariates $x_i$, and the coefficients vector $\beta$ lie close to a low-dimensional
  subspace.   In order to model this property, we assume $\Sigma = WW^T+I$, with $W\in\reals^{p\times d}$, $d\ll p$,
  and $\beta$ lies in the span of the columns of $W$. Interestingly, this
  model reproduces many phenomena observed in more complex nonlinear models, and has a more direct
  connection to neural networks.
\item \emph{Nonlinear model.} In all of the previous cases, the distribution of $x_i$  is of the form $x_i=\Sigma^{1/2}z_i$
  where $z_i$ is a vector with independent coordinates. In order to test the generality of our results, we consider a
  model in which $x_i$ is obtained by passing  $z_i\sim\normal(0,I_d)$ through a one-layer neural net with random first layer weights.
\end{enumerate}
We will summarize our results for these four examples in the next subsection.

A skeptical reader might ask what linear models have to do with neural networks.
We emphasize that linear models provide more than a simple analogy, and a recent line of work outlines
a concrete connection between the two settings
\citep{jacot2018neural,du2018gradient,du2018gradient2,allen2018convergence,chizat2018note}.
We will discuss this connection in Section \ref{sec:Connection}.

\subsection{Summary of results}

As mentioned above, we analyze the out-of-sample prediction risk of the minimum  
$\ell_2$ norm (or min-norm, for short) least squares estimator, and of ridge-regularized
least squares.

We denote by $\gamma:=p/n \in (0,\infty)$ the overparametrization ratio.
When $\gamma < 1$, we call the problem \emph{underparametrized}, and 
when $\gamma > 1$, we call it \emph{overparametrized}.
Our most general results for the linear model (Theorem \ref{thm:risk_gen} and \ref{thm:risk_ridge})
apply to a non-asymptotic setting in which $n,p$ are finite, and provide a deterministic approximation of
the risk with error bounds that are uniform in the distribution of the data.
We will occasionally consider a simpler asymptotic scenario in which both $p$ and $n$ diverge
with $p/n\to\gamma$.

We assume the model \eqref{eq:data_y} and denote by $\snr= \|\beta\|_2^2/\sigma^2$ the
signal-to-noise ratio. We refer to Figure \ref{fig:summary} for supporting plots of the
asymptotic risk curves for different cases of interest.  

\begin{figure}[htbp]
\centering
\includegraphics[width=0.725\textwidth]{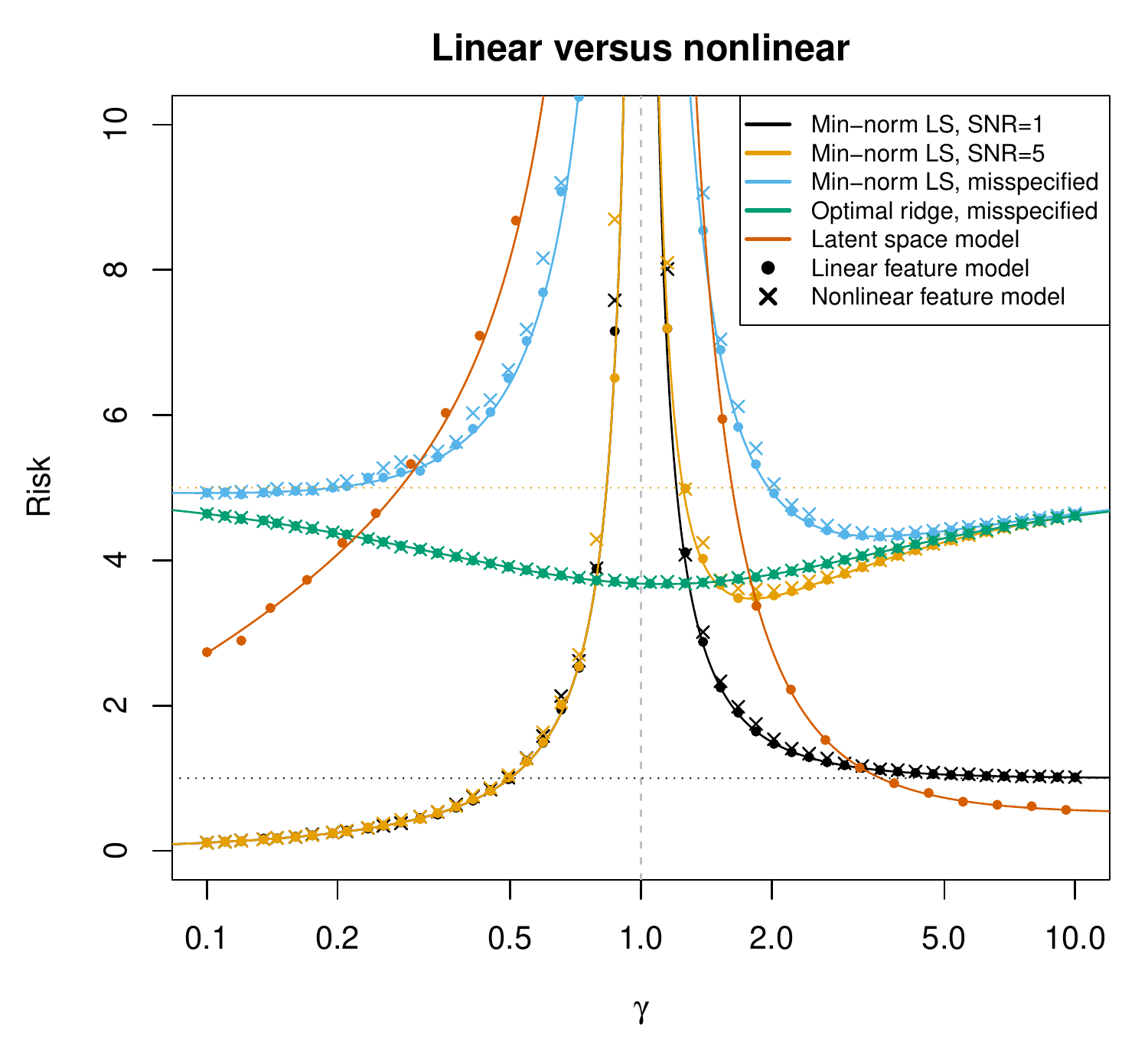} 
\caption{\footnotesize Asymptotic risk curves for the linear feature model, as a 
  function of the limiting aspect ratio $\gamma$.  Black and yellow: risks for min-norm least
  squares in the isotropic well specified model, for $\snr=1$ and $\snr=5$,
  respectively.  These two match for $\gamma<1$ but differ for $\gamma>1$.
  The null risks for $\snr=1$ and $\snr=5$ are marked by the dotted black and 
  yellow lines, respectively. Light blue: risk for  a misspecified model with
  significant approximation bias ($a=1.5$ in \eqref{eq:poly_decay}), when
  $\snr=5$. Green: optimally-tuned (equivalently, CV-tuned) ridge
  regression, in the same misspecified setup as for the light blue. Red: latent space model
  of Section \ref{sec:LatentSpace}, with $r=7$, $\sigma=0$. The
  points denote finite-sample risks, with $n=200$, $p=[\gamma n]$, 
  across various values of $\gamma$.  Meanwhile, the  `$\times$' points mark finite-sample risks for a 
  nonlinear feature model, with $n=200$, $p=[\gamma n]$, $d=100$, and 
  $X=\varphi(ZW^T)$, where $Z$ has i.i.d.\ $N(0,1)$ entries, $W$ has i.i.d.\
  $N(0,1/d)$ entries, and $\varphi(t) = a(|t|-b)$ is a ``purely nonlinear''
  activation function, for constants $a,b$. Theorem \ref{cor:purely_nonlinear} predicts that this
  nonlinear risk should converge to the linear risk with $p$ features
  (regardless of $d$).}  
\label{fig:summary} 
\end{figure}

Our main results are twofold: $(i)$~We show that by taking $\Sigma\neq I_p$, we can easily construct scenarios
in which the minimum of the risk is achieved in the overparamertized regime $p>n$; $(ii)$~We show that these findings
are robust to the details of the distribution of $(y_i,x_i)$.

As a preliminary remark, note that in the underparametrized regime  ($\gamma<1$), the min-norm estimator
coincides with the standard least squares estimator. Its  risk is purely
  variance (there is no bias), and does not depend on $\beta,\Sigma$ 
  (see Proposition \ref{thm:risk_lo}).  Interestingly,  the asymptotic risk diverges as we approach
  the interpolation boundary (as $\gamma \to 1$).   

   In contrast, in the overparametrized regime ($\gamma>1$), the risk is composed of both
  bias and variance\footnote{Note that in the overparametrized regime the bias is non-vanishing even in the interpolation
    limit $\lambda\to 0$. The reason is that the set of interpolators is an affine space of dimension $p-n$, and  the min-norm criterion selects one specific  interpolator which has norm smaller than $\beta$.},
  and generally depends on $\beta,\Sigma$ (see Theorem
  \ref{thm:risk_gen}).
  
We next highlight some concrete results for the four models discussed in the previous section
(unless explicitly said, we refer to the min-norm estimator).
\begin{description}
\item[Isotropic features.] The asymptotic risk depends on the coefficients vector only through its norm
  $\|\beta\|_2^2$ or, up to a scaling, on $\snr= \|\beta\|_2^2/\sigma^2$. 
  \begin{enumerate}
  \item If the model is \emph{well-specified}, we observe two different behaviors.
    For $\snr \leq 1$, the risk is decreasing for $\gamma \in (1,\infty)$.
   For $\snr > 1$, the risk has a {\it local} minimum on $\gamma \in
 (1,\infty)$.

 In either case, the risk approaches the null risk  as $\gamma \to \infty$, and achieves its global minimum
 in the underparametrized regime (see Section  \ref{sec:risk_iso}).
 \item If the model is \emph{misspecified}, when $\snr > 1$, the risk can attain its {\it
    global} minimum in the overparametrized regime $\gamma \in (1,\infty)$ (when there is strong enough
  approximation bias, see Section \ref{sec:poly_bias}). However, the risk is again increasing for $\gamma$ 
  large enough.
\item Optimally-tuned ridge regression uses a non-vanishing regularization $\lambda>0$,
  and dominates the min-norm least squares
  estimator in risk, across all values of $\gamma$ and $\snr$,  both the 
  well-specified and misspecified settings.  For a misspecified model,
  optimally-tuned ridge regression attains its global minimum around $\gamma=1$
  (see Section \ref{sec:ridge}).
  \item Optimal tuning of the ridge penalty can be achieved by leave-one-out cross-validation
  (see Theorem \ref{thm:risk_cv}).  
\end{enumerate}
\item[Anisotropic features.] In this case $\Sigma\neq \id$ and the risk depends on the geometry of $(\Sigma,\beta)$,
  and in particular on how $\beta$ aligns with the eigenvectors of $\Sigma$.
  \begin{enumerate}
  \item If the coefficients vector is equidistributed along the eigenvectors of $\Sigma$, the behavior is
    qualitatively similar to the isotropic case. This situation arises, for instance, if $\beta$ is itself random with a spherical prior.
  \item If $\beta$ is aligned with the top eigenvectors of $\Sigma$, the situation is qualitatively different.
    As an example we obtain an explicit formula for the asymptotic risk in the latent space model discussed above, see Figure
    \ref{fig:risk_latent_space_n} for an illustration.
    We find that, for natural choices of the model parameters, the risk is monotone decreasing in
  the overparametrized regime, and reaches its global minimum as $\gamma\to \infty$. This
  qualitative behavior matches the one observed for neural networks (see Section \ref{sec:LatentSpace}).
\item For the latent space model, we observe that, at large overparametrization, the minimum error is
  achieved as $\lambda\to 0$, i.e. by min-norm interpolators (see Section \ref{sec:LatentRidge},
  and Section \ref{sec:Related} for related work).
\end{enumerate}
\item[Nonlinear model.] Finally, we consider a nonlinear model in which $x_i = \varphi(Wz_i)$
  where $\varphi$ is a non-linear activation function applied componentwise, $W\in\reals^{p\times d}$,
  and $z_i\sim\normal(0,I_d)$. We compute the limiting risk of min-norm
  regression for a purely non-linear activation, and  show that this  matches the one for Gaussian $x_i$.
  This confirms that the results found for the case $x_i=\Sigma^{1/2}z_i$ with $z_i$ an i.i.d. vector
  are likely to hold in substantially greater generality
  (see Theorem \ref{cor:purely_nonlinear}). 
\end{description}

From a technical viewpoint, analysis of the isotropic covariates model is straightforward and relies on
standard random matrix theory results. However, we believe it provides useful insights.

In contrast, the results for general covariance and coefficients structure $(\Sigma,\beta)$ is
technically novel. We discuss related work in Section \ref{sec:Related}.
Our results for the  nonlinear
model are also novel. In this setting, we derive a new asymptotic result on resolvents of
certain block matrices, which may be of independent interest (see Lemma
\ref{thm:ResolventKernel}).

We next discuss some intuition behind and implications of our results. 

\paragraph{Bias and variance}  The shape of the asymptotic risk curve for
min-norm least squares is, of course, controlled by its components: bias and 
variance.  For fully specified models,  the bias increases with $\gamma$ in  the overparametrized regime,
which is intuitive.  When $p>n$, the min-norm least squares estimate of 
$\beta$ is constrained to lie the row space of $X$, the training feature
matrix.  This is a subspace of dimension $n$ lying in a feature space of
dimension $p$.  Thus as $p$ increases, so does the bias, since this row space
accounts for less and less of the ambient $p$-dimensional feature space.   

Meanwhile, we find that, in the overparametrized regime, the variance {\it decreases} with
$\gamma$.  This may seem counterintuitive at first, because it says, in a 
sense, that the min-norm least squares estimator becomes {\it more}
regularized as $p$ grows.  However, this too can be explained intuitively, as
follows.  As $p$ grows, the minimum $\ell_2$ norm least squares solution---i.e.,
the minimum $\ell_2$ norm solution to the linear system $Xb=y$, for a 
training feature matrix $X$ and response vector $y$---will generally have
decreasing $\ell_2$ norm.  Why?  Compare two such linear systems: in each, we 
are asking for the min-norm solution to a linear system with the same $y$, but
in one instance we are given more columns in $X$, so we can generally
decrease the components of $b$ (by distributing them over more columns), 
and achieve a smaller $\ell_2$ norm.  This can in fact be formalized
asymptotically, see Corollaries \ref{cor:norm_iso} and \ref{cor:norm_gen}.

\paragraph{Double descent} Recently, \citet{belkin2018reconciling} pointed out
a fascinating empirical trend where, for popular methods like neural networks
and random forests, we can see a {\it second} bias-variance tradeoff in the
out-of-sample prediction risk beyond the interpolation limit.  The risk curve
here resembles a traditional U-shape curve before the interpolation limit,
and then descends again beyond the interpolation limit, which these authors call
``double descent''.  A closely related phenomenon was found earlier by
\citet{spigler2018jamming}, who studied the ``jamming transition'' from
underparametrized to overparametrized neural networks. Our results formally
verify that this double descent phenomenon occurs even in the simple and
fundamental case of least squares regression.  The appearance of the second
descent in the risk, past the interpolation boundary ($\gamma=1$), is explained
by the fact that the variance decreases as $\gamma$ grows, as discussed above. 

In the misspecified case, the variance still decreases with $\gamma$ (for the
same reasons), but interestingly, the bias can now also decrease with $\gamma$,   
provided $\gamma$ is not too large (not too far past the interpolation
boundary). The intuition here is that in a misspecified model, some part of the 
true regression function is always unaccounted for, and adding features
generally improves our approximation capacity. As a consequence, the double 
descent phenomenon can be even more pronounced in the misspecified case
(depending on the strength of the approximation bias), and that the risk can
attain its global minimum past the interpolation limit.

Finally in the latent space model, we observe that the overall risk can be monotone decreasing
in the over parametrized regime, and attain its global minimum for large
overparametrization $\gamma\to \infty$ (after $p,n\to\infty$). In this case
we can write the design matrix as $X=ZW^T+U$, where $U$ is noise, and $Z$ is
the $n\times d$ matrix of latent covariates. Equivalently, the $i$-th column of $X$ (the $i$-th feature) takes the form $\tx_i = Zw_i+\tu_i$,
where $w_i$ is the $i$-th column of $W^T$ and $\tu_i$ is the $i$-th column of $U$. 
Therefore, each  new feature provides new information about the underlying low-dimensional latent variables $Z$.
As $p$ gets large, ridge regression with respect to the feature matrix $X$ approximates increasingly well a
ridge regression with respect to the latent variables $Z$.

\paragraph{Interpolation versus regularization} The min-norm least squares 
estimator can be seen as the limit of ridge regression as the tuning parameter 
tends to zero. It is also the convergence point of gradient descent run on the
least squares loss.  We would not in general expect the best-predicting ridge
solution to be at the end of its regularization path. Our results,
comparing min-norm least squares to optimally-tuned ridge regression, 
show that (asymptotically) this is never the case, when $\beta$ is
incoherent with respect to the eigenvectors of $\Sigma$. This is for instance the case
when $\Sigma=I_p$, or $\beta$ is distributed according to a spherically symmetric prior.
In contrast, \cite{kobak2020optimal} recently pointed out that ---when $\beta$ is aligned with the leading eigenvectors of
$\Sigma$--- min-norm regression can have optimal  risk (i.e. the optimal regularization vanishes). 
We show that this is indeed the case in the latent space model mentioned above:
this provides indeed an extremely simple example of a phenomenon that has been observed in the past
for kernel methods \cite{liang2018just}.

As mentioned above, early-stopped gradient descent is known to
be closely connected to ridge regularization, see, e.g., \citet{ali2019continuous} which proves
a tight coupling between the two (see Section \ref{sec:Related} for further related work).

In practice, of course, we would not have access to the optimal tuning
parameter for ridge (optimal stopping for gradient descent), and we would rely
on, e.g., cross-validation (CV).  Our theory shows that for ridge regression, CV
tuning is asymptotically equivalent to optimal tuning (and we would expect
the same results to carry over to gradient descent, but have not pursued this
formally).    

Historically, the debate between interpolation and regularization has been alive
for the last 30 or so years. Support vector machines find maximum-margin
decision boundaries, which often perform very well for problems where the Bayes
error is close to zero. But for less-separated classification tasks, one needs
to tune the cost parameter \citep{hastie2004entire}. Relatedly, in
classification, it is common to run boosting until the training error is
zero, and similar to the connection between gradient descent and $\ell_2$ 
regularization, the boosting path is tied to $\ell_1$ regularization
\citep{rosset2004boosting,tibshirani2015general}. Again, we now know that
boosting can overfit, and the number of boosting iterations should be treated 
as a tuning parameter.

\subsection{Connection to neural networks}
\label{sec:Connection}

As mentioned above, recent literature has pointed out a direct connection between
linear models and more complex models such as neural networks
\citep{jacot2018neural,du2018gradient,du2018gradient2,allen2018convergence,chizat2018note}.
Our work contributes to this line of work and in particular points at the important role played by \emph{universality},
a core concept in random matrix theory \cite{tao2012topics}.

Assume to be given data i.i.d. $(y_i,z_i)$, $i\le n$, $y_i\in\reals$, $z_i\in \reals^d$. 
Consider learning a neural network with parameters (weights) $\btheta\in\reals^p$,
$f(\,\cdot\,;\btheta):\reals^d\to\reals$, $z\mapsto f(z;\btheta)$. The specific form or
architecture of the network is not important for our discussion. However it is important to
emphasize that the form of $f$ need not be related to the actual distribution of $(y_i,\bz_i)$. 

In some settings, the number of parameters $p$ is so  
large that training effectively changes $\theta$ only by a small amount
with respect to a random initialization $\theta_0 \in \R^p$. It thus
makes sense to linearize the model around $\theta_0$. Further, supposing that 
the initialization is such that $f(z;\theta_0) \approx 0$, and letting $\theta =
\theta_0 + \beta$, we can approximate the statistical model $z\mapsto f(z;\btheta)$ by
\begin{equation}
\label{eq:linearized}
z\mapsto \nabla_\theta f(z;\theta_0)^T\beta\, .
\end{equation}
This model is still nonlinear in the input $z$, but is linear in the parameters $\beta$.
We are therefore led to consider a linear regression problem, with random
features $x_i=\nabla_\theta f(z_i;\theta_0)$, $i=1,\ldots,n$, of
high-dimensionality ($p$ much greater than $n$).  Notice that the features are
random because of the initialization $\theta_0$.  Further, since $p>n$, many vectors $\beta$
give rise to a model that interpolates the data.

The above scenario was made rigorous in a number of papers
\cite{jacot2018neural,du2018gradient,du2018gradient2,allen2018convergence,chizat2018note}. In
particular, \cite{chizat2018note} shows ---under some technical conditions--- that the linearization \eqref{eq:linearized}
can be accurate if the model is overparametrized
($p>n$), and closed under scalings (if $f(\,\cdot\, )$ is encoded by a neural network,
then $sf(\,\cdot\,)$ is also  a neural network for any $s\in\reals$). Under these conditions, there exists a
scaling of the network's parameters such that gradient-based training converges to a model that can be
approximated arbitrarily well by \eqref{eq:linearized}. Further,
under the linearization \eqref{eq:linearized},
gradient descent converges to the interpolator that minimizes \footnote{Understanding the bias induced by
  gradient-based algorithms on fully nonlinear models is a broadly open problem, which has attracted considerable attention recently, see e.g.  \cite{gunasekar2018characterizing,gunasekar2018implicit}.} the $\ell_2$ norm $\|\beta\|_2$
(see Proposition \ref{prop:grad_ls} below).

What are the \emph{statistical consequences} of these linearization results? In principle, one could proceed
as follows: assume $\{(y_i,z_i)\}_{i\le n}$ to be i.i.d. samples with a certain population distribution $P_{y,z}$,
and then study the behavior of minimum $\ell_2$ norm interpolator of the form \eqref{eq:linearized}
under this data model. Of course, carrying out such a study in any generality is an outstanding mathematical challenge:
we refer to Section \ref{sec:Related} for some pointers to recent progress in this direction
after a first appearance of this manuscript.

A different approach relies on a  \emph{universality hypothesis}.
Recall that $\{(y_i,z_i)\}_{i\le n}$ have joint distribution $P_{y,z}$, and $x_i= \nabla_\theta f(z_i;\theta_0)$ and assume these
  quantities to have zero mean\footnote{We further note that the assumption of zero mean is mainly for convenience, as a
non-zero mean of the covariates can be effectively compensated by an intercept.}. Universality posits that  the asymptotic  risk of ridge regression (or min-norm interpolation) does not change   if we replace the joint distribution $(x_i,z_i)$ by a Gaussian with the same covariance.   

While  universality is not expected to hold for any distribution of the data $(y_i,z_i)$,
and for any function $f$,  it has been confirmed in a number of examples of interest (see Section \ref{sec:Related}).
In fact, our analysis of the nonlinear model in Section \ref{sec:nonlinear} provides a concrete example in which universality
can be confirmed rigorously.

Under the universality hypothesis, we can assume $(z_i,x_i)$ to be jointly Gaussian.
If the true joint distribution of the data $(y_i,z_i)$ is also Gaussian, it is sufficient to study the data distribution
\begin{align*}
  &y_i = \beta^Tx_i+\epsilon_i,\;\;\;\;\;\; x_i\sim N(0,\Sigma)\, ,  \epsilon_i \sim N(0,\sigma^2) \,,
\end{align*}
where $\Sigma, \sigma$ are chosen to match the second order statistics of the original model.
This is a special case of our `linear model' \eqref{eq:data_y}.
Summarizing, replacing the original nonlinear model $y_i = f(z_i;\theta)$, with the
linear model  rests on two key steps. The first step is the linearization in
Eq.~\eqref{eq:linearized}: this has been proved to be a good approximation under
certain `lazy training' schemes \cite{chizat2018note}. The second step is the universality hypothesis,
which is quite common (albeit for simpler models) in random matrix theory \cite{tao2012topics}. 

Finally, although we believe our our results are relevant to 
understanding overparametrized neural networks, the concrete correspondence outlined
above only holds in a certain `lazy training' regime, in which network weights do not change much during training,
More generally, in a neural network, the feature representation and 
the regression function or classifier are learned {\it simultaneously}.  In both 
our linear and nonlinear model settings, the features $X$ are not learned, but
observed. Learning $X$ could significantly change some aspects of the behavior
of an interpolator. (See for instance Chapter 9 of \citet{goodfellow2016deep},
and also \citet{chizat2018note,zhang2019layers}, which emphasize the importance 
of learning the representation.)  

\subsection{Related work}
\label{sec:Related}
  
The present work connects to and is motivated by the recent interest in
interpolators in machine learning
\citep{belkin2018understand,belkin2018reconciling,liang2018just,
belkin2018does,geiger2019scaling}.
Several authors have argued that minimum $\ell_2$ norm least squares regression    
captures the basic behavior of deep neural networks, at least in early (lazy)
training 
\citep{jacot2018neural,du2018gradient,du2018gradient2,allen2018convergence,
zou2018stochastic,chizat2018note,lee2019wide}.
The connection between neural networks and kernel ridge regression arises when 
the number of hidden units diverges. The same limit was also studied (beyond the
linearized regime) in
\cite{mei2018mean,rotskoff2018neural,sirignano2018mean,chizat2018global}.

Ridge regression with random features has been studied in the past. 
\citet{dicker2016ridge}  considers a model in which the covariates
are isotropic Gaussian $x_i\sim N(0,I_p)$ and  computes the asymptotic risk of ridge
regression in the proportional asymptotics $p,n\to\infty$, with $p/n\to\gamma\in (0,\infty)$. 
\citet{dobriban2018high} generalize these results to $x_i = \Sigma^{1/2}z_i$,
where $z_i$ has independent entries with bounded $12$-th moment.

Recently, \citet{advani2017high} study the effect of early stopping and ridge regularization
in a model with isotropic Gaussian covariates $x_i\sim N(0,I_p)$, again focusing on the
proportional asymptotics $p,n\to\infty$, with $p/n\to\gamma\in (0,\infty)$. They
show that this simple model reproduces several phenomena observed in neural networks training.
The same model is reconsidered in concurrent work by  \citet{belkin2019two}, who obtain
exact results for the expected risk of min-norm regression, relying on the
jointly Gaussian distribution of $(y_i,x_i)$.
We contribute to this line of work by extending the analysis to general covariance structures
and to misspecified models. As we will see, these generalizations allow to produce examples
for which the global minimum of the risk is achieved in the overparametrized regime $\gamma>1$.

The importance of the relation between the coefficient vector $\beta$ and the eigenvectors
of $\Sigma$ was emphasized by \citet{kobak2020optimal} and \citet{bartlett2020benign}.
These papers point out ---under different asymptotic settings--- that $\lambda= 0+$ (i.e., min-norm
regression) can be optimal or nearly optimal. After a preprint of this paper
appeared, \citet{wu2020optimal} and \citet{richards2020asymptotics} generalized our earlier
results to cover the case in which $\beta$ is potentially aligned with $\Sigma$.
We review in further detail these important generalizations in Section \ref{sec:correlated}.
We contribute to this line of work by obtaining non-asymptotic approximations
for the risk, with explicit and nearly optimal error bounds. These hold under weaker assumptions
on the geometry of $(\Sigma,\beta)$ than the results of \cite{wu2020optimal,richards2020asymptotics}.

For the nonlinear model, the random matrix theory literature is much sparser,
and focuses on the related model of kernel random matrices, namely, symmetric
matrices of the form $K_{ij}=\varphi(z_i^Tz_j)$.  \citet{el2010spectrum} 
studied the spectrum of such matrices in a regime in which $\varphi$ can be
approximated by a linear function (for $i\neq j$) and hence the spectrum
converges to a rescaled Marchenko-Pastur law. This approximation does not hold 
for the regime of interest here, which was studied instead by
\citet{cheng2013spectrum} (who determined the limiting spectral distribution)
and \citet{fan2015spectral} (who characterized the extreme eigenvalues). The 
resulting eigenvalue distribution is the free convolution of a semicircle law
and a Marchenko-Pastur law. In the current paper, we must consider asymmetric 
(rectangular) matrices $x_{ij} =\varphi(w_j^Tz_i)$, whose singular value
distribution was recently computed by \citet{pennington2017nonlinear}, using the 
moment method. Unfortunately, the prediction variance depends on both the
singular values and vectors of this matrix. In order to address this issue, we
apply the leave-one out method of \citet{cheng2013spectrum} to compute the
asymptotics of the resolvent of a suitably extended matrix. We then extract the
information of interest from this matrix. 
After appearance of a preprint of this paper, \citet{mei2019generalization} extended the
results presented here, to obtain a complete characterization of the risk
for the non-linear random features model. 

Let us finally mention that the universality (or `invariance') phenomenon is quite common in random matrix
theory \cite{tao2012topics}. In the context of kernel inner product random matrices, it appears (somewhat implicitly)
in \cite{cheng2013spectrum} and (more explicitly) in \cite{fan2015spectral}.
After a first appearance of this manuscript, universality has been investigated in the context of neural networks
in several papers \cite{mei2019generalization,montanari2019generalization,gerace2020generalisation,goldt2020gaussian,hu2020universality,adlam2020neural}.

\subsection{Outline}

Section \ref{sec:prelim} provides important background. Sections
\ref{sec:isotropic}--\ref{sec:cv} consider the linear model case, focusing 
on isotropic features, correlated features, misspecified models, ridge
regularization, and cross-validation, respectively.  Section \ref{sec:nonlinear} 
covers the nonlinear model case.  Nearly all proofs are deferred until the
appendix. 

\section{Preliminaries}
\label{sec:prelim}

We describe our setup and gather a number of important preliminary results. 

\subsection{Data model and risk}

Assume we observe training data $(x_i,y_i) \in \R^p \times \R$, $i=1,\ldots,n$
from the model of Eqs.~\eqref{eq:data_x}, \eqref{eq:data_y}. We collect the responses in a vector $y \in
\R^n$, and the features in a matrix $X \in \R^{n\times p}$ (with rows $x_i \in
\R^p$, $i=1,\ldots,n$).  

Consider a test point $x_0 \sim P_x$, independent of the training data. For an
estimator \smash{$\hbeta$} (a function of the training data $X,y$), we define
its out-of-sample prediction risk (or simply, risk) as  
$$
R_X(\hbeta;\beta) = \E \big[ ( x_0^T \hbeta - x_0^T \beta )^2 \,|\, X \big]
= \E \big[ \| \hbeta - \beta \|_\Sigma^2 \,| \, X \big],
$$
where $\|x\|_\Sigma^2 = x^T \Sigma x$.  Note that our definition of risk is
conditional on $X$ (as emphasized by our notation $R_X$).  Note also that we
have the bias-variance decomposition   
$$
R_X(\hbeta;\beta) = 
\underbrace{\|\E(\hbeta|X) - \beta\|_\Sigma^2}_{B_X(\hbeta; \beta)} {}+{}     
\underbrace{\Tr[\Cov(\hbeta|X) \Sigma]}_{V_X(\hbeta; \beta)}.    
$$

\subsection{Ridgeless least squares}

We consider the minimum $\ell_2$ norm (min-norm) least squares regression
estimator, of $y$ on $X$, defined by  
\begin{equation}
\label{eq:min_ls}
\hbeta = \arg\min \Big\{ \|b\|_2 : b \; \text{minimizes} \;
\|y-Xb\|_2^2 \Big\}\, .
\end{equation}
This can be equivalently written as $\hbeta = (X^T X)^+ X^T y$, where   $(X^T X)^+$ is the pseudoinverse of $X^T X$.
An alternative name for \eqref{eq:min_ls} is the 
``ridgeless'' least squares estimator, motivated by the fact that  
\smash{$\hbeta = \lim_{\lambda \to 0^+} \hbeta_\lambda$}, where
\smash{$\hbeta_\lambda$} denotes the ridge regression estimator:
\begin{equation}
\label{eq:ridge}
\hbeta_\lambda = \arg\min_{b \in \R^p} \bigg\{ \frac{1}{n}\|y - Xb\|_2^2 +
\lambda \|b\|_2^2 \bigg\}  \, ,
\end{equation}
or, equivalently, 
$\hbeta_\lambda = (X^T X + n\lambda I)^{-1} X^T y$.

When $X$ has full column rank the  
min-norm least squares estimator reduces to \smash{$\hbeta = (X^T
  X)^{-1} X^T y$}, the usual least squares estimator.  When $X$ has rank $n$,
importantly, this estimator interpolates the training data: \smash{$y_i = x_i^T
  \hbeta$}, for $i=1,\ldots,n$.  

Lastly, the following is a well-known fact that connects the min-norm least
squares solution to gradient descent (as referenced in the introduction).  
\begin{proposition}
\label{prop:grad_ls}
Initialize $\beta^{(0)} = 0$, and consider running gradient descent on the least 
squares loss, yielding iterates
$$
\beta^{(k)} = \beta^{(k-1)} + t X^T (y-X\beta^{(k-1)}), 
\quad k=1,2,3,\ldots,
$$
where we take $0 < t \leq 1/\lambda_{\max}(X^T X)$ (and $\lambda_{\max}(X^T  
X)$ is the largest eigenvalue of $X^T X$).  Then \smash{$\lim_{k \to \infty}
  \beta^{(k)} = \hbeta$}, the min-norm least squares solution in
\eqref{eq:min_ls}.  
\end{proposition}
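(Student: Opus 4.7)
The plan is to exploit the fact that the min-norm least squares solution in \eqref{eq:min_ls} is characterized by two conditions: $(i)$ it satisfies the normal equation $X^T X \hbeta = X^T y$, and $(ii)$ it lies in $\col(X^T) = \row(X)$. I would show that the gradient descent iterates preserve the second condition at every step and converge (in the appropriate sense) to a fixed point of the iteration, which forces the first condition in the limit.

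First, I would establish invariance of the row space by induction: since $\beta^{(0)} = 0 \in \col(X^T)$ trivially, and the update $\beta^{(k)} - \beta^{(k-1)} = tX^T(y - X\beta^{(k-1)})$ is itself of the form $X^T v$, every iterate lies in $\col(X^T)$. Next, I would rewrite the recursion in the linear form $\beta^{(k)} = (I - tX^TX)\beta^{(k-1)} + tX^T y$ and analyze it using the SVD $X = U D V^T$, so that $X^TX = V D^2 V^T$ with eigenvalues $\sigma_i^2 \in [0, \lambda_{\max}(X^TX)]$. Decomposing $\beta^{(k)}$ in the eigenbasis of $X^TX$, the recursion decouples: the coefficient $a_i^{(k)}$ along the $i$-th eigenvector satisfies $a_i^{(k)} = (1 - t\sigma_i^2) a_i^{(k-1)} + t(V^T X^T y)_i$. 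On the null space of $X$ (where $\sigma_i = 0$), $(X^T y)$ has no component, so these coordinates remain at $0$ for all $k$. On the complement, $|1 - t\sigma_i^2| < 1$ by the stepsize condition $0 < t \le 1/\lambda_{\max}(X^TX)$ (strictly less than $1$ for all nonzero $\sigma_i^2$ since $t\sigma_i^2 \in (0,1]$, and one can check the boundary case separately), so these coordinates converge geometrically to the fixed point $(X^T y)_i / \sigma_i^2$.

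Combining these, $\beta^{(k)}$ converges to a limit $\beta^\star$ which satisfies $X^TX \beta^\star = X^T y$ (i.e., it minimizes $\|y - Xb\|_2^2$) and lies in $\col(X^T)$ (by invariance from the first step, since $\col(X^T)$ is closed). These two properties uniquely characterize the min-norm least squares solution: any other minimizer differs from $\beta^\star$ by an element of $\nul(X)$, which is orthogonal to $\col(X^T)$, so adding such an element strictly increases the $\ell_2$ norm by Pythagoras. Therefore $\beta^\star = \hbeta$.

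The only mild subtlety is the edge case $t\sigma_i^2 = 1$ in the boundary of the stepsize condition, where $|1 - t\sigma_i^2| = 0$ so convergence is actually immediate in that coordinate rather than problematic; the truly dangerous case would be $|1 - t\sigma_i^2| = 1$ with $\sigma_i \ne 0$, which is excluded by $t > 0$ and $\sigma_i^2 \le \lambda_{\max}$. Beyond that, the proof is routine linear algebra, and I do not anticipate any real obstacle.
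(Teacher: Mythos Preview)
Your proof is correct and follows essentially the same approach as the paper: both establish that every iterate lies in the row space of $X$ by induction, use the stepsize condition to guarantee convergence to a least squares minimizer, and conclude by the fact that the min-norm solution is the unique minimizer lying in $\row(X)$. The only difference is that you spell out the convergence argument explicitly via the SVD and coordinate-wise analysis, whereas the paper simply asserts convergence as a consequence of the stepsize bound.
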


\begin{proof}
The choice of step size guarantees that \smash{$\beta^{(k)}$} converges to a 
least squares solution as $k \to \infty$, call it \smash{$\tilde\beta$}.  Note
that \smash{$\beta^{(k)}$}, $k=1,2,3,\ldots$ all lie in the row space of $X$;
therefore \smash{$\tilde\beta$} must also lie in the row space of $X$; and the
min-norm least squares solution \smash{$\hbeta$} is the unique least squares
solution with this property.  
\end{proof}

\subsection{Bias and variance}

We recall expressions for the bias and variance of the min-norm least squares
estimator, which are standard.  

\begin{lemma}
\label{lem:bias_var}
Under the model \eqref{eq:data_x}, \eqref{eq:data_y}, the min-norm least squares
estimator \eqref{eq:min_ls} has bias and variance 
$$
B_X(\hbeta; \beta) = \beta^T \Pi \Sigma \Pi \beta \quad \text{and} \quad   
V_X(\hbeta; \beta) = \frac{\sigma^2}{n} \Tr (\hSigma^+ \Sigma),  
$$
where \smash{$\hSigma=X^T X/n$} is the (uncentered) sample covariance of $X$,
and \smash{$\Pi = I-\hSigma^+\hSigma$} is the projection onto the null space of
$X$.  
\end{lemma}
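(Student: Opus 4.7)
The plan is to work directly from the closed form $\hbeta=(X^TX)^+X^Ty$ and the model $y=X\beta+\varepsilon$, then identify the bias and covariance conditionally on $X$ using standard properties of the Moore--Penrose pseudoinverse.

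First, substituting $y=X\beta+\varepsilon$ gives
$$
\hbeta \;=\; (X^TX)^+ X^T X\,\beta \;+\; (X^TX)^+ X^T\varepsilon.
$$
Taking conditional expectation (using $\E(\varepsilon\mid X)=0$) yields $\E(\hbeta\mid X)=(X^TX)^+X^TX\,\beta$. The key algebraic fact is that $(X^TX)^+X^TX$ is the orthogonal projector onto the row space of $X^TX$, which equals the row space of $X$; equivalently, it equals $I-\Pi$ where $\Pi$ is the projector onto $\nul(X)$. Hence $\E(\hbeta\mid X)-\beta=-\Pi\beta$, and
$$
B_X(\hbeta;\beta) \;=\; \|{-}\Pi\beta\|_\Sigma^2 \;=\; \beta^T\Pi\,\Sigma\,\Pi\,\beta,
$$
using symmetry of $\Pi$.

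For the variance, conditional on $X$ we have $\Cov(\hbeta\mid X)=(X^TX)^+X^T\,\Cov(\varepsilon\mid X)\,X(X^TX)^+=\sigma^2(X^TX)^+X^TX(X^TX)^+$. The pseudoinverse identity $A^+AA^+=A^+$ applied with $A=X^TX$ collapses this to $\Cov(\hbeta\mid X)=\sigma^2(X^TX)^+$. Now using $\hSigma=X^TX/n$ so that $(X^TX)^+=\hSigma^+/n$, we obtain
$$
V_X(\hbeta;\beta) \;=\; \Tr\!\bigl[\Cov(\hbeta\mid X)\,\Sigma\bigr] \;=\; \frac{\sigma^2}{n}\,\Tr(\hSigma^+\Sigma),
$$
as claimed.

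There is no real obstacle here: the proof is a one-line substitution plus two standard pseudoinverse identities, namely that $A^+A$ is the orthogonal projector onto $\row(A)$ and that $A^+AA^+=A^+$. The only point worth being careful about is to note that $\nul(X^TX)=\nul(X)$, so that the projector appearing in $I-(X^TX)^+X^TX$ really is the $\Pi$ defined in the statement.
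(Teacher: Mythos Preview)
Your proof is correct and follows essentially the same route as the paper: compute $\E(\hbeta\mid X)=(X^TX)^+X^TX\beta=\hSigma^+\hSigma\beta$ and $\Cov(\hbeta\mid X)=\sigma^2(X^TX)^+X^TX(X^TX)^+=\sigma^2\hSigma^+/n$, then plug into the definitions of $B_X$ and $V_X$. Your version simply spells out the pseudoinverse identities $A^+AA^+=A^+$ and that $A^+A$ projects onto $\row(A)$, which the paper leaves implicit.
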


\begin{proof}
As \smash{$\E(\hbeta|X) = (X^T X)^+ X^T X \beta = \hSigma^+ \hSigma
  \beta$} and \smash{$\Cov(\hbeta|X) = \sigma^2 (X^T X)^+ X^T X (X^T X)^+ =
  \sigma^2 \hSigma^+/n$}, the bias and variance expressions follow from plugging 
these into their respective definitions. 
\end{proof}

\subsection{Underparametrized asymptotics}

We consider an asymptotic setup where $n,p \to \infty$, in such a way that 
$p/n \to \gamma \in (0,\infty)$.  Recall that when $\gamma < 1$, we call the
problem underparametrized; when $\gamma > 1$, we call it overparametrized. 
Here, we recall the risk of the min-norm least squares estimator in the
underparametrized case.  The rest of this paper focuses on the overparametrized
case.   

The following is a known result in random matrix theory, and can be found in
Chapter 6 of \citet{serdobolskii2007multiparametric}.
It can also be found in the wireless communications literature, see Chapter 4 of
\citet{tulino2004random}.
%

\begin{proposition}
\label{thm:risk_lo}
Assume the model \eqref{eq:data_x}, \eqref{eq:data_y}, and assume $x \sim
P_x$ is of the form $x=\Sigma^{1/2} z$, where $z$ is a random vector with
i.i.d.\ entries that have zero mean, unit variance, and a finite 4th moment, and
$\Sigma$ is a (sequence of) deterministic positive definite matrix, such that 
$\lambda_{\min}(\Sigma) \geq c > 0$, for all $n,p$ and a constant $c$
(here $\lambda_{\min}(\Sigma)$ is the smallest eigenvalue of $\Sigma$).
Then as $n,p \to \infty$, such that $p/n \to \gamma < 1$, the
risk of the least squares estimator \eqref{eq:min_ls} satisfies, almost surely,    
$$
\lim_{n\to\infty}R_X(\hbeta; \beta) = \sigma^2 \frac{\gamma}{1-\gamma}.
$$
\end{proposition}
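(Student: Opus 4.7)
The plan is to reduce everything to a standard Marchenko--Pastur computation after observing two key simplifications. First, since $\gamma<1$ and the entries of $z$ have a finite fourth moment, the Bai--Yin type result guarantees that almost surely, for all large $n$, the smallest singular value of $Z/\sqrt{n}$ is bounded below by a positive constant (approaching $1-\sqrt{\gamma}$), so $Z$ and hence $X=Z\Sigma^{1/2}$ has full column rank. Consequently, the min-norm least squares estimator coincides with the classical OLS estimator $\hbeta=(X^TX)^{-1}X^Ty$. This makes $\hbeta$ conditionally unbiased given $X$, so the bias term in Lemma~\ref{lem:bias_var} vanishes (the projection $\Pi$ onto the null space of $X$ is zero).

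All that remains is the variance term, which Lemma~\ref{lem:bias_var} expresses as $\frac{\sigma^2}{n}\Tr(\hSigma^{-1}\Sigma)$. Writing $X=Z\Sigma^{1/2}$ with $Z$ having i.i.d.\ zero-mean, unit-variance entries, one has $\hSigma=\Sigma^{1/2}(Z^TZ/n)\Sigma^{1/2}$, so by the cyclic property of the trace the covariance $\Sigma$ cancels out:
$$
\frac{1}{n}\Tr(\hSigma^{-1}\Sigma)=\frac{1}{n}\Tr\bigl((Z^TZ/n)^{-1}\bigr)=\frac{p}{n}\cdot\frac{1}{p}\Tr\bigl((Z^TZ/n)^{-1}\bigr).
$$
Thus the original risk limit is independent of $\Sigma$ and reduces to the purely isotropic computation.

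Next I would invoke the Marchenko--Pastur theorem: under the finite fourth moment hypothesis, the empirical spectral distribution of $Z^TZ/n$ converges almost surely to the MP law with parameter $\gamma$, supported on $[(1-\sqrt{\gamma})^2,(1+\sqrt{\gamma})^2]$. The quantity $\frac{1}{p}\Tr((Z^TZ/n)^{-1})$ is the integral of $1/\lambda$ against the empirical spectral distribution, and it is a classical exercise that $\int\lambda^{-1}\,dF_\gamma(\lambda)=1/(1-\gamma)$ when $\gamma<1$. Combining, $\frac{1}{n}\Tr(\hSigma^{-1}\Sigma)\to \gamma/(1-\gamma)$ almost surely.

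The only subtle step is justifying that weak convergence of the spectral measure implies convergence of the integral of the unbounded function $1/\lambda$. This is exactly where the Bai--Yin bound from the first step is used: because $\lambda_{\min}(Z^TZ/n)$ is almost surely bounded below by a positive constant eventually, we may truncate $1/\lambda$ at a fixed level without error for large $n$, and the continuous bounded function obtained can be integrated against the weakly convergent empirical measure. That is the main technical point; everything else is bookkeeping.
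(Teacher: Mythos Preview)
Your proposal is correct and follows essentially the same route as the paper's proof: both use Bai--Yin to ensure full column rank (hence zero bias), cancel $\Sigma$ from the variance via the cyclic trace identity to reduce to $\frac{p}{n}\cdot\frac{1}{p}\Tr((Z^TZ/n)^{-1})$, and then combine the Marchenko--Pastur limit with the Bai--Yin lower bound to justify integrating the unbounded function $1/\lambda$. The paper computes $\int \lambda^{-1}\,dF_\gamma(\lambda)$ explicitly by evaluating the Stieltjes transform at $z=0$, whereas you cite it as a classical exercise, but this is not a meaningful difference.
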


As it can be seen from the last proposition, in the underparametrized case the risk is just variance.
In contrast, in the overparametrized case,
the bias \smash{$B_X(\hbeta; \beta) = \beta^T \Pi
  \Sigma \Pi \beta$} is nonzero, because $\Pi$ is. This will be the focus of the next sections.

\section{Isotropic features}
\label{sec:isotropic}

We begin by considering the simpler case in which $\Sigma = I$. In this case
the limiting  bias is relatively straightforward to compute and depends on $\beta$ only through
$\|\beta\|_2^2$.  In Section \ref{sec:correlated}, we generalize our analysis and study the dependence
of the bias on the geometry of $\Sigma$ and $\beta$.

\subsection{Limiting bias}

As mentioned above, in the isotropic case the risk depends $\beta$ only on through
$r^2=\|\beta\|_2^2$.  To give some
intuition as to why this is true, consider the special case where $X$ has
i.i.d.\ entries from $N(0,1)$.  By rotational invariance, for any orthogonal $U
\in \R^{p \times p}$, the distribution of $X$ and $XU$ is the same.    
Thus
\begin{align*}
B_X(\hbeta; \beta) &= \beta^T \big(I - (X^T X)^+ X^T X \big) \beta \\
&\overset{d}{=} \beta^T \big(I - U^T (X^T X)^+ U U^T X^T X U\big) \beta \\
&= r^2 - (U \beta)^T (X^T X)^+ X^T X (U \beta).
\end{align*}
Choosing $U$ so that $U\beta=r e_i$, the $i$th standard basis vector, then
averaging over $i=1,\ldots,p$, yields  
$$
\E B_X(\hbeta; \beta)  = r^2 \E\big[ 1- \Tr\big((X^T X)^+X^T X\big)/p 
\big] = r^2(1-n/p).
$$
It is possible to show that,  $B_X(\hbeta; \beta)$ concentrates around its expectation ant therefore
$B_X(\hbeta; \beta) \to r^2(1-1/\gamma)$, almost surely. This is stated formally in the next section.

\subsection{Limiting risk}
\label{sec:risk_iso}

As the next result shows, the independence of the risk on $\beta$ is still
true outside of the Gaussian case, provided the features are isotropic.
The next result can be proved as a corollary of the more general Theorem \ref{thm:risk_gen_asymp}
below. We give a simpler self-contained proof using a theorem of 
\citet{rubio2011spectral} in Appendix \ref{sec:ProofRiskIso}.
\begin{theorem}
\label{thm:risk_iso}
Assume the model \eqref{eq:data_x}, \eqref{eq:data_y}, where $x \sim P_x$ has
i.i.d.\ entries with zero mean, unit variance, and a finite moment of order
$4+\eta$, for some $\eta>0$. Also assume that $\|\beta\|_2^2=r^2$ for all
$n,p$. Then for the min-norm least squares estimator \smash{$\hbeta$} in
\eqref{eq:min_ls}, as $n,p \to \infty$, such that $p/n \to \gamma \in (0,\infty)$, it holds
almost surely that
\begin{align}
  B_X(\hbeta;\beta)&\to r^2\big(1-\frac{1}{\gamma}\big) \, ,\\
  V_X(\hbeta;\beta)&\to \sigma^2 \frac{1}{\gamma-1}\, .
\end{align}
Hence,  summarizing with Proposition \ref{thm:risk_lo}, we have
\begin{equation}
\label{eq:risk_iso}
R_X(\hbeta; \beta) \to \begin{cases}
\sigma^2 \frac{\gamma}{1-\gamma} & \text{for $\gamma < 1$}, \\
r^2\big(1-\frac{1}{\gamma}\big) + \sigma^2 \frac{1}{\gamma-1} & 
\text{for $\gamma > 1$}.
\end{cases}
\end{equation}
\end{theorem}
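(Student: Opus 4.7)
The plan is to treat the overparametrized regime $\gamma>1$ directly; the case $\gamma<1$ is Proposition~\ref{thm:risk_lo}. By Lemma~\ref{lem:bias_var} with $\Sigma = I$, we have $B_X(\hbeta;\beta) = \beta^T \Pi \beta$ and $V_X(\hbeta;\beta) = (\sigma^2/n) \Tr(\hSigma^+)$, where $\hSigma = X^T X/n$ and $\Pi = I - \hSigma^+ \hSigma$ is the orthogonal projector onto $\nul(X)$. I will prove the two limits separately, leveraging the Marchenko--Pastur law and the Bai--Yin edge result for basic spectral control, and invoking the deterministic equivalent of \citet{rubio2011spectral} only for the quadratic form that arises in the bias.

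For the \textbf{variance}, observe that the nonzero eigenvalues of $\hSigma$ coincide with those of the Gram matrix $G := XX^T/n$, which is almost surely invertible since $p>n$. Hence $V_X = (\sigma^2/n)\Tr(G^{-1})$. The Marchenko--Pastur theorem for i.i.d.\ entries with finite second moment shows that the empirical spectral distribution of $G$ converges almost surely to the probability measure $\rho_\gamma$ on $[(1-\sqrt\gamma)^2,(1+\sqrt\gamma)^2]$ with density $(2\pi\lambda)^{-1}\sqrt{(b-\lambda)(\lambda-a)}$. Bai--Yin (under the $4+\eta$ moment assumption) further gives that the extreme eigenvalues of $G$ converge almost surely to the edges of this support; in particular $\lambda_{\min}(G)\geq(\sqrt\gamma-1)^2 - o(1)>0$ almost surely, so $\lambda\mapsto 1/\lambda$ is bounded on the spectrum for all $n$ large. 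Weak convergence plus uniform integrability then yield $(1/n)\Tr(G^{-1}) \to \int \lambda^{-1}\,d\rho_\gamma(\lambda) = 1/(\gamma-1)$, the last identity being a standard computation.

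For the \textbf{bias}, write $\beta^T\Pi\beta = r^2 - \beta^T\hSigma^+\hSigma\beta$, and regularize using the spectral identity $(\hSigma + zI)^{-1} = \hSigma^+ + z^{-1}\Pi$, which holds because $\hSigma^+$ and $\Pi$ act on complementary eigenspaces. This gives
\[
z\,\beta^T(\hSigma + zI)^{-1}\beta \;=\; \beta^T\Pi\beta \,+\, z\,\beta^T\hSigma^+\beta.
\]
Since $\beta^T\hSigma^+\beta \leq r^2/\lambda_{\min,+}(\hSigma)$ is bounded almost surely (again by Bai--Yin applied to the smallest \emph{nonzero} eigenvalue), the second term vanishes as $z\downarrow 0$ uniformly in $n$ (for $n$ large). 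I then apply the Rubio--Mestre deterministic-equivalent theorem to the isotropic sample covariance matrix $\hSigma$: for each fixed $z>0$,
\[
\bigl|\beta^T(\hSigma+zI)^{-1}\beta \;-\; r^2\,m_\gamma(-z)\bigr| \;\xrightarrow{a.s.}\; 0,
\]
where $m_\gamma$ is the Stieltjes transform of the Marchenko--Pastur law $\mathrm{MP}_\gamma$ (which carries an atom of mass $1-1/\gamma$ at $0$). Multiplying by $z$ and using $z\,m_\gamma(-z) \to 1-1/\gamma$ as $z\downarrow 0$ (the atom mass), I obtain $\beta^T\Pi\beta \to r^2(1-1/\gamma)$.

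The main technical obstacle is justifying the exchange of the limits $z\downarrow 0$ and $n\to\infty$. This is precisely where the uniform-in-$n$ lower bound on the smallest nonzero eigenvalue of $\hSigma$ supplied by Bai--Yin becomes critical: it makes the error $|z\beta^T(\hSigma+zI)^{-1}\beta - \beta^T\Pi\beta|$ decay at a rate depending only on $z$, not on $n$, while the function $z\mapsto z\,m_\gamma(-z)$ is continuous at $0$. A standard diagonal argument then lets one pick $z=z_n \downarrow 0$ slowly enough that both the Rubio--Mestre approximation error and the regularization error remain negligible, completing the identification of the limit.
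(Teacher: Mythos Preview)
Your approach is essentially the paper's: for the variance you pass to the Gram matrix $XX^T/n$ and use Marchenko--Pastur together with Bai--Yin edge control; for the bias you regularize to $z\,\beta^T(\hSigma+zI)^{-1}\beta$, invoke the Rubio--Mestre deterministic equivalent to identify the limit as $r^2\,z\,m(-z)$, and then justify the exchange of $z\to 0^+$ and $n\to\infty$. The paper carries out that exchange via equicontinuity (bounding $f_n'(z)$ by Bai--Yin) and Arzel\`a--Ascoli/Moore--Osgood rather than your diagonal argument, but both routes are valid.

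One correction: the claimed ``spectral identity'' $(\hSigma+zI)^{-1}=\hSigma^+ + z^{-1}\Pi$ is false --- on the nonzero eigenspace the left side acts as $(d_i+z)^{-1}$, not $d_i^{-1}$. What you actually need, and what does hold, is the exact decomposition
\[
z\,\beta^T(\hSigma+zI)^{-1}\beta \;=\; \beta^T\Pi\beta \;+\; z\sum_{i:\,d_i>0}\frac{(u_i^T\beta)^2}{d_i+z}\,,
\]
whose second term is bounded by $z\,r^2/\lambda_{\min,+}(\hSigma)$, which by Bai--Yin is $O(z)$ uniformly in $n$. With this fix your argument goes through unchanged.
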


On $(0,1)$, there is no bias, and the variance increases with $\gamma$; on
$(1,\infty)$, the bias increases with $\gamma$, and the variance decreases with
$\gamma$. Let  
$\snr=r^2/\sigma^2$.  Observe that the risk of the null estimator 
\smash{$\tilde\beta = 0$} is $r^2$, which we hence call the null risk.  The
following facts are immediate from the form of the risk curve in
\eqref{eq:risk_iso}.  See Figure \ref{fig:risk_iso} for an accompanying
plot when $\snr$ varies from 1 to 5.

\begin{figure}[ht]
\centering
\includegraphics[width=0.725\textwidth]{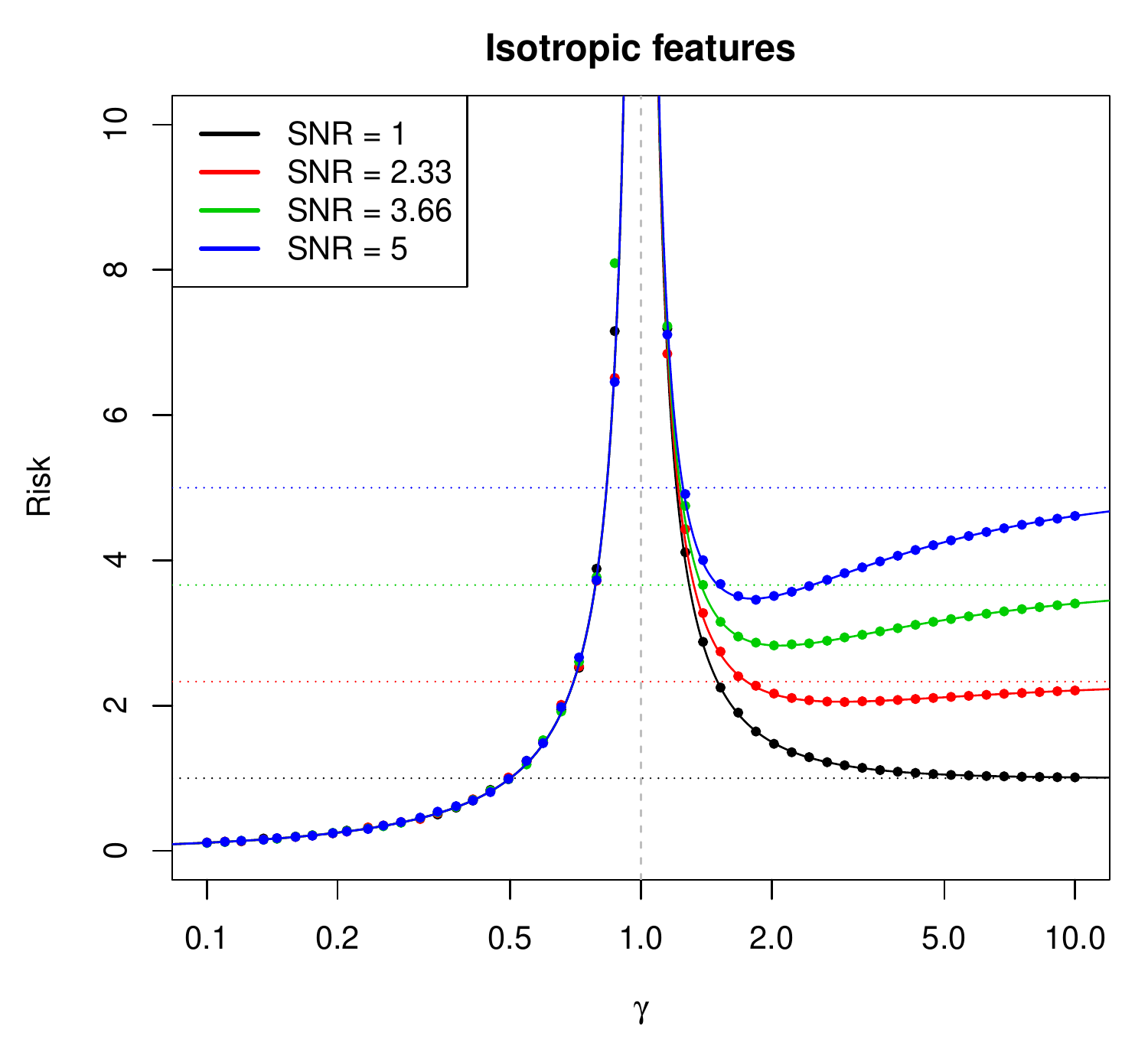} 
\caption{\footnotesize Asymptotic risk curves in \eqref{eq:risk_iso} for the min-norm
  least squares estimator, when $r^2$ varies from 1 to 5, and $\sigma^2=1$. For
  each value of $r^2$, the null risk is marked as a dotted line, and the points
  denote finite-sample risks, with $n=200$, $p=[\gamma n]$, across various
  values of $\gamma$, computed from features $X$ having i.i.d.\ $N(0,1)$
  entries.}     
\label{fig:risk_iso} 
\end{figure}

\begin{enumerate}
\item On $(0,1)$, the least squares risk $R(\gamma)$ is better than the null
  risk if and only if \smash{$\gamma < \frac{\snr}{\snr+1}$}. 

\item On $(1,\infty)$, when $\snr \leq 1$, the min-norm least squares
  risk $R(\gamma)$ is always worse than the null risk.  Moreover, it is
  monotonically decreasing, and approaches the null risk (from above) as $\gamma
  \to \infty$.  

\item On $(1,\infty)$, when $\snr > 1$, the min-norm least squares risk
  $R(\gamma)$ beats the null risk if and only if \smash{$\gamma >
    \frac{\snr}{\snr-1}$}.  Further, it has a local minimum at \smash{$\gamma= 
    \frac{\sqrt{\snr}}{\sqrt{\snr}-1}$}, and approaches the null risk (from
  below) as $\gamma \to \infty$.  
\end{enumerate}

\subsection{Limiting $\ell_2$ norm}

Calculation of the limiting $\ell_2$ norm of the min-norm least squares
estimator is quite similar to the study of the limiting risk in Theorem
\ref{thm:risk_iso}, and therefore we state the next result without proof. 

\begin{corollary}
\label{cor:norm_iso}
Assume the conditions of Theorem \ref{thm:risk_iso}.  Then as $n,p \to \infty$,
such that $p/n \to \gamma$, the squared $\ell_2$ norm of the min-norm least
squares estimator \eqref{eq:min_ls} satisfies, almost surely,  
$$
\E[\|\hbeta\|_2^2 \,|\, X] \to 
\begin{cases}
r^2 + \sigma^2 \frac{\gamma}{1-\gamma} & \text{for $\gamma < 1$}, \\  
r^2 \frac{1}{\gamma} + \sigma^2 \frac{1}{\gamma-1} & \text{for $\gamma > 
  1$}. 
\end{cases}
$$
\end{corollary}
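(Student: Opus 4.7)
The plan is to reduce the computation of $\mathbb{E}[\|\hbeta\|_2^2 \mid X]$ to a bias-variance style decomposition that recycles the two limits already established in Theorem \ref{thm:risk_iso} and Proposition \ref{thm:risk_lo}, rather than performing any new random matrix analysis.

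First I would apply the standard conditional variance decomposition
\[
\mathbb{E}\big[\|\hbeta\|_2^2 \,\big|\, X\big] = \big\|\mathbb{E}[\hbeta \mid X]\big\|_2^2 + \Tr\big(\Cov(\hbeta \mid X)\big),
\]
using the explicit conditional mean and covariance computed in Lemma \ref{lem:bias_var}: $\mathbb{E}[\hbeta \mid X] = \hSigma^+ \hSigma\beta = (I-\Pi)\beta$ and $\Cov(\hbeta \mid X) = \sigma^2 (X^TX)^+$. Since $\Pi$ is an orthogonal projector, $(I-\Pi)^2 = I-\Pi$, which gives $\|\mathbb{E}[\hbeta\mid X]\|_2^2 = \beta^T(I-\Pi)\beta = r^2 - \beta^T\Pi\beta$. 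When $\Sigma = I$, the bias expression in Lemma \ref{lem:bias_var} reduces to $B_X(\hbeta;\beta) = \beta^T\Pi\beta$, and similarly $\Tr(\Cov(\hbeta\mid X)) = \sigma^2 \Tr((X^TX)^+) = (\sigma^2/n)\Tr(\hSigma^+) = V_X(\hbeta;\beta)$. Therefore
\[
\mathbb{E}\big[\|\hbeta\|_2^2 \,\big|\, X\big] = r^2 - B_X(\hbeta;\beta) + V_X(\hbeta;\beta).
\]

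Next I would simply substitute the limits already proved earlier. In the underparametrized case $\gamma < 1$, we have $\Pi = 0$ almost surely for all large enough $n$, so $B_X \to 0$, and Proposition \ref{thm:risk_lo} gives $V_X \to \sigma^2 \gamma/(1-\gamma)$, yielding $r^2 + \sigma^2\gamma/(1-\gamma)$. In the overparametrized case $\gamma > 1$, Theorem \ref{thm:risk_iso} provides both $B_X \to r^2(1 - 1/\gamma)$ and $V_X \to \sigma^2/(\gamma-1)$, so the limit becomes $r^2 - r^2(1-1/\gamma) + \sigma^2/(\gamma-1) = r^2/\gamma + \sigma^2/(\gamma-1)$, as stated.

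There is essentially no hard step here, which is why the authors omit the proof: the entire content is the observation that the conditional second moment of $\hbeta$ admits a bias-variance decomposition in which both summands have already been computed in Theorem \ref{thm:risk_iso}. The only mild subtlety to flag is that one must work with the variance decomposition of $\|\hbeta\|_2^2$ rather than of the prediction error $\|\hbeta-\beta\|_\Sigma^2$; once the algebraic identification $\|\mathbb{E}[\hbeta\mid X]\|_2^2 = r^2 - B_X(\hbeta;\beta)$ (specific to $\Sigma = I$) and $\Tr(\Cov(\hbeta\mid X)) = V_X(\hbeta;\beta)$ is made, the conclusion is immediate and inherits almost sure convergence from the cited results.
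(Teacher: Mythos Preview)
Your proposal is correct and matches the spirit of what the paper intends: the authors explicitly state the result without proof, remarking only that the calculation is ``quite similar to the study of the limiting risk in Theorem \ref{thm:risk_iso}.'' Your identity $\E[\|\hbeta\|_2^2 \mid X] = r^2 - B_X(\hbeta;\beta) + V_X(\hbeta;\beta)$ in the isotropic case is exactly the observation that reduces the corollary to the bias and variance already handled there, so nothing further is needed.
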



We can see that the limiting norm, as a function of $\gamma$, has a somewhat 
similar profile to the limiting risk in \eqref{eq:risk_iso}: it is monotonically
increasing on $(0,1)$, diverges at the interpolation boundary, and is
monotonically decreasing on $(1,\infty)$.   

\section{Correlated features}
\label{sec:correlated}

We broaden the scope of our analysis from the last section, where we
examined isotropic features.  In this section, we take $x \sim P_x$ to be of
the form  $x = \Sigma^{1/2} z$, where $z$ is a random vector with independent
entries that have zero mean and unit variance, and $\Sigma$ is arbitrary (but
still deterministic and positive definite).

The risk of min-norm regression depends on the geometry of $\Sigma$ and $\beta$.
Denote by $\Sigma = \sum_{i=1}^ps_i v_iv_i^T$ the eigenvalue
decomposition of $\Sigma$ with $s_1\ge s_2\ge \cdots\ge s_p\ge 0$.
The geometry of the problem is captured by the sequence of eigenvalues $(s_1,\dots,s_p)$,
and  by the coefficients of $\beta$ in the basis of eigenvectors $((v_1^T\beta),\dots, (v_p^T\beta))$. We encode
these via two probability distributions on $\reals_{\ge0}$:
\begin{align}
  \hH_n(s) := \frac{1}{p}\sum_{i=1}^p 1_{\{s\ge s_i\}}\, ,\;\;\;\; \hG_n(s)  =\frac{1}{\|\beta\|_2^2}\sum_{i=1}^p \<\beta,v_i\>^2 1_{\{s\ge s_i\}}\, .
  \label{eq:HGdef}
\end{align}

We next state our assumptions about the data distribution: our results will be uniform with respect to the (large) constants $M$,
$\{C_k\}_{k\ge 2}$
appearing in this assumption. 
\begin{assumption}\label{ass:CovariatesAssumption}
  The covariates vector $x\sim P_x$ is of the form $x = \Sigma^{1/2} z$, where, defining $\hH_n$ as per Eq.~\eqref{eq:HGdef}, we have
  \begin{enumerate}
  \item[$(a)$] The vector  $z=(z_1,\dots, z_p)$ has independent (not necessarily identically distributed) entries with $\E\{z_i\}=0$,
    $\E\{z_i^2\}=1$, and $\E\{|z_i|^k\} \le C_k<\infty$ for all $k\ge 2$.
  \item[$(b)$] $s_1= \|\Sigma\|_{op}\le M$, $\int s^{-1} d \hH_n(s)<M$.
    \item[$(c)$] $|1-(p/n)|\ge 1/M$, $1/M\le p/n\le M$.
      \end{enumerate}
\end{assumption}
Condition $(a)$ bounds the tail probabilities on the covariates. Requiring finite moment of all order is
useful to get strong bounds on the deviations of the risk from its predicted value. As discussed below,
bounds on the first few moments are sufficient if we are satisfied in weaker probability bounds.

Conditions $(b)$ requires the eigenvalues of $\Sigma$ to be bounded, and not to
accumulate\footnote{The latter assumption could have been further relaxed, by requiring only $p_{+}$ of the
  $p$ eigenvalues to be non-vanishing and to satisfy the other conditions.  This would require to redefine $\gamma$
as $p_{+}/n$.} near $0$. 
For the analysis of min-norm interpolation, we will add the additional assumption that the
minimum eigenvalue of $\Sigma$ is bounded away from zero. However 
condition $(b)$ is sufficient for the analysis of ridge regression  in Section
\ref{sec:ridge}.

Finally, as our statements are non-asymptotic, we do not assume $p/n$  to converge to to a value. However 
condition $(c)$ requires $p/n$ to be bounded and bounded away from the interpolation threshold $p/n=1$.

\subsection{Prediction risk}

\begin{definition}[Predicted bias and variance: min-norm regression]\label{def:LimitBiasVar}
    Let $\hH_n$ be the empirical distribution of eigenvalues of $\Sigma$,
    and $\hG_n$ the reweighted distribution as per Eq.~\eqref{eq:HGdef}.
    For $\gamma\in\reals_{>0}$,
  define $c_0 = c_0(\gamma,\hH_n)\in\reals_{> 0}$ to be the
  unique non-negative solution of
  \begin{align}
    1-\frac{1}{\gamma} = \int \frac{1}{1+c_0\gamma s}\, d\hH_n(s)\, , \label{eq:c0def}
  \end{align}
  We then define the predicted bias and variance by
  \begin{align}
    \cuB(\hH_n,\hG_n,\gamma) &:=\|\beta\|_2^2\left\{1+\gamma  c_0\frac{\int \frac{s^2}{(1+c_0\gamma s)^2}\, d\hH_n(s)}
                               {\int \frac{s}{(1+c_0\gamma s)^2}\, d\hH_n(s)}\right\}\cdot  \int \frac{s}{(1+c_0\gamma s)^2}\, d\hG_n(s)\, ,
    \label{eq:BiasPredMinNorm}\\
    \cuV(\hH_n,\gamma) &:= \sigma^2\gamma  \frac{\int \frac{s^2}{(1+c_0\gamma s)^2}\, d\hH_n(s)}
{\int \frac{s}{(1+c_0\gamma s)^2}\, d\hH_n(s)}\, .
  \end{align}
\end{definition}

Note that evaluating $\cuB(H,G,\gamma)$,  $\cuV(H,\gamma)$ numerically is relatively straightforward,
with the most complex part being the solution of Eq.~\eqref{eq:c0def}.
The next theorem establishes that ---under suitable technical conditions---
the functions $\cuB$, $\cuV$ characterize the test error. Similar theorems were proved in
\cite{wu2020optimal,richards2020asymptotics}, which generalized an earlier version of this manuscript
to account for the geometry of $(\Sigma,\beta)$.
\begin{theorem}
\label{thm:risk_gen}
Assume the  data model \eqref{eq:data_x}, \eqref{eq:data_y} and that the covariates distribution satisfies
Assumption \ref{ass:CovariatesAssumption}. Further assume $s_p = \lambda_{\min}(\Sigma)>1/M$.
Define $\gamma=p/n$  and let
$\hbeta$ be the min-norm least squares
estimator in Eq.~\eqref{eq:min_ls}.

Then for any constants $D>0$ (arbitrarily large)
  there exist $C=C(M,D)$  such that, with probability at least $1-Cn^{-D}$ the following hold
\begin{align}
  &R_X(\hbeta;\beta)  =B_X(\hbeta;\beta)  +V_X(\hbeta;\beta) \, ,\\
  &\big|B_X(\hbeta;\beta)-\cuB(\hH_n,\hG_n,\gamma) \big|\le \frac{C\|\beta\|_2^2}{n^{1/7}}\, ,\label{eq:MinNormBias}\\
  &\big|V_X(\hbeta;\beta)-\cuV(\hH_n,\gamma) \big|\le \frac{C}{n^{1/7}}\, ,\label{eq:MinNormVariance}
\end{align}
where $\cuB$ and $\cuV$ are given in Definition \ref{def:LimitBiasVar-lambda}.
\end{theorem}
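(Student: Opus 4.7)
The strategy is to reduce the min-norm problem to a ridge problem in the limit $\lambda\to 0^+$, apply a non-asymptotic anisotropic deterministic equivalent for the sample covariance resolvent, and match the $\lambda = 0$ limit to the formulas in Definition~\ref{def:LimitBiasVar}.

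\emph{Step 1 (ridge trick).} For $\lambda > 0$ set $M(\lambda) := (\hSigma + \lambda I)^{-1}$. Standard pseudoinverse limits give $\lambda M(\lambda)\to \Pi$ and $M(\lambda)\hSigma M(\lambda)\to \hSigma^+$ as $\lambda\to 0^+$, so Lemma~\ref{lem:bias_var} rewrites
\[
B_X(\hbeta;\beta)=\lim_{\lambda\to 0^+}\lambda^2\,\beta^T M(\lambda)\Sigma M(\lambda)\beta,\qquad V_X(\hbeta;\beta)=\lim_{\lambda\to 0^+}\frac{\sigma^2}{n}\Tr\bigl(M(\lambda)\hSigma M(\lambda)\Sigma\bigr).
\]
The hypotheses $s_p\ge 1/M$ and $|p/n-1|\ge 1/M$ ensure the nonzero singular values of $\hSigma$ are bounded below with high probability, so both limits exist and it suffices to control the right-hand sides at a carefully chosen small $\lambda>0$.

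\emph{Step 2 (anisotropic deterministic equivalent).} Because $x_i=\Sigma^{1/2}z_i$ with $z_i$ independent, centered, unit-variance, and all polynomial moments bounded, I would prove (in the spirit of Rubio--Mestre and Knowles--Yin, via truncation of the $z_{ij}$ followed by a leave-one-out/resolvent-identity argument) that there is a scalar $\tilde m(\lambda)>0$ solving a Silverstein-type fixed point in $(\hH_n,\gamma,\lambda)$, such that for the deterministic matrix $\bar M(\lambda):=(\lambda I+\Sigma/(1+\tilde m(\lambda)))^{-1}$ one has, with probability at least $1-Cn^{-D}$,
\[
\bigl|\beta^T M(\lambda)\Sigma M(\lambda)\beta-\beta^T\bar M(\lambda)\Sigma\bar M(\lambda)\beta\bigr|\le \eps_n\|\beta\|_2^2,
\]
and an analogous (appropriately corrected, since $\hSigma\neq\Sigma$ inside the trace) bound for $n^{-1}\Tr(M(\lambda)\hSigma M(\lambda)\Sigma)$, uniformly in $\lambda$ in a right-neighbourhood of $0$, with $\eps_n=O(n^{-1/7})$. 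The anisotropic bilinear bound is obtained by Hanson--Wright concentration on the $z_i$ propagated through a resolvent identity, and the fixed-point equation is controlled by contraction stability.

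\emph{Step 3 (taking $\lambda\to 0^+$).} With the parametrisation $c(\lambda):=\tilde m(\lambda)/\gamma$, the Silverstein fixed point rearranges into $1-1/\gamma=\int(1+c(\lambda)\gamma s)^{-1}\,d\hH_n(s)+O(\lambda)$, which under $s_p\ge 1/M$ is a uniformly strict contraction in $c$ and therefore admits a Lipschitz limit $c_0:=c(0^+)$ equal to the unique positive solution of \eqref{eq:c0def}. Substituting $\bar M(\lambda)$ and passing to the eigenbasis of $\Sigma$, the limits $\lim_{\lambda\to 0^+}\lambda^2\beta^T\bar M\Sigma\bar M\beta$ and $\lim_{\lambda\to 0^+}(\sigma^2/n)\Tr(\bar M\hSigma\bar M\Sigma)$ reproduce the integrals $\int s(1+c_0\gamma s)^{-2}\,d\hG_n(s)$ and $\int s^2(1+c_0\gamma s)^{-2}\,d\hH_n(s)$, which reassemble into the expressions $\cuB(\hH_n,\hG_n,\gamma)$ and $\cuV(\hH_n,\gamma)$.

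\emph{Main obstacle.} The delicate point is carrying the anisotropic approximation of Step~2 uniformly down to $\lambda=0^+$ with the claimed rate. Off-the-shelf anisotropic local laws typically require spectral parameters bounded away from the edge; here both $s_p\ge 1/M$ and $|p/n-1|\ge 1/M$ are essential to keep the Silverstein equation quantitatively stable as $\lambda\downarrow 0$, preventing $\bar M(\lambda)$ from blowing up along the limiting null directions of $\hSigma$. The non-sharp exponent $1/7$ arises from optimising a truncation scale $n^\alpha$ (used to upgrade polynomial moment bounds into subexponential concentration via Hanson--Wright) against the Lipschitz constant of the fixed-point map; tracking $D$ through the probability tail yields the constant $C(M,D)$.
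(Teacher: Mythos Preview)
Your overall architecture---reduce min-norm to ridge, invoke an anisotropic deterministic equivalent for the resolvent, and send $\lambda\to 0^+$---is exactly what the paper does. But two of your technical claims do not match how the argument actually goes through, and the first of them is a real gap.

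\textbf{The $M\Sigma M$ sandwich.} Anisotropic local laws of the Knowles--Yin or Rubio--Mestre type control bilinear forms $\langle u,(\hSigma+\lambda I)^{-1}v\rangle$ in a \emph{single} resolvent. Your Step~2 needs $\beta^T M(\lambda)\,\Sigma\, M(\lambda)\beta$, a product of two resolvents with $\Sigma$ sandwiched; ``Hanson--Wright propagated through a resolvent identity'' does not explain how you get there. The paper's device is to introduce an auxiliary parameter $\eta$ and set
\[
\oF_n(\eta,\lambda)=\lambda\,\beta^T\bigl(S_X+\lambda I+\lambda\eta\Sigma\bigr)^{-1}\beta,
\]
so that $B_X(\hbeta_\lambda;\beta)=-\partial_\eta\oF_n(0,\lambda)$. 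Now $\oF_n(\eta,\lambda)$ is a single-resolvent bilinear form in the perturbed covariance $\Sigma_\eta=\Sigma(I+\eta\Sigma)^{-1}$, to which the Knowles--Yin anisotropic local law applies directly, giving $|\oF_n-F_n|\le C/(\lambda n^{(1-\eps)/2})$ for a deterministic $F_n$. A separate analytic lemma (the paper's Lemma~\ref{lemma:ConvDerivative}, proved via high-order finite differences) then upgrades the $L^\infty$ bound on $\oF_n-F_n$ to a bound on $\partial_\eta\oF_n-\partial_\eta F_n$, at the cost of controlling high $\eta$-derivatives of both (this is where Lemma~\ref{lemma:Usol} on the fixed-point map enters). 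The variance is handled by the same trick with $\partial_\lambda$ in place of $\partial_\eta$. Your proposal is missing an equivalent mechanism.

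\textbf{Origin of the $n^{-1/7}$ rate.} This does not come from truncation. Under Assumption~\ref{ass:CovariatesAssumption}(a) all moments of $z_{ij}$ are already bounded, and the proof of Theorem~\ref{thm:risk_gen} uses no truncation at all (truncation appears only in the proof of Theorem~\ref{thm:risk_ridge_asymp}, to weaken the moment hypothesis). Moreover, the local-law error in Theorem~\ref{thm:risk_ridge} is $C/(\lambda n^{(1-\eps)/2})$ for the bias and $C/(\lambda^2 n^{(1-\eps)/2})$ for the variance, so it \emph{blows up} as $\lambda\downarrow 0$; the approximation is not uniform in a right-neighbourhood of $0$ in the way you assert. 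Instead the paper bounds $|B_X(\hbeta_\lambda)-B_X(\hbeta)|\le C(M)\lambda$ and $|\cuB(\lambda)-\cuB|\le C(M)\lambda$ directly (using $s_p\ge 1/M$ and the Bai--Yin lower bound on $\sigma_{\min}(Z)/\sqrt{n}$), then chooses the fixed value $\lambda=n^{-1/7}$ and balances $\lambda$ against $\lambda^{-2}n^{-(1-\eps)/2}$, taking $\eps$ small.
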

 The proof of this theorem is deferred to Section \ref{sec:ProofGeneralFormulaMinNorm}.

The order of the error bound in Eqs.~\eqref{eq:MinNormBias}, \eqref{eq:MinNormVariance} is not optimal:
a central limit theorem heuristics suggests the deterministic approximation to be accurate up to  an
error of order $n^{-1/2}$. Indeed, we are able to establish the correct order in the case of ridge regression, see
Theorem \ref{thm:risk_ridge}.

 \begin{remark}\label{rmk:Comparison}
Note that Theorem \ref{thm:risk_gen} establishes deterministic approximations for the bias and variance,
that are valid at finite $n,p$. The overparametrization ratio $\gamma=p/n$ is a non-asymptotic
quantity, and the error bounds are uniform, i.e. depend uniquely on the constant $M$.
This is to be contrasted with the asymptotic setting of \cite{richards2020asymptotics,wu2020optimal}.
Both of these papers assume a sequence of regression problems with $n,p\to\infty$, $p/n\to \gamma$,
and obtain an asymptotically exact expression for the risk.

In order for the asymptotics to make sense, additional assumptions are required by \cite{richards2020asymptotics,wu2020optimal}.
In \cite{richards2020asymptotics}, this is achieved by assuming $\beta$ to be random
with $\E[\beta\beta^T]=r^2\Phi(\Sigma)/d$ for a certain (deterministic) function $\Phi:\reals\to\reals$
(promoted to a function on matrices in the usual way). In addition,
the empirical spectral distribution of $\Sigma$ is assumed to converge.
To state the assumptions in \cite{wu2020optimal}, recall that $(s_i)_{i\le p}$ are the eigenvalues
of $\Sigma$, and denote by $b_i = p\cdot (v_i^T\beta)^2$ the projection of $\beta$ onto the eigenvectors of $\Sigma$. Then \cite{wu2020optimal} assumes that the joint empirical distribution
$p^{-1}\sum_{i=1}^{p}\delta_{s_i,b_i}$ converges weakly as $n,p\to\infty$.

Technically, \cite{richards2020asymptotics,wu2020optimal} apply asymptotic random matrix theory results, such as
 \cite{ledoit2011eigenvectors}, while we have to take a longer detour to exploit 
non-asymptotic results established in\cite{knowles2017anisotropic}. We believe that the non-asymptotic approach
provides more concrete and accurate statements.
\end{remark}

 Theorem \ref{thm:risk_gen} also implies asymptotic predictions under minimal assumptions.
  In particular,  if the two  probability measures $\hH_n$, $\hG_n$ converge weakly to probability measures
  $H$, $G$ on $[0,\infty)$, then we obtain $B_X(\hbeta;\beta)/\|\beta\|^2\to \cuB(H,G,\gamma)$,
  $V_X(\hbeta;\beta)\to \cuV(H,\gamma)$ almost surely.
\begin{theorem}\label{thm:risk_gen_asymp}
  Consider the setting of Theorem \ref{thm:risk_gen} but, instead of Assumption \ref{ass:CovariatesAssumption} $(a)$,
  assume that $(z_i)_{i\le p}$ are identically distributed and satisfy the conditions
  $\E z_i= 0$, $\E(z_i^2) = 1$, $\E(|z_i|^{4+\delta})\le C<\infty$.
  Further assume $p/n\to \gamma\in (0,\infty)$, $\hH_n\Rightarrow H$, $\hG_n\Rightarrow G$. Then,
  almost surely $B_X(\hbeta;\beta)/\|\beta\|^2_2\to \cuB(H,G,\gamma)$,
  $V_X(\hbeta;\beta)\to \cuV(H,\gamma)$.
 \end{theorem}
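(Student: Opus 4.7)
The plan is to derive Theorem \ref{thm:risk_gen_asymp} from the non-asymptotic Theorem \ref{thm:risk_gen} by combining a standard truncation argument, the Borel--Cantelli lemma, and a continuity analysis of $\cuB$ and $\cuV$ under weak convergence. The main gap is that Theorem \ref{thm:risk_gen} requires $\E|z_i|^k \le C_k$ for every $k$, whereas here only $\E|z_i|^{4+\delta} \le C$ is assumed. To bridge this, I would truncate: set $K_n = n^{\alpha}$ for a suitably small $\alpha>0$, and define $\tilde z_{ij} = (z_{ij}\one\{|z_{ij}|\le K_n\} - \mu_n)/\sigma_n$, where $\mu_n$ and $\sigma_n$ restore zero mean and unit variance. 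The truncated entries then have all moments bounded polynomially in $K_n$. Markov's inequality gives $\P(|z_{ij}|>K_n) \le C K_n^{-(4+\delta)}$, so a union bound over the $np$ entries yields $\P(\exists i,j : |z_{ij}|>K_n) \le Cnp\, K_n^{-(4+\delta)}$. Choosing $\alpha$ small enough that this is summable in $n$, Borel--Cantelli ensures that almost surely, for all large $n$, the truncated and original covariate matrices agree (up to the vanishing affine corrections $\mu_n \to 0$, $\sigma_n \to 1$, which perturb the design in operator norm by $o(1)$ and perturb the bias and variance accordingly).

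Next I would convert the probability bound in Theorem \ref{thm:risk_gen} into almost sure convergence. Applied to the truncated model, for any fixed $D>1$, the theorem furnishes an event of probability at least $1-Cn^{-D}$ on which $|B_X-\cuB(\hH_n,\hG_n,\gamma_n)| \le C\|\beta\|_2^2\, n^{-1/7}$ and $|V_X-\cuV(\hH_n,\gamma_n)| \le C\, n^{-1/7}$, where $\gamma_n=p/n$. Since $\sum_n n^{-D}<\infty$, Borel--Cantelli yields almost surely $B_X/\|\beta\|_2^2 - \cuB(\hH_n,\hG_n,\gamma_n)\to 0$ and $V_X - \cuV(\hH_n,\gamma_n)\to 0$.

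The third step is to verify that $\cuB(\hH_n,\hG_n,\gamma_n)\to\cuB(H,G,\gamma)$ and $\cuV(\hH_n,\gamma_n)\to\cuV(H,\gamma)$. The key sub-step is continuity of the implicit fixed point: $c_{0,n}$ solves $1-1/\gamma_n = \int(1+c_{0,n}\gamma_n s)^{-1}\, d\hH_n(s)$. By Assumption \ref{ass:CovariatesAssumption}(b), the support of $\hH_n$ lies in $[0,M]$, so the map $c\mapsto \int(1+c\gamma s)^{-1}\, d\hH_n(s)$ is bounded, continuous, and strictly decreasing in $c$, and the weak convergence $\hH_n\Rightarrow H$ together with $\gamma_n\to\gamma$ delivers uniform convergence on compact $c$-intervals; monotonicity then transfers convergence to the solution, giving $c_{0,n}\to c_0$. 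The integrands $s(1+c_0\gamma s)^{-2}$ and $s^2(1+c_0\gamma s)^{-2}$ are bounded continuous on $[0,M]$, so the integrals defining $\cuB$ and $\cuV$ against $\hH_n$ and $\hG_n$ converge to their $H,G$ analogues. The assumption $|1-p/n|\ge 1/M$ keeps $\gamma_n$ bounded away from $1$, avoiding any degeneracy in the fixed-point equation.

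The main obstacle will be the truncation step: one must verify that the constant $C$ in Theorem \ref{thm:risk_gen} depends only on finitely many moments of $z_i$ (so that bounds polynomial in $K_n$ are absorbed by $n^{-1/7}$), or else choose $\alpha$ small enough and $D$ large enough to preserve both summability and the $o(1)$ error. Once this is in place, combining the three steps yields the claimed almost sure limits.
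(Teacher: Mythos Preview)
Your three-step scaffold (truncate, apply Borel--Cantelli to the non-asymptotic bound, pass to the limit in $\cuB,\cuV$ via weak convergence) is the right architecture, and your continuity argument for $c_0$ and the integrals is fine. But the obstacle you flag in the truncation step is real and is not resolved by your proposal. The constant $C=C(M,D)$ in Theorem~\ref{thm:risk_gen} depends on the full family $\{C_k\}_{k\ge 2}$ of moment bounds (the underlying anisotropic local law from \cite{knowles2017anisotropic} does), and the paper gives no finite-moment or polynomial-in-$C_k$ control of this dependence. With $K_n=n^{\alpha}$ you have $C_k\sim n^{\alpha(k-2)}$ for every $k$, so there is no a priori reason $C(M_n,D)\,n^{-1/7}\to 0$; meanwhile, the union-bound step forces $\alpha>2/(4+\delta)$, so you cannot take $\alpha$ arbitrarily small. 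In short, the growing truncation level and the unspecified dependence of $C$ on high moments are in tension, and you have not shown they can be reconciled.

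The paper sidesteps this entirely by truncating at a \emph{fixed} level $M$ rather than a growing one. Decompose $z_{ij}=a_M z^M_{ij}+\tilde z^M_{ij}$, where $z^M_{ij}$ is the centered and normalized version of $z_{ij}\one\{|z_{ij}|\le M\}$ and $\tilde z^M_{ij}$ is the (centered) remainder; correspondingly $X=X_M+\tilde X_M$ with $X_M=Z_M\Sigma_M^{1/2}$ and $\Sigma_M=a_M^2\Sigma$. Since $M$ is fixed, $z^M_{ij}$ has all moments bounded by constants independent of $n$, and Theorem~\ref{thm:risk_gen} (plus Borel--Cantelli) gives almost sure convergence of $B_{X_M,\Sigma_M}$ and $V_{X_M,\Sigma_M}$ to $\cuB(H^M,G^M,\gamma)$ and $\cuV(H^M,\gamma)$, where $H^M,G^M$ are the $a_M^2$-dilates of $H,G$. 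The price is that $X_M\ne X$ with positive probability; but the bias and variance are Lipschitz in $\|S_X-S_{X_M}\|_{\op}$ and $\|\Sigma-\Sigma_M\|_{\op}$ (for min-norm this uses a lower bound on the smallest nonzero singular value, available from Bai--Yin since $\lambda_{\min}(\Sigma)\ge 1/M$ and $|1-\gamma|\ge 1/M$), and by Bai--Yin $\limsup_n \|\tilde Z_M\|_{\op}/\sqrt n\le C\eps_M$ almost surely, with $\eps_M\to 0$ as $M\to\infty$ (here the $4+\delta$ moment suffices). This yields $\limsup_n |B_X-B_{X_M,\Sigma_M}|\le C\eps_M$, and letting $M\to\infty$ (so $H^M\Rightarrow H$, $G^M\Rightarrow G$) finishes the argument. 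The key point is that the fixed-level truncation decouples the application of Theorem~\ref{thm:risk_gen} from the comparison to the original design: the former needs all moments (available for fixed $M$), the latter needs only operator-norm control (available from $4+\delta$ moments via Bai--Yin).
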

As pointed out above the condition $\hH_n\Rightarrow H$, $\hG_n\Rightarrow G$ (here $\Rightarrow$ denotes weak convergence)
is strictly weaker than the condition assumed in \cite{wu2020optimal} to establish asymptotic results.
Further, we require weaker moment conditions.

  In the next sections we illustrate the role of the geometry of $\beta,\Sigma$
  by considering two models for which $\hH_n\Rightarrow H$, $\hG_n\Rightarrow G$ as $n,p\to\infty$. First we consider the case $G=H$,
  which we refer to as `equidistributed:' the components of $\beta$ are roughly equally distributed along the
  eigenvectors of $\Sigma$. In this case there is no special relation between $\beta$ and $\Sigma$.
  
  As a further application, we consider a latent space model in which $\beta$ is aligned with the top eigenvectors of $\Sigma$. This can be regarded as a misspecified model, and is therefore presented in
  Section \ref{sec:LatentSpace} below.

  \subsection{Equidistributed coefficients}

  In this section we assume $G=H$, $\|\beta\|_2\to r$,
  and $p/n\to \gamma$.
  One way to generate
  $\beta$ satisfying this condition is to draw it uniformly at random on the $p$-dimensional sphere
  of radius $\|\beta\|_2 = r$. In this case the conditions of Theorem \ref{thm:risk_gen} (or Theorem \ref{thm:risk_gen_asymp}), hold
  with $\hG_n\to G=H$. 
  \begin{corollary}\label{coro:equidistributed}
    Under the assumptions of Theorem \ref{thm:risk_gen_asymp}, 
    further assume $G=H$,  $\|\beta\|^2\to r^2$. Then
    for $n,p \to \infty$, with $p/n \to \gamma > 1$, almost surely
    \begin{align}
      B_X(\hbeta;\beta) & \to \cuB_{\sequi}(H,\gamma) := \frac{r^2}{c_0(H,\gamma)\gamma^2}\, ,\label{eq:Bequi}\\
       V_X(\hbeta;\beta) & \to \cuV_{\sequi}(H,\gamma) := \cuV(H,\gamma) \, .
    \end{align}
    \end{corollary}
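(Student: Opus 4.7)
The plan is to deduce this corollary directly from Theorem \ref{thm:risk_gen_asymp} by a straightforward algebraic simplification of the predicted bias formula \eqref{eq:BiasPredMinNorm} in the special case $G=H$, using the defining fixed-point equation \eqref{eq:c0def} for $c_0$. There is really no new probabilistic content: the work is entirely at the level of the deterministic functionals $\cuB$ and $\cuV$.

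For the variance, the statement is immediate: the functional $\cuV(H,\gamma)$ does not depend on $G$ at all, so Theorem \ref{thm:risk_gen_asymp} already delivers $V_X(\hbeta;\beta)\to\cuV(H,\gamma)=\cuV_{\sequi}(H,\gamma)$ almost surely. Similarly, Theorem \ref{thm:risk_gen_asymp} gives $B_X(\hbeta;\beta)/\|\beta\|_2^2\to\cuB(H,G,\gamma)$. Under the equidistributed assumption $G=H$, and using $\|\beta\|_2^2\to r^2$, it remains to show
\[
\cuB(H,H,\gamma)=\frac{r^2}{c_0(H,\gamma)\,\gamma^2}.
\]

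To do so, I would substitute $G=H$ into \eqref{eq:BiasPredMinNorm} and observe that the bracketed factor combines with the outer integral to produce a single integral:
\[
\cuB(H,H,\gamma)=\|\beta\|_2^2\int\frac{s(1+c_0\gamma s)}{(1+c_0\gamma s)^2}\,dH(s)=\|\beta\|_2^2\int\frac{s}{1+c_0\gamma s}\,dH(s).
\]
Next, I would use the fixed-point equation \eqref{eq:c0def} in the form
\[
\int\frac{c_0\gamma s}{1+c_0\gamma s}\,dH(s)=\int\Big(1-\frac{1}{1+c_0\gamma s}\Big)\,dH(s)=1-\Big(1-\tfrac{1}{\gamma}\Big)=\frac{1}{\gamma},
\]
which gives
\[
\int\frac{s}{1+c_0\gamma s}\,dH(s)=\frac{1}{c_0\gamma^2}.
\]
Combining the last two displays and sending $\|\beta\|_2^2\to r^2$ yields $\cuB(H,H,\gamma)=r^2/(c_0\gamma^2)$, which is precisely $\cuB_{\sequi}(H,\gamma)$.

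There is no real obstacle here: the main check is just the telescoping identity $s+c_0\gamma s^2 = s(1+c_0\gamma s)$, which collapses the two integrals from the predicted bias formula into one, followed by the straightforward rewriting of $\int s/(1+c_0\gamma s)\,dH(s)$ using \eqref{eq:c0def}. The only thing worth noting is that the argument goes through because $c_0=c_0(\gamma,H)>0$ is well defined for $\gamma>1$ (indeed, the right-hand side of \eqref{eq:c0def} is continuous and strictly decreasing from $1$ at $c_0=0$ to $0$ as $c_0\to\infty$, and $1-1/\gamma\in(0,1)$ when $\gamma>1$), so the expression $r^2/(c_0\gamma^2)$ is finite.
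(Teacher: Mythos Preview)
Your proposal is correct and follows essentially the same route as the paper's own proof: both reduce to Theorem \ref{thm:risk_gen_asymp}, then collapse the bracketed factor in \eqref{eq:BiasPredMinNorm} against the outer integral via $s+c_0\gamma s^2=s(1+c_0\gamma s)$, and finish by rewriting $\int s/(1+c_0\gamma s)\,dH(s)$ using the fixed-point equation \eqref{eq:c0def}. Your added remark on the existence of $c_0>0$ for $\gamma>1$ is a welcome clarification but not needed beyond what the paper assumes.
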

    As a special case, we can revisit the isotropic case $\Sigma=I$, which results in $dH = \delta_1$. In this case
    $c_0(H,\gamma) = \gamma(\gamma-1)$ yielding immediately $\cuB_{\sequi}(H,\gamma) =1-\gamma^{-1}$ and
    $\cuV_{\sequi}(H,\gamma) =1/(\gamma-1)$.

\subsection{Limiting $\ell_2$ norm}

Again, as in the isotropic case, analysis of the limiting $\ell_2$ norm is
similar to analysis of the risk in Theorem \ref{thm:risk_gen}. We give the next result without proof, as it is an immediate generalization of previous results.
\begin{corollary}
  \label{cor:norm_gen}
    Under the assumptions of Theorem \ref{thm:risk_gen_asymp}, 
    further assume  $\|\beta\|^2\to r^2$, and let $c_0=c_0(H,\gamma)$ be defined as there.
Then as $n,p \to \infty$,
such that $p/n \to \gamma$,  the min-norm least
squares estimator \eqref{eq:min_ls} satisfies, almost surely,
\begin{align}                                                                                                                        
\|\hbeta\|_2^2 \to 
\begin{dcases}
r^2 + \sigma^2 \frac{\gamma}{1-\gamma} \int \frac{1}{s} \, dH(s)& \text{for $\gamma < 1$}, \\  
\int\frac{c_0\gamma s}{1+c_0\gamma s}\,  dG(s) + c_0\gamma\sigma^2  & \text{for $\gamma >  
  1$},
\end{dcases}
 \end{align}
\end{corollary}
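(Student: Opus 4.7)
The plan is to start from $\|\hbeta\|_2^2 = y^T X (X^TX)^{+2} X^T y$ with $y = X\beta + \epsilon$ and decompose into signal, cross, and noise contributions. The cross term $2\beta^T(X^TX)^+(X^TX)(X^TX)^+ X^T\epsilon$ has conditional mean zero given $X$ and conditional variance at most $\sigma^2 \|\beta\|_2^2\cdot \lambda_{\max}((X^TX)^+)$, which vanishes almost surely under Assumption \ref{ass:CovariatesAssumption}; combined with a Borel--Cantelli argument this sends the cross term almost surely to zero. The noise term $\epsilon^T X(X^TX)^{+2}X^T\epsilon$ has conditional mean $\sigma^2\,\tr((X^TX)^+)$ and concentrates around it by Hanson--Wright. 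Hence it suffices to identify the deterministic limits of $\beta^T \Pi^c \beta$ and $\sigma^2 \tr((X^TX)^+)$, where $\Pi^c := (X^TX)^+(X^TX)$ is the projector onto the row span of $X$ (the signal term collapses from a quartic to a quadratic via $\Pi^c\Pi^c = \Pi^c$).

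For the underparametrized case $\gamma<1$, almost surely $X$ has full column rank, so $\Pi^c = I$ and the signal part is $\|\beta\|_2^2\to r^2$. For the variance, $\tr((X^TX)^{-1}) = \tfrac{1}{n}\tr((X^TX/n)^{-1})$, and the Marchenko--Pastur/Silverstein fixed-point equation applied to $X^TX/n = \Sigma^{1/2}Z^TZ\Sigma^{1/2}/n$ at $z=0$ gives $\tfrac{1}{p}\tr((X^TX/n)^{-1}) \to \tfrac{1}{1-\gamma}\int s^{-1}dH(s)$; multiplying by $p/n\to\gamma$ yields the claimed $\sigma^2\tfrac{\gamma}{1-\gamma}\int s^{-1}dH(s)$. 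For the overparametrized case $\gamma>1$, use $\tr((X^TX)^+) = \tr((XX^T)^{-1})$: expanding the Stieltjes transform of $X^TX/n$ near $z=0$ as $m(z) = -\tfrac{\gamma-1}{\gamma z} + c_0 + O(z)$ and matching terms in the Silverstein equation forces the regular-part constant to satisfy exactly \eqref{eq:c0def}, so subtracting off the pole from the $p-n$ zero eigenvalues yields $\tr((X^TX)^+) \to \gamma c_0$, producing the $c_0\gamma\sigma^2$ summand. For the signal part, use $\Pi = \lim_{\lambda\to 0^+}\lambda(X^TX/n + \lambda I)^{-1}$ and invoke the anisotropic deterministic equivalent for the resolvent $(X^TX/n+\lambda I)^{-1}$ already established in the proof of Theorem \ref{thm:risk_gen} (via the anisotropic local laws of Knowles--Yin) to obtain $\beta^T\Pi\beta \to r^2\int (1+c_0\gamma s)^{-1}dG(s)$, whence $\beta^T\Pi^c\beta \to r^2\int \tfrac{c_0\gamma s}{1+c_0\gamma s}dG(s)$.

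The main obstacle is the anisotropic signal term in the overparametrized case, which requires uniform control of the quadratic form $\beta^T(X^TX/n+\lambda I)^{-1}\beta$ as $\lambda\to 0^+$, with $\beta$ potentially aligned in delicate ways with the eigenvectors of $\Sigma$. Fortunately, precisely this control is already furnished by the proof of Theorem \ref{thm:risk_gen}: the present statement is its specialization obtained by replacing the weighting matrix $\Sigma$ in the risk functional by the identity in the test quadratic form, with the data model unchanged. Thus once the same resolvent identities, deterministic equivalents, and concentration bounds are in hand, one simply re-runs the algebra with $I$ in place of $\Sigma$ and matches the resulting integrals against the defining equation \eqref{eq:c0def} for $c_0$. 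No new random matrix input is needed beyond what has already been deployed for Theorem \ref{thm:risk_gen} and Corollary \ref{cor:norm_iso}.
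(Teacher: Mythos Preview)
The paper gives no proof of this corollary: it states explicitly ``We give the next result without proof, as it is an immediate generalization of previous results.'' Your approach is precisely the implementation of that remark. You decompose $\|\hbeta\|_2^2$ into signal, cross, and noise pieces, observe that the signal piece $\beta^T\Pi^c\beta$ and the noise piece $\sigma^2\Tr((X^TX)^+)$ are exactly the bias and variance functionals of Theorem~\ref{thm:risk_gen} with the \emph{outer} $\Sigma$ replaced by $I_p$, and then re-run the resolvent/deterministic-equivalent machinery already developed there. Your Laurent-expansion identification of $c_0$ as the constant term of $m(z)$ near $z=0$ (via matching the pole residue $(\gamma-1)/\gamma$ in the Silverstein equation) is correct and matches the computation carried out in the paper's proof of the bias term in Appendix~\ref{sec:ProofGeneralFormulaMinNorm}. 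Likewise, your computation $\lambda\langle\beta,(S_X+\lambda I)^{-1}\beta\rangle\to \|\beta\|_2^2\int(1+c_0\gamma s)^{-1}\,dG(s)$ is exactly the $\eta=0$ specialization of the function $\oF_n(\lambda,\eta)$ analyzed there, so no new random-matrix input is required.

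One caution on rigor: to upgrade the conditional-on-$X$ concentration of the cross and noise terms to \emph{almost sure} convergence via Borel--Cantelli you need summable tail bounds, which do not follow from the bare second-moment assumption on $\epsilon_i$ in the data model~\eqref{eq:data_x}--\eqref{eq:data_y}. The paper's isotropic analogue, Corollary~\ref{cor:norm_iso}, sidesteps this by stating the result for the conditional expectation $\E[\|\hbeta\|_2^2\mid X]$ rather than for $\|\hbeta\|_2^2$ itself. You should either do the same here (which makes the cross term vanish identically and the noise term equal its conditional mean), or add a mild higher-moment assumption on the noise so that Hanson--Wright / Chebyshev with a $4{+}\eta$ moment yields summable tails.
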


\section{Misspecified model}
\label{sec:misspecified}

In this section, we consider a misspecified model, in which the regression
function is still linear, but we observe only a subset of the features. Such a
setting provides another potential motivation for interpolation: in
many problems, we do not know the form of the regression function, and we
generate features in order to improve our approximation capacity.  Increasing
the number of features past the point of interpolation (increasing $\gamma$ past
1) can now decrease \emph{both} bias and variance.

As such, the misspecified model setting also yields further interesting
asymptotic comparisons between the $\gamma<1$ and $\gamma>1$ regimes.
Recall the isotropic features model of Section \ref{sec:risk_iso}: the risk function in \eqref{eq:risk_iso} is
globally minimized at $\gamma = 0$. This is a consequence of the fact that, in a well-specified linear model
at $\gamma=0$ there is no bias and no variance, and hence no risk.  In a misspecified model, we will see that
the story can be quite different, and the asymptotic risk can actually attain
its \emph{global} minimum on $(1,\infty)$.

\subsection{Data model and risk}

Consider, instead of \eqref{eq:data_x}, \eqref{eq:data_y}, a data model 
\begin{gather}
\label{eq:data_x_mis}
\big((x_i, w_i), \epsilon_i\big) \sim P_{x,w} \times P_\epsilon, \quad
i=1,\ldots,n, \\   
\label{eq:data_y_mis}
y_i = x_i^T \beta + w_i^T \theta + \epsilon_i, \quad i=1,\ldots,n,
\end{gather}
where as before the random draws across $i=1,\ldots,n$ are independent. Here, 
we partition the features according to $(x_i,w_i) \in \R^{p+q}$, $i=1,\ldots,n$,
where the joint distribution $P_{x,w}$ is such that $\E((x_i,w_i))=0$ and
$$
\Cov\big((x_i,w_i)\big) = \Sigma = 
\begin{bmatrix}
\Sigma_x & \Sigma_{xw} \\ 
\Sigma_{xw}^T & \Sigma_w
\end{bmatrix}. 
$$
We collect the features in a block matrix $[\,X \; W\,] \in \R^{n \times (p+q)}$
(which has rows $(x_i,w_i) \in \R^{p+q}$, $i=1,\ldots,n$).  We presume that $X$
is observed but $W$ is unobserved, and focus on the min-norm least squares
estimator exactly as before in \eqref{eq:min_ls}, from the regression of $y$ on
$X$ (not the full feature matrix $[\,X \; W\,]$).

Given a test point $(x_0,w_0) \sim P_{x,w}$, and an estimator \smash{$\hbeta$}
(fit using $X,y$ only, and not $W$), we define its out-of-sample
prediction risk as 
$$
R_X(\hbeta;\beta,\theta) = 
\E \big[ \big( x_0^T \hbeta - \E(y_0|x_0,w_0) \big)^2 \,|\, X \big] =
\E \big[ \big( x_0^T \hbeta -x_0^T \beta - w_0^T \theta \big)^2 \,|\, X \big].
$$
Note that this definition is conditional on $X$, and we are integrating over the 
randomness not only in $\epsilon$ (the training errors), but in the unobserved
features $W$, as well.  The next lemma decomposes this notion of risk in
a useful way.

\begin{lemma}
\label{lem:risk_mis_decomp}
Under the misspecified model \eqref{eq:data_x_mis}, \eqref{eq:data_y_mis}, for
any estimator \smash{$\hbeta$}, we have 
$$
R_X(\hbeta; \beta, \theta) = 
\underbrace{\E \big[ \big( x_0^T \hbeta - \E(y_0|x_0) \big)^2 \,|\, X
  \big]}_{R^*_X(\hbeta; \beta, \theta)} +
\underbrace{\E \big[ \big( \E(y_0|x_0) - \E(y_0|x_0,w_0) \big)^2
  \big]}_{M(\beta,\theta)}. 
$$
\end{lemma}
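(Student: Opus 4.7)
This is a standard Pythagorean/orthogonal-decomposition identity in disguise, so the plan is to write
\[
x_0^T\hbeta - x_0^T\beta - w_0^T\theta \;=\; \bigl(x_0^T\hbeta - \E(y_0\mid x_0)\bigr) \;+\; \bigl(\E(y_0\mid x_0) - x_0^T\beta - w_0^T\theta\bigr),
\]
square, take conditional expectation given $X$, and show that the cross term vanishes. The first squared term is exactly $R^*_X(\hbeta;\beta,\theta)$. For the second squared term, I would use that the test point $(x_0,w_0,\epsilon_0)$ is independent of the training data, hence independent of $X$; so the conditional expectation given $X$ coincides with the unconditional expectation, which is $M(\beta,\theta)$.

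The only thing requiring care is the cross term
\[
2\,\E\!\left[\bigl(x_0^T\hbeta - \E(y_0\mid x_0)\bigr)\bigl(\E(y_0\mid x_0) - x_0^T\beta - w_0^T\theta\bigr)\,\Big|\,X\right].
\]
Here I would use the tower property by further conditioning on $\hbeta$ (equivalently on $(X,y)$, which determines $\hbeta$): since $(x_0,w_0)\perp (X,y)$, one can pull $\hbeta$ out as a fixed vector in the inner expectation, reducing the task to showing
\[
\E\!\left[g(x_0)\bigl(\E(y_0\mid x_0) - x_0^T\beta - w_0^T\theta\bigr)\right] = 0
\]
for any measurable function $g$. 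Using the model \eqref{eq:data_y_mis} and independence of $\epsilon_0$, we have $\E(y_0\mid x_0) = x_0^T\beta + \E(w_0^T\theta\mid x_0)$, so the inner factor equals $\E(w_0^T\theta\mid x_0) - w_0^T\theta$, whose conditional expectation given $x_0$ is zero by the defining property of conditional expectation. An application of the tower property then kills the cross term.

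\textbf{Anticipated obstacle.} There is no real obstacle; the only subtlety is bookkeeping of what is random vs.\ conditioned on. In particular, $\hbeta$ is random even after conditioning on $X$ (through the training noise $\epsilon$ and through $W$, which enters $y$), so one must justify the passage to expectation over $(x_0,w_0)$ alone via an extra layer of conditioning on $(X,y)$ before invoking the orthogonality property of $\E(y_0\mid x_0)$. Once this is set up properly the identity follows immediately.
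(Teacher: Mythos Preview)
Your proposal is correct and follows essentially the same approach as the paper's proof: add and subtract $\E(y_0\mid x_0)$ inside the square, expand, and kill the cross term by noting that $\E\bigl[\E(y_0\mid x_0)-\E(y_0\mid x_0,w_0)\,\big|\,x_0\bigr]=0$. Your extra layer of conditioning on $(X,y)$ to freeze $\hbeta$ is a reasonable bit of care, but the paper's one-line argument suffices since the first factor in the cross term is $x_0$-measurable once $\hbeta$ is frozen.
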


\begin{proof}
Simply add an subtract $\E(y_0|x_0)$ inside the square in the definition of
\smash{$R_X(\hbeta; \beta, \theta)$}, then expand, and note that the cross term
vanishes because
$\E \big[\big( \E(y_0|x_0) - \E(y_0|x_0,w_0) \big) \,|\, x_0 \big] = 0$.
\end{proof}

The first term \smash{$R^*_X(\hbeta; \beta, \theta)$} in the decomposition in
Lemma \ref{lem:risk_mis_decomp} is precisely the risk that we studied 
previously in the well-specified case, except that the response distribution 
has changed (due to the presence of the middle term in \eqref{eq:data_y_mis}).  
We call the second term \smash{$M(\beta, \theta)$} in Lemma
\ref{lem:risk_mis_decomp} the {\it misspecification bias}.

\begin{remark}
If $(x_i,w_i)$ are jointly Gaussian, then the above expressions simplify and  Theorem \ref{thm:risk_gen}  
can be used to characterize the risk \smash{$R_X(\hbeta; \beta, \theta)$}.
In particular, the conditional distribution of $w$ given $x$ is $P_{w|x} = N(\Sigma_{wx}\Sigma_{x}^{-1}x,\Sigma_{w|x})$
where $\Sigma_{wx}=\Sigma_{xw}^T$, and $\Sigma_{w|x} =\Sigma_w-\Sigma_{wx}\Sigma_x^{-1}\Sigma_{wx}^T$.
Further, $y= \tilde{\beta}^Tx +\tilde{\epsilon}$, where $\tilde\beta = \beta+\Sigma_x^{-1}\Sigma_{xw}\theta$,
and $\tilde{\eps}\sim N(0,\tilde\sigma^2)$, $\tilde\sigma^2=\sigma^2+\theta^T\Sigma_{w|x}\theta$.
It is then easy to show that the misspecification bias is $M(\beta,\theta) = \theta^T\Sigma_{w|x}\theta$
and the term $R^*_X(\hbeta; \beta, \theta)$ can be approximated using Theorem \ref{thm:risk_gen}.
\end{remark}

In order to discuss some qualitative features, we focus on the simplest possible model by assuming independent
covariates.

\subsection{Isotropic features}
\label{sec:risk_mis_iso}

Here, we make the additional   
simplifying assumption that $(x,w) \sim P_{x,w}$ has i.i.d.\ entries with unit
variance, which implies that $\Sigma=I$.  (The case of independent features but
general covariances $\Sigma_x,\Sigma_w$ is similar, and we omit the details.)
Therefore, we may write the response distribution in \eqref{eq:data_y_mis} as  
$$
y_i = x_i^T \beta + \delta_i, \quad i=1,\ldots,n,
$$
where $\delta_i$ is independent of $x_i$, having mean zero and variance
$\sigma^2+\|\theta\|_2^2$, for $i=1,\ldots,n$.  Denote the 
total signal by $r^2=\|\beta\|_2^2+\|\theta\|_2^2$, and the fraction of the
signal captured by the observed features by $\kappa=\|\beta\|_2^2/r^2$. Then
\smash{$R_X^*(\hbeta;\beta,\theta)$} behaves exactly as we computed previously, 
for isotropic features in the well-specified setting (Theorem \ref{thm:risk_lo}
for $\gamma < 1$, and Theorem \ref{thm:risk_iso} for $\gamma>1$), after we make
the substitutions:   
\begin{equation}
\label{eq:subs}
r^2 \mapsto r^2 \kappa \quad \text{and} \quad
\sigma^2 \mapsto \sigma^2 + r^2 (1-\kappa).
\end{equation}
Furthermore, we can easily calculate the misspecification bias: 
$$
M(\beta,\theta) = \E(w_0^T \theta)^2 = r^2 (1-\kappa).
$$
Putting these results together leads to the next conclusion. 

\begin{theorem}
\label{thm:risk_mis_iso}
Assume the misspecified model \eqref{eq:data_x_mis}, \eqref{eq:data_y_mis}, and
assume $(x,w) \sim P_{x,w}$ has i.i.d.\ entries with zero mean, unit variance,
and a finite moment of order $8+\eta$, for some $\eta>0$.  Also assume that
$\|\beta\|_2^2+\|\theta\|_2^2=r^2$ and $\|\beta\|_2^2/r^2 = \kappa$ for all
$n,p$. Then for the min-norm least squares estimator \smash{$\hbeta$} in
\eqref{eq:min_ls}, as $n,p \to \infty$, with $p/n \to \gamma$, it holds almost
surely that
$$
R_X(\hbeta;\beta,\theta) \to
\begin{cases}
r^2(1-\kappa) + \big(r^2(1-\kappa) + \sigma^2\big) 
\frac{\gamma}{1-\gamma} & \text{for $\gamma < 1$}, \\
r^2(1-\kappa) + r^2 \kappa \big(1-\frac{1}{\gamma}\big) 
+ \big(r^2(1-\kappa) + \sigma^2\big) \frac{1}{\gamma-1} &  
\text{for $\gamma > 1$}.
\end{cases}
$$
\end{theorem}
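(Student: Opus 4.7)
The plan is to reduce the misspecified problem to a well-specified one by absorbing the unobserved-feature contribution into an effective noise term, and then invoke the decomposition of Lemma \ref{lem:risk_mis_decomp} together with the already-established results of Proposition \ref{thm:risk_lo} and Theorem \ref{thm:risk_iso}.

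First, I would write $y_i = x_i^T\beta + \delta_i$ with $\delta_i := w_i^T\theta + \epsilon_i$. Because $(x,w)$ has i.i.d.\ entries with identity covariance, $w_i$ is independent of $x_i$, hence $\delta_i$ is independent of $x_i$, has mean zero, and variance $\|\theta\|_2^2 + \sigma^2 = r^2(1-\kappa) + \sigma^2$. Moreover, the assumed finite $(8+\eta)$-moment on the entries of $(x,w)$ ensures that $\delta_i$ satisfies all the moment hypotheses needed to apply the isotropic results (the $8+\eta$ is chosen so that after forming the bilinear term $w^T\theta$ and combining with $x$, relevant cross-moment products remain of order $4+\eta$).

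Next, apply Lemma \ref{lem:risk_mis_decomp} to get
\begin{equation*}
R_X(\hbeta;\beta,\theta) = R^\star_X(\hbeta;\beta,\theta) + M(\beta,\theta),
\end{equation*}
where $R^\star_X$ is the risk in the well-specified model with noise $\delta_i$ and regression vector $\beta$ of squared norm $r^2\kappa$. Invoking Proposition \ref{thm:risk_lo} for $\gamma<1$ and Theorem \ref{thm:risk_iso} for $\gamma>1$, under the substitutions $r^2 \mapsto r^2\kappa$ and $\sigma^2 \mapsto \sigma^2 + r^2(1-\kappa)$ (cf.\ equation \eqref{eq:subs}), yields almost surely
\begin{equation*}
R^\star_X(\hbeta;\beta,\theta) \to
\begin{cases}
\bigl(\sigma^2 + r^2(1-\kappa)\bigr)\dfrac{\gamma}{1-\gamma} & \gamma<1,\\[4pt]
r^2\kappa\bigl(1-\tfrac{1}{\gamma}\bigr) + \bigl(\sigma^2 + r^2(1-\kappa)\bigr)\dfrac{1}{\gamma-1} & \gamma>1.
\end{cases}
\end{equation*}

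Finally, compute the misspecification bias: since $w_0$ has identity covariance and is independent of $x_0$, $\E(y_0|x_0) = x_0^T\beta$ and $\E(y_0|x_0,w_0) - \E(y_0|x_0) = w_0^T\theta$, so $M(\beta,\theta) = \|\theta\|_2^2 = r^2(1-\kappa)$. Adding this deterministic constant to $R^\star_X$ gives the stated formula. The only non-routine step is verifying that Theorem \ref{thm:risk_iso} genuinely applies with the enlarged noise $\delta_i$, which is where the strengthened $(8+\eta)$-moment assumption is used; once that is in hand the result follows by direct substitution and addition.
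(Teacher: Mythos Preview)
Your proposal is correct and follows essentially the same route as the paper: decompose via Lemma~\ref{lem:risk_mis_decomp}, rewrite $y_i = x_i^T\beta + \delta_i$ with the effective noise $\delta_i = w_i^T\theta + \epsilon_i$ independent of $x_i$, invoke Proposition~\ref{thm:risk_lo} and Theorem~\ref{thm:risk_iso} under the substitutions \eqref{eq:subs}, and add the misspecification bias $M(\beta,\theta)=r^2(1-\kappa)$. One minor remark: your parenthetical on the $(8+\eta)$-moment is slightly imprecise---the moment condition in Theorem~\ref{thm:risk_iso} is on the \emph{features} $x_i$, not on the noise, and since the bias and variance in Lemma~\ref{lem:bias_var} depend on the noise only through its variance, the effective noise $\delta_i$ merely needs finite second moment, which it has automatically.
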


We remark that, in the independence setting considered in Theorem 
\ref{thm:risk_mis_iso}, the dimension $q$ of the unobserved feature space does 
not play any role: we may equally well take $q=\infty$ for all $n,p$
(i.e., infinitely many unobserved features).  

The components of the limiting risk from Theorem \ref{thm:risk_mis_iso} are
intuitive and can be interpreted as follows.  The first term $r^2(1-\kappa)$ is
the misspecification bias (irreducible).  The second term, which we deem as 0
for $\gamma<1$ and $r^2 \kappa (1-1/\gamma)$ for $\gamma>1$, is the bias.  The
third term, $r^2(1-\kappa) \gamma/(1-\gamma)$ for $\gamma<1$ and $r^2(1-\kappa)
/(\gamma-1)$ for $\gamma>1$, is what we call the {\it misspecification variance}:
the inflation in variance due to unobserved features, when we take
$\E(y_0|x_0)$ to be the target of estimation.  The last term, $\sigma^2
\gamma/(1-\gamma)$ for $\gamma<1$ and $\sigma^2/(\gamma-1)$ for  
$\gamma>1$, is the variance itself.

\subsection{Polynomial approximation bias}
\label{sec:poly_bias}

Since adding features should generally improve our approximation capacity, it is
reasonable to model $\kappa=\kappa(\gamma)$ as an increasing function of
$\gamma$. To get an idea of the possible shapes taken by the asymptotic risk
curve from Theorem \ref{thm:risk_mis_iso}, we consider the example of a  \emph{polynomial decay} for
the approximation bias, 
\begin{equation}
\label{eq:poly_decay}
1-\kappa(\gamma) = (1+\gamma)^{-a},
\end{equation}
for some $a > 0$.  In this case, the limiting risk in the isotropic setting,
from Theorem \ref{thm:risk_mis_iso}, becomes   
\begin{equation}
\label{eq:risk_mis_iso}
R_a(\gamma) = \begin{cases}
r^2(1+\gamma)^{-a} + (r^2(1+\gamma)^{-a} + \sigma^2)
\frac{\gamma}{1-\gamma} & \text{for $\gamma < 1$}, \\
r^2(1+\gamma)^{-a}  + r^2 \big(1-(1+\gamma)^{-a}\big)
\big(1-\frac{1}{\gamma}\big) + (r^2(1+\gamma)^{-a}  + \sigma^2)
\frac{1}{\gamma-1} & \text{for $\gamma > 1$}.  
\end{cases}
\end{equation}
We next summarize some interesting features of these risk curves, and Figures \ref{fig:risk_mis_lo} and
\ref{fig:risk_mis_hi} give accompanying plots for $\snr=1$ and $5$,
respectively.  Recall that the null risk is $r^2$, which comes from predicting
with the null estimator \smash{$\tilde\beta=0$}. 

\begin{figure}[p]
\vspace{-40pt}
\centering
\includegraphics[width=0.725\textwidth]{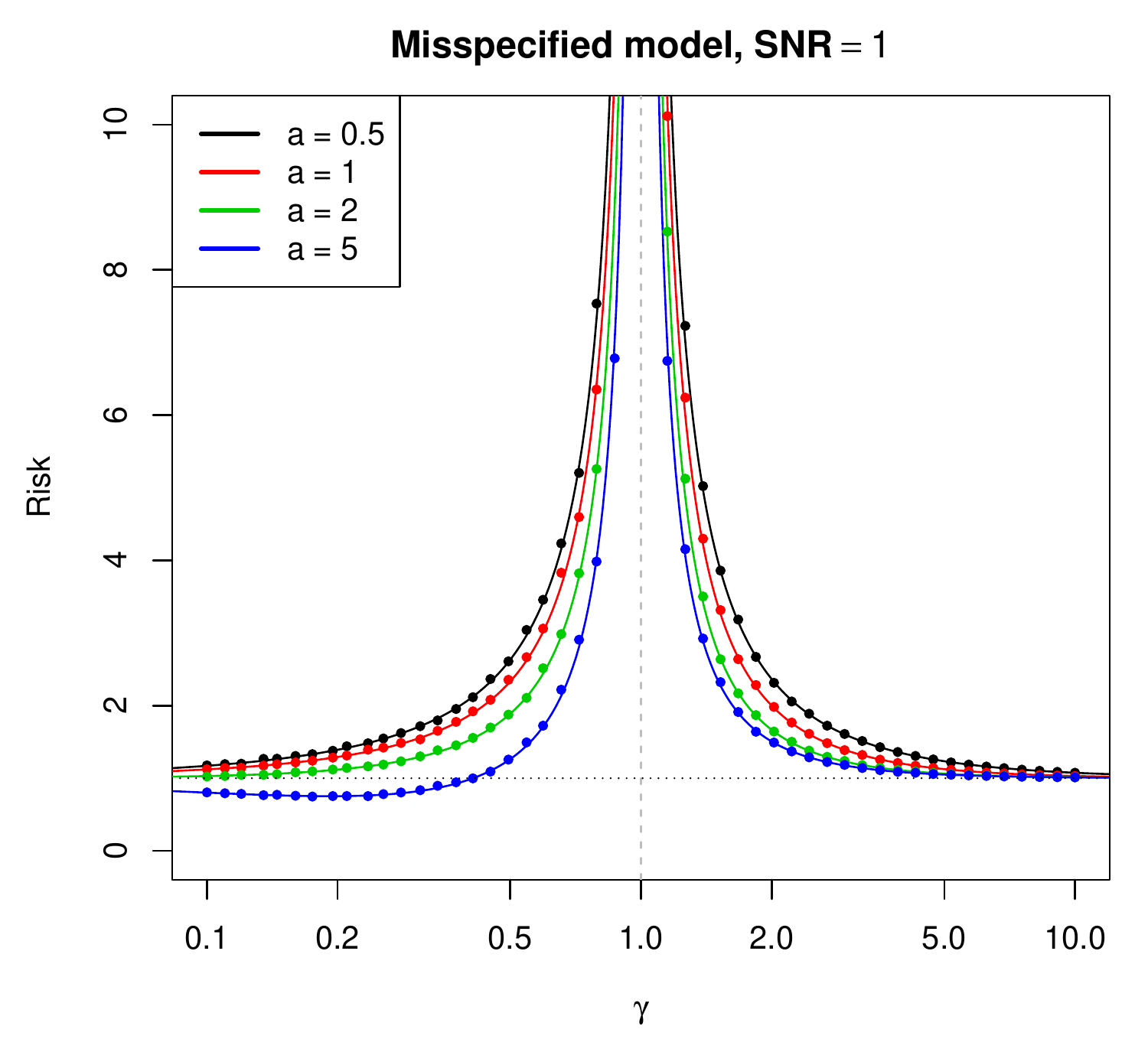} 
\caption{\footnotesize Asymptotic risk curves in \eqref{eq:risk_mis_iso} for the 
  min-norm least squares estimator in the misspecified case, when the
  approximation bias has polynomial decay as in \eqref{eq:poly_decay}, as $a$
  varies from 0.5 to 5.  Here $r^2=1$ and $\sigma^2=1$, so $\snr=1$. The null
  risk $r^2=5$ is marked as a dotted black line. The points denote finite-sample
  risks, with $n=200$, $p=[\gamma n]$, across various values of $\gamma$,
  computed from features $X$ having i.i.d.\ $N(0,1)$ entries.}    
\label{fig:risk_mis_lo} 

\bigskip
\includegraphics[width=0.725\textwidth]{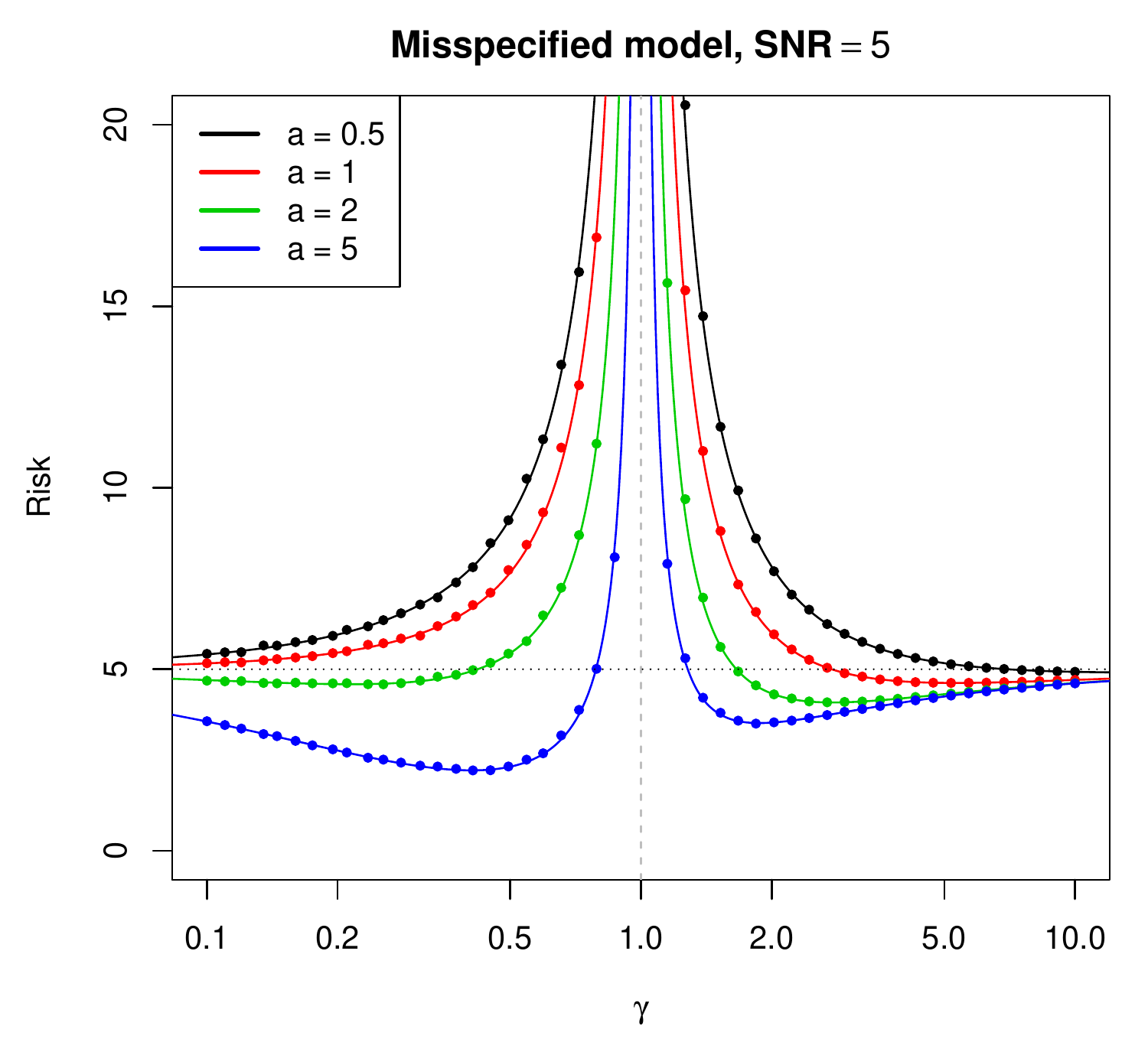}
\caption{\footnotesize Asymptotic risk curves in \eqref{eq:risk_mis_iso} for the 
  min-norm least squares estimator in the misspecified case, when the
  approximation bias has polynomial decay as in \eqref{eq:poly_decay}, as $a$ 
  varies from 0.5 to 5.  Here $r^2=5$ and $\sigma^2=1$, so $\snr=5$. The null
  risk $r^2=5$ is marked as a  dotted black line. The points are again
  finite-sample risks, with $n=200$, $p=[\gamma n]$, across various values of
  $\gamma$.}  
\label{fig:risk_mis_hi}
\end{figure}

\begin{enumerate}
\item On $(0,1)$, the least squares risk $R_a(\gamma)$ can only be better than
  the null risk if \smash{$a > 1+\frac{1}{\snr}$}. Further, in this case, we
  have $R_a(\gamma) < r^2$ if and only if $\gamma < \gamma_0$, where $\gamma_0$
  is the unique zero of the function
  $$
  (1+x)^{-a} + \Big(1+\frac{1}{\snr}\Big) x - 1
  $$
  that lies in \smash{$(0,\frac{\snr}{\snr+1})$}.  Finally, on
  \smash{$(\frac{\snr}{\snr+1},1)$}, the least squares risk $R_a(\gamma)$ is
  always worse than the null risk, regardless of $a>0$, and it is monotonically
  increasing.  

\item On $(1,\infty)$, when $\snr \leq 1$, the min-norm least squares risk
  $R_a(\gamma)$ is always worse than the null risk.  Moreover, it is
  monotonically decreasing, and approaches the null risk (from above) as $\gamma
  \to \infty$.      

\item On $(1,\infty)$, when $\snr > 1$, the min-norm least squares risk
  $R_a(\gamma)$ can be better than the null risk for any $a>0$, and in
  particular we have $R_a(\gamma) < r^2$ if and only if $\gamma <  \gamma_0$,
  where $\gamma_0$ is the unique zero of the function
  $$
  (1+x)^{-a} (2x-1) + 1 - \Big(1-\frac{1}{\snr}\Big) x
  $$
  lying in \smash{$(\frac{\snr}{\snr-1},\infty)$}.  Indeed, on
  \smash{$(1,\frac{\snr}{\snr-1})$}, the min-norm least squares risk
  $R_a(\gamma)$ is always worse than the null risk (regardless of $a>0$), and it
  is monotonically decreasing.  

\item When $\snr > 1$, for small enough $a>0$, the global minimum of the
  min-norm least squares risk $R_a(\gamma)$ occurs after $\gamma=1$.  A
  sufficient but not necessary condition is \smash{$a \leq 1 +
    \frac{1}{\snr}$} (because, due to points 1 and 3 above, we see that in this
  case $R_a(\gamma)$ is always worse than null risk for $\gamma < 1$, but will
  be better than the null risk at some $\gamma > 1$).  
\end{enumerate}

\subsection{A latent space model}
\label{sec:LatentSpace}

\begin{figure}[ht]
\vspace{-40pt}
\centering
\includegraphics[width=0.725\textwidth]{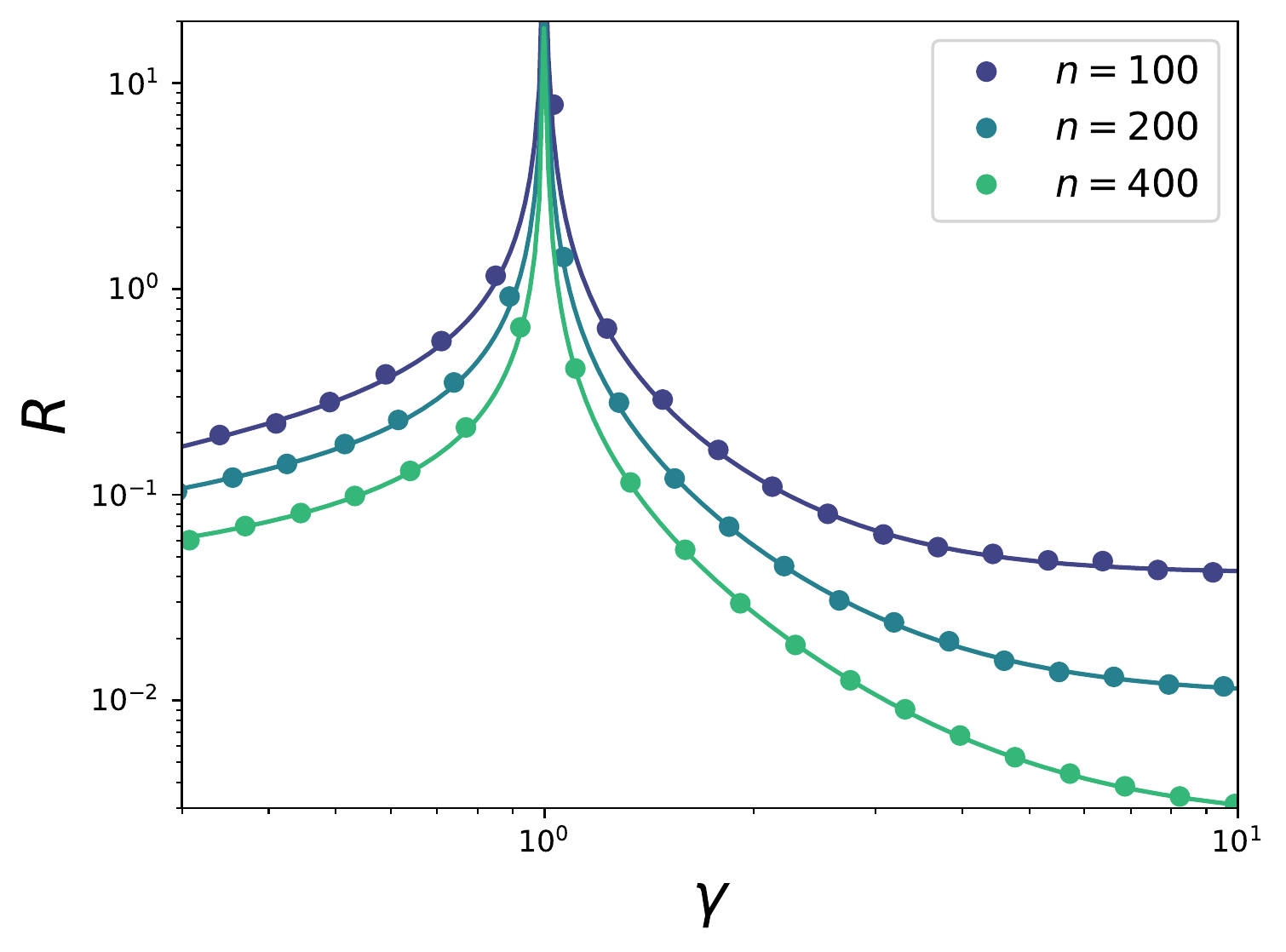} 
\caption{\footnotesize Latent space model of Section \ref{sec:LatentSpace}: test error $R_X(\hbeta;\beta)$ of minimum norm regression as a function of the overparametrization ratio $\gamma$. Here
  $d=20$, $r=1$, $\sigma_{\xi}=0$ and $n$ varies across various curves. Symbols are averages over $100$
 realizations; continuous lines report the analytical prediction of Corollary \ref{coro:Latent}.}    
\label{fig:risk_latent_space_n} 
\end{figure}

\begin{figure}[ht]
\vspace{-40pt}
\centering
\includegraphics[width=0.725\textwidth]{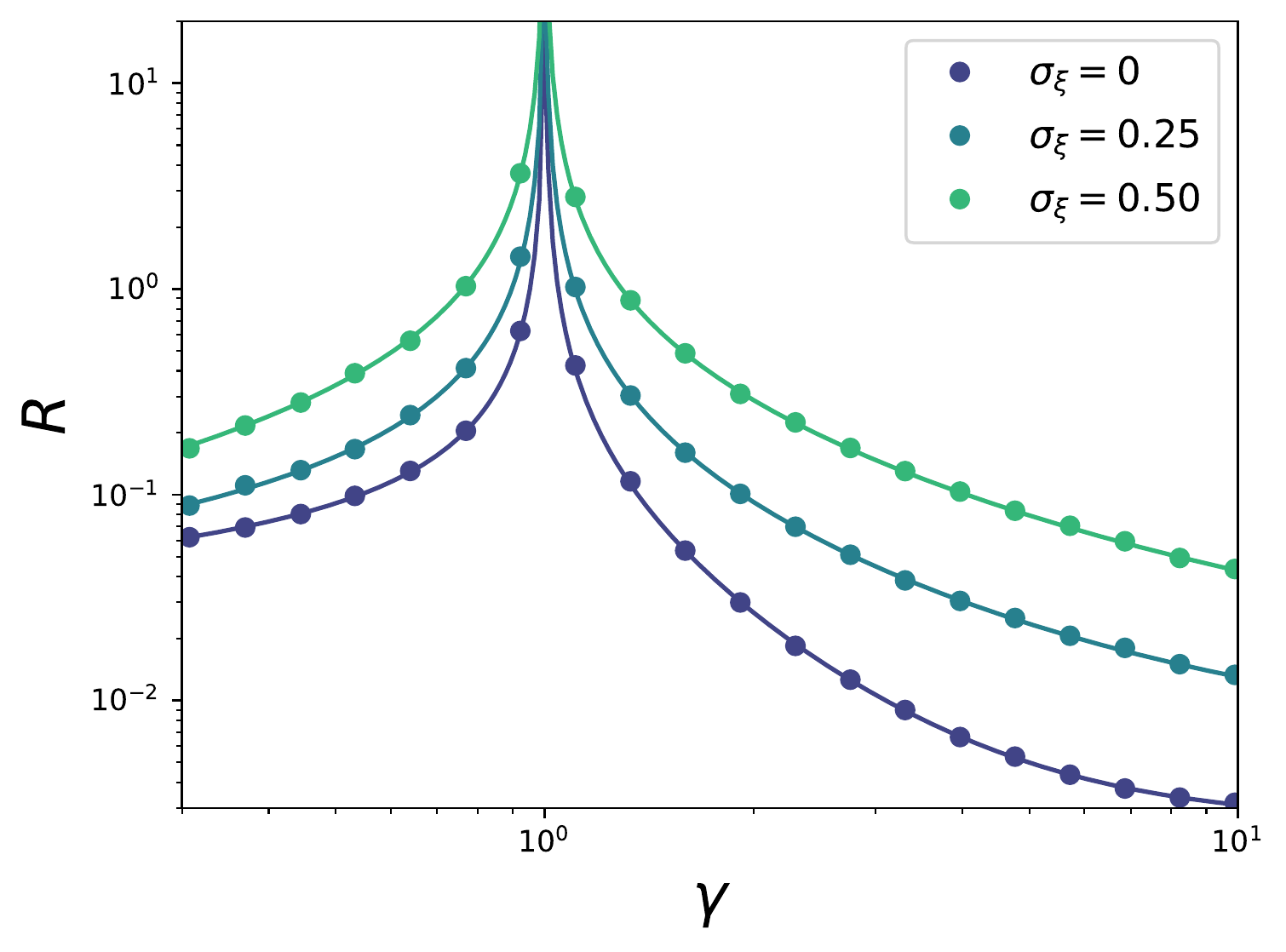}
\caption{\footnotesize Latent space model of Section \ref{sec:LatentSpace}: test error $R_X(\hbeta;\beta)$ of minimum norm regression as a function of the overparametrization ratio $\gamma$. Here
  $d=20$, $r=1$, $n=400$, and the noise variance $\sigma_{\xi}$
  varies across different curves. Symbols are averages over $100$
 realizations; continuous lines report the analytical prediction of Corollary \ref{coro:Latent}.}  
\label{fig: risk_latent_space_sigma}
\end{figure}

We next consider an example in which $\beta$ is aligned with the top eigenvectors of $\Sigma$.
To motivate this example, assume that the responses $y_i$ are linear in the latent features vectors
$z_i\in\reals^d$. We do not observe this latent vector, but rather observe $p\ge d$ covariates
$x_i:=(x_{i1},\dots,x_{ip})$ that also linear in the latent vector $z_i$:
\begin{align}
  y_i = \theta^Tz_i+\xi_i\, ,\;\;\;\;\;\; x_{ij} = w_j^Tz_i+ u_{ij}\, .\label{eq:LS_First}
\end{align}
Here $(\xi_i)_{i\le n}$, $(u_{ij})_{i\le n,j\le p}$ are noise variables that are mutually independent,
and independent of $z_i$, with $\xi_i\sim\normal(0,\sigma_{\xi}^2)$, $u_{ij}\sim\normal(0,1)$.
The features matrix takes the form $X = ZW^T+U$ and therefore, for $p>n$, min-norm regression amounts to
\begin{align}
  \hbeta = \arg\min \Big\{ \|b\|_2 :  ZW^Tb +Ub = 0\Big\}\, .\label{eq:MinNormLatent}
  \end{align}

Apart from its intrinsic interest, this latent-space model is directly connected to nonlinear random features models, as
the ones studied in Section \ref{sec:nonlinear}. Indeed, in  non-linear random features model we have
$x_{ij} = \varphi(w_j^Tz_i)$. We can decompose this as $x_{ij} = a_0+a_1w_j^Tz_i+ \tilde{\varphi}(w_j^Tz_i)$,
where $\tilde{\varphi}$ is such that $\tilde\varphi(w_j^Tz_i)$ has zero mean and is uncorrelated with $w_j^Tz_i$
when $z_j\sim\normal(0,\id_d)$. Equation \eqref{eq:LS_First} then corresponds to replacing the uncorrelated random variable $\tilde\varphi(w_j^Tz_i)$
by the independent Gaussians $u_{ij}$.

We assume the latent vector to be isotropic $z_i\sim\normal(0,I_d)$.
It is then easy to see that the model is equivalent
to (a special case) model studied in Section \ref{sec:correlated}, namely $x_i\sim\normal(0,\Sigma)$, $y_i = \beta^Tx_i+\eps_i$.
Indeed (considering the case of Gaussian $\eps_i$), in both cases $(y_i, x_i)\in\reals^{p+1}$ is a centered Gaussian vector.
By matching their covariances we obtain (defining $\psi = d/p$):
\begin{align}
  &\Sigma = I_p+WW^T\, ,\;\;\;\;\; \beta =  W(I+W^TW)^{-1}\theta\, ,\\
  &\epsilon_i  \sim N(0,\sigma^2)\, ,\;\;\;\;\;\; \sigma^2 = \sigma_{\xi}^2+\theta^T(I+W^TW)^{-1}\theta\, .
\end{align}
Here $W\in\reals^{p\times d}$ is the matrix with rows $(w_i)_{i\le p}$. In what follows $r_{\theta}^2:=\|\theta\|_2^2$.
As anticipated, the coefficients vector is aligned with the top eigenspace of $\Sigma$ (the span of the columns of
$W$).

Notice that the signal-to-noise ratio in the features $x_{ij}$ is equal to $\|w_i\|_2^2$. To be definite, we'll fix
$\|w_i\|_2=1$, which implies in particular $\|W\|_F^2=p$. In order to simplify our calculations,
we assume all the non-zero singular values of $W$ to be equal, whence $W^TW = (p/d)I_d$. It is not hard to check that this
model satisfies the assumptions of  Theorem \ref{thm:risk_gen}, with
\begin{align}
  H(s) & = (1-\psi)\bfone(s\ge 1)+\psi\bfone(s\ge 1+\psi^{-1})\, ,\\
  G(s) &=\bfone(s\ge 1+\psi^{-1})\, ,\;\;\;\; \|\beta\|^2_2 = \frac{\psi r^2_{\theta}}{(1+\psi)^2}
\end{align}
Using Theorem \ref{thm:risk_gen}, we get the following explicit expressions.
\begin{corollary}\label{coro:Latent}
  Consider the latent space model described above, and assume $d/p\to \psi\in (1,\infty)$,
  $p/n\to \gamma\in (0,\infty)$. Then, almost surely
    \begin{align}
      R_X(\hbeta;\beta) &\to  \cuB_{\slat}(\psi,\gamma) + \cuV_{\slat}(\psi,\gamma) \, ,\\
      \cuB_{\slat}(\psi,\gamma)  & :=\left\{1+\gamma c_0
                                   \frac{  \cuE_1(\psi,\gamma) }{  \cuE_2(\psi,\gamma) }\right\}\cdot\frac{r^2_{\theta}}{(1+\psi)
                                   (1+c_0\gamma(1+\psi^{-1}))^2}\, ,\\
     \cuV_{\slat}(\psi,\gamma)  & :=\sigma^2\gamma c_0
                                   \frac{  \cuE_1(\psi,\gamma) }{  \cuE_2(\psi,\gamma) }\, ,\\
     \cuE_1(\psi,\gamma) & := \frac{1-\psi}{(1+c_0\gamma)^2}+ \frac{\psi(1+\psi^{-1})^2}{(1+c_0(1+\psi^{-1})\gamma)^2}\, ,\\  \cuE_2(\psi,\gamma) &:= \frac{1-\psi}{(1+c_0\gamma)^2}+ \frac{1+\psi}{(1+c_0(1+\psi^{-1})\gamma)^2}\, .
    \end{align}
    where $\sigma^2=\sigma_{\xi}^2+\psi r_{\theta}^2/(1+\psi)$, and $c_0 =c_0(\psi,\gamma)\ge 0$ is the unique non-negative solution of the following second order equation
    \begin{align}
      1-\frac{1}{\gamma} = \frac{1-\psi}{1+c_0\gamma}+ \frac{\psi}{1+c_0(1+\psi^{-1})\gamma}\, .
      \end{align}
\end{corollary}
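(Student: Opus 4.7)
The plan is to reduce the latent-space model to the Gaussian linear model already analyzed in Theorem~\ref{thm:risk_gen} (or its asymptotic form Theorem~\ref{thm:risk_gen_asymp}) and then to evaluate the integrals in Definition~\ref{def:LimitBiasVar} against the spectral measures produced by this particular $\Sigma$ and $\beta$. Concretely, since the errors, noise and latent variables are jointly Gaussian, the joint law of $(y_i,x_i)$ is the centered Gaussian determined by its covariance, and matching second moments gives exactly $x_i\sim\normal(0,\Sigma)$ with $\Sigma=I_p+WW^T$, $y_i=\beta^Tx_i+\eps_i$ with $\beta=W(I+W^TW)^{-1}\theta$ and $\eps_i\sim\normal(0,\sigma^2)$ where $\sigma^2=\sigma_\xi^2+\theta^T(I+W^TW)^{-1}\theta$. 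Using $\|w_j\|_2=1$ and the assumption that the nonzero singular values of $W$ are all equal, we have $W^TW=(p/d)I_d=\psi^{-1}I_d$, so $\sigma^2$ reduces to $\sigma_\xi^2+\psi r_\theta^2/(1+\psi)$ and $\|\beta\|_2^2=\psi r_\theta^2/(1+\psi)^2$, as claimed.

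Next I would compute the two spectral measures. Because $W^TW=\psi^{-1}I_d$, the matrix $WW^T$ has eigenvalue $\psi^{-1}$ on a $d$-dimensional subspace (the column span of $W$) and eigenvalue $0$ on its orthogonal complement. Hence $\Sigma=I_p+WW^T$ has $d$ eigenvalues equal to $1+\psi^{-1}$ and $p-d$ eigenvalues equal to $1$, giving exactly
\[
\hH_n = (1-\psi)\,\delta_1+\psi\,\delta_{1+\psi^{-1}}\,,
\]
which converges (in fact equals) $H$ of the statement. Moreover $\beta$ lies entirely in the top eigenspace, so $\hG_n=\delta_{1+\psi^{-1}}=G$. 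Assumption~\ref{ass:CovariatesAssumption} is then verified: the spectrum of $\Sigma$ is bounded and bounded away from zero (in particular $s_p=1$), $p/n\to\gamma$, and the Gaussianity of $z_i$ supplies all moments.

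Substituting $H$ into the fixed-point equation \eqref{eq:c0def} of Definition~\ref{def:LimitBiasVar} immediately yields the displayed equation for $c_0(\psi,\gamma)$ (which, after clearing denominators, is quadratic in $c_0$ and thus has a unique nonnegative root). For the bias, the integral against $G$ collapses to a single term,
\[
\int\frac{s}{(1+c_0\gamma s)^2}\,dG(s)=\frac{1+\psi^{-1}}{\bigl(1+c_0\gamma(1+\psi^{-1})\bigr)^2}\,,
\]
and the ratio of integrals against $H$ becomes $\cuE_1(\psi,\gamma)/\cuE_2(\psi,\gamma)$ by direct evaluation on the two-point measure. Combining these with the prefactor $\|\beta\|_2^2=\psi r_\theta^2/(1+\psi)^2$ produces $\cuB_\slat$, and the variance expression $\cuV_\slat$ is read off similarly from Definition~\ref{def:LimitBiasVar} with the same two-point measure $H$.

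The only genuinely non-routine point is justifying the reduction to the Gaussian linear model without any loss. The cleanest way is to invoke joint Gaussianity: conditional on $W$, the vector $(y_i,x_i)$ is centered Gaussian and hence characterized by its covariance, so the min-norm estimator \eqref{eq:MinNormLatent} has the same conditional distribution as in the equivalent linear model, and Theorem~\ref{thm:risk_gen_asymp} applies to the latter. One then passes from conditional to unconditional statements using that the assumptions of the theorem are satisfied almost surely in $W$ (trivially here, since $W$ is deterministic modulo its singular-vector frame, which is irrelevant to $\Sigma$'s spectrum). The rest is bookkeeping of the integrals against two-point measures.
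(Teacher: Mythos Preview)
Your proposal is correct and follows essentially the same route as the paper. The paper explicitly reduces the latent-space model to the linear model of Section~\ref{sec:correlated} by the joint-Gaussianity argument, identifies the two-point measures $H=(1-\psi)\delta_1+\psi\,\delta_{1+\psi^{-1}}$ and $G=\delta_{1+\psi^{-1}}$ together with $\|\beta\|_2^2=\psi r_\theta^2/(1+\psi)^2$, and then states that Corollary~\ref{coro:Latent} follows by substituting these into the formulas of Definition~\ref{def:LimitBiasVar} via Theorem~\ref{thm:risk_gen}; this is precisely your plan.
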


Figures \ref{fig:risk_latent_space_n} and \ref{fig: risk_latent_space_sigma} illustrate this corollary
by comparing analytical predictions to numerical simulations. We observe that the prediction risk is monotone decreasing in the overparametrization ratio for $\gamma>1$, and reaches its global minimum asymptotically as $\gamma\to\infty$ (after $p,n,d\to\infty$). To understand why this happens,
notice  that each feature vector $x_{i}$ can be viewed as a noisy measurement of the latent
covariates $z_i$. If the noise $u_{ij}$ was absent, then performing min-norm regression with respect to
$(x_i)_{i\le n}$ amounts to minimize would be equivalent to min-norm regression with respect to $(z_i)_{i\le n}$.
To see this, consider again Eq.~\eqref{eq:MinNormLatent}. If we drop
the noise $U$, we are minimizing $\| b\|_2$
subject to $Z(W^Tb) = 0$, and the regression function is $\hat{f}(z) = x^T\hbeta =  z^T(W^T\hbeta)$.
Since $W$ is orthogonal, this is equivalent to computing $\hat{\theta} = \arg\min\{\|t\|_2:$ subject to $Zt = 0\}$.

In presence of noise $u_{ij}$, the latent features cannot be estimated exactly. However as $p$ gets larger,
the noise is effectively `averaged out' and we approach the idealized situation in which the $z_i$'s are observed.

\section{Ridge regularization}
\label{sec:ridge}

We generalize the formulas of Section \ref{sec:correlated} to non-vanishing ridge regularization.
We work under the same assumptions of that section.
In particular, recall that $\hH_n(s) = p^{-1}\sum_{i=1}^p 1_{\{s\ge s_i\}}$ is the empirical distribution of eigenvalue of
$\Sigma$, and $\hG_n(s)  =\sum_{i=1}^p \<\beta,v_i\>^2 1_{\{s\ge s_i\}}/\|\beta\|^2$ the same empirical distribution, reweighted
by the projection of $\beta$ onto the eigenvector $v_i$ of the covariance $\Sigma$.
(Recall the eigenvalue decomposition $\Sigma=\sum_{i=1}^ps_iv_iv_i^T$.)
\begin{definition}[Predicted bias and variance: ridge regression]\label{def:LimitBiasVar-lambda}
  Let $\hH_n$ be the empirical distribution of eigenvalues of $\Sigma$,
  and $\hG_n$ the reweighted distribution as per Eq.~\eqref{eq:HGdef}. For $\gamma\in\reals_{>0}$, and $z\in \complex_+$
  (the set of complex numbers with $\Im(z)>0$), 
  define $m_n(z) = m(z;\hH_n,\gamma)$ as the unique solution of
  \begin{align}
   m_n(z) = \int \frac{1}{s[1-\gamma-\gamma zm_n(z)] -z}\, d\hH_n(s)\, .\label{eq:m0def}
  \end{align}
  Further define $m_{n,1}(z) = m_{n,1}(z;\hH_n,\gamma)$ via
  \begin{align}
    m_{n,1}(z) := \frac{\int \frac{s^2[1-\gamma-\gamma zm_n(z)]}{[s[1-\gamma-\gamma zm_n(z)] -z]^2}\, d\hH_n(s)}
{1-\gamma \int \frac{zs}{[s[1-\gamma-\gamma zm_n(z)] -z]^2}\, d\hH_n(s)}\, .\label{eq:m1def}
  \end{align}
  These definitions are extended analytically to $\Im(z) = 0$ whenever possible.
  We then define the predicted bias and variance by
  \begin{align}
    \cuB(\lambda;\hH_n,\hG_n,\gamma) &:=\lambda^2\|\beta\|^2 \big(1+\gamma m_{n,1}(-\lambda)\big) \int \frac{s }{[\lambda +(1-\gamma+\gamma\lambda m_n(-\lambda))s]^2} d\hG_n(s)\, ,\label{eq:BiasRidge}\\
    \cuV(\lambda;\hH_n,\gamma) &:= \sigma^2\gamma  \int \frac{s^2(1-\gamma+\gamma\lambda^2 m'_n(-\lambda))}{[\lambda+s(1-\gamma+\gamma \lambda m_n(-\lambda))]^2} d\hH_n(s)\, . \label{eq:VarianceRidge}
  \end{align}
\end{definition}

 We next state our deterministic approximation of the risk.
\begin{theorem}\label{thm:risk_ridge}
  Let $M^{-1} \le p/n \le M$,  and Assumption \ref{ass:CovariatesAssumption} hold.
  Further assume $\lambda\vee \lambda_{\min}(\Sigma)>1/M$ and $n^{-2/3+1/M}<\lambda<M$.
  Let $\hbeta_{\lambda}$ be the ridge estimator of Eq.~\eqref{eq:ridge}.

  Then for any constants $D>0$ (arbitrarily large) and $\eps>0$ (arbitrarily small),
  there exist $C=C(M,D)$  such that, with probability at least $1-Cn^{-D}$ the following hold
\begin{align}
  &R_X(\hbeta _{\lambda};\beta)  =B_X(\hbeta _{\lambda};\beta)  +V_X(\hbeta _{\lambda};\beta) \, ,\\
  &\big|B_X(\hbeta_{\lambda};\beta)-\cuB(\lambda;\hH_n,\hG_n,\gamma) \big|\le \frac{C\|\beta\|_2^2}{\lambda n^{(1-\eps)/2}}\, ,\\
  &\big|V_X(\hbeta_{\lambda};\beta)-\cuV(\lambda;\hH_n,\gamma) \big|\le \frac{C}{\lambda^2 n^{(1-\eps)/2}}\, ,
\end{align}
  where $\cuB$ and $\cuV$ are given in Definition \ref{def:LimitBiasVar-lambda}. 
\end{theorem}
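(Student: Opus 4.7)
The strategy is to identify both the bias and the variance as smooth functionals of the resolvent $R(z) := (\hSigma - zI)^{-1}$ evaluated at $z = -\lambda$, and then substitute $R(-\lambda)$ by its deterministic equivalent via the anisotropic local law for sample covariance matrices from \cite{knowles2017anisotropic}. Starting from $\E(\hbeta_\lambda\mid X) = (\hSigma+\lambda I)^{-1}\hSigma\beta$ and $\Cov(\hbeta_\lambda\mid X) = (\sigma^2/n)(\hSigma+\lambda I)^{-1}\hSigma(\hSigma+\lambda I)^{-1}$, the algebraic identities $(\hSigma+\lambda I)^{-1}\hSigma = I - \lambda R(-\lambda)$ and $R\hSigma R = R - \lambda R^2$ reduce the task to
$$
B_X(\hbeta_\lambda;\beta) = \lambda^2\, \beta^T R(-\lambda)\,\Sigma\, R(-\lambda)\beta,\qquad V_X(\hbeta_\lambda;\beta) = \frac{\sigma^2}{n}\bigl[\Tr(R(-\lambda)\Sigma) - \lambda \Tr(R(-\lambda)^2\Sigma)\bigr].
$$

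The deterministic equivalent of $R(z)$ is $M(z) := [\phi(z)\Sigma - zI]^{-1}$ with $\phi(z) := 1-\gamma - \gamma z m_n(z)$, constructed so that $p^{-1}\Tr M(z) = m_n(z)$ satisfies \eqref{eq:m0def}. The anisotropic local law gives, at $z = -\lambda$ (where the hypotheses $\lambda \ge n^{-2/3+1/M}$ and $\lambda\vee \lambda_{\min}(\Sigma) > 1/M$ keep $-\lambda$ at distance $\gg n^{-2/3}$ from the edge), the bilinear and averaged estimates
$$
\bigl|u^T R v - u^T M v\bigr| \le \frac{n^\eps\|u\|_2\|v\|_2}{\lambda \sqrt{n}},\qquad \Bigl|\tfrac{1}{p}\Tr(A R) - \tfrac{1}{p}\Tr(A M)\Bigr| \le \frac{n^\eps\|A\|_{\mathrm{op}}}{\lambda n},
$$
with probability at least $1 - n^{-D}$ for any $D > 0$, under the moment conditions of Assumption \ref{ass:CovariatesAssumption}.

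For the variance I would apply the averaged law to $\Tr(R(-\lambda)\Sigma)/n$, and then use a Cauchy-type transfer on a disk of radius $\sim \lambda$ around $z=-\lambda$ to deduce concentration of $\Tr(R(z)^2\Sigma)/n = -\partial_z\Tr(R(z)\Sigma)/n$, costing one additional factor $1/\lambda$. A direct calculation gives $\Tr(M(z)\Sigma)/p = \int s/(s\phi(z)-z)\, d\hH_n(s)$, and differentiating the fixed-point equation \eqref{eq:m0def} expresses $\phi'(z)$ in terms of $m'_n(z)$; the algebraic identity $\phi(-\lambda)+\lambda \phi'(-\lambda) = 1-\gamma + \gamma \lambda^2 m'_n(-\lambda)$ then reproduces exactly the numerator in \eqref{eq:VarianceRidge}. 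The overall factor $\gamma = p/n$ comes from trading a $p$-trace for an $n$-normalized one.

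The delicate step is the bias: $\beta^T R\Sigma R\beta$ is a two-resolvent bilinear form, and its deterministic equivalent is \emph{not} merely $\beta^T M\Sigma M\beta$ but carries an additional multiplicative renormalization $1+\gamma m_{n,1}(-\lambda)$, with $m_{n,1}$ as in \eqref{eq:m1def}; this factor encodes the leading correlation between the two factors of $R$. I would derive it by extending the self-consistent perturbation (cumulant / fluctuation-averaging) argument underlying the one-resolvent law to the two-resolvent bilinear form $\beta^T R\Sigma R \beta$, identifying $m_{n,1}$ as the resulting "loop" contribution. Expanding $\beta^T M(-\lambda)\Sigma M(-\lambda)\beta$ in the eigenbasis of $\Sigma$ yields $\|\beta\|_2^2 \int s/(\lambda + s\phi(-\lambda))^2 \, d\hG_n(s)$, and multiplying by $\lambda^2 (1+\gamma m_{n,1}(-\lambda))$ matches \eqref{eq:BiasRidge}. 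The main obstacle is precisely this two-resolvent estimate: pinning down the renormalization factor with the near-optimal error $n^{-(1-\eps)/2}$ uniformly in the problem parameters requires careful bookkeeping beyond the one-resolvent statements of \cite{knowles2017anisotropic}, and the inverse powers of $\lambda$ in the final error bounds come from propagating the $\|M(-\lambda)\|_{\mathrm{op}}\lesssim 1/\lambda$ scaling through both estimates.
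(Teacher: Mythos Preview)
Your variance argument matches the paper's: write $V_X$ as a $\lambda$-derivative of $\tfrac{\lambda}{p}\Tr\bigl(\Sigma(S_X+\lambda I)^{-1}\bigr)$, apply the averaged local law from \cite{knowles2017anisotropic}, and pass to the derivative by analyticity (Cauchy estimate), which costs the extra factor $1/\lambda$.

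For the bias, however, your plan diverges from the paper and contains a real gap. You propose to attack the two-resolvent bilinear form $\beta^T R(-\lambda)\Sigma R(-\lambda)\beta$ head-on by ``extending the self-consistent perturbation (cumulant / fluctuation-averaging) argument'' of \cite{knowles2017anisotropic} to a two-resolvent local law, with $1+\gamma m_{n,1}(-\lambda)$ emerging as a loop renormalization. That is morally the right picture, but \cite{knowles2017anisotropic} contains no such two-resolvent statement, and deriving one with the claimed $n^{-(1-\eps)/2}$ error, uniformly in $(\Sigma,\beta)$, is substantial new work that you only gesture at. As written, this is the crux of the theorem and it is left open.

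The paper sidesteps this difficulty entirely by a linearization trick. Introduce an auxiliary parameter $\eta$ and set
\[
\oF_n(\eta,\lambda):=\lambda\,\beta^T\bigl(S_X+\lambda I+\lambda\eta\,\Sigma\bigr)^{-1}\beta,
\]
so that $B_X(\hbeta_\lambda;\beta)=-\partial_\eta \oF_n(0,\lambda)$. The point is that for each fixed $\eta$ one can rewrite $S_X+\lambda I+\lambda\eta\Sigma=\Sigma_\eta^{1/2}S_Z\Sigma_\eta^{1/2}+\lambda I$ with $\Sigma_\eta:=\Sigma(I+\eta\Sigma)^{-1}$ and $\beta_\eta:=(I+\eta\Sigma)^{-1/2}\beta$, so $\oF_n(\eta,\lambda)$ is again a \emph{one}-resolvent bilinear form, to which the anisotropic local law of \cite{knowles2017anisotropic} applies directly and uniformly in $\eta$. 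One then transfers the uniform bound $|\oF_n-F_n|\le (\lambda n^{(1-\eps)/2})^{-1}$ to the $\eta$-derivative via a finite-difference/interpolation lemma (the paper proves one: compare $h'(0)$ to a high-order centered difference quotient of $h$, bounding the truncation by $\|h^{(2k+1)}\|_\infty\delta^{2k}$ and the sampling error by $\|h\|_\infty/\delta$, then choose $\delta=n^{-\eps/2}$ and $k\sim 1/\eps$). Differentiating the deterministic equivalent $F_n(\eta,\lambda)$ at $\eta=0$ produces exactly the factor $1+\gamma m_{n,1}(-\lambda)$ through the Taylor expansion of the companion transform in $\eta$. This route never leaves the one-resolvent world and uses \cite{knowles2017anisotropic} as a black box; your route would require proving a genuinely new multi-resolvent estimate.
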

The proof of this theorem is deferred to Appendix \ref{sec:ProofGeneralFormula}.
 As for theorem \ref{thm:risk_gen}, similar results were proved in
 \cite{wu2020optimal,richards2020asymptotics}, subsequently to a first version of this manuscript that only focused on
 random $\beta$. The same comparison of Remark \ref{rmk:Comparison} applies here.
 
 In particular,  Theorem \ref{thm:risk_ridge} establishes  non-asymptotic deterministic approximations for the bias
  $B_X(\hbeta_{\lambda};\beta)$ and variance $V_X(\hbeta;\beta)$. The error terms are uniform
  over the covariance matrix, and have nearly optimal dependence upon
  the sample size $n$. Indeed, a  central-limit theorem
  heuristics suggests indeed fluctuations of order $n^{-1/2}$.

  As for the case of min-norm regression, it is easy to extract asymptotic prediction under minimal assumptions.
  \begin{theorem}\label{thm:risk_ridge_asymp}
      Consider the setting of Theorem \ref{thm:risk_ridge} but, instead of Assumption \ref{ass:CovariatesAssumption} $(a)$,
  assume that $(z_i)_{i\le p}$ are identically distributed and satisfy the conditions
  $\E z_i= 0$, $\E(z_i^2) = 1$, $\E(|z_i|^{4+\eta})\le C<\infty$ for some $\eta>0$.
  Further assume $p/n\to \gamma\in (0,\infty)$, $\hH_n\Rightarrow H$, $\hG_n\Rightarrow G$.
  Then, for any $\lambda>0$,
  almost surely $B_X(\hbeta_{\lambda};\beta)/\|\beta\|^2_2\to \cuB(\lambda;H,G,\gamma)$,
  $V_X(\hbeta_{\lambda};\beta)\to \cuV(\lambda;H,\gamma)$.
\end{theorem}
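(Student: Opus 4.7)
The overall plan is to derive Theorem \ref{thm:risk_ridge_asymp} from the non-asymptotic Theorem \ref{thm:risk_ridge} via three ingredients: a truncation step reducing the $(4+\eta)$-th moment hypothesis to the all-moments hypothesis of Assumption \ref{ass:CovariatesAssumption}; a Borel--Cantelli step upgrading the probabilistic bound $1-Cn^{-D}$ to almost sure convergence; and a continuity argument showing that the deterministic predictions $\cuB(\lambda;\hH_n,\hG_n,\gamma_n)/\|\beta\|_2^2$ and $\cuV(\lambda;\hH_n,\gamma_n)$ converge to $\cuB(\lambda;H,G,\gamma)$ and $\cuV(\lambda;H,\gamma)$ under the weak-convergence assumptions $\hH_n\Rightarrow H$, $\hG_n\Rightarrow G$, and $\gamma_n=p/n\to\gamma$.

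For the truncation, I would set $T_n=n^{\alpha}$ for a small $\alpha>0$ and replace each entry $z_{ij}$ by $\tilde z_{ij}=(z_{ij}\mathbf{1}\{|z_{ij}|\le T_n\}-a_n)/b_n$, where $a_n,b_n$ are chosen so that $\tilde z_{ij}$ has zero mean and unit variance (these shifts are of order $T_n^{-3-\eta}$ and $1+O(T_n^{-2-\eta})$ respectively, using the $(4+\eta)$-th moment bound). A union bound gives $\P(z_{ij}\ne\tilde z_{ij}\text{ for some }i,j)=O(np\cdot T_n^{-4-\eta})$, which is summable as soon as $\alpha>2/(4+\eta)$. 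The truncated entries are bounded and thus satisfy Assumption \ref{ass:CovariatesAssumption}(a) with constants $\tilde C_k\le T_n^k$ that grow polynomially in $n$. A careful reading of the proof of Theorem \ref{thm:risk_ridge}, whose probabilistic core is the anisotropic local law of \cite{knowles2017anisotropic}, shows that the constant $C(M,D)$ depends polynomially on the moment bounds, so choosing $\alpha$ small enough preserves the rate $n^{-(1-\epsilon)/2}$ up to an arbitrarily small loss in the exponent. Borel--Cantelli (with $D=2$, together with the truncation event) then yields $|B_X(\hat\beta_\lambda;\beta)/\|\beta\|_2^2-\cuB(\lambda;\hH_n,\hG_n,\gamma_n)/\|\beta\|_2^2|\to 0$ almost surely, and similarly for the variance.

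The continuity step reduces to showing that the scalars $m_n(-\lambda)$ and $m_{n,1}(-\lambda)$ defined by \eqref{eq:m0def}--\eqref{eq:m1def} depend continuously on $(\hH_n,\gamma_n)$ with respect to weak convergence, evaluated at the fixed value $z=-\lambda<0$. Under the hypothesis $\lambda\vee\lambda_{\min}(\Sigma)>1/M$, the map defining $m_n$ is a contraction on an appropriate invariant set in $\mathbb{C}_+$ (this is classical for Marchenko--Pastur-type fixed points), so $m_n(-\lambda)\to m(-\lambda)$ and $m_{n,1}(-\lambda)\to m_1(-\lambda)$ by passing to the limit in the defining equations. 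Once these scalar limits are established, the integrals in \eqref{eq:BiasRidge}--\eqref{eq:VarianceRidge} converge because the integrands are bounded continuous functions of $s$ on the uniformly bounded support of $\hH_n,\hG_n$ guaranteed by Assumption \ref{ass:CovariatesAssumption}(b), and weak convergence preserves integrals of such test functions.

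The main obstacle is tracking the dependence of the non-asymptotic constants in Theorem \ref{thm:risk_ridge} on the higher-moment bounds $C_k$, which is what makes the truncation argument quantitative. If the proof only uses the moments through a polynomial dependence in the anisotropic local law, the sketch above goes through directly; otherwise one can fall back on a Lindeberg swapping argument, replacing the truncated non-Gaussian entries by Gaussians with matched second moments and invoking a Gaussian version of Theorem \ref{thm:risk_ridge}. The continuity step is conceptually routine, but uniqueness of the fixed point of \eqref{eq:m0def} in $\mathbb{C}_+$ must be invoked to rule out spurious limits.
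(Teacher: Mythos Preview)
Your overall plan---truncate to reduce to bounded entries, apply the non-asymptotic Theorem \ref{thm:risk_ridge}, upgrade via Borel--Cantelli to almost sure convergence, and pass to the limit in the deterministic predictions using weak convergence---is exactly the paper's strategy, and your continuity argument for $\cuB,\cuV$ is essentially the same.

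Where you diverge is in the truncation, and this is precisely where your self-identified ``main obstacle'' becomes a real gap. You truncate at a growing level $T_n=n^\alpha$ so that eventually the truncation does nothing, but then the moment bounds $C_k$ in Assumption \ref{ass:CovariatesAssumption}(a) grow like $n^{\alpha k}$. Closing the argument would require you to trace the dependence of the constant $C(M,D)$ in Theorem \ref{thm:risk_ridge} on the $C_k$ through the anisotropic local law of \cite{knowles2017anisotropic} and show the polynomial loss is harmless; you do not do this, and it is not obvious. (Also, summability of $np\cdot T_n^{-4-\eta}$ over $n$ requires $\alpha>3/(4+\eta)$, not $2/(4+\eta)$, and even on the good event the renormalized $\tilde z_{ij}=(z_{ij}-a_n)/b_n$ differs from $z_{ij}$ by a deterministic affine shift you still have to absorb.)

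The paper avoids all of this by truncating at a \emph{fixed} level $M$ and sending $M\to\infty$ only at the end. One decomposes $z_{ij}=a_Mz_{ij}^M+\tilde z_{ij}^M$, where $z_{ij}^M$ is bounded by $2M$ (so Assumption \ref{ass:CovariatesAssumption}(a) holds with $n$-independent constants) and the tail satisfies $\E|\tilde z_{ij}^M|^4\le\eps_M^4\to 0$ by the $(4+\eta)$-th moment bound. Since now $X\ne X_M$, the missing ingredient is a direct perturbation bound on the ridge bias,
\[
\big|B_{X,\Sigma}(\hbeta_\lambda;\beta)-B_{X_M,\Sigma_M}(\hbeta_\lambda;\beta)\big|
\le C\|S_X-S_{X_M}\|_{\op}+C\|\Sigma-\Sigma_M\|_{\op},
\]
which follows from the explicit formula \eqref{eq:BX}. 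The Bai--Yin law gives $\limsup_n n^{-1/2}\|\tilde Z_M\|_{\op}\le C\eps_M$ almost surely, hence $\limsup_n|B_X-B_{X_M}|\le C\eps_M$. Theorem \ref{thm:risk_ridge} applies to $(X_M,\Sigma_M)$ with \emph{fixed} constants, Borel--Cantelli yields $B_{X_M}\to\cuB(\lambda;H^M,G^M,\gamma)$ a.s.\ (where $H^M,G^M$ are the $a_M^2$-rescalings of $H,G$), and letting $M\to\infty$ finishes. The trade is clean: instead of tracking constants inside the black box of Theorem \ref{thm:risk_ridge}, the paper uses an operator-norm Lipschitz bound on the bias plus Bai--Yin on the tail, both elementary and external to that theorem.
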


\subsection{Isotropic features}

As a special case, we can consider the simple isotropic model that was already studied in Section \ref{sec:isotropic}.
Very similar (though not
identical) results can be found in \citet{dicker2016ridge,dobriban2018high}.
\begin{corollary}
\label{coro:risk_ridge_iso}
Assume the conditions of Theorem \ref{thm:risk_iso} (well-specified model, 
isotropic features). Then for ridge regression estimator in \eqref{eq:ridge} 
as $n,p \to \infty$, such that $p/n \to \gamma \in 
(0,\infty)$, it holds almost surely that 
\begin{align}
R_X(\hbeta_\lambda;\beta) \to r^2\lambda^2m'(-\lambda)+\sigma^2 \gamma \big( m(-\lambda) - \lambda m'(-\lambda) \big)\ . \label{eq:RiskIsoRidge}
\end{align}
Here $m(z)$ is given by Eq.~\eqref{eq:m0def} which in this case has the explicit solution
$m(z) = [1-\gamma-z-\sqrt{(1-\gamma-z)^2-4\gamma z}]/(2\gamma z)$.

Furthermore, the limiting ridge risk is
minimized at $\lambda^*=\sigma^2\gamma/r^2$,  in which case we have the simpler expression
$R_X(\hbeta_\lambda;\beta) \to \sigma^2\gamma m(-\lambda^*)$.

Under the conditions of Theorem \ref{thm:risk_mis_iso} (misspecified model,
isotropic features), the limiting risk of ridge regression is as in the first
two displays, and the optimal limiting risk is as in the third, after we make the  
substitutions in \eqref{eq:subs} and add $r^2(1-\kappa)$, to each expression.  
\end{corollary}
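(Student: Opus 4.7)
The plan is to apply Theorem \ref{thm:risk_ridge_asymp} in the isotropic case $\Sigma = I_p$, where both limiting spectral measures reduce to point masses, $H = G = \delta_1$ (all eigenvalues equal one, so the reweighting by $\langle\beta,v_i\rangle^2$ is immaterial), and then carry out an algebraic collapse of the formulas in Definition \ref{def:LimitBiasVar-lambda} to the clean form \eqref{eq:RiskIsoRidge}.

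First I would determine $m(z)$: substituting $dH = \delta_1$ into \eqref{eq:m0def} yields a quadratic in $m(z)$, namely $\gamma z\, m(z)^2 + (z - 1 + \gamma)\, m(z) + 1 = 0$, whose branch satisfying $\Im(m(z)) > 0$ for $z \in \complex_+$ is the stated closed-form. Denoting $\eta := 1 - \gamma + \gamma\lambda m(-\lambda)$, the fixed-point identity at $z = -\lambda$ reads $\eta + \lambda = 1/m(-\lambda)$; implicit differentiation of the same identity yields the companion formula $m'(-\lambda) = m(-\lambda)^2(1 + \gamma m(-\lambda))/(1 + \gamma\lambda\, m(-\lambda)^2)$.

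Second, I would simplify $\cuB$ and $\cuV$. With $dG = \delta_1$ and using $\lambda + \eta = 1/m(-\lambda)$, the integral in \eqref{eq:BiasRidge} reduces to $m(-\lambda)^2$, so $\cuB = \lambda^2 r^2(1 + \gamma m_{n,1}(-\lambda))\, m(-\lambda)^2$. Likewise \eqref{eq:VarianceRidge} reduces to $\cuV = \sigma^2\gamma(1 - \gamma + \gamma\lambda^2 m'(-\lambda))\, m(-\lambda)^2$. To match the stated form, I would specialize \eqref{eq:m1def} to $H = \delta_1$ to get $m_{n,1}(-\lambda) = \eta\, m(-\lambda)^2/(1 + \gamma\lambda\, m(-\lambda)^2)$, and combine with the $m'$ identity above to obtain $1 + \gamma m_{n,1}(-\lambda) = (1 + \gamma m(-\lambda))/(1 + \gamma\lambda\, m(-\lambda)^2) = m'(-\lambda)/m(-\lambda)^2$, which gives $\cuB = r^2\lambda^2 m'(-\lambda)$. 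For the variance, repeatedly invoking the fixed-point identity in the form $(1-\gamma)m(-\lambda) = 1 - \lambda m(-\lambda) - \gamma\lambda\, m(-\lambda)^2$, one verifies that $(1 - \gamma + \gamma\lambda^2 m'(-\lambda))\, m(-\lambda)^2 = m(-\lambda) - \lambda m'(-\lambda)$, yielding $\cuV = \sigma^2\gamma(m(-\lambda) - \lambda m'(-\lambda))$.

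Third, for the optimal tuning, I would differentiate $R(\lambda) = r^2\lambda^2 m'(-\lambda) + \sigma^2\gamma(m(-\lambda) - \lambda m'(-\lambda))$ in $\lambda$, using $\frac{d}{d\lambda}m(-\lambda) = -m'(-\lambda)$. Collecting terms, the derivative factors as $(r^2\lambda - \sigma^2\gamma)(2m'(-\lambda) - \lambda m''(-\lambda))$. Since $m'(-\lambda) > 0$ and the second factor has constant sign on $\lambda > 0$, the unique stationary point is $\lambda^\star = \sigma^2\gamma/r^2$; at this value the prefactor $r^2\lambda^{\star 2} - \sigma^2\gamma\lambda^\star = 0$ kills the $m'$ contribution, leaving $R(\lambda^\star) = \sigma^2\gamma\, m(-\lambda^\star)$. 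Finally, the misspecified extension follows from Lemma \ref{lem:risk_mis_decomp} exactly as in Section \ref{sec:risk_mis_iso}: conditioning on $x$, the observable response is linear in $x$ with effective squared signal $r^2\kappa$ and effective noise variance $\sigma^2 + r^2(1-\kappa)$, and one adds back the irreducible misspecification bias $r^2(1-\kappa)$; since the reduction in step two is linear in $r^2$ and in $\sigma^2$ separately, the substitution commutes with the preceding calculation.

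The main obstacle I expect is the algebraic collapse in step two: neither $\cuB$ nor $\cuV$ visibly equals $r^2\lambda^2 m'$ or $\sigma^2\gamma(m - \lambda m')$, and matching them requires repeated and somewhat delicate use of the fixed-point identity and its derivative. The remaining work (solving the quadratic for the explicit branch, differentiating the scalar risk, and invoking the misspecified decomposition) is routine.
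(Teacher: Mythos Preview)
Your proposal is correct and follows exactly the route the paper indicates: the corollary is stated as a specialization of Theorem~\ref{thm:risk_ridge_asymp} and Definition~\ref{def:LimitBiasVar-lambda} to $H=G=\delta_1$, and the paper gives no further argument beyond this. The only step you leave implicit---that $2m'(-\lambda)-\lambda m''(-\lambda)$ has constant sign---follows at once from the integral representation $m(z)=\int (s-z)^{-1}\,dF_\gamma(s)$, which gives $2m'(-\lambda)-\lambda m''(-\lambda)=\int 2s(s+\lambda)^{-3}\,dF_\gamma(s)>0$.
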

It is easy to recover the formulas in Theorem \ref{thm:risk_iso} as a limiting case of Eq.~\eqref{eq:RiskIsoRidge},
by using the $z\to 0$ asymptotics $m(z) = (1-\gamma)^{-1}+O(z)$ for $\gamma<1$ and $m(z) = (1-\gamma)^{-1}+O(z)$ for $\gamma<1$ and
$m(z) = -(\gamma-1)/(\gamma z)+[(\gamma-1)\gamma]^{-1} +O(z)$ for $\gamma>1$. 

\begin{figure}[p]
\vspace{-30pt}
\centering
\includegraphics[width=0.725\textwidth]{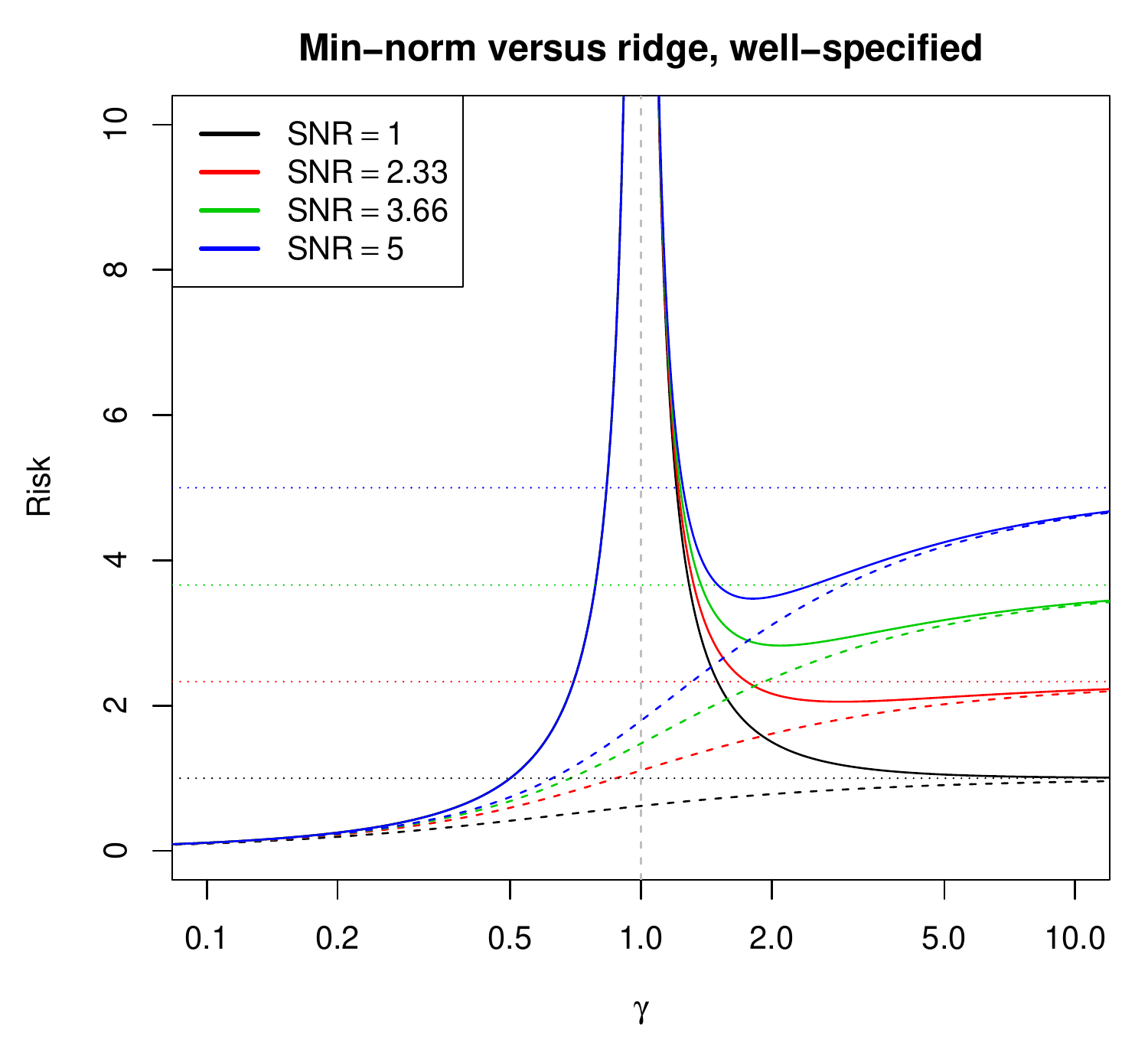} 
\caption{\footnotesize  Asymptotic risk curves for the min-norm least squares
  estimator in \eqref{eq:risk_iso} as solid lines, and optimally-tuned ridge
  regression (from Theorem \ref{thm:risk_ridge}) as dashed lines. Here $r^2$
  varies from 1 to 5, and $\sigma^2=1$.  The null risks are marked by the dotted
  lines.}  
\label{fig:risk_rg}

\bigskip
\includegraphics[width=0.725\textwidth]{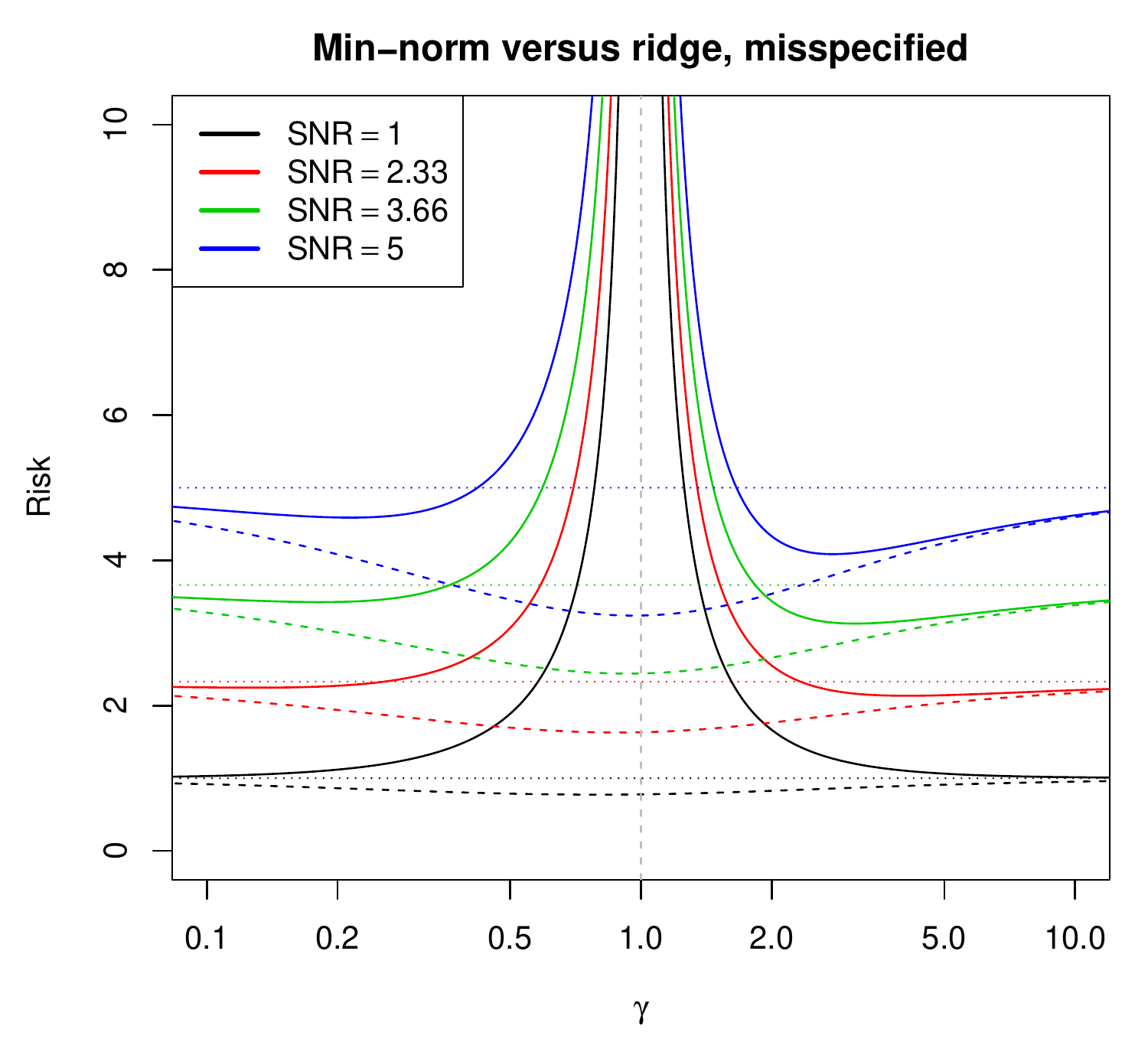}
\caption{\footnotesize Asymptotic risk curves for the min-norm least squares
  estimator in \eqref{eq:risk_mis_iso} as solid lines, and optimally-tuned ridge 
  regression (from Theorem \ref{thm:risk_ridge}) as dashed lines, in the
  misspecified case, when the approximation bias has polynomial decay as in
  \eqref{eq:poly_decay}, with $a=2$.  Here $r^2$ varies from 1 to 5, and
  $\sigma^2=1$.  The null risks are marked by the dotted lines.}      
\label{fig:risk_rg_ab}
\end{figure}

Figures \ref{fig:risk_rg} and \ref{fig:risk_rg_ab} compare the risk curves of
min-norm least squares to those from optimally-tuned ridge regression, in the
well-specified and misspecified settings, respectively.  There are two important
points to make.  The first is that optimally-tuned ridge regression is seen to
have strictly better asymptotic risk throughout, regardless of $r^2$, $\gamma$,
$\kappa$.  This should not be a surprise, as by definition optimal
tuning should yield better risk than min-norm least squares, which is the
special case given by $\lambda \to 0^+$.

The second point is that, in this example, the limiting risk of
optimally-tuned ridge regression appears to have a minimum around $\gamma=1$,
and this occurs closer and closer to $\gamma=1$ as $\snr$ grows.  This
behavior is interesting, especially because it is  antipodal to that
of the min-norm least squares risk, and leads us to very different suggestions
for practical usage for feature generators: in settings where we apply 
substantial $\ell_2$ regularization (say, using CV tuning to mimic
optimal tuning, which the next section shows to be asymptotically equivalent),
it seems we want the complexity of the feature space to put us as close to the
interpolation boundary ($\gamma=1$) as possible.

As we will see, the behavior is rather different in the latent space model.

\subsection{Latent space model}
\label{sec:LatentRidge}

\begin{figure}[p]
\vspace{-30pt}
\centering
\includegraphics[width=0.725\textwidth]{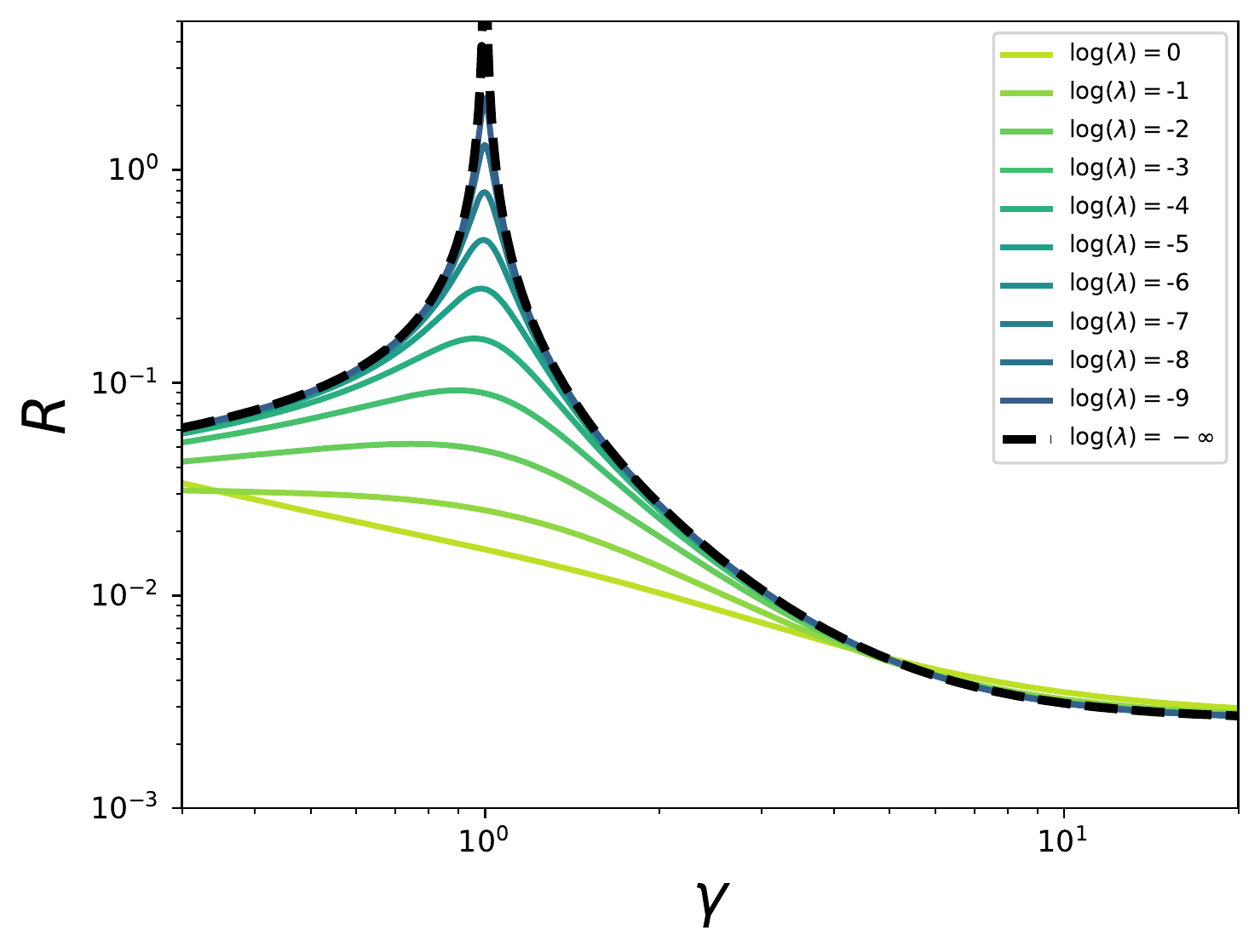} 
\caption{\footnotesize Asymptotic risk as a function of the overparametrization ratio $\gamma = p/n$,
  for ridge regression in the latent space model of Section \ref{sec:LatentRidge}. Here
  $\delta=n/d=20$, $r_{\theta}=1$, $\sigma_{\xi}=0$, and each curve corresponds to a different value of the
  regularization $\lambda$. The dashed curve correspond to the min-norm interpolator
  (which coincides with the $\lambda\to 0$ limit of ridge regression).}  
\label{fig:lowrank_ridge}

\bigskip
\includegraphics[width=0.725\textwidth]{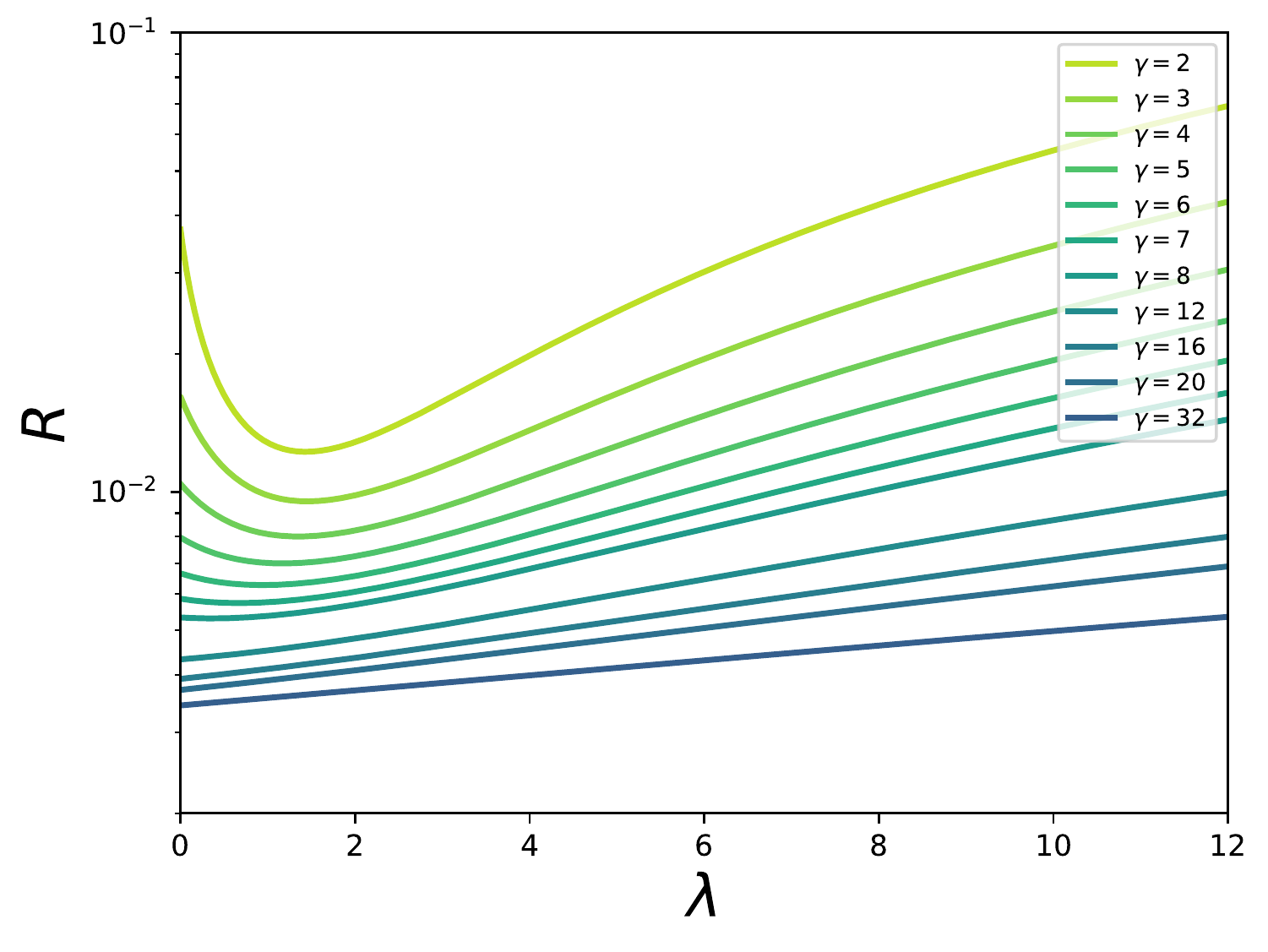}
\caption{\footnotesize Asymptotic risk as a function of the regularization parameter $\lambda$,
  for ridge regression in the latent space model of Section \ref{sec:LatentRidge}. Here
  $\delta=n/d=20$, $r_{\theta}=1$, $\sigma_{\xi}=0.1$, and each curve corresponds to a different value of the overparametrization ratio $\gamma=p/n$.}
\label{fig:lowrank_ridge_lamb}
\end{figure}

As a special application, we consider the latent space model of Section \ref{sec:LatentSpace}.
It is immediate to specialize Eqs.~\eqref{eq:BiasRidge} and \eqref{eq:VarianceRidge}
to this case. We omit giving giving explicit formulas for brevity, and
instead plot the resulting curves for the prediction risk (test error).

In Figure \ref{fig:lowrank_ridge} we plot the risk as a function of
the overparametrization ration $\gamma=p/n$ for several values of the regularization parameter $\lambda$
(included the ridgeless limit $\lambda\to 0$). The setting here is analogous to the one of Fig.~\ref{fig:risk_latent_space_n}.
We observe several interesting phenomena:
\begin{enumerate}
\item Independently of $\lambda$ in the probed range, the risk is minimized at large overparametrization $\gamma\gg 1$.
\item As expected, the divergence of the risk at the interpolation threshold $\gamma=1$ is smoothed out by regularization,
  and the risk becomes a monotone decreasing function of $\gamma$ when $\lambda$ is large enough. Crucially, the optimal amount of regularization
  (corresponding to the lower envelope of these curves) results in a monotonically decreasing risk.
\item At large overparametrization, the optimal value of the regularization parameter is $\lambda\to 0$.
\end{enumerate}
We confirm the last finding in Fig.~\ref{fig:lowrank_ridge_lamb} which plots the risk as a function of $\lambda$:
the optimal regularization is $\lambda\to 0$. Notice that this is the case despite the fact that the observations are noisy,
namely $\sigma_{\xi}>0$ strictly.

The optimality of $\lambda\to 0$ has a known intuitive explanation that is worth recalling here.
Recall that ridge predictor at point $x_0$ takes the form
\begin{align}
  \hf_{\lambda}(x_0) = \<x_0,\hbeta_{\lambda}\> = K(x_0,X)\big(K(X,X)+\lambda I\big)^{-1}y\, ,
\end{align}
where we introduced the kernel matrix $K(X,X) = XX^T/n$, and the vector $K(x_0,X) = x_0^TX/n$.
In the present case $X$ can ve viewed as a noisy version of $\oX=ZW^T$, namely 
Consider the case in which the covariates $X$ contain noise, as is our case,
$X=\oX+\eta U$ where $u_{ij}\sim N(0,1)$. (While we are considering $\eta=1$, it is instructive to
regard the noise standard deviation as a parameter.)
Then, we might expect
$K(X,X) \approx K(\oX,\oX)+\eta^2UU^T\approx K(\oX,\oX)+\eta^2 I_n$. If this approximations hold, the noise acts as an
extra ridge term, which can be sufficient to regularize the problem.

To the best of our knowledge, this argument was first presented by \citet{webb1994functional}
and, more explicitly, by \citet{bishop1995training}. Recently, \citet{kobak2020optimal} elucidated
its role in linear regression, establishing several of its consequences. In a parallel line of work,
a closely related idea has recently emerged  in the analysis of kernel methods in high dimension
\cite{el2010spectrum,liang2018just}.

\section{Cross-validation}
\label{sec:cv}

We analyze the effect of using cross-validation to choose the tuning parameter
in ridge regression.  In short, we find that choosing the ridge tuning parameter
to minimize the leave-one-out cross-validation error leads to the same
asymptotic risk as the optimally-tuned ridge estimator.  The next subsection
gives the details; the following subsection presents a new ``shortcut formula'' 
for leave-one-out cross-validation in the overparametrized regime, for min-norm 
least squares, akin to the well-known formula for underparametrized least
squares and ridge regression.    We refer to   \cite{Craven:79,golub1979gcv} for background on
CV and GCV, and to \cite{arlot2010survey} for a more recent review.

\subsection{Limiting behavior of CV tuning}

Given the ridge regression solution \smash{$\hbeta_\lambda$} in
\eqref{eq:ridge}, trained on $(x_i,y_i)$, $i=1,\ldots,n$, denote by
\smash{$\hf_\lambda$} the corresponding ridge predictor, defined as
\smash{$\hf_\lambda(x) = x^T \hbeta_\lambda$} for $x \in \R^p$.  Additionally,
for each $i=1,\ldots,n$, denote by \smash{$\hf^{-i}_\lambda$} the ridge
predictor trained on all but $i$th data point $(x_i,y_i)$.\footnote{To be
  precise, this is \smash{$\hf^{-i}(x) = x^T (X_{-i}^T X_{-i} + n\lambda I)^{-1}
    X_{-i}^T y_{-i}$}, where $X_{-i}$ denotes $X$ with the $i$th row removed,
  and $y_{-i}$ denotes $y$ with the $i$th component removed. Arguably, it may
  seem more natural to replace the factor of $n$ here by a factor of $n-1$; we 
  leave the factor of $n$ as is because it simplifies the presentation in what
  follows, but we remark that the same asymptotic results would hold with $n-1$
  in place of $n$.} Recall that the {\it leave-one-out cross-validation} 
(leave-one-out CV, or simply CV) error of the ridge solution at a tuning
parameter value $\lambda$ is   
\begin{equation}
\label{eq:cv}
\cv_n(\lambda) = 
\frac{1}{n} \sum_{i=1}^n \big( y_i - \hf^{-i}_\lambda(x_i) \big)^2.  
\end{equation}
We typically view this as an estimate of the out-of-sample prediction
error \smash{$\E(y_0 - x_0^T \hbeta_\lambda)^2$}, where the expectation is taken 
over everything that is random: the training data $(x_i,y_i)$, $i=1,\ldots,n$
used to fit \smash{$\hbeta_\lambda$}, as well as the independent test point 
$(x_0,y_0)$. Note also that, when we observe training data from the model
\eqref{eq:data_x}, \eqref{eq:data_y}, and when $(x_0,y_0)$ is drawn
independently according to the same process, we have the relationship  
$$
\E(y_0 - x_0^T \hbeta_\lambda)^2 
= \sigma^2 + \E(x_0^T \beta - x_0^T \hbeta_\lambda)^2 
= \sigma^2 + \E[R_X(\hbeta_\lambda; \beta)],
$$
where \smash{$R_X(\hbeta_\lambda; \beta) ] = \E[(x_0^T \beta - x_0^T
  \hbeta_\lambda)^2 \,|\, X]$} is the conditional prediction risk, which has
been our focus throughout.

Recomputing the leave-one-out predictors \smash{$\hf^{-i}_\lambda$},
$i=1,\ldots,n$ can be burdensome, especially for large $n$.  Importantly, there
is a well-known ``shortcut formula'' that allows us to express the leave-one-out
CV error \eqref{eq:cv} as a weighted average of the training errors, 
\begin{equation}
\label{eq:cv_shortcut}
\cv_n(\lambda) = 
\frac{1}{n} \sum_{i=1}^n \bigg( \frac{y_i - \hf_\lambda(x_i)}  
{1-(S_\lambda)_{ii}} \bigg)^2,
\end{equation}
where $S_\lambda=X(X^T X + n\lambda)^{-1} X^T$ is the ridge smoother matrix.
This identity is an immediate consequence of the Sherman-Morrison-Woodbury formula.
In the next subsection, we give an extension to 
the case $\lambda=0$ and $\rank(X)=n$, i.e., to min-norm least squares.  

The next result shows that, for isotropic features, the CV error of a ridge
estimator converges almost surely to its prediction error.  The focus on
isotropic features is only for simplicity: a more general analysis
is possible but is not pursued here. The proof, given in Appendix
\ref{app:risk_cv}, relies on the shortcut formula \eqref{eq:cv_shortcut}.  In 
the proof, we actually first analyze generalized cross-validation (GCV), which
turns out to be somewhat of an easier calculation (see the proof for details on
the precise form of GCV), and then relate leave-one-out CV to GCV. 

\begin{theorem}
\label{thm:risk_cv}
Assume the a isotropic prior, namely $\E(\beta)=0$, $Cov(\beta)=r^2\id_p/p$,
and the data model \eqref{eq:data_x},
\eqref{eq:data_y}.  Assume that $x \sim P_x$ has i.i.d.\ entries with zero mean, 
unit variance, and a finite moment of order $4+\eta$, for some $\eta>0$. Then 
for the CV error \eqref{eq:cv} of the ridge estimator in \eqref{eq:ridge} with
tuning parameter $\lambda > 0$, as $n,p \to \infty$, with $p/n \to \gamma \in
(0,\infty)$, it holds almost surely that    
$$
\cv_n(\lambda) - \sigma^2 \to \sigma^2 \gamma \big( m(-\lambda) -   
\lambda (1 - \alpha \lambda) m'(-\lambda) \big),
$$
where \smash{$m(z)$} denotes the Stieltjes transform of the
Marchenko-Pastur law $F_\gamma$ (as in Corollary \ref{coro:risk_ridge_iso}), and $\alpha = r^2/(\sigma^2 \gamma)$.  
Observe that the right-hand side is the asymptotic risk of ridge regression from 
Theorem \ref{thm:risk_ridge}.  Moreover, the above convergence is uniform
over compacts intervals excluding zero.  Thus if $\lambda_1,\lambda_2$ are  
constants with $0 < \lambda_1 \leq \lambda^* \leq \lambda_2 < \infty$, where
$\lambda^*=1/\alpha$ is the asymptotically optimal ridge tuning parameter value, 
and we define \smash{$\lambda_n = \arg\min_{\lambda \in [\lambda_1,\lambda_2]}  
\cv_n(\lambda)$}, then the expected risk of the CV-tuned ridge estimator $R_X(\hbeta):=\E_\beta R_X(\hbeta;\beta)$
\smash{$\hbeta_{\lambda_n}$} satisfies, almost surely, 
$$
R_X(\hbeta_{\lambda_n}) \to \sigma^2 \gamma m(-1/\alpha),
$$
with the right-hand side above being the asymptotic risk of optimally-tuned   
ridge regression. Further, the exact same set of results holds for GCV. 
\end{theorem}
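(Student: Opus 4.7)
The plan is to analyze GCV first ---it is a ratio of two explicit quadratic forms in $y$, so it is directly accessible via random matrix theory--- and then transfer the result to leave-one-out CV using the shortcut identity \eqref{eq:cv_shortcut}. Setting $K := XX^T/n$, we have $I - S_\lambda = \lambda(K+\lambda I)^{-1}$, so
\begin{equation*}
\gcv_n(\lambda) = \frac{\|y - X\hbeta_\lambda\|^2_2/n}{(1-\tr(S_\lambda)/n)^2} = \frac{\tfrac{1}{n}\, y^T(K+\lambda I)^{-2}y}{\big(\tfrac{1}{n}\tr((K+\lambda I)^{-1})\big)^2}.
\end{equation*}
Substituting $y = X\beta+\eps$ and averaging over the isotropic prior $\Cov(\beta) = (r^2/p)I_p$ and the independent noise decomposes the numerator into two traces of resolvents (plus a vanishing cross term); a Hanson--Wright-type estimate, combined with the operator-norm bound $\|(K+\lambda I)^{-1}\|_{\op}\le 1/\lambda$, upgrades these to almost-sure equalities. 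Standard Bai--Silverstein asymptotics give $\tfrac{1}{n}\tr((K+\lambda I)^{-1})\to \tilde m(-\lambda)$ and $\tfrac{1}{n}\tr((K+\lambda I)^{-2})\to \tilde m'(-\lambda)$, where $\tilde m(z)=\gamma m(z)+(\gamma-1)/z$ is the companion Stieltjes transform of the spectrum of $K$; applying the identity $K(K+\lambda I)^{-2}=(K+\lambda I)^{-1}-\lambda(K+\lambda I)^{-2}$ and translating back to $(m,m')$ yields $\gcv_n(\lambda)\to\sigma^2+\sigma^2\gamma(m(-\lambda)-\lambda(1-\alpha\lambda)m'(-\lambda))$, matching the claimed limit.

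For leave-one-out CV, apply \eqref{eq:cv_shortcut}: the difference $\cv_n(\lambda) - \gcv_n(\lambda)$ is a weighted average of the training residuals times the deviations of the leverages $(S_\lambda)_{ii} = 1-\lambda[(K+\lambda I)^{-1}]_{ii}$ from their mean $\tr(S_\lambda)/n$. A Sherman--Morrison leave-one-out decomposition expresses each resolvent diagonal $[(K+\lambda I)^{-1}]_{ii}$ through the quadratic form $x_i^T(K_{(i)}+\lambda I)^{-1}x_i$, where $K_{(i)}$ denotes the Gram matrix with the $i$-th row of $X$ removed; Hanson--Wright concentrates this quadratic form around a deterministic limit independent of $i$, giving $\max_i |(S_\lambda)_{ii} - \tr(S_\lambda)/n| \to 0$ almost surely (equivalently, invoke the anisotropic local Marchenko--Pastur law). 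Coupled with $\max_i (y_i - \hf_\lambda(x_i))^2 = O(1)$ a.s., which follows from the same resolvent bound, this gives $|\cv_n(\lambda) - \gcv_n(\lambda)| \to 0$ almost surely.

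For the final conclusion, upgrade pointwise to uniform convergence on $[\lambda_1, \lambda_2]$. Both $\cv_n$ and its limit are smooth in $\lambda$, with derivatives uniformly bounded for $\lambda \ge \lambda_1 > 0$ (again by the resolvent bound $\|(K+\lambda I)^{-1}\|_{\op}\le 1/\lambda_1$), so pointwise a.s.\ convergence combined with equicontinuity gives uniform a.s.\ convergence on $[\lambda_1,\lambda_2]$. The limiting function coincides with $\sigma^2 + \E_\beta R_X(\hbeta_\lambda;\beta)$ by Corollary \ref{coro:risk_ridge_iso}, and direct differentiation of $(\sigma^2\gamma)[m(-\lambda)+\lambda(\alpha\lambda-1)m'(-\lambda)]$ shows its unique stationary point on $(0,\infty)$ is $\lambda^* = 1/\alpha$. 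Uniform convergence therefore forces the CV minimizer $\lambda_n$ to satisfy $R_X(\hbeta_{\lambda_n}) \to \sigma^2\gamma m(-1/\alpha)$, the optimally-tuned ridge risk. The main obstacle is the uniform-in-$(\lambda,i)$ leverage concentration: Hanson--Wright at a single $(\lambda,i)$ is routine, but stitching it into a bound uniform over both the $n$ indices and over $[\lambda_1,\lambda_2]$ requires a net over $\lambda$ combined with resolvent Lipschitz control via $(K+\lambda I)^{-1} - (K+\lambda' I)^{-1} = (\lambda'-\lambda)(K+\lambda I)^{-1}(K+\lambda' I)^{-1}$, which is exactly where the assumption that the tuning interval is bounded away from zero becomes essential.
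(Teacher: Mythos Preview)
Your proposal follows essentially the same route as the paper's proof: analyze GCV first as a ratio of traces via the companion Stieltjes transform, then reduce CV to GCV by controlling the diagonal leverages through a Sherman--Morrison leave-one-out identity, and finally upgrade to uniform convergence via equicontinuity. Two points deserve correction.

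First, the claim $\max_i (y_i - \hf_\lambda(x_i))^2 = O(1)$ a.s.\ is false in general: the noise $\epsilon_i$ has only a $(4+\eta)$-th moment, so $\max_i |\epsilon_i|$ (and hence the max residual) grows polynomially in $n$. Fortunately you do not need it. Writing $\cv_n(\lambda)-\gcv_n(\lambda)$ as $\tfrac{1}{n}\sum_i r_i^2\,[\,(1-(S_\lambda)_{ii})^{-2}-(1-\tr(S_\lambda)/n)^{-2}\,]$ with $r_i=y_i-\hf_\lambda(x_i)$, bound it by $\big(\tfrac{1}{n}\sum_i r_i^2\big)\cdot\max_i|\cdots|$; the first factor is $\|(I-S_\lambda)y\|_2^2/n\le \|y\|_2^2/n\to r^2+\sigma^2$ a.s., and only the bracket needs the $\max$. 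This is exactly how the paper proceeds (via Cauchy--Schwarz with the operator norm of $D_\lambda^{-2}-\bar D_\lambda^{-2}$).

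Second, ``Hanson--Wright'' is the right idea but the wrong tool under a bare $(4+\eta)$-moment assumption: the sub-Gaussian version does not apply. What is needed is the polynomial-moment trace lemma (Lemma~B.26 in Bai--Silverstein), which gives $\E|\delta_i^T A_i\delta_i-\tr A_i/n|^{2q}\le C\lambda^{-q}p^{-q}$ for $q=2+\eta/2$; combined with a union bound over $i$ and Borel--Cantelli this yields $\max_i \Delta_i\to 0$ a.s. The $(4+\eta)$-moment hypothesis enters precisely here, to make the resulting probability summable. The anisotropic local law you cite as an alternative, as stated in Knowles--Yin, assumes bounded moments of all orders, so invoking it would not be self-contained under the theorem's hypotheses.
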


Similar results were obtained  for various linear smoothers in 
\citet{li1986asymptotic,li1987asymptotic},
for the lasso in the high-dimensional (proportional) asymptotics in \citet{miolane2018distribution}, and for
general smooth penalized estimators in \citet{xu2019consistent}. The latter
paper covers ridge regression as a special case, and gives more precise results
(convergence rates), but assumes more restrictive conditions.  

The key implication of Theorem \ref{thm:risk_cv}, in the context of the current
paper and its central focus, is that the CV-tuned or GCV-tuned ridge
estimator has the same asymptotic performance as the optimally-tuned
ridge estimator. In other words, the ridge curves in Figures
\ref{fig:summary}, \ref{fig:risk_rg}, and \ref{fig:risk_rg_ab} can be
alternatively viewed as the asymptotic risk of ridge under CV tuning.

\subsection{Shortcut formula for ridgeless CV}

We extend the leave-one-out CV shortcut formula \eqref{eq:cv_shortcut} to work
when $p>n$ and $\lambda=0+$, i.e., for min-norm least squares.  In this case,
both the numerator and denominator are zero in each summand of
\eqref{eq:cv_shortcut}. To circumvent this, we can use the so-called ``kernel
trick'' to rewrite the ridge regression solution \eqref{eq:ridge} with
$\lambda>0$ as 
\begin{equation}
\label{eq:ridge_kernel}
\hbeta_\lambda = X^T (XX^T + n\lambda I)^{-1} y.
\end{equation}
Using this expression, the shortcut formula for leave-one-out CV in \eqref{eq:cv_shortcut} can be 
rewritten as
$$
\cv_n(\lambda) = \frac{1}{n} \sum_{i=1}^n 
\frac{[(XX^T + n\lambda I)^{-1} y]^2_i}
{[(XX^T + n\lambda I)^{-1}]^2_{ii}}\, .
$$
Taking $\lambda \to 0^+$ yields the shortcut formula for leave-one-out CV in
min-norm least squares (assuming without a loss of generality that
$\rank(X)=n$),  
\begin{equation}
\label{eq:cv_shortcut_mn}
\cv_n(0) = \frac{1}{n} \sum_{i=1}^n 
\frac{[(XX^T)^{-1} y]_i^2}{[(XX^T)^{-1}]_{ii}^2},
\end{equation}
In fact, the exact same arguments given here still apply when we replace $XX^T$
by a positive definite kernel matrix $K$ (i.e., $K_{ij}=k(x_i,x_j)$ for each
$i,j=1,\ldots,n$, where $k$ is a positive definite kernel function), in which
case \eqref{eq:cv_shortcut_mn} gives a shortcut formula for leave-one-out CV in 
kernel ridgeless regression (the limit in kernel ridge regression as $\lambda
\to 0^+$).  We also remark that, when we include an unpenalized intercept in the
model, in either the linear or kernelized setting, the shortcut formula
\eqref{eq:cv_shortcut_mn} still applies with $XX^T$ or $K$ replaced by their
doubly-centered (row- and column-centered) versions, and the matrix inverses
replaced by pseudoinverses.


\section{Nonlinear model}
\label{sec:nonlinear}

Our analysis in the previous sections assumed $x_i = \Sigma^{1/2}z_i$, with $z_i$ a vector with
i.i.d. entries. As discussed in the introduction, we expect our results to have implications on certain neural
networks in the lazy training regime, via the universality hypothesis. Roughly speaking, if
covariates are obtained by applying the featurization map $z_i\mapsto x_i=\nabla f(z_i;\theta_0)$ to
data $z_i$, the asymptotic behavior of ridge regression is expected to be close to the one of an equivalent
Gaussian model, whose second-order statistics match the ones of $\nabla f(z_i;\theta_0)$.

In this section, we test universality on one simple example.
We observe data as in \eqref{eq:data_x}, \eqref{eq:data_y},
but now $x_i = \varphi(W z_i) \in \R^p$, where $z_i \in \R^d$ has i.i.d.\
entries from $N(0,1)$, for $i=1,\ldots,n$.  Also, $W \in \R^{p \times d}$ has
i.i.d.\ entries from $N(0,1/d)$. Finally, $\varphi:\reals\to\reals$ is an activation function acting  
componentwise on vectors.

We focus on a simple case in which the second order statistics match the ones of the isotropic model.

\subsection{Limiting risk}

Notice that,  conditionally on $W$, the vectors  $x_i = \varphi(W z_i)$, $i\le n$ are independent. However, they do not have independent
coordinates: intuitively if $d$ is significantly smaller than $p$, $x_i$ contains much less randomness than a vector of $p$ independent
entries\footnote{For instance, if $\varphi(t) = at^2+b$,  we can reconstruct  $z_i$ from the first $2d$ coordinates of $x_i$, and
  therefore the remaining  $p-2d$ coordinates of $x_i$ are a function of the first $2d$.}.

Nevertheless, the next theorem shows that if $\varphi$ is purely nonlinear (in the sense that $\E\{\varphi(G)\} =\E\{G\varphi(G)\} = 0$),
then the feature matrix $X$ behaves ``as if'' it has
i.i.d.\ entries, in that the asymptotic bias and variance are exactly as in the
linear isotropic case, recall Eq.~\eqref{eq:risk_iso}.  In other words, this theorem provides a rigorous confirmation of the universality hypothesis stated in the introduction.
\begin{theorem}
  \label{cor:purely_nonlinear}
  Assume the model \eqref{eq:data_x}, \eqref{eq:data_y}, where each
$x_i = \varphi(W z_i) \in \R^p$, for $z_i \in \R^d$ having i.i.d.\
entries from $N(0,1)$, $W \in \R^{p \times d}$ having i.i.d.\ entries from  
$N(0,1/d)$ (with $W$ independent of $z_i$), and is $\varphi$ an activation
function that acts componentwise.  Assume that $|\varphi(x)|\le
c_0(1+|x|)^{c_0}$ for a constant $c_0>0$. Also, for $G \sim N(0,1)$, assume that
the standardization conditions hold: $\E[\varphi(G)]=0$ and
$\E[\varphi(G)^2]=1$, $\E[G\varphi(G)]=0$.

Then for $\gamma>1$, the variance  
satisfies, almost surely,   
$$
\lim_{\lambda \to 0^+} \, \lim_{n,p,d \to \infty} \, 
V_X(\hbeta_\lambda; \beta) = \frac{\sigma^2}{\gamma-1},
$$
which is precisely as in the case of linear isotropic features, recall Theorem 
\ref{thm:risk_iso}. Also, under a isotropic prior, namely $\E(\beta)=0$, $Cov(\beta)=r^2\id_p/p$), the Bayes bias
$B_X(\hbeta_{\lambda}) := \E_{\beta}B_X(\hbeta_{\lambda};\beta)$
satisfies, almost surely   
$$
\lim_{\lambda \to 0^+} \, \lim_{n,p,d \to \infty} \,
B_X(\hbeta_\lambda) = 
\begin{cases}
0 & \text{for $\gamma < 1$}, \\
r^2(1-1/\gamma) & \text{for $\gamma > 1$},
\end{cases}
$$
which is again as in the case of linear isotropic features, recall Theorem
 \ref{thm:risk_iso}. 
\end{theorem}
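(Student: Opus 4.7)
The plan is to reduce the bias and variance to traces of the ridge resolvent of $\hat\Sigma := X^T X/n$, invoke Lemma \ref{thm:ResolventKernel} to show that these traces match those of a linear isotropic Gaussian design, and then pass to the ridgeless limit using the explicit Marchenko--Pastur Stieltjes transform. Starting from Lemma \ref{lem:bias_var} adapted to the ridge case, and using $(X^TX+n\lambda I)^{-1}=n^{-1}(\hat\Sigma+\lambda I)^{-1}$, one obtains
\begin{align*}
V_X(\hbeta_\lambda;\beta) &= \tfrac{\sigma^2}{n}\Tr\bigl[(\hat\Sigma+\lambda I)^{-1}\hat\Sigma(\hat\Sigma+\lambda I)^{-1}\Sigma_x\bigr],\\
\E_\beta B_X(\hbeta_\lambda;\beta) &= \tfrac{r^2\lambda^2}{p}\Tr\bigl[(\hat\Sigma+\lambda I)^{-1}\Sigma_x(\hat\Sigma+\lambda I)^{-1}\bigr],
\end{align*}
where the second line uses the isotropic prior $\E[\beta\beta^T] = (r^2/p)I_p$ and $\Sigma_x = \E[x_0 x_0^T]$ denotes the population feature covariance.

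Next I would establish $\|\Sigma_x - I_p\|_{\mathrm{op}} = o(1)$. Writing the $a$-th row of $W$ as $w_a^T$, one has $[\Sigma_x]_{ab} = \E_z[\varphi(w_a^T z)\varphi(w_b^T z)]$. Expanding $\varphi = \sum_{k\geq 0} c_k He_k$ in Hermite polynomials, the standardization conditions force $c_0 = c_1 = 0$, so $[\Sigma_x]_{aa}\to 1$ from $\|w_a\|^2\to 1$ (since $w_a \sim N(0, I_d/d)$), and $|[\Sigma_x]_{ab}|\le C|w_a^T w_b|^2$ for $a\neq b$. A moment bound on the Wishart-type matrix with entries $(w_a^T w_b)^2$ gives $\|\Sigma_x - I_p\|_{\mathrm{op}}\to 0$, which, combined with $\Tr[(\hat\Sigma+\lambda I)^{-1}\hat\Sigma(\hat\Sigma+\lambda I)^{-1}] = O(p)$, lets me replace $\Sigma_x$ by $I_p$ in the traces above with negligible error. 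Using the algebraic identity $(\hat\Sigma+\lambda)^{-1}\hat\Sigma(\hat\Sigma+\lambda)^{-1} = (\hat\Sigma+\lambda)^{-1} - \lambda(\hat\Sigma+\lambda)^{-2}$, the two quantities reduce to $\sigma^2\gamma[m_n(-\lambda) - \lambda m_n'(-\lambda)]$ and $r^2\lambda^2 m_n'(-\lambda)$ respectively, where $m_n(z) := (1/p)\Tr[(\hat\Sigma - zI)^{-1}]$ is the empirical Stieltjes transform of $\hat\Sigma$.

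The crux is to show that $m_n(-\lambda) \to m(-\lambda)$ and $m_n'(-\lambda)\to m'(-\lambda)$ almost surely for every $\lambda > 0$, where $m$ is the Stieltjes transform of the standard Marchenko--Pastur law with aspect ratio $\gamma$. This is exactly the content of Lemma \ref{thm:ResolventKernel} under the purely-nonlinear hypothesis: a Cheng--Singer leave-one-out analysis applied to a bordered block matrix whose off-diagonals are $X$ and $X^T$ yields a self-consistent equation for the resolvent in which the nonlinearity enters only through the two Hermite coefficients $\E[\varphi(G)]$ and $\E[G\varphi(G)]$; the standardization hypotheses kill them both, and the equation collapses to the MP fixed-point equation. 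Combining with the previous step and using the explicit formula $m(z)=[1-\gamma-z-\sqrt{(1-\gamma-z)^2-4\gamma z}]/(2\gamma z)$, a Laurent expansion near $z=0$ yields, for $\gamma > 1$, $\lambda^2 m'(-\lambda)\to 1-1/\gamma$ and $m(-\lambda) - \lambda m'(-\lambda)\to 1/[\gamma(\gamma-1)]$ as $\lambda\to 0^+$, which give the announced limits $r^2(1-1/\gamma)$ and $\sigma^2/(\gamma-1)$ (and a similar computation handles $\gamma < 1$, where $\lambda^2 m'(-\lambda)\to 0$). The hardest part is the resolvent-level universality: since the columns of $X=\varphi(ZW^T)$ share the latent matrix $W$, $X$ has dependent entries and off-the-shelf deterministic-equivalent theorems do not apply; the bordered block-matrix device in Lemma \ref{thm:ResolventKernel} is what linearizes the dependence and closes the fixed-point equation, and the purely-nonlinear hypothesis is precisely what kills the extra terms and reduces it to the Marchenko--Pastur equation.
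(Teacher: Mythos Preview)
Your proposal is correct and, for the bias, tracks the paper's argument closely: the paper also replaces the population covariance by $I_p$ (its Lemma \ref{lemma:Sigma-Sigma0}, using a Frobenius bound $\|\Sigma-I_p\|_F\le(\log n)^C$ rather than your operator-norm bound, but either works once divided by $p$), and then invokes Marchenko--Pastur convergence (Corollary \ref{coro:Stieltjes}) to evaluate $r^2\lambda^2 m_n'(-\lambda)$.

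For the variance you take a genuinely shorter route. The paper computes $\Tr[(\hat\Sigma+\lambda I)^{-2}\hat\Sigma\,\Sigma_0]$ via Lemma \ref{lemma:ResolventToVar}: it embeds $\Sigma_0=I_p+c_1Q$ into the block matrix $A(s,t)$ through the parameters $(s,t)$, differentiates the joint resolvent $m(\xi,s,t)$ in $(s,t)$, and reads off the variance as the constant term $D_0$ in the Laurent expansion of $-\partial_x m(\xi,x,c_1x)|_{x=0}$ near $\xi=0$. That detour is necessary when $c_1\neq 0$, because then $\Sigma_0$ involves $Q$ and the trace is not a spectral functional of $\hat\Sigma$ alone. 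You exploit the fact that under the purely-nonlinear hypothesis $c_1=0$ one has $\Sigma_0=I_p$, so after the replacement $\Sigma_x\to I_p$ the variance collapses to $\sigma^2\gamma[m_n(-\lambda)-\lambda m_n'(-\lambda)]$, and Corollary \ref{coro:Stieltjes} (itself a consequence of Lemma \ref{thm:ResolventKernel}) already gives everything needed. This is cleaner for the case at hand; the paper's block-matrix differentiation buys generality to $c_1\neq 0$ that the stated theorem does not require.

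Two small points worth making explicit in a write-up: what you invoke as ``Lemma \ref{thm:ResolventKernel}'' is really its Corollary \ref{coro:Stieltjes} (convergence of the Stieltjes transform of $\hat\Sigma$ to the MP transform); and the passage $m_n\to m\Rightarrow m_n'\to m'$ needs a one-line normal-families/Vitali argument, since Corollary \ref{coro:Stieltjes} only states pointwise convergence of $m_n$.
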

The proof of Theorem \ref{cor:purely_nonlinear} is lengthy and will be sketched
shortly.  

\begin{figure}[p]
\vspace{-40pt}
\centering
\includegraphics[width=0.725\textwidth]{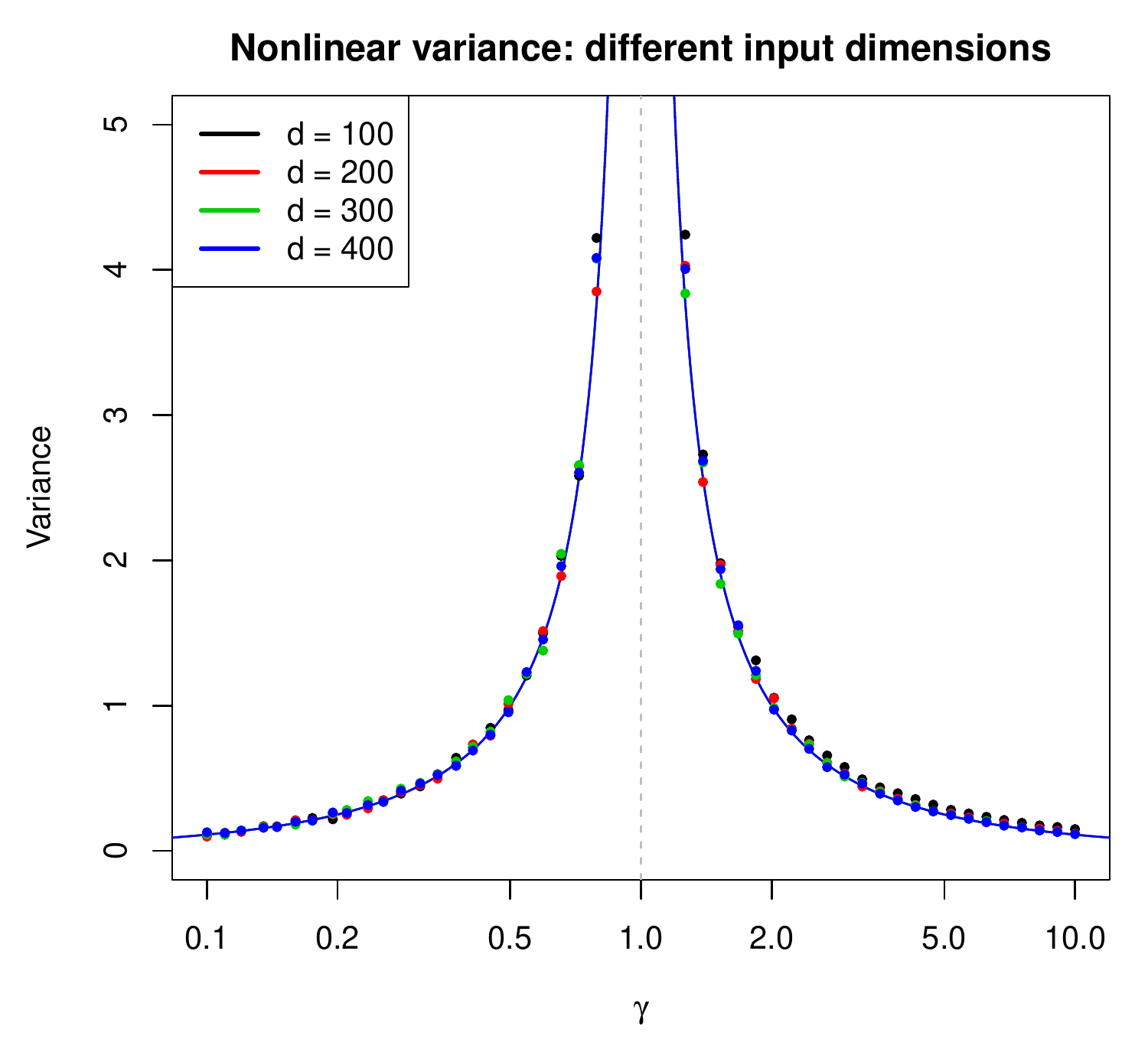} 
\caption{\footnotesize Asymptotic variance curves for the min-norm least squares 
  estimator in the nonlinear feature model (from Theorem
  \ref{cor:purely_nonlinear}), for the purely nonlinear activation
  $\varphi_{\sabs}$.  Here $\sigma^2=1$, and the points are finite-sample
  risks, with $n=200$, $p=[\gamma n]$, over various values of $\gamma$, and
  varying input dimensions: $d=100$ in black, $d=200$ in red, $d=300$ in green,
  and $d=400$ in black.  As before, the features used for finite-sample
  calculations are $X=\varphi(ZW^T)$, where $Z$ has i.i.d.\ $N(0,1)$ entries and
  $W$ has i.i.d.\ $N(0,1/d)$ entries.} 
\label{fig:purely_nonlinear}
\end{figure}

The origin of the conditions $\E\{\varphi(G)\}=\E\{G\varphi(G)\}=0$ can be easily explained
(throughout $G\sim N(0,1)$).
In summary, these conditions ensure that the first and second order statistics of $x_i=\varphi(Wz_i)$ approximately match
those of the isotropic model. To illustrate this point, let $i\neq j$, and
assume that the corresponding rows of $W$ (denoted by $w_i^T$ and $w_j^T$) have unit norm (this will only be approximately true,
but simplifies our explanation). We then have $\E\{\varphi(w_i^Tz_1)\} =\E\{\varphi(w_j^Tz_1)\} =\E\{\varphi(G)\}=0$. Further
\begin{align}
  \E\{x_{1,i}x_{1,j}|W\} = \E\{\varphi(w_i^Tz_1)\varphi(w_{j}^Tz_1)|W\} =  \E\{\varphi(G_1)\varphi(G_2)\}\, ,
\end{align}
where $G_1,G_2$ are jointly Gaussian with unit variance and covariance $w_i^Tw_j$. Denoting by $\varphi(x) = \sum_{k\ge 0}\lambda_k(\varphi)\,  h_k(x)$
the decomposition of $\varphi$ into orthonormal Hermite polynomials, we thus obtain
\begin{align}
  \E\{x_{1,i}x_{1,j}|W\} = \sum_{k=0}^{\infty}\lambda_k^2(\varphi)(w_i^Tw_j)^k\, ,
\end{align}
Since $\lambda_0(\varphi)=\E\{\varphi(G)\}=0$, $\lambda_1(\varphi) = \E\{G\varphi(G)\}=0$, we have
$\E\{x_{1,i}x_{1,j}|W\} = O((w_i^Tw_j)^2)=O(1/d)$. In other words,  the population covariance $\E\{x_{1}x_{1}^T|W\}$ has small entries out-of diagonal, and in fact
$\|\E\{x_{1}x_{1}^T|W\} -I_p\|_{\op}=o_P(1)$ \cite{el2010spectrum}.

Despite the nonlinear model $x_i = \varphi(Wz_i)$ matches the second-order
\emph{population} statistics of
the isotropic model, it is far from obvious that the resulting least norm regression risk is the same.
Indeed the coordinates of vector $x_i$ are highly dependent, as it can be seen by noting
that they are a function of only $d\ll p$ independent random variables.
Theorem \ref{cor:purely_nonlinear} confirms that ---despite dependence--- the risk is asymptotically the same, thus providing a concrete example of the general universality
phenomenon.

Figure \ref{fig:purely_nonlinear} compares the asymptotic risk curve from
Theorem  \ref{cor:purely_nonlinear} to that computed by simulation, using an
activation function $\varphi_{\sabs}(t) = a(|t|-b)$, where
\smash{$a=\sqrt{\pi/(\pi-2)}$} and \smash{$b=\sqrt{2/\pi}$} are chosen to meet    
the standardization conditions. This activation function is purely nonlinear,
i.e., it satisfies \smash{$\E[G\varphi_{\sabs}(G)]=0$} for $G \sim N(0,1)$, by 
symmetry. Again, the agreement between finite-sample and asymptotic risks is
excellent. Notice in particular that, as predicted by Theorem  \ref{cor:purely_nonlinear}, the risk
depends only on $p/n$ and not on $d/n$. 

\subsection{Proof outline for Theorem \ref{cor:purely_nonlinear}}
\label{sec:nonlinear_proof}

We define $\gamma_n= p/n$ and $\psi_n= d/p$.  Recall that as $n,p,d \to
\infty$, we have $\gamma_n \to \gamma$ and $\psi_n \to \psi$.  To reduce
notational overhead, we will generally drop the subscripts from
$\gamma_n,\psi_n$, writing these simply as $\gamma,\psi$, since their meanings 
should be clear from the context.
Let $N=p+n$ and define the symmetric matrix $\bA(s) \in \reals^{N\times N}$, 
for $s  \geq 0$, with the block structure:
\begin{equation}
\bA(s) = \begin{bmatrix}
s\id_p & \frac{1}{\sqrt{n}} \bX^{\sT} \\ 
\frac{1}{\sqrt{n}} \bX & 0_n
\end{bmatrix},\label{eq:aden}
\end{equation}
where $I_p \in \R^{p\times p}$ and $0_n \in \R^{n\times n}$ are the identity and zero matrix, respectively. As we will see this matrix
allows to construct the traces of interest by taking suitable derivatives of its resolvent. 

We introduce the following resolvents (as usual, these are defined for
$\Im(\xi)>0$ and by analytic continuation, whenever possible, for $\Im(\xi) =
0$):  
\begin{align*}
m_{1,n}(\xi,s) &= 
\E\Big\{\big(\bA(s) - \xi\id_{N}\big)^{-1}_{1,1}\Big\} = 
\E M_{1,n}(\xi,s),\\   
 M_{1,n}(\xi,s) &=
\frac{1}{p}\Tr_{[1,p]}\Big\{\big(\bA(s) - \xi\id_{N}\big)^{-1}\Big\},\\  
m_{2,n}(\xi,s) &= 
\E\Big\{\big(\bA(s) -\xi\id_{N}\big)^{-1}_{p+1,p+1}\Big\} =
\E M_{2,n}(\xi,s),\\   
 M_{2,n}(\xi,s) &= 
\frac{1}{n}\Tr_{[p+1,p+n]}\Big\{\big(\bA(s) - \xi\id_{N}\big)^{-1}\Big\}.  
\end{align*}
Here and henceforth, we write $[i,j]=\{i+1,\ldots,i+j\}$ for integers $i,j$.
We also write \smash{$M^{-1}_{ij}= (M^{-1})_{ij}$} for a matrix $M$, and  
\smash{$\Tr_S(M) = \sum_{i\in S} M_{ii}$} for a subset $S$.  The equalities in 
the first and third lines above follow by invariance of the distribution of
$\bA(s)$ under permutations of $[1,p]$ and $[p+1,p+n]$. Whenever clear from
the context, we will omit the arguments from block matrix and resolvents, and 
write $\bA=\bA(s)$, $m_{1,n} =m_{1,n}(\xi,s)$, and $m_{2,n}
=m_{2,n}(\xi,s)$.    

The next lemma characterizes the asymptotics of $m_{1,n}$, $m_{2,n}$.
\begin{lemma}\label{thm:ResolventKernel}
Assume the conditions of Theorem \ref{cor:purely_nonlinear}.  Consider $\Im(\xi)>0$
or $\Im(\xi)=0$, $\Re(\xi)<0$, with $s\ge t\ge 0$. Let $m_1$ and $m_2$ be the
unique solutions of the following quadratic equations: 
\begin{align}
m_2= \big(-\xi-\gamma m_1\big)^{-1},\;\;\;\;\;\;\;\;
m_1=  \big(-\xi-s-m_2 \big)^{-1},\label{eq:M1}   
\end{align}
subject to the condition of being analytic functions for $\Im(z)>0$, and
satisfying $|m_1(z,s)|, |m_2(z,s)| \le 1/\Im(z)$ for $\Im(z)>C$ (with $C$ a
sufficiently large constant).  Then, as $n,p,d \to \infty$, such that $p/n \to
\gamma$ and $d/p \to \psi$, we have almost surely (and in $L^1$),
\begin{align}
\lim_{n,p,d\to\infty} M_{1,n}(\xi,s) &= m_1(\xi,s),\label{eq:AS-1}\\ 
\lim_{n,p,d\to\infty} M_{2,n}(\xi,s) &= m_2(\xi,s).\label{eq:AS-2}
\end{align}
\end{lemma}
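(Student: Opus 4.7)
}
The strategy is a leave-one-out / Schur complement argument in the style of \cite{cheng2013spectrum}, applied to the extended block matrix $\bA(s)$. I would split the task into three parts: $(i)$ reduce the a.s.\ statement to convergence of $m_{a,n}=\E M_{a,n}$ for $a\in\{1,2\}$; $(ii)$ derive an approximate coupled system for $m_{1,n},m_{2,n}$ via Schur complements, the two Schur relations coming respectively from feature indices $i\in[1,p]$ and sample indices $i\in[p+1,p+n]$; $(iii)$ pass to the limit and identify the limit as the unique analytic solution of \eqref{eq:M1}.

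For $(i)$, I would work first with $\Im(\xi)\ge \eta>0$ and show $|M_{a,n}-\E M_{a,n}|\to 0$ a.s.\ using a Doob martingale with respect to the filtration generated by revealing the rows of $Z$ and $W$ one at a time. Each such rank-one/rank-two update perturbs $(\bA-\xi I)^{-1}$ (hence its normalized block trace) by $O(1/(n\eta))$ by the standard resolvent identity, so Azuma--Hoeffding gives $|M_{a,n}-\E M_{a,n}|=O(n^{-1/2+\epsilon})$ with high probability, and Borel--Cantelli gives a.s.\ convergence. Extension to real $\xi<0$ follows by analyticity and a standard net argument on a compact region of $\{\Re(\xi)<0\}$ using the deterministic bound $\|(\bA-\xi I)^{-1}\|_{\op}\le 1/|\Re(\xi)|$.

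For $(ii)$, fix $i\in[1,p]$ and write, by Schur complement,
\begin{equation*}
\big(\bA-\xi I\big)^{-1}_{ii}=\big(s-\xi-q_i^{\sT}R^{(i)}q_i\big)^{-1},
\end{equation*}
where $R^{(i)}$ is the resolvent of the submatrix $\bA^{(i)}$ and $q_i$ is the $i$th column of $\bA$ with the $i$th entry removed. Since $q_i$ is supported on the sample block $[p+1,p+n]$ with entries $\varphi(w_i^{\sT}z_j)/\sqrt n$ that are i.i.d.\ across $j$ conditionally on $w_i$, a Hanson--Wright-type inequality (valid under the polynomial growth assumption on $\varphi$ by a truncation and hypercontractivity argument) gives
\begin{equation*}
q_i^{\sT}R^{(i)}q_i=\frac{\E\{\varphi(\|w_i\|G)^2\}}{n}\Tr\!\big(R^{(i)}_{[p+1,p+n]}\big)+o_{\P}(1).
\end{equation*}
Using $\|w_i\|^2\to 1$ and $\E\varphi(G)^2=1$, the scalar tends to $1$, and the trace is close to $n\, m_{2,n}$ up to a $O(1/n)$ error (interlacing). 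Averaging over $i$ yields $m_{1,n}=[-\xi-s-m_{2,n}]^{-1}+o(1)$. For $i\in[p+1,p+n]$, the analogous Schur complement brings in $q_i^{\sT}R^{(i)}q_i$ with $q_i$ supported on the feature block and with entries $\varphi(w_j^{\sT}z_{i-p})/\sqrt n$. Here the entries are \emph{not} conditionally independent. This is the main obstacle and the place where the purely-nonlinear hypothesis enters: conditioning on $(W,Z_{-(i-p)})$ and expanding $\varphi$ in Hermite polynomials around a standard Gaussian, one finds
\begin{equation*}
\E_{z_{i-p}}\!\big[\varphi(w_j^{\sT}z_{i-p})\varphi(w_k^{\sT}z_{i-p})\big]=\delta_{jk}+O\big((w_j^{\sT}w_k)^2\big)+(\text{lower order}),
\end{equation*}
because $\lambda_0(\varphi)=\E\varphi(G)=0$ and $\lambda_1(\varphi)=\E G\varphi(G)=0$ kill the constant and linear contributions, and $|w_j^{\sT}w_k|=O(d^{-1/2})$ for $j\ne k$. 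Hence the off-diagonal contribution to the conditional expectation is bounded by $p\|R^{(i)}\|_{\op}/(nd)=O(1/n)$, and after a Gaussian--Poincar\'e concentration over $z_{i-p}$ (with polynomial growth handled by truncation) I obtain $q_i^{\sT}R^{(i)}q_i=\gamma_n m_{1,n}+o_{\P}(1)$, giving the second relation $m_{2,n}=[-\xi-\gamma_n m_{1,n}]^{-1}+o(1)$.

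For $(iii)$, substituting one equation into the other produces a scalar cubic in $m_1$; the selection condition $|m_a(\xi)|\le 1/\Im(\xi)$ for $\Im(\xi)$ large, combined with analyticity in $\complex_+$, singles out the unique admissible root, and the limit is extended to $\Im(\xi)=0,\Re(\xi)<0$ by analytic continuation using the uniform resolvent bound. The combination of (i)--(iii) yields the a.s.\ convergence \eqref{eq:AS-1}--\eqref{eq:AS-2}, and $L^1$ convergence follows from uniform integrability guaranteed by the same resolvent bound. The main technical obstacle is unambiguously the off-diagonal estimate in the second Schur step, which is precisely where universality is used: writing it cleanly under only polynomial growth of $\varphi$ (rather than assuming $\varphi$ Lipschitz) requires combining Hermite truncation, deterministic bounds on $W^{\sT}W-(p/d)I_d$ in operator norm, and moment control on $\|z_{i-p}\|^2/d-1$.
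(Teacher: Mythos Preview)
Your high-level strategy---concentration to reduce to $m_{a,n}$, two Schur complements to derive approximate fixed-point equations, and a contraction argument to identify the limit---matches the paper's proof (which treats this lemma as the $t=0$, $c_1=0$ specialization of a more general result). Two points deserve correction, and one structural difference is worth noting.

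\textbf{The feature-index Schur complement is not the easy one.} You write that for $i\in[1,p]$ the entries $\varphi(w_i^{\sT}z_j)/\sqrt n$ are i.i.d.\ across $j$ conditionally on $w_i$, and invoke Hanson--Wright. But $R^{(i)}$ depends on $X$ with only the $i$th column removed, hence on all of $z_1,\dots,z_n$, so $q_i$ and $R^{(i)}$ are \emph{not} independent under that conditioning and Hanson--Wright does not apply. The conditioning that fixes $R^{(i)}$ is on $(W_{-i},Z)$; then $q_i$ is a function of $w_i\sim N(0,I_d/d)$ alone, but its entries are no longer independent---exactly the obstacle you flag for the sample-index step. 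The two Schur complements are therefore symmetric, and both require the Hermite/covariance argument (or the paper's decomposition), not a direct i.i.d.\ quadratic-form bound.

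\textbf{The off-diagonal bound is dimensionally wrong.} Writing $\Sigma_W=\big(\E_{z}[\varphi(w_j^{\sT}z)\varphi(w_k^{\sT}z)]\big)_{j,k\le p}$ for the conditional covariance, the quantity to control is $n^{-1}\Tr\big(R^{(i)}(\Sigma_W-I_p)\big)$. The entrywise bound $|(\Sigma_W)_{jk}|=O(1/d)$ for $j\neq k$ gives only $p^2\|R^{(i)}\|_{\sop}/(nd)=O(1)$, not the $p\|R^{(i)}\|_{\sop}/(nd)=O(1/n)$ you wrote. What is actually needed is $\|\Sigma_W-I_p\|_{\sop}=o_P(1)$, which is the El~Karoui kernel-matrix result the paper cites; this does not follow from the entrywise estimate.

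\textbf{How the paper proceeds instead.} Rather than bounding $\Sigma_W-I$ directly, the paper first reduces to polynomial $\varphi$ (by an $L^2(\mu_G)$ density lemma), and then, for each Schur step, \emph{orthogonally decomposes} the remaining rows along the removed direction: for the sample index $p+n$ it writes $w_a=\eta_a\, z_n/\|z_n\|+\tilde w_a$, and for the feature index $p$ it writes both $w_a$ and $z_j$ along $w_p$. This manufactures genuine conditional independence between the leave-one-out column and a ``clean'' resolvent, at the cost of a rank-two perturbation $\Delta$ (trivially zero here since $t=0$ and $c_1=0$) and an error matrix $E$ whose operator norm is $O((\log n)^C/\sqrt n)$ by a mean-value argument as in Cheng--Singer. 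Your direct route via $\|\Sigma_W-I\|_{\sop}$ is a legitimate alternative in the purely nonlinear case, but you must (a) apply it symmetrically to both Schur steps, and (b) invoke the operator-norm result explicitly rather than the incorrect entrywise count.
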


The proof of this lemma is given in Appendix \ref{app:ResolventKernel}.
As a corollary of the above, we obtain that the asymptotical empirical spectral distribution of the empirical covariance
\smash{$\hSigma=X^T X/n$} matches the one for the independent entries model, and is hence given by the Marchenko-Pastur law
(a result already obtained in \citet{pennington2017nonlinear}).
We state this formally using the Stieltjes transform
\begin{equation}
R_n(z) = \frac{1}{p}\Tr\big((\hSigma-z\id_p)^{-1}\big),\label{eq:Sn}  
\end{equation}

\begin{corollary}\label{coro:Stieltjes}
Assume the conditions of Theorem \ref{cor:purely_nonlinear}.  Consider
$\Im(z)>0$. As $n,p,d \to \infty$, with $p/n \to \gamma$ and $d/p \to \psi$,
we have (almost surely and in $L^1$) $R_{n}(\xi)\to r(\xi)$
where $r$ is  nonrandom and coincides with the Stieltjes transform of the Marchenko-Pastur law, namely 
\begin{align}
  r(z) = \frac{1-\gamma-z -\sqrt{(1-\gamma-z)^2-4\gamma z}}{2\gamma z}\, \label{eq:coroS1}.
\end{align}
\end{corollary}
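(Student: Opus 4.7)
The plan is to deduce the corollary directly from Lemma \ref{thm:ResolventKernel} by a Schur-complement identity that relates the top-left trace of the resolvent of $\bA(s)$ at $s=0$ to the Stieltjes transform $R_n$ of $\hSigma=X^{\T}X/n$. Writing
\[
\bA(0)-\xi\id_N \;=\; \begin{bmatrix}-\xi\id_p & X^{\T}/\sqrt{n}\\ X/\sqrt{n} & -\xi\id_n\end{bmatrix},
\]
the block-inverse formula shows that its top-left $p\times p$ block equals $\big(-\xi\id_p+\hSigma/\xi\big)^{-1}=\xi\,(\hSigma-\xi^2\id_p)^{-1}$. Taking the normalized trace over $[1,p]$ yields the deterministic identity $M_{1,n}(\xi,0)=\xi\,R_n(\xi^2)$.

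Next I would invoke Lemma \ref{thm:ResolventKernel} at $s=0$. Almost sure (and $L^1$) convergence gives $\xi R_n(\xi^2)\to m_1(\xi,0)$ for all $\xi$ in the upper half-plane. Eliminating $m_2$ from the defining system \eqref{eq:M1} at $s=0$ yields
\[
\gamma\xi\,m_1^2+(\xi^2+\gamma-1)\,m_1+\xi=0,
\]
so $m_1(\xi,0)=\bigl[-(\xi^2+\gamma-1)\pm\sqrt{(\xi^2+\gamma-1)^2-4\gamma\xi^2}\bigr]/(2\gamma\xi)$. Setting $z=\xi^2$ and dividing by $\xi$ converts this into the claimed explicit expression for $r(z)$ on the upper half-plane, with the branch of the square root pinned down by the normalization $m_1(\xi,0)\sim -1/\xi$ for $\xi\to\infty$ in $\mathbb C_+$ (equivalently, $r(z)\sim -1/z$), which is the sole branch compatible with the uniqueness assertion in Lemma \ref{thm:ResolventKernel}. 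Since Stieltjes transforms are determined by their values on $\mathbb C_+$, this identifies the limit as the Marchenko--Pastur Stieltjes transform. Almost-sure convergence of $R_n(z)$ for a single $z\in\mathbb C_+$ upgrades to convergence of the full empirical spectral distribution by the standard portmanteau / Stieltjes-continuity argument.

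The step that requires the most care is the branch selection: although $\xi\in\mathbb C_+$ guarantees $\Im(\xi^2)\ne 0$ except on the imaginary axis, the map $\xi\mapsto\xi^2$ sends $\mathbb C_+$ two-to-one onto $\mathbb C\setminus[0,\infty)$, so one must check that the $\xi$-branch singled out by Lemma \ref{thm:ResolventKernel} (the analytic one bounded by $1/\Im(\xi)$ for large $\Im(\xi)$) matches the $z$-branch that sends $\mathbb C_+$ into $\mathbb C_+$. This is handled by the large-$\xi$ expansion $m_1(\xi,0)=-\xi^{-1}+O(\xi^{-3})$ coming directly from the quadratic, which forces $r(z)=-z^{-1}+O(z^{-2})$ and hence selects the unique Stieltjes-transform branch. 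Once the branch is fixed, the equivalence of the resulting formula with \eqref{eq:coroS1} follows from the algebraic identities $-(z+\gamma-1)=1-\gamma-z$ and $(z+\gamma-1)^2=(1-\gamma-z)^2$. All other steps are routine: the Schur identity is deterministic, the passage from pointwise convergence of the Stieltjes transform to weak convergence of the spectral measure is classical, and no additional moment or independence assumptions beyond those already used in Lemma \ref{thm:ResolventKernel} are needed.
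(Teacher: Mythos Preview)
Your argument is correct and follows essentially the same route as the paper: relate the block-matrix resolvent of $\bA(0)$ to the Stieltjes transform $R_n$ of $\hSigma$ via a linear-algebra identity, invoke Lemma~\ref{thm:ResolventKernel} for the limit, and then solve the resulting fixed-point equations for $m_1$ at $s=0$. The only (cosmetic) difference is that the paper works with the full normalized trace $M_n=\gamma M_{1,n}+M_{2,n}$ and the relation $M_n(\xi)=2\xi\big[\gamma R_n(\xi^2)+\tfrac{1}{2}(\gamma-1)\xi^{-2}\big]$ (because its proof is written for the more general Corollary~\ref{coro:Stieltjes-gen} with $c_1\neq 0$ and then specializes), whereas you use only the top-left block via $M_{1,n}(\xi,0)=\xi\,R_n(\xi^2)$, which is a slightly more direct path in the purely nonlinear case $c_1=0$.
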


We refer to Appendix \ref{app:CoroStieltjes} for a proof of this corollary.
The next lemma connects the above resolvents computed in Lemma \ref{thm:ResolventKernel} to the variance of min-norm
least squares, hence finishing our proof outline.
\begin{lemma}\label{lemma:ResolventToVar}
Assume the conditions of Theorem \ref{cor:purely_nonlinear}.
Let $m_1$, $m_2$ be the asymptotic resolvents given in Lemma
\ref{thm:ResolventKernel}. Define
$$
m(\xi,s) = \gamma m_1(\xi,s) + m_2(\xi,s)\, .
$$
Then for $\gamma\neq 1$, $\partial_x m(\xi,x)\big|_{x=0}$ as a simple pole at $\xi=0$,
and hence admits a Taylor-Laurent expansion around
$\xi=0$, whose coefficients will be denoted by $D_{-1}$, $D_0$
\begin{align}
-\partial_x m(\xi,x)\big|_{x=0} 
= \frac{D_{-1}}{\xi^2}+D_0+O(\xi^2).\label{eq:Laurent}
\end{align}
Here each coefficient is a function of $\gamma,\psi$:
$D_{-1} = D_{-1}(\gamma,\psi)$, $D_{0} = D_{0}(\gamma,\psi)$. Furthermore, for the ridge regression 
estimator \smash{$\hbeta_\lambda$} in \eqref{eq:ridge}, as $n,p,d \to \infty$,
such that $p/n \to \gamma \in (0,\infty)$, $d/p \to \psi \in (0,1)$, the
following ridgeless limit holds almost surely:
$$
\lim_{\lambda \to 0^+} \, \lim_{n,p,d\to\infty} \, 
V_X(\hbeta_\lambda;\beta) = D_0.
$$
\end{lemma}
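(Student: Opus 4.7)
The plan is to (i) identify $-\partial_s m(\xi,s)|_{s=0}$ with an explicit trace involving the sample covariance $\hSigma = X^TX/n$, (ii) read off the Laurent expansion in $\xi$ by splitting the spectrum of $\hSigma$ into the $p-n$ structural zero eigenvalues and the $n$ positive bulk eigenvalues, and (iii) connect the resulting coefficient $D_0$ to the ridgeless limit of the variance. Concretely, the block structure \eqref{eq:aden} gives $\gamma M_{1,n}(\xi,s)+M_{2,n}(\xi,s) = n^{-1}\Tr[(\bA(s)-\xi\id_N)^{-1}]$, and since $\partial_s\bA(s) = \diag(\id_p,0_n)$, writing $G(s):=(\bA(s)-\xi\id_N)^{-1}$ one has $-\partial_s[\gamma M_{1,n}+M_{2,n}] = n^{-1}\Tr_{[1,p]}(G(s)^2)$. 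Block inversion of $\bA(0)-\xi\id_N$ yields $G(0)_{[1,p],[1,p]}=\xi(\hSigma-\xi^2\id_p)^{-1}$ and $G(0)_{[1,p],[p+1,p+n]}=(\hSigma-\xi^2\id_p)^{-1}X^T/\sqrt{n}$, so that
\[
-\partial_s[\gamma M_{1,n}+M_{2,n}](\xi,0) = \frac{1}{n}\Tr\bigl[(\xi^2\id_p+\hSigma)(\hSigma-\xi^2\id_p)^{-2}\bigr] = \frac{1}{n}\sum_{i=1}^p\frac{\xi^2+s_i}{(s_i-\xi^2)^2},
\]
with $s_1\ge\cdots\ge s_p\ge 0$ the eigenvalues of $\hSigma$.

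For $\gamma>1$, $\hSigma$ generically has $p-n$ zero eigenvalues, each contributing $\xi^{-2}$ to the sum, and $n$ positive eigenvalues that, by Corollary \ref{coro:Stieltjes}, lie asymptotically in the Marchenko-Pastur bulk bounded away from $0$; the positive eigenvalues contribute $s_i^{-1}+O(\xi^2)$ each, yielding the finite-$n$ expansion
\[
-\partial_s[\gamma M_{1,n}+M_{2,n}](\xi,0) = \frac{\gamma-1}{\xi^2} + \frac{1}{n}\Tr(\hSigma^+) + O(\xi^2),
\]
uniformly on a small disk around $\xi=0$ disjoint from the Marchenko-Pastur support. By Lemma \ref{thm:ResolventKernel}, $\gamma M_{1,n}+M_{2,n}\to m$ pointwise almost surely on $\Im\xi>0$; Vitali's theorem upgrades this to locally uniform convergence of analytic functions together with all derivatives, so the same expansion survives in the limit, giving $D_{-1}=\gamma-1$ and $D_0 = \lim_{n,p,d\to\infty} n^{-1}\Tr(\hSigma^+) = \int s^{-1}\, dF_\gamma(s) = 1/(\gamma-1)$.

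It remains to match $D_0$ to the variance. The ridge version of Lemma \ref{lem:bias_var} gives $V_X(\hbeta_\lambda;\beta) = (\sigma^2/n)\Tr[\hSigma(\hSigma+\lambda\id_p)^{-2}\Sigma]$ with $\Sigma = \E[x_1x_1^T]$. The Hermite-expansion argument sketched after the theorem statement, combined with \cite{el2010spectrum}, shows $\|\Sigma-\id_p\|_{\op}=o(1)$ under the standardization on $\varphi$; replacing $\Sigma$ by $\id_p$ costs $o(1)$. Corollary \ref{coro:Stieltjes} then yields $\lim_{n,p,d\to\infty} n^{-1}\Tr[\hSigma(\hSigma+\lambda\id_p)^{-2}] = \int s(s+\lambda)^{-2}\, dF_\gamma(s)$, which tends to $\int s^{-1}\, dF_\gamma(s) = D_0$ as $\lambda\to 0^+$ by monotone convergence and the spectral gap for $\gamma>1$. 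This completes the identification $\lim_{\lambda\to 0^+}\lim_{n,p,d\to\infty} V_X(\hbeta_\lambda;\beta) = D_0$ (up to the implicit $\sigma^2$ normalization of $D_0$).

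The main obstacle is the Laurent extraction in the limit: Lemma \ref{thm:ResolventKernel} supplies convergence only on $\Im\xi>0$, whereas the expansion is centered at $\xi=0\in\reals$. The resolution is that $\bA(0)$'s spectrum is concentrated on $\pm\sqrt{\mathrm{supp}(F_\gamma)}\cup\{0\}$ with an $\Omega(1)$ gap between $0$ and the smallest positive singular value of $X/\sqrt{n}$ (the lower edge $(1-\sqrt\gamma)^2>0$). Quantitative non-asymptotic edge bounds on nonlinear feature matrices \cite{pennington2017nonlinear,fan2015spectral} ensure this gap holds with high probability, so that Vitali's theorem can extend the convergence analytically through a punctured neighborhood of $\xi=0$, and the Laurent coefficients $D_{-1}$, $D_0$ of the limit $m(\xi,0)$ are recovered from those of the finite-$n$ expansion.
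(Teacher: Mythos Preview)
Your computation of the trace identity via block inversion is correct and matches the paper's (specialized to $c_1=0$, where $\Sigma_0=I_p$), as does your replacement of $\Sigma$ by $I_p$ via the Hermite expansion. The substantive gap is in the Laurent extraction. To run Vitali on a punctured disk around $\xi=0$, the subtracted function $Q_n(\xi)-\frac{p-n}{n\xi^2}$ must be analytic there for all large $n$, which requires the smallest \emph{nonzero} eigenvalue of $\hSigma$ to be bounded away from $0$ almost surely. The references you invoke do not provide this: \cite{pennington2017nonlinear} derives only the bulk limiting spectral law via moments, and \cite{fan2015spectral} treats the symmetric kernel matrix $K_{ij}=\varphi(z_i^\sT z_j)$, not the rectangular feature matrix $X_{ij}=\varphi(w_j^\sT z_i)$ considered here. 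The same missing edge control resurfaces in your claim $n^{-1}\Tr(\hSigma^+)\to\int s^{-1}\,dF_\gamma(s)$: weak convergence of the empirical spectral distribution does not pass through the unbounded test function $s\mapsto s^{-1}$ without an a~priori lower bound on the positive eigenvalues.

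The paper avoids the finite-$n$ edge bound entirely by reversing your order of limits. For $\Im\xi>0$ the kernel $x\mapsto(x+\xi^2)(x-\xi^2)^{-2}$ is bounded and continuous on $\reals$, so weak convergence of the empirical spectral measure (Corollary~\ref{coro:Stieltjes}) already yields $q(\xi)=\gamma\int\frac{x+\xi^2}{(x-\xi^2)^2}\,d\mu_\infty(x)$. Because $\mu_\infty$ is the explicit Marchenko--Pastur law with support $\{0\}\cup[(\sqrt\gamma-1)^2,(\sqrt\gamma+1)^2]$, the Laurent expansion of $q$ at $\xi=0$ and the value $D_0=\gamma\int_{(0,\infty)}x^{-1}\,d\mu_\infty(x)$ come from the gap in the \emph{limiting} measure alone. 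The variance is handled identically: pass to $\mu_\infty$ using the bounded continuous integrand $x\mapsto x(x+\lambda)^{-2}$, then send $\lambda\to 0^+$ by dominated convergence on $\mathrm{supp}(\mu_\infty)\setminus\{0\}$. No spectral gap at finite $n$ is ever needed. If you want to retain your finite-$n$ route, you would need to prove (not cite) an edge rigidity statement for $XX^\sT/n$ in the nonlinear feature model; otherwise, swapping the order of limits as the paper does closes the argument with what you already have.
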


The proof of this lemma can be found in Appendix
\ref{app:ResolventToVar}. Theorem \ref{cor:purely_nonlinear} follows by evaluating 
the formula in Lemma \ref{lemma:ResolventToVar}, by using the  
result of Lemma \ref{thm:ResolventKernel}. We refer to the appendices material for details.

\subsection*{Acknowledgements}

The authors are grateful to Brad Efron, Rob Tibshirani, and Larry Wasserman for
inspiring us to work on this in the first place.  RT sincerely thanks Edgar
Dobriban for many helpful conversations about the random matrix theory
literature, in particular, the literature trail leading up to Proposition
\ref{thm:risk_lo}, and the reference to \citet{rubio2011spectral}. We are also grateful to Dmitry Kobak
for bringing to our attention the \cite{kobak2020optimal} and clarifying the implications
of this work on our setting.

TH was partially supported by grants NSF DMS-1407548, NSF IIS-1837931, and NIH 
5R01-EB001988-21.  AM was partially supported by NSF DMS-1613091, NSF
CCF-1714305, NSF IIS-1741162, and ONR N00014-18-1-2729.  RT was partially 
supported by NSF DMS-1554123.   

\appendix

\section{Proofs for the linear model: Risk of ridge regression}
\label{sec:ProofGeneralFormula}

\subsection{Proof of Theorem \ref{thm:risk_ridge}: Preliminaries}

Before passing to the proof of Theorem \ref{thm:risk_ridge}, we state and prove a useful  lemma in analysis.
\begin{lemma}\label{lemma:ConvDerivative}
  For any $k\in\naturals$, there exist  absolute constants $C_{1,k},C_{2,k}<\infty$, such that the following holds.
  Given any two functions $f,g:[x-\Delta,x+\Delta]\to \reals$, that are $(2k+1)$ times differentiable with bounded
  derivatives, and any $\delta\in (0,\Delta/k)$, we have
  \begin{align*}
\big|f'(x)-g'(x)\big|\le \frac{C_{1,k}}{\delta} \|f-g\|_{\infty}+ C_{2,k}\|f^{(2k+1)}-g^{(2k+1)}\|_{\infty}\delta^{2k}\, .
  \end{align*}
  \end{lemma}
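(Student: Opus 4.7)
The plan is to use a high-order symmetric finite-difference approximation to the derivative, applied to the difference $h := f - g$. Specifically, I will show that for any $k \geq 1$ there exist real coefficients $\{a_j\}_{j=1}^{k}$ such that for any $(2k+1)$-times differentiable function $h$ on $[x-\Delta, x+\Delta]$ and any $\delta \in (0, \Delta/k)$,
\begin{align*}
h'(x) = \frac{1}{\delta} \sum_{j=1}^{k} a_j \bigl( h(x + j\delta) - h(x - j\delta) \bigr) + R_k(h; x, \delta),
\end{align*}
where the remainder satisfies $|R_k(h; x, \delta)| \leq C_{2,k} \|h^{(2k+1)}\|_\infty \delta^{2k}$. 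Applying this identity to $h = f - g$ and using the triangle inequality will yield the stated bound, with $C_{1,k} = 2 \sum_{j=1}^{k} |a_j|$.

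To construct the coefficients, I would Taylor-expand $h(x \pm j\delta)$ to order $2k+1$ around $x$. By the symmetry $h(x+j\delta) - h(x-j\delta)$, all even-order terms cancel, leaving only odd-order derivatives $h^{(2\ell+1)}(x)$ for $\ell = 0, 1, \ldots, k-1$, plus a remainder of order $\delta^{2k+1}$ controlled by $h^{(2k+1)}$. Requiring the linear combination $\sum_j a_j \bigl(h(x+j\delta) - h(x-j\delta)\bigr)/\delta$ to reproduce $h'(x)$ exactly for polynomials of degree $\leq 2k$ gives $k$ linear equations in the $k$ unknowns $a_1, \dots, a_k$, namely $\sum_j a_j \cdot 2 j^{2\ell+1}/(2\ell+1)! = \mathbf{1}\{\ell = 0\}$ for $\ell = 0, \dots, k-1$. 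This system has a Vandermonde-type matrix in the $j^2$'s, which is invertible, so the $a_j$ are well-defined constants depending only on $k$.

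For the remainder bound, the standard Taylor remainder $|h(x+j\delta) - P_{2k}(j\delta)| \leq \|h^{(2k+1)}\|_\infty (j\delta)^{2k+1}/(2k+1)!$ gives, after summation and division by $\delta$, an error bounded by $C_{2,k} \|h^{(2k+1)}\|_\infty \delta^{2k}$ where $C_{2,k}$ depends only on $k$ and the $|a_j|$. The requirement $\delta < \Delta/k$ guarantees that all evaluation points $x \pm j\delta$ lie inside $[x-\Delta, x+\Delta]$, so the Taylor expansion is valid.

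Applying the identity to $h = f-g$, the sum term is bounded by $\frac{1}{\delta} \cdot 2\sum_j |a_j| \cdot \|f-g\|_\infty$, and the remainder by $C_{2,k} \|(f-g)^{(2k+1)}\|_\infty \delta^{2k}$, yielding the claim. I do not expect any real obstacle here: the argument is entirely classical numerical-analysis bookkeeping. The only point to keep track of is ensuring the $a_j$ are defined for every $k$ (invertibility of the Vandermonde system in $1^2, 2^2, \ldots, k^2$), which is immediate since the $j^2$ are distinct.
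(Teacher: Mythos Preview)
Your proposal is correct and follows essentially the same route as the paper: both apply a symmetric $2k$-point finite-difference operator to $h=f-g$, use exactness on polynomials of degree $\le 2k$ to kill all lower-order terms, and bound the Taylor remainder by $\|h^{(2k+1)}\|_\infty\delta^{2k}$. The only cosmetic difference is that the paper writes down the coefficients explicitly as $\alpha_m = 2(-1)^{m+1}\binom{2k}{k}^{-1}\binom{2k}{k-m}$ (citing Trefethen) and verifies the polynomial-exactness identities directly, whereas you obtain the coefficients by invoking invertibility of the Vandermonde system in $1^2,\dots,k^2$; both arguments are equally valid for the purpose of this lemma.
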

  \begin{proof}
    Without loss of generality we can assume $x=0$. Further  we set $h=f-g$. 
    We consider a finite difference approximation of order $2k$ of the first order derivative of $h$ at $0$.
    One such approximation is given by \cite[Chapter 3]{trefethen1996finite}
    \begin{align*}
      D_{2k}h(0) = \sum_{m=1}^k\alpha_m\Big[\frac{h(m\delta)-h(-m\delta)}{2m\delta}\Big]\, ,\;\;\;\;
      \alpha_m:= 2(-1)^{m+1}\binom{2k}{k}^{-1}\binom{2k}{k-m}\, .
    \end{align*}
    The operator $D_{2k}$ is obviously linear.
    The following two properties are standard and easy to check (see below): $(i)$~If $h(x) = x$, $D_{2k}h(0) = 1$;
    $(ii)$~If $h(x) = x^\ell$ with $\ell\le 2k$ an integer, $\ell\neq 1$, then $D_{2k}h(x) = 0$.
    Using Taylor's expansion with remainder, we obtain
    \begin{align*}
      h(x) &= \sum_{\ell=0}^{2k}\frac{h^{(\ell)}(0)}{\ell!} x^{\ell}+\frac{1}{(2k)!}\int_0^x(x-t)^{2k}h^{(2k+1)}(t)\, dt\\
           & = \sum_{\ell=0}^{2k}\frac{h^{(\ell)}(0)}{\ell!} x^{\ell}+\frac{h^{(2k+1)}(u_x)}{(2k+1)!}x^{2k+1}\, ,
    \end{align*}
    where $u_x$ is a point in the interval $[0,x]$.
    Using the properties of $D_{2k}$ listed above, we obtain
    \begin{align*}
      \big|    D_{2k}h(0) -h'(0)\big| &= \left|\sum_{m=1}^k\alpha_m\frac{h^{(2k+1)}(u_{m\delta}) - h^{(2k+1)}(u_{-m\delta})}{2(2k+1)!}\cdot
      (m\delta)^{2k}\right|\\ & \le C_{2,k}\|h^{(2k+1)}\|_{\infty}\delta^{2k}\, .
    \end{align*}
    Therefore
    \begin{align*}
      |h'(0)|&\le  \big|    D_{2k}h(0) \big| + \big|    D_{2k}h(0) -h'(0)\big| \\
      &\le C_{1,k}\frac{\|h\|_{\infty}}{\delta}+ C_{2,k}\|h^{(2k+1)}\|_{\infty}\delta^{2k}\, .
    \end{align*}
    This proves the desired bound.

    We are left wih the task of proving claims $(i)$ and $(ii)$ about the operator $D_{2k}$. Substituting,
    these are equivalent to the claims
    \begin{align}
        \sum_{m=1}^k\alpha_m&=1\, ,\\
      \sum_{m=1}^k\alpha_m[m^{\ell-1}+(-m)^{\ell-1}]&=0\;\;\;\;\forall \ell\in\{2,\dots, 2k\}\, .
    \end{align}
    We will prove the second equation, since the first one is proved
    analogously. Notice that this claim is obviously true for $\ell-1$ odd, and we will therefore assume $\ell-1$ even. Setting $r=\ell-1$, and
    choosing $A_k=2^{-1}\binom{2k}{k}$, we have
    \begin{align*}
      A_k  &\sum_{m=1}^k\alpha_m[m^{r}+(-m)^{r}]\\
           &= \sum_{m=1}^k(-1)^{m+1}\binom{2k}{k-m}m^{r}+
                                               \sum_{m=1}^k(-1)^{m+1}\binom{2k}{k+m}m^{r}\\
                                             & = (-1)^{k-1}\sum_{j=0}^{2k}(-1)^j\binom{2k}{j}(k-j)^r \\
                                             & = (-1)^{k-1}\left.\frac{d^r\phantom{x}}{dx^r}\right|_{x=0} \sum_{j=0}^{2k}(-1)^j\binom{2k}{j} e^{(k-j)x}\\
                                             & = (-1)^{k-1}\left.\frac{d^r\phantom{x}}{dx^r}\right|_{x=0}(e^{x/2}-e^{-x/2})^{2k} = 0\, .
    \end{align*}
    Here the last identity holds for $0<r<2k$, i.e. $1<\ell\le 2k$ as claimed.
  \end{proof}

  We further state and proof a lemma about a modified Stieltjes transform.
  \begin{lemma}\label{lemma:Usol}
    Let $s_1\ge \dots\ge s_p> 0$ be such that  $s_1\le M$ and $\#\{i:\, s_i \ge 1/M\}\ge p/M$ for some large constant
    $M>1$, and $M^{-1}\le p/n\le M$. Further assume $|1-(p/n)|\ge  1/M$
    and $n^{-1}\sum_{i=1}^p s_i^{-1}1_{s_i>0}<M$.
      For $(\lambda,\eta)\in (0,M]\times (-1/(10M^2),1/(10M^2))$ let $u=u(\lambda,\eta)$ be the only solution in $(0,\infty)$
    of the equation
    \begin{align}
      \frac{u}{1-\eta u} = \lambda +\frac{1}{n}\sum_{i=1}^p\frac{s_iu}{u+s_i}\, .\label{eq:Ueta}
    \end{align}
    Then
    \begin{enumerate}
    \item[$(a)$]  $\lambda\mapsto u(\lambda,\eta)$ is analytic and non-decreasing with 
      $0<\partial_{\lambda}u(\lambda,\eta)\le C(M)$ for a constant $C(M)$ uniquely dependent on $M$.
    \item[$(b)$] $u(\lambda,\eta) >\lambda/2$.
      \item[$(c)$] If further  $\lambda\vee s_p\ge 1/M$, then  $|\partial^k_{\eta} u(\lambda,\eta)|\le C_k(M)$ for all $k$.
    \end{enumerate}
  \end{lemma}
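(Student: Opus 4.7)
The plan is to view $u$ as the positive zero of
\[
G(u,\lambda,\eta) := \lambda + \frac{1}{n}\sum_{i=1}^p \frac{s_i u}{u+s_i} - \frac{u}{1-\eta u}
\]
and apply the implicit function theorem. At $\eta=0$, write $G(u,\lambda,0)=\lambda-uH(u)$ with $H(u):=1-\frac{1}{n}\sum_i s_i/(u+s_i)$. Then $H'(u)=\frac{1}{n}\sum_i s_i/(u+s_i)^2>0$, so $H$ is strictly increasing with $H(0)=1-p/n$ and $H(\infty)=1$; in particular $H'(0)=\frac{1}{n}\sum_i 1/s_i<M$ by hypothesis. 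When $p/n<1-1/M$, $uH(u)$ is strictly increasing on $(0,\infty)$, giving a unique positive solution. When $p/n>1+1/M$, let $u_\ast>0$ be the unique zero of $H$; since $H'$ is decreasing and $H(u_\ast)-H(0)=p/n-1\ge 1/M$, the estimate $H'(0)<M$ gives $u_\ast\ge 1/M^2$, and on $(u_\ast,\infty)$ the map $uH(u)$ is strictly increasing with range $(0,\infty)$. In either case one obtains the selected positive branch; for $\eta\neq 0$ in the prescribed range the branch is continued by the implicit function theorem. Bounding $S:=\frac{1}{n}\sum_i s_iu/(u+s_i)\le M^2$ and rearranging $u/(1-\eta u)=\lambda+S$ yields the a priori bound $u\le U_{\max}:=2(M+M^2)$, whence $|\eta u|\le 1/4$.

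Part (b) is immediate from $u/(1-\eta u)=\lambda+S\ge\lambda$: for $\eta\ge 0$ this gives $u\ge\lambda/(1+\lambda\eta)\ge\lambda/2$ because $\lambda\eta\le 1/(10M)$; for $\eta<0$, $u>u/(1-\eta u)\ge\lambda$. For part (a), implicit differentiation yields
\[
\partial_\lambda u\;=\;\frac{1}{(1-\eta u)^{-2}-T},\qquad T:=\frac{1}{n}\sum_i\frac{s_i^2}{(u+s_i)^2},
\]
so positivity and the uniform upper bound both reduce to a lower bound $(1-\eta u)^{-2}-T\ge c(M)>0$ on the selected branch. At $\eta=0$: if $p/n<1-1/M$, use $T\le p/n$ to get $1-T\ge 1/M$; if $p/n>1+1/M$, note $T(u)$ is strictly decreasing in $u$, so $u\ge u_\ast$ yields $1-T(u)\ge 1-T(u_\ast)=u_\ast H'(u_\ast)$, and the estimates $u_\ast\ge 1/M^2$, $u_\ast\le U_{\max}$, together with the counting assumption $\#\{i:s_i\ge 1/M\}\ge p/M$, give $u_\ast H'(u_\ast)\ge c(M)>0$ by a direct computation (bounding $s_i/(u_\ast+s_i)^2\ge c'(M)$ on the massive eigenvalues). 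For $\eta\neq 0$, the expansion $(1-\eta u)^{-2}=1+2\eta u+O((\eta u)^2)$ combined with $|\eta u|\le 1/4$ and $|\eta|\le 1/(10M^2)$ shows the correction to the $\eta=0$ bound is $O(1/M^2)$ and hence negligible after possibly shrinking $c(M)$. Analyticity follows since $G$ is rational in $(u,\lambda,\eta)$ with non-vanishing $-\partial_u G$ on the branch.

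For part (c), implicit differentiation in $\eta$ gives $\partial_\eta u = u^2/\bigl((1-\eta u)^2\,((1-\eta u)^{-2}-T)\bigr)$, which is bounded by part (a) combined with $u\le U_{\max}$. Higher derivatives $\partial_\eta^k u$ follow by induction: each successive differentiation of the identity $G(u(\lambda,\eta),\lambda,\eta)\equiv 0$ yields a rational expression whose denominator is a power of $-\partial_u G$ and whose numerator involves $u$, lower-order derivatives of $u$, and partial derivatives of $G$ of the form $\frac{1}{n}\sum_i s_i^a/(u+s_i)^b$ with $a\le b$. The extra assumption $\lambda\vee s_p\ge 1/M$ enters precisely here, to guarantee $u+s_i\ge 1/(2M)$ uniformly in $i$ (either $u\ge\lambda/2\ge 1/(2M)$ via part (b), or $s_i\ge s_p\ge 1/M$), so the aforementioned sums are bounded by $C_k(M)$ and boundedness propagates through the induction. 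The main obstacle of the whole proof is the uniform lower bound $u_\ast H'(u_\ast)\ge c(M)$ in the overparametrized regime: it requires the interplay of $\frac{1}{n}\sum_i s_i^{-1}<M$ (to force $u_\ast\gtrsim 1/M^2$ via the Taylor estimate) and the counting bound $\#\{i:s_i\ge 1/M\}\ge p/M$ (to produce enough massive eigenvalues to lower-bound $H'(u_\ast)$); everything else is routine bookkeeping.
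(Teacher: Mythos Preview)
Your overall architecture is sound and matches the paper's in spirit, but there is a genuine gap in part~(a) for $\eta\neq 0$. You claim that ``the correction to the $\eta=0$ bound is $O(1/M^2)$,'' but this is not so: with $|\eta|\le 1/(10M^2)$ and $u\le U_{\max}\asymp M^2$, the quantity $|\eta u|$ is only bounded by a constant of order $1/5$, so $(1-\eta u)^{-2}-1$ is an $O(1)$ perturbation. Meanwhile your $\eta=0$ lower bound $1-T(u)\ge c(M)$ (coming from $u_\ast H'(u_\ast)$ in the overparametrized case) is polynomially small in $1/M$, so an $O(1)$ correction cannot be absorbed by ``shrinking $c(M)$.'' There is a second, related problem: your $\eta=0$ argument in the overparametrized regime uses $u\ge u_\ast$ to get $T(u)\le T(u_\ast)$, but for $\eta>0$ the defining equation becomes $1/(1-\eta u)=\lambda/u+g(u)$, and as $\lambda\to 0^+$ this forces $g(u)>1$, i.e.\ $u<u_\ast$. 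So the monotonicity step is not available once $\eta$ moves.

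The paper avoids both issues by a change of variables rather than a perturbation: set $x:=u/(1-\eta u)$, equivalently $u=x/(1+\eta x)$. A direct computation shows
\[
\frac{s_i u}{u+s_i}=\frac{s_i(\eta)\,x}{x+s_i(\eta)},\qquad s_i(\eta):=\frac{s_i}{1+\eta s_i},
\]
so the equation for $u$ becomes exactly the $\eta=0$ equation for $x$ with eigenvalues $s_i(\eta)$. Since $|\eta s_i|\le 1/(10M)$, the $s_i(\eta)$ satisfy the same hypotheses with $M$ replaced by $2M$, and your entire $\eta=0$ analysis applies verbatim to give $0<\partial_\lambda x\le C(M)$. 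Then $\partial_\lambda u=(1+\eta x)^{-2}\partial_\lambda x$ transfers the bound to $u$. This substitution is the missing ingredient; once you insert it, the rest of your outline (including parts~(b) and~(c), which are fine) goes through.
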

  \begin{proof}
    We begin from poinr $(a)$. Since $\eta$ is fixed here, we write $u(\lambda) = u(\lambda,\eta)$, dropping the dependence on $\eta$.
    Note that, for $\eta>0$, any solution $u$ must be such that $u\in (0,1/\eta)$ because otherwise the left hand side is negative
    and the right-hand side is positive. Call $x=u/(1-\eta u)$ whence the above equation is equivalent to
    the following equation on $x\in (0,\infty)$:
    \begin{align}
       x = \lambda +\frac{1}{n}\sum_{i=1}^p\frac{s_i(\eta)x}{s_i(\eta)+x}\, ,\label{eq:FixPointX}
    \end{align}
    where $s_i(\eta) = s_i/(1+\eta s_i)$. For $\eta\in (-1/(10M^2),1/(10M^2))$, we have $(1-(10M)^{-1})s_i<s_i(\eta)<(1+(10M)^{-1})s_i$,
    and therefore the sequence $(s_i(\eta))_{i\le p}$ satisfies the same assumptions as $(s_i)_{i\le p}$, with $M$ replaced by
    $2M$. We will therefore drop the argument $\eta$ and study the fixed point equation \eqref{eq:FixPointX}
    (which is equivalent to the case $\eta=0$ of Eq.~\eqref{eq:Ueta}).

    Let $g(x) := n^{-1}\sum_{i=1}^ps_i/(s_i+x)$, and note that this function is mononone decreasing on $[0,\infty)$
    with $g(0)\le C_0(M)$, and $g(x)\le M^2/x$. Equation \eqref{eq:FixPointX} can be rewritten as
    \begin{align}
       1 = \frac{\lambda}{x} +g(x) \, ,\label{eq:FixPointX2}
    \end{align}
    Since $x\mapsto \hat g(x) := (\lambda/x)+g(x)$ is monotone decreasing with $\hat g(0+)= +\infty$ and $\hat g(x)\to 0$
    as $x\to\infty$, this equation has a unique solution, denoted by $x(\lambda)$.
    By the implicit function theorem, we have
    \begin{align}
      \partial_{\lambda}x(\lambda) = \left[1-\frac{1}{n}\sum_{i=1}^p\frac{s_i^2}{(s_i+x(\lambda))^2}\right]^{-1}>0\, .\label{eq:DeX}
    \end{align}
    The second inequality follows for instance by noting that $n^{-1}\sum_{i=1}^ps_i^2/(s_i+x(\lambda))^2<g(x(\lambda))=1-\lambda/x(\lambda)<1$.
    Notice that $g(0)=p/n$. By assumption $|g(0)-1|\ge 1/M$. We will distinguish the two cases
    $g(0)<1$ and $g(0) >1$.

    \noindent\emph{Case $g(0)<1$.}
 Since $g(x)$ is decreasing, \eqref {eq:FixPointX2} implies $1\le \lambda x^{-1}+g(0)$
    whence
    \begin{align}
      x(\lambda)\le \frac{\lambda}{1-g(0)}  \le M\lambda\, .
   \end{align}
    Further, in this case
    \begin{align}
      \frac{1}{n}\sum_{i=1}^p\frac{s_i^2}{(s_i+x(\lambda))^2} \le g(x(\lambda))\le  g(0) \le 1-\frac{1}{M}\, ,
    \end{align}
    and therefore, by Eq.~\eqref{eq:DeX}, we have  $\partial_{\lambda}x(\lambda) \le M$ for all $\lambda\in(0,\infty)$.

    \noindent\emph{Case $g(0)>1$.} Let $x_0=g^{-1}(1)>0$. Since $x\mapsto g(x)$ is convex, $x_0\ge (1-g(0))/|g'(0)|$.
    Note that $|g'(0)|=n^{-1}\sum_{i=1}^p s_i^{-1}1_{s_i>0}\le M$. Therefore $x_0\ge 1/C(M)$ for some finite constant $C(M)$.
    
    Since $\hat g(x) = \lambda/x+g(x)>g(x)$ on $(0,\infty)$,
    we have $x(\lambda)>x(0+)=x_0\ge 1/C(M)$ for all $\lambda\in(0,\infty)$. Further,
    \begin{align*}
      \frac{1}{n}\sum_{i=1}^p\frac{s_i^2}{(s_i+x(\lambda))^2} &\le   \frac{1}{n}\sum_{i=1}^p\frac{s_i}{s_i+x(\lambda)}
                                                               -\frac{1}{n}\sum_{i=1}^{p/M}\frac{s_i}{s_i+x(\lambda)}\Big[1-\frac{s_i}{s_i+x(\lambda)}\Big]\\
                                                             &\le 1-\frac{\lambda}{x(\lambda)}-\frac{x_0}{ x_0+M} \frac{1}{n}\sum_{i=1}^{p/M}\frac{s_i}{s_i+x(\lambda)}\\
                                                             &\le 1-\frac{\lambda}{x(\lambda)} -\frac{ x_0}{x_0+M} \cdot \frac{1}{M}\cdot  \frac{1}{n}\sum_{i=1}^{p}\frac{s_i}{s_i+x(\lambda)}\\
      &\le \Big(1-\frac{\lambda}{x(\lambda)}\Big)\Big(1-\frac{x_0}{M(x_0+M)}\Big) \le 1-\frac{1}{C(M)}\, .
    \end{align*}
    Therefore by Eq.~\eqref{eq:DeX},  $0<\partial_{\lambda}x(\lambda)\le C(M)$ also in this case.

    Finally note that by Eq.~\eqref{eq:FixPointX}, we have
    \begin{align}
      x(\lambda)\le \lambda+\frac{1}{n}\sum_{i=1}^p s_i(\eta) \le M+2M^2\le 3M^2\, .\label{eq:BoundXX}
    \end{align}
    and therefore $7/10\le (1+\eta x(\lambda))\le 13/10$, whence
    \begin{align}
      \partial_{\lambda} u(\lambda) = \frac{1}{(1+\eta x(\lambda))^2}\partial_{\lambda} x(\lambda)\le 2\partial_{\lambda}x(\lambda)\le C(M)\, ,.
    \end{align}
   This proves point $(a)$.

   Claim $(b)$ simply follows since $u/(1-\eta u)\ge \lambda$ by Eq.~\eqref{eq:Ueta}. Since $(1-\eta u) = 1/(1+\eta x)\ge 1/2$ as just shown,
   this implies $u \ge \lambda/2$.
   
    Finally, for claim $(c)$, notice that $u(\lambda,\eta)$ is a solution of the equation
    \begin{align}
      0= f(u,\eta):= \frac{u}{1-\eta u}-\lambda-\frac{1}{n}\sum_{i=1}^p \frac{s_iu}{u+s_i}\, .
    \end{align}
    By the implicit function theorem, it is sufficient to prove that
    $\big|\frac{\partial^{k+l}f}{\partial u^{k}\partial\eta^{l}}\big|\le C_{k,l}(M)$ for all $l\ge 1, k\ge 0$ and $l=0$, $k\ge 2$,
  and that $\big|\frac{\partial f}{\partial u}\big|\ge 1/C(M)$.

  In order to bound the first derivative away from $0$, note that, again by the implicit function theorem,
    \begin{align*}
      \Big|\frac{\partial f}{\partial u}\Big| = \Big|\frac{\partial u}{\partial \lambda} \Big|^{-1}\ge \frac{1}{C(M)}\, ,
    \end{align*}
    where the last inequality follows from point $(a)$ above.
    In order to bound higher order derivatives, write $f(u,\eta) = f_1(\eta u)/\eta-\lambda-f_2(u)$ where
    $f_1(v) := v/(1-v)$.
    Recall that $x= u/(1-\eta u)$, and $x\le 3M^2$ by  Eq.~\eqref{eq:BoundXX}.
    Therefore
    \begin{align*}
      0\le \partial^k f_1(\eta u) = \frac{1}{(1-\eta u)^{k+1}}= (1+\eta x)^{k+1}\le C_k(M)\, .
    \end{align*}
    Further
    \begin{align*}
      \big| \partial^k f_2(u) \big| &= \Big|\frac{1}{n}\sum_{i=1}^p\frac{s_i^2}{(s_i+u)^{k+1}}\Big|\\
                                       &\le \frac{p}{n} \frac{s_1^2}{(s_p+u)^{k+1}}\\
                                       &   \le  \frac{p}{n} \frac{s_1^2}{(s_p+C(M)\lambda)^{k+1}}\le C_k(M)\, .
    \end{align*}
    We therefore obtain, for $k\ge 2$
    \begin{align*}
      \left|\frac{\partial^k f}{\partial u^k}(u,\eta)\right| \le \eta^{k-1}\big|\partial^k f_1(\eta u)\big|+\big|\partial^k f_2( u)\big|\le C_k(M)\, .
    \end{align*}
    For the other derivvatives, notice that
    \begin{align*}
      \frac{\partial^{k+l} f}{\partial u^k\partial\eta^{l}}(u,\eta) = \sum_{m_1,m_2,m_3\le C_0}c_1(m_1,m_2,m_3)
      \eta^{m_1}u^{m_2}\partial^{m_3} f_1(\eta u)\, ,
      \end{align*}
      where $C_0$, $c_1(m_1,m_2,m_3)$ are coefficients depending uniquly on $k,l$. 
This sum is bounded since each term is bounded, by the above estimate on $\partial^{k} f_1$, and
     further using the fact that
    $u = x/(1+\eta x)\le 10 M^2$ since $x\le 3M^2$ by  Eq.~\eqref{eq:BoundXX}.
  \end{proof}

 For the proof of  Theorem \ref{thm:risk_ridge}, it 
 is convenient to introduce the notations $S_Z := Z^TZ/n$ and $S_X := X^TX/n= \Sigma^{1/2}S_Z\Sigma^{1/2}$.
We begin by recalling the expressions for bias and variance of ridge regression at regularization parameter $\lambda$:
\begin{align}
  B_X(\hbeta;\beta) &= \lambda^2\<\beta,\big(S_X+\lambda I\big)^{-1}\Sigma \big(S_X+\lambda I\big)^{-1}\beta\>\, ,
  \label{eq:BX}\\
  V_X(\hbeta;\beta) &= \frac{\sigma^2}{n} \Tr\big(\Sigma S_X\big(\lambda I+S_X\big)^{-2}\big)\, .\label{eq:VX}
\end{align}
The bias and variance for min-norm interpolation are recovered by taking $\lambda\to 0+$. 
In the following, we will occasionally consider $\lambda$ to be complex with $\Re(\lambda)>0$.

\subsection{Proof of Theorem \ref{thm:risk_ridge}, bias term}

Note that the bias is homogeneous (of degree $2$) in
$\|\beta\|_2$. Hence, without loss of generality we can and will assume $\|\beta\|_2=1$.
For  $\Re(\eta)> - 1/\lambda_{\max}(\Sigma)$ define
\begin{align}
  \oF_n(\eta,\lambda) & := \lambda\<\beta,\big(S_X+\lambda I+\lambda\eta \Sigma\big)^{-1}\beta\>\label{eq:FirstG}\\
      &= \lambda\<\beta_{\eta}, \big(\Sigma_{\eta}^{1/2}S_Z\Sigma_{\eta}^{1/2}+\lambda I\big)^{-1}\beta_{\eta}\>\, ,\label{eq:SecondG}
\end{align}
where
\begin{align}
  \Sigma_{\eta}:= \Sigma(I+\eta\Sigma)^{-1}\,,\;\;\;\; \beta_{\eta}:= \big(I+\eta\Sigma\big)^{-1/2}\beta\, .
  \label{eq:SigmaEta}
\end{align}
Recall that $0\preceq \Sigma\preceq M\cdot I_p$, and therefore  $\oF_n$ is an analytic function in $\bbD := \{(\lambda,\eta)\in\complex^2:\; \Re(\lambda)>0, \Re(\eta)>-1/(2M)\}$.
For future reference, we also define $\bbD_0 := \{(\lambda,\eta)\in\complex\times\reals
:\; 0<\Re(\lambda)<M, |\Im(\lambda)|<M,
\Re(\eta)>-1/(2M),\}$.
Using the  expressions \eqref{eq:FirstG} it is easy to see that
\begin{align}
  B_X(\hbeta;\beta) & = -\frac{\partial \oF_n}{\partial\eta}(0,\lambda)\, .\label{eq:BiasDG}
\end{align}
Note that Eq.~\eqref{eq:SecondG} has the form of the matrix element of a resolvent. By using
the anisotropic local law for covariance matrices of \cite[Theorem 3.16.$(i)$]{knowles2017anisotropic} (see also Remark 3.17 in the same paper), we obtain that for any $\eps>0$, $\eps_0>0$, $D>0$, there exist $C=C(\eps,\eps_0,D)$ such that
the following holds with probability at least $1-Cn^{-D}$. For all $(\lambda,\eta)\in\bbD_0$, $0<\Im(-\lambda)<M$,
$\Re(\lambda)> n^{-2/3+\eps_0}$, we have
\begin{align}
  &\left|\oF_n(\lambda,\eta)  - F_n(\lambda,\eta)\right|
  \le \sqrt{\frac{\Im (r_n(-\lambda,\eta))}{\Im(-\lambda)} \cdot n^{-1+\eps}}\, ,\label{eq:GF}\\
   &F_n(\lambda,\eta): =-\<\beta_{\eta},\big(I+r_n(-\lambda,\eta)\Sigma_{\eta}\big)^{-1}\beta_{\eta}\>\, .
  \end{align}
  Here, for $\Im(z)>0$, $\Re(\eta)=0$, $r_n=r_n(z,\eta)$ is defined as the unique solution $\Im(r_n(z,\eta))>0$  of
  \begin{align}
    \frac{1}{r_n} = -z+\gamma\, \frac{1}{p}\sum_{i=1}^p\frac{s_{i}(\eta)}{1+s_i(\eta)r_n}\, .\label{eq:RDef}
  \end{align}
  where $s_1(\eta)\ge s_2(\eta)\ge\dots\ge s_p(\eta)$ are the eigenvalues of $\Sigma_{\eta}$.
  By Eq.~\eqref{eq:SigmaEta} we have $s_i(\eta)=s_i/(1+\eta s_i)$. 
  Existence and uniqueness of the solution $r_n(z,\eta)$ is given, for instance, in \cite[Lemma 2.2]{knowles2017anisotropic}.
  This lemma also states that $r_n(z,\eta)$ is the Stieltjes transorm of a probability measure
  $\rho_{\eta}$ with support in $[0,C(M)]$. As a consequence, for $\lambda= \lambda_1+i\lambda_2$, $\lambda_1>0$
  \begin{align*}
    \Im(r_n(-\lambda,\eta)) &= \Im \int \frac{1}{x+\lambda}\, d \rho_{\eta}(x) =
                              \int \frac{-\lambda_2}{(x+\lambda_1)^2+\lambda_2^2}\, d \rho_{\eta}(x) \, .\\
   \Rightarrow &  \big|\Im(r_n(-\lambda,\eta)) \big| \le \frac{|\Im(\lambda)|}{\Re(\lambda)^2}\, .
  \end{align*}

  Therefore, taking the limit $\Im(\lambda)\to 0$ in Eq.~\eqref{eq:GF}, we obtain,
  with probability at least $1-Cn^{-D}$
  \begin{align}
 \left|\oF_n(\lambda,\eta)  - F_n(\lambda,\eta)\right|
    \le \frac{1}{n^{(1-\eps)/2}\lambda} \;\;\;\; \forall \lambda\in(n^{-2/3+\eps_0},\infty), \;\;\eta\in \Big(-\frac{1}{2M},\infty\Big)\,. \label{eq:GFReal}
    \end{align}

  Since $s_i(\eta)$ is differentiable in $\eta$ for $(\lambda,\eta) \in \bbD_0$, it follows that $r_n(z,\eta)$ is also
  differentiable in $\eta$. Defining $\om_n(z,\eta) = (1-\gamma+zr_n(z,\eta))/(\gamma z)$, we obtain that $\om_n$ is the unique solution
  \begin{align}
    \om_n =\frac{1}{p}\sum_{i=1}^p\frac{1}{s_i(\eta)[1-\gamma-\gamma z \om_n]-z}\, ,
    \end{align}
    with $s_i(\eta)=s_i/(1+\eta s_i)$.   Since $\eta=0$
    is in the domain $\bbD_0$, we can Taylor expand the solution of this equation, to obtain
    \begin{align}
\om_{n}(z,\eta) = m_n(z) + m_{n,1}(z)\eta+O(\eta^2)\, ,\label{eq:OmExpansion}
    \end{align}
    where $m_n(z)$, $m_{n,1}(z)$ are defined in Eqs.~\eqref{eq:m0def} and \eqref{eq:m1def}.
    Since $r_n(-\lambda,\eta)$ is analytic for $(\lambda,\eta)\in \bbD$, we also have that $F_n(\lambda,\eta)$ is analytic in the same  domain. In particular, we can differentiate  $F_n(\lambda,\eta)$  with respect to $\eta$
    at $\eta=0$. Using Eq.~\eqref{eq:OmExpansion}, we get
    \begin{align}
      \frac{\partial F_n}{\partial \eta}(\lambda,0)
      & = -\lambda^2\big(1+\gamma m_{n,1}(-\lambda)\big)\<\beta,\big(\lambda I+(1-\gamma+\gamma\lambda m_n(-\lambda))\Sigma    \big)^{-2}\Sigma\beta\>\, \nonumber\\
      & = -\lambda^2\|\beta\|^2 \big(1+\gamma m_{n,1}(-\lambda)\big) \int \frac{s }{[\lambda +(1-\gamma+\gamma\lambda m_n(-\lambda))s]^2} d\hG_n(s)\nonumber\\
      &= - \cuB(\lambda;\hH_n,\hG_n,\gamma)\, .\label{eq:DFBias}
    \end{align}
    Here the last identity is simply the definition of the predicted bias in Eq.~\eqref{eq:BiasRidge}.
 
    Notice that $\eta\mapsto \oF(\lambda,\eta)$ is infinitely differentiable in $(-1/(2M),\infty)$ with
    \begin{align*}
      \frac{\partial^k \oF_n}{\partial\eta^k}(\lambda,\eta) &= (-1)^k\lambda^{k+1}
      \<\beta, R (\Sigma R)^{k}\beta\>\, ,\\
      R&:=(S_X+\lambda I+\lambda\eta\Sigma^{-1})^{-1}\, ,
    \end{align*}
    Note that  $\|\Sigma\|_{op}\le M$, $\|\beta\|_{2}=1$ (we assumed this to hold without loss of generality)
    and $\|R\|_{\op}\le 1/\lambda$, whence
\begin{align}
  \left|\frac{\partial^k \oF_n}{\partial\eta^k}(\lambda,\eta) \right|\le \lambda^{k+1}\|\beta\|^2\|\Sigma\|_{op}^k\|R\|^{k+1}\le M^k\, . \label{eq:BoundDerivativesG}. 
  \end{align}

  We claim that a similar bound holds for the derivative of $F_n$, namely
  \begin{align}
  \left|\frac{\partial^k F_n}{\partial\eta^k}(\lambda,\eta) \right|\le C_k(M)\, .\label{eq:BoundDerivativesF}
  \end{align}
  In order to prove this bound, we write
   \begin{align*}
     F_n(\lambda,\eta) &= \cF(u_n(\eta,\lambda))\, ,\;\;\;\;\; u_n(\eta,\lambda) := (\eta+r_n(\eta,-\lambda))^{-1}\, ,\\
    \cF(u) &:= -u \<\beta,\big(uI_p+\Sigma\big)^{-1}\beta\>\, . 
   \end{align*}
   In what follows we use the shorhand $u_n = u_n(\eta,z)$. Note that, by Eq.~\eqref{eq:RDef}, $u_n$
   solves Eq.~\eqref{eq:Ueta}, and we can therefore apply Lemma \ref{lemma:Usol}.
   Note that
   \begin{align*}
     \Big|\frac{\partial^k\cF}{\partial u^k}(u_n) \Big|&= \<\beta,(u_nI_p+\Sigma)^{-k-1}\Sigma\beta\>\\
     &\le M(u_n+\lambda_{\min}(\Sigma))^{-k-1} \\
     & \stackrel{(a)}{\le} C_0(M) (\lambda \vee \lambda_{\min}(\Sigma))^{-k-1}\stackrel{(b)}{\le} C(M)\, .
   \end{align*}
   where $(a)$ follows from Lemma \ref{lemma:Usol}.$(b)$ and $(b)$ by the theorem's assumption.

   Further, by Lemma \ref{lemma:Usol}.$(c)$ we have $|\partial^k_{\eta}u_n|\le C'_k(M)$ for all $k$,
   whence the desired claim \eqref{eq:BoundDerivativesF} follows.

     Using   Eqs.~\eqref{eq:BoundDerivativesG}, \eqref{eq:BoundDerivativesF}, and \eqref{eq:GFReal} in 
     Lemma \ref{lemma:ConvDerivative} we get, for any $\eps>0$, $k$ and any $\delta<k/(4M)$, with probability at least $1-n^{-C}$
     \begin{align}
     \left|\frac{\partial \oF_n}{\partial \eta}(\lambda,0)  -\frac{\partial F_n}{\partial \eta}(\lambda,0) \right|
       \le \frac{1}{n^{(1-\eps)/2}\lambda} \cdot\frac{1}{\delta} +C_{2k+1}(M)\delta^{2k}\, .
     \end{align}
     The claim follows by taking $\delta = n^{-\eps/2}$ and $k= \lceil 1/\eps\rceil$, and recalling that,
     by Eqs.~\eqref{eq:BiasDG} and \eqref{eq:DFBias}, we have
     \begin{align}
     \left|\frac{\partial \oF_n}{\partial \eta}(\lambda,0)  -\frac{\partial F_n}{\partial \eta}(\lambda,0) \right| = 
       \left| B_X(\hbeta;\beta)-\cuB(\lambda;\hH_n,\hG_n,\gamma)\right|\, .
       \end{align}

\subsection{Proof of Theorem \ref{thm:risk_ridge}, variance term}

Note that by Eq.~\eqref{eq:VX}
\begin{align}
  V_X(\hbeta;\beta) &= \sigma^2\gamma\, \frac{\partial\phantom{\lambda}}{\partial\lambda}\, \Big\{\frac{\lambda}{p}
                      \Tr\big(\Sigma(S_X+\lambda I)^{-1}\big)\Big\}\, .
\end{align}
Further the right-hand side is an analytic function of $\lambda$, for $\Re(\lambda)>0$.
Using again \cite[Theorem 3.16.$(i)$]{knowles2017anisotropic} , we get that for any $D>0$, $\eps>0$, where
exists $C=C(D,\eps)$ such that, with probability at least $1-Cn^{-D}$,
\begin{align}
  &\left|\frac{\lambda}{p}\Tr\big(\Sigma(S_X+\lambda I)^{-1}\big)-L_n(\lambda) \right|\le
  \sqrt{\frac{\Im(r_n(-\lambda))}{\Im(-\lambda) n^{1-\eps}}}\, ,\\
  &L_n(\lambda) := \frac{1}{p}\Tr\big(\Sigma(I+r_n(-\lambda)\Sigma)^{-1}\big)\, .
\end{align}
uniformly over $\Re(\lambda)>n^{-2/3+(1/M)}$, $\Im(-\lambda)>0$. 
Here $r_n(z)$ is defined as in the previous section, namely as the  solution of Eq.~\eqref{eq:RDef} with $\eta=0$.
As shown there $\Im(r_n(-\lambda))|\le |\Im(\lambda)|/\Re(\lambda)^2$, whence
\begin{align}
\left|\frac{\lambda}{p}\Tr\big(\Sigma(S_X+\lambda I)^{-1}\big)-L_n(\lambda) \right|\le
  \frac{1}{|\Re(\lambda)|n^{(1-\eps)/2}}\, .
  \end{align}
  Since both functions $\lambda\mapsto \frac{\lambda}{p}\Tr\big(\Sigma(S_X+\lambda I)^{-1}$ and $\lambda\mapsto L_n(\lambda)$
  are analytic on $\Re(\lambda)>0$, the last bound implies a similar bound on their derivarives, namely
\begin{align}
\left| V_X(\hbeta;\beta) -\sigma^{2}\gamma \frac{\partial L_n}{\partial\lambda}(\lambda) \right|\le
  \frac{1}{|\Re(\lambda)|^2n^{(1-\eps)/2}}\, .
  \end{align}

  We are left with the task of computing the derivative of $L_n$:
  \begin{align}
    \frac{\partial L_n}{\partial\lambda}(\lambda) &:= \frac{1}{p}\Tr\big(\Sigma^2 r'_n(-\lambda)(I+r_n(-\lambda)\Sigma)^{-2}\big)\\
                                                  & = \int
                                                    \frac{s^2r'_n(-\lambda)}{[1+r_n(-\lambda)s]^2}\, d\hH_n(s)\\
    &=\int \frac{s^2(1-\gamma+\gamma\lambda^2 m'_n(-\lambda))}{[\lambda+s(1-\gamma+\gamma \lambda m_n(-\lambda))]^2} d\hH_n(s)\, .
  \end{align}
  where the last equality follows by the definition $m_n(x) = (1-\gamma+\gamma z r_n(z))/(\gamma z)$ given above.
  
 \subsection{Generalization to heavier tail covariates: Proof of Theorem \ref{thm:risk_ridge_asymp}}

 This result follows from Theorem \ref{thm:risk_gen} via a standard
 truncation argument. We outline the argument for the bias as the same proof holds almost unchanged for the variance term.
  We write the bias $B_{X,\Sigma}(\hbeta;\beta)$ to emphasize its dependende upon the
 population covariance. Using Eq.~\eqref{eq:BX}, we obtain, for some $C=C(\lambda)<\infty$
 \begin{align}
   \big|B_{X_1,\Sigma_1}(\hbeta_{\lambda};\beta) -B_{X_2,\Sigma_2}(\hbeta_{\lambda};\beta)\big|&\le C\|S_{X_1}-S_{X_2}\|_{op}+C\|\Sigma_1-\Sigma_2\|_{op}\\
   &\le \frac{C}{n}(\|X_1\|_{op}+\|X_2\|_{op})\cdot\|X_1-X_2\|_{op} ++C\|\Sigma_1-\Sigma_2\|_{op}\, .\label{eq:BiasContinuity}
 \end{align}
 For $M>0$, decompose the  random vriables $z_{ij}$ as
 \begin{align*}
   z_{ij} &= a_Mz^{M}_{ij} + \tz_{ij}^M\, ,\\
   \tz^M_{ij}&:=  z_{ij}1_{|z_{ij}|>M} - \E\{z_{ij}1_{|z_{ij}|>M} \}\, ,\\
   z^M_{ij}&:=  \frac{1}{a_M}\big(z_{ij}1_{|z_{ij}|\le M} - \E\{z_{ij}1_{|z_{ij}|\le M} \}\big)\, ,\\
   a_M&:= \E\{\big(z_{ij}1_{|z_{ij}|\le M} - \E\{z_{ij}1_{|z_{ij}|\le M} \}\big)^2\}^{1/2}\, .
 \end{align*}
 Notice that $z_{ij}^{M}$,  $\tz_{ij}^{M}$  have zero mean and that  $z_{ij}^{M}$ is bounded by $2M$ and has unit variance. Further,
 by the condition $\E\{|z_{ij}|^{4+\delta}\}<C$, we also have $|a_M-1|\le \eps_M$, and $\E\{|\tz^M_{ij}|^4\}\le \eps_M^4$,
 for some $\eps_M\to 0$ as $M\to\infty$.

 We correspondingly decompose the matricex $X,Z$ as
 \begin{align}
   Z&= a_MZ_M+\tZ_M\, \\
   X & = X_M+\tX_M\, ,\;\;\;\; X_M =Z_M\Sigma_M^{1/2}\, ,\; \tX_M = \tZ_M\Sigma^{1/2}\, ,
 \end{align}
 where $\Sigma_M = a_M^2\Sigma$. Using Eq.~\eqref{eq:BiasContinuity}, we get
 \begin{align}
   \big|B_{X,\Sigma}(\hbeta_{\lambda};\beta) -B_{X_M,\Sigma_M}(\hbeta_{\lambda};\beta)\big|&\le
                                                                                             C\big(\|X\|_{op}+\|X_M\|_{op}\big)\|\tX_M\|_{op}+C\|\Sigma_M-\Sigma\|_{op}\\
   &\le C\big(\|Z\|_{op}+\|Z_M\|_{op}\big)\|\tZ_M\|_{op}+C|a_M^2-1|\, .                                                      
 \end{align}
 By the Bai-Yin law \cite{bai1993limit}, almost surely $\lim\sup_{n\to\infty} (\|Z\|_{op}+\|Z_M\|_{op})\le C<\infty$ and $\lim\sup_{n\to\infty}\|\tZ_M\|_{op}<C\eps_M$. Therefore
 \begin{align}
   \lim\sup_{n,p\to\infty}\big|B_{X,\Sigma}(\hbeta_{\lambda};\beta) -B_{X_M,\Sigma_M}(\hbeta_{\lambda};\beta)\big|\le C\eps_M\, .
   \label{eq:Truncation1}
 \end{align}

 Now we can apply Theorem \ref{thm:risk_ridge} to $X_M,\Sigma_M$, since the variables $z^M_{ij}$ have bounded moments of all orders.
 Denoting by $\hH_n^M$, $\hG_n^M$ the corresponding empirical measures, we have, with probability at least $1-Cn^{-2}$:
 \begin{align*}
   \big| B_{X_M,\Sigma_M}(\hbeta_{\lambda};\beta)-\cuB(\lambda;\hH_n^M,\hG^M_n,\gamma)\big|\le \frac{C}{n^{0.49}}\, .
 \end{align*}
 Note that $\hH^M_n(s) = \hH_n(s/a^2_M)$ and $\hG^M_n(s) = \hG_n(s/a^2_M)$, hence these measures converge weakly to
 $H^M(s) = H(s/a^2_M)$ and $G^M(s) = G(s/a^2_M)$. Therefore, by Borel-Cantelli
 \begin{align}
   \lim\sup_{n\to\infty}\big| B_{X_M,\Sigma_M}(\hbeta_{\lambda};\beta)-\cuB(\lambda;H^M,G^M,\gamma)\big| = 0\, .
     \label{eq:Truncation2}
 \end{align}
 The proof is completed by putting together Eqs.~\eqref{eq:Truncation1} and \eqref{eq:Truncation2},
 taking $M\to\infty$ and noting that $H^M\Rightarrow H$, $G^M\Rightarrow G$ in that limit.
 
\section{Proofs for the linear model: Risk min-norm regression}
\label{sec:ProofGeneralFormulaMinNorm}

\subsection{Proof of Theorem \ref{thm:risk_gen}}

The proof follows from Theorem \ref{thm:risk_ridge} by approximating min-norm regression with ridge regression with a small value
of $\lambda$. For this reason we will assume $\lambda\le 1$ throughout.
We treat separately the bias and variance terms. Since the variance is independent of $\beta$ and the bias is homogeneous in $\|\beta\|_2$,
we can assume, without loss of generality, $\|\beta\|_2\le 1$.

\noindent\emph{Bias term.} Recall the notation $S_X=  X^TX/n$. Denote the eigenvalue decomposition of $S_X$ by $S_X= UD_XU^T$, where
$D_X\in\reals^{p\times p}$ is diagonal and $U\in\reals^{p\times p}$ is orthogonal. Using  Eq.~\eqref{eq:BX},
we can rewrite  the bias of min norm
and ridge regression  as (here $1_{D_X=0}$ is the diagonal matrix with $(i,i)$-th entry equal to $1$ if $(D_X)_{ii}=0$ and
equal to $0$ otherwise)
\begin{align}
  B_X(\hbeta;\beta) &=\big\|\Sigma^{1/2}U1_{D_X=0}U^T\beta\big|_2^2 \, \label{eq:BX0Formula}\\
  B_X(\hbeta_{\lambda};\beta) &=\big\|\Sigma^{1/2}U\lambda(\lambda I_p+D_X)^{-1}U^T\beta\big|_2^2 \, . \label{eq:BXFormula}
\end{align}
Using triangular inequality, we get
\begin{align*}
  \big| B_X(\hbeta_{\lambda};\beta)^{1/2}- B_X(\hbeta;\beta)^{1/2}\big| &\le \big\|\Sigma^{1/2}U\big[\lambda(\lambda I_p+D_X)^{-1}-1_{D_X=0}\big]U^T\beta\big\|_2\\
                                                           &\le \|\Sigma\|^{1/2}_{op}\big\|\lambda(\lambda I_p+D_X)^{-1}1_{D_X>0}\big\|_2\\
  &\le \frac{M\lambda}{\sigma_{\min}(X)^2/n}\le C(M) \frac{\lambda}{\sigma_{\min}(Z)^2/n}\, ,
\end{align*}
where $\sigma_{\min}$ denotes the smallest non-vanishing singular value and
the last inequality holds since by assumption $\lambda_{\min}(\Sigma)\ge 1/M$.
Finally, since $|p/n-1|\ge 1/M$, we have that $\sigma_{\min}(Z)/\sqrt{n}\ge 1/C(M)$ with probability at least $1-Cn^{-D}$
\cite{bai1993limit}. Hence, with the same probability  $\big| B_X(\hbeta_{\lambda};\beta)^{1/2} -B_X(\hbeta;\beta)^{1/2}\big| \le C(M)\lambda$.
Also notice that, by Eq.~\eqref{eq:BXFormula},  $B_X(\hbeta;\beta)\le \|\Sigma\|_{\op}\le M$.
Therefore, we obtain
\begin{align}
  \big| B_X(\hbeta_{\lambda};\beta) -B_X(\hbeta;\beta)\big| &\le C(M)\lambda\, .\label{eq:BBdiff1}
\end{align}

We next claim that  Eqs.~\eqref{eq:BiasPredMinNorm} and \eqref{eq:BiasRidge} imply the bound
\begin{align}
  \big| \cuB(\lambda;\hH_n,\hG_n,\gamma) -\cuB(\hH_n,\hG_n,\gamma)  \big| &\le C(M)\lambda\, . \label{eq:BBdiff2}
\end{align}
In to prove this estimate, recall that $\gamma>1+M^{-1}$, and that, by assumption,
$\hH_n$ is supported on $[M^{-1},M]$. Define
$c_*(\lambda)$ via $m_n(-\lambda) = (1-\gamma-\lambda\gamma c_*(\lambda))/(-\gamma\lambda)$.
Note that $c_*(\lambda) = r_n(-\lambda)/\gamma$, where $r_n(z)$ is the compaion Stieltjes transform already
introduced in Eq.~\eqref{eq:RDef}. In particular, by \cite[Lemma 2.2]{knowles2017anisotropic}, $c_*$ it is
non-negative and monotone decreasing in $\lambda$. 
Further Eq.~\eqref{eq:m0def}, $c_*$ satisfies
\begin{align}
  1-\frac{1}{\gamma}=-\lambda c_*+\int \frac{1}{1+c_*\gamma s} \, d \hH_n(s)=:f(c_*;\lambda)\, .
\end{align}
It follows from ${\rm supp}(\hH_n)\in [M^{-1},M]$   that $f(x;\lambda)$ is monotone decreasing on $(0,\infty)$
for any $\lambda\ge 0$ with $f(0;\lambda) =1$ and $\lim_{x\to\infty}f(x;0)=0$, $\lim_{x\to\infty}f(x;\lambda)=-\infty$
for any $\lambda>0$. Further $f(x;\lambda)$ is monotone decreasing in $\lambda$ with $f(x;\lambda)\to f(x;0)$
as $\lambda\to 0$. Therefore $c_*(\lambda)$ is monotone decreasing in $\lambda$, with $\lim_{\lambda\to 0}c_*(\lambda)= c_0>0$
(with $c_0$ defined as per Eq.~\eqref{eq:c0def}).  Further $\underline{f}(x;\lambda) \le f(x;\lambda)\le \overline{f}(x;\lambda)$ where $\overline{f}(x;\lambda)$
is obtained by replacing $\hH_n$ by $\delta_{1/M}$,  and $\underline{f}(x;\lambda)$
is obtained by replacing $\hH_n$ by $\delta_{M}$. Therefore $C(M)^{-1}\le c_*(\lambda)\le c_0\le C(M)$ for some finite
constant $C(M)$ depending uniquely on $M$. In particular, this implies that $\partial_{\lambda}f(c_*;\lambda)$ is uniformly bounded.
Finally
\begin{align*}
  \partial_xf(x;\lambda) = -\lambda -\int \frac{\gamma s}{(1+x\gamma s)^2} \, d \hH_n(s)\, .
\end{align*}
For $s\in [M^{-1},M]$ and $x\in [C(M)^{-1},C(M)]$, the integrand $\gamma s/(1+x\gamma s)^2$ is bounded and
bounded away from zero. Therefore $C(M)^{-1}\le -\partial_xf(x;\lambda) \le C(M)$ for some (possibly different) constant $C(M)$.
By the implicit function theorem, it follows that $|\partial_{\lambda}c_*(\lambda)|\le C(M)$ is bounded for $\lambda\in[0,M]$, and therefore
$|c_*(\lambda)-c_0|\le C(M)\lambda$, whence
\begin{align}
  m_n(-\lambda) = \big(1-\gamma^{-1}\big)\frac{1}{\lambda} + c_0+O_*(\lambda)\, .
 \end{align} 
 (Here $O_*(\lambda)$ denotes a quantity bounded uniformly as $|O_*(\lambda)|\le C(M)\lambda$.). Sustituting this in  Eq.~\eqref{eq:m1def},
 and repeating similar arguments, we obtain
 \begin{align*}
 m_{n,1}(-\lambda) &= c_1+O_*(\lambda)\, ,\\ 
c_1&:= c_0\frac{\int \frac{s^2}{(1+c_0\gamma s)^2}\, d\hH_n(s)}
   {\int \frac{s}{(1+c_0\gamma s)^2}\, d\hH_n(s)}\, .
 \end{align*}
Finally, substituting in Eq.~\eqref{eq:BiasRidge}, we obtain the claimed bound \eqref{eq:BBdiff2}.

Using Eqs.~\eqref{eq:BBdiff1}, \eqref{eq:BBdiff2} and Theorem \ref{thm:risk_ridge} we deduce that,
for all $M,D,\eps$,  and all $n^{-2/3+1/M}\lambda\le 1$, with probability at least $1-Cn^{-D}$ ,
\begin{align*}
  \big|B_X(\hbeta;\beta) -\cuB(\hH_n,\hG_n,\gamma)  \big| &\le C(M)
                                                                                    \Big(\lambda +\frac{1}{\lambda n^{(1-\eps)/2}}\Big)\, . 
\end{align*}
The desired bound is obtained by setting $\lambda = n^{-1/7}$ and choosing $\eps$ small enough.

\noindent\emph{Variance term.} With the same  notations introduced above,  Eq.~\eqref{eq:VX} implies 
\begin{align}
  V_X(\hbeta;\beta) &= \frac{\sigma^2}{n}\Tr\big(\Sigma U D_X^{-1}1_{D_X>0}U^T\big) \, ,\label{eq:VX0Formula}\\
  V_X(\hbeta_{\lambda};\beta) &=\frac{\sigma^2}{n}\Tr\big(\Sigma U D_X(\lambda I+D_X)^{-2}U^T\big) \, .
\end{align}
Taking the difference and bounding $|x(x+\lambda)^{-2}-x^{-1}|\le 2\lambda/x^2$ for $x,\lambda>0$, we get
\begin{align*}
  \big|  V_X(\hbeta_{\lambda};\beta) -V_X(\hbeta;\beta) \big|&\le  \frac{2\lambda\sigma^2}{n}\Tr\big(\Sigma U D_X^{-2}1_{D_X>0}U^T\big)\\
                                                             &\le \frac{2\lambda M\sigma^2}{\sigma_{\min}(X)^4/n^2}\le C(M) \lambda\, .
\end{align*}
The rest of the proof is analogous to one for the bias term and is omitted.
                                                               
\section{Other proofs for the linear model}

\subsection{Proof of Proposition  \ref{thm:risk_lo}}

Write $X=Z\Sigma^{1/2}$.  Note that
\begin{align}
\lambda_{\min}(X^T X/n) \geq \lambda_{\min}(Z^T Z/n) \lambda_{\min}(\Sigma) 
\geq (c/2) (1-\sqrt\gamma)^2,\label{eq:LowerBD_Eigenvalue}
\end{align}
where the second inequality holds almost surely for all $n$ large enough, following from 
$\lambda_{\min}(\Sigma) \geq c$, and the Bai-Yin theorem
\citep{bai1993limit}, which implies that the smallest eigenvalue of $Z^T Z/n$ is
almost surely larger than \smash{$(1-\sqrt\gamma)^2/2$} for sufficiently large
$n$.  As the right-hand side in the above display is strictly positive, we have
that $X^T X$ is almost surely invertible.  Therefore by the bias and variance
results from Lemma \ref{lem:bias_var}, we have almost surely $\Pi=0$ and 
\smash{$B_X(\hbeta;\beta) = 0$}, and also
\begin{align*}
V_X(\hbeta; \beta) &= \frac{\sigma^2}{n} \Tr (\hSigma^{-1} \Sigma) \\ 
&=  \frac{\sigma^2}{n} \Tr\bigg(\Sigma^{-1/2} \Big(\frac{Z^T Z}{n}\Big)^{-1}
  \Sigma^{-1/2} \Sigma \bigg) \\
&= \frac{\sigma^2 p}{n} \int \frac{1}{s} \, dF_{Z^T Z/n}(s)\\
& = \frac{\sigma^2 p}{n} \int \left(\frac{1}{s}\wedge M\right) \, dF_{Z^T Z/n}(s),
\end{align*}
where \smash{$F_{Z^T Z/n}$} is the spectral measure of $Z^T Z/n$
and the last identity holds if $M^{-1}<(c/2) (1-\sqrt\gamma)^2$  almost surely for all $n\ge N_0$ (with
$N_0$ a random number) by Eq.~\eqref{eq:LowerBD_Eigenvalue}.
By
the Marchenko-Pastur theorem
\citep{marchenko1967distribution,silverstein1995strong}, which says that  
\smash{$F_{Z^T Z/n}$} converges weakly, almost surely, to the Marchenko-Pastur
law $F_\gamma$ (depending only on $\gamma$), whenc, almost surely :
$$
\int_0^\infty \left(\frac{1}{s}\wedge M\right)  \, dF_{Z^T Z/n}(s) \to   
\int_{0}^\infty \left(\frac{1}{s}\wedge M\right)  \, dF_\gamma(s) = \int_{0}^\infty \frac{1}{s} \, dF_\gamma(s)\, ,
$$
where the last equality follows since 
$F_\gamma$ is supported in $[(1-\sqrt{\gamma})^2, (1+\sqrt\gamma)^2]$, provided $M> (1-\sqrt{\gamma})^{-2}$.
Thus the last display implies that as $n,p \to \infty$,
almost surely,
\begin{equation}
\label{eq:var_lo}
V_X(\hbeta; \beta) \to \sigma^2 \gamma \int \frac{1}{s} \, dF_\gamma(s).
\end{equation}
It remains to compute the right-hand side above.  We recognize the right-hand side as the evaluation of the
Stieltjes transform $m(z)$ of Marchenko-Pastur law at $z=0$.  Fortunately, this
has an explicit form (e.g., Lemma 3.11 in \citealt{bai2010spectral}), for real
$z>0$:    
\begin{equation}
\label{eq:stieltjes_mp}
m(-z) = \frac{-(1 - \gamma + z) + \sqrt{(1 - \gamma + z)^2 + 
    4 \gamma z}}{2 \gamma z}.
\end{equation}
The proof is thus completed by taking the limit $z\to 0+$ in this expression.

\subsection{Proof of Theorem \ref{thm:risk_iso}}
\label{sec:ProofRiskIso}

Notice that this result can be obtained as a special case of Theorem \ref{thm:risk_gen_asymp}.
For the reader's convenience, we present here a simpler self-contained proof that assumes
the covariates to satisfy the stronger condition $\E(|z_{ij}|^{8+\delta})\le C<\infty$.

\noindent\emph{Variance term.} Recalling the expression for the bias from Lemma \ref{lem:bias_var} (where now
$\Sigma=I$), we have 
$$
V_X(\hbeta; \beta) = \frac{\sigma^2}{n} \sum_{i=1}^n \frac{1}{s_i},
$$
where $s_i = \lambda_i(X^T X/n)$, $i=1,\ldots,n$ are the nonzero eigenvalues of
$X^T X/n$.  Let $t_i = \lambda_i(XX^T/p)$, $i=1,\ldots,p$ denote the eigenvalues
of $XX^T/p$.  Then we may write $s_i = (p/n) t_i$, $i=1,\ldots,n$, and 
$$
V_X(\hbeta; \beta) = \frac{\sigma^2}{p} \sum_{i=1}^n \frac{1}{t_i} =  
\frac{\sigma^2 n}{p} \int \frac{1}{t} \, dF_{XX^T/p}(t),
$$
where $F_{XX^T/p}$ is the spectral measure of $XX^T/p$.  Now as $n/p \to \tau =
1/\gamma < 1$, we are back precisely in the setting of Theorem
\ref{thm:risk_lo}, and by the same arguments, we may conclude that almost surely  
$$
V_X(\hbeta; \beta) \to \frac{\sigma^2 \tau}{1-\tau} = \frac{\sigma^2}{\gamma-1}, 
$$
completing the proof.

\noindent\emph{Bias term.}
Recall the expression for the bias from Lemma \ref{lem:bias_var} (where now 
$\Sigma=I$), and note the following key characterization of the pseudoinverse of a
rectangular matrix $A$,
\begin{equation}
\label{eq:pinv}
(A^T A)^+ A^T = \lim_{z \to 0^+} \, (A^T A + zI)^{-1} A^T.
\end{equation}
We can apply this to \smash{$A=X/\sqrt{n}$}, and rewrite the bias as
\begin{align}
\nonumber
B_X(\hbeta; \beta) &= \lim_{z \to 0^+} \, \beta^T 
\big( I-(\hSigma + zI)^{-1} \hSigma \big) \beta \\ 
\label{eq:bias_iso_lim}
&= \lim_{z \to 0^+} \, z \beta^T (\hSigma + zI)^{-1} \beta, 
\end{align}
where in the second line we added and subtracted $zI$ to \smash{$\hSigma$} and
simplified.  By Theorem 1 in \citet{rubio2011spectral}, which may be seen as a  
generalized Marchenko-Pastur theorem, we have that for any $z>0$, and any
deterministic sequence of matrices $\Theta_n \in \R^{p \times p}$,
$n=1,2,3,\ldots$ with uniformly bounded trace norm, it holds as $n,p \to 
\infty$, almost surely,        
\begin{equation}
\label{eq:gen_mp}
\Tr\Big( \Theta_n \big((\hSigma + zI)^{-1} - c_n(z) I \big)\Big) \to 0, 
\end{equation}
for a deterministic sequence $c_n(z)>0$, $n=1,2,3,\ldots$ (defined for each
$n$ via a certain fixed-point equation).  Taking $\Theta_n=I/p$ in
the above, note that this reduces to the almost sure convergence of the  
Stieltjes transform of the specral distribution of \smash{$\hSigma$}, and
hence by the (classical) Marchenko-Pastur theorem, we learn that $c_n(z)
\to m(-z)$, where $m$ denotes the Stieltjes transform of the Marchenko-Pastur
law $F_\gamma$.  Further, taking $\Theta_n=\beta\beta^T/p$, we see from
\eqref{eq:gen_mp} and $c_n(z) \to m(-z)$ that, almost surely, 
\begin{equation}
\label{eq:bias_iso_z}
z \beta^T (\hSigma + zI)^{-1} \beta \to z m(-z) r^2.
\end{equation}
Define \smash{$f_n(z)=z\beta^T (\hSigma + zI)^{-1} \beta$}.  Notice that $|f_n(z)|
\leq r^2$, and \smash{$f_n'(z) = \beta^T (\hSigma + zI)^{-2} \hSigma \beta$}, so   
$$
|f_n'(z)| \leq r^2 
\frac{\lambda_{\max}(\hSigma)}{(\lambda_{\min}^+(\hSigma) + z)^2}   
\leq 8r^2 \frac{(1+\sqrt\gamma)^2}{(1-\sqrt\gamma)^4},
$$
where \smash{$\lambda_{\max}(\hSigma)$} and \smash{$\lambda_{\min}^+(\hSigma)$}    
denote the largest and smallest nonzero eigenvalues, respectively, of
\smash{$\hSigma$}, and the second inequality holds almost surely for large
enough $n$, by the Bai-Yin theorem \citep{bai1993limit}.  As its derivatives are
bounded, the sequence $f_n$, $n=1,2,3,\ldots$ is equicontinuous, and
by the Arzela-Ascoli theorem, we deduce that $f_n$ converges uniformly to its
limit. By the Moore-Osgood theorem, we can exchange limits (as $n,p \to \infty$
and $z \to 0^+$) and conclude from \eqref{eq:bias_iso_lim},
\eqref{eq:bias_iso_z} that as $n,p \to \infty$, almost surely,  
$$
B_X(\hbeta; \beta) \to r^2 \lim_{z \to 0^+} \, z m(-z).
$$
Finally, relying on the fact that the Stieltjes transform of the
Marchenko-Pastur law has the explicit form in \eqref{eq:stieltjes_mp}, 
we can compute the above limit: 
\begin{align*}
\lim_{z \to 0^+} \, z m(-z) 
&= \lim_{z \to 0^+} \, \frac{-(1 - \gamma + z) + 
\sqrt{(1 - \gamma + z)^2+ 4 \gamma z}}{2 \gamma} \\
&= \frac{-(1-\gamma) + (\gamma-1)}{2 \gamma} 
= 1-1/\gamma,
\end{align*}
completing the proof.

\subsection{Proof of Corollary \ref{coro:equidistributed}}

The corollary follows immediately from  Theorem \ref{thm:risk_gen_asymp}, once we
estabilish that, for $\|\beta\|^2\to r^2$,
$\cuB(H,H,\gamma) \to \cuB_{\sequi}(H,\gamma)$. By homogeneity, we can assume without loss of generality
that $\|\beta\|^2 = r^2=1$. By using  Eq.~\eqref{eq:BiasPredMinNorm}, we get
\begin{align}
  \cuB(H,H,\gamma) &=  \int\frac{s}{(1+c_0\gamma s)^2}\, d H(s)+\gamma  c_0\int \frac{s^2}{(1+c_0\gamma s)^2}\, d H(s)\\
                   & = \int\frac{s}{1+c_0\gamma s}\, d H(s)\\
  & = \frac{1}{c_0\gamma}\Big[1- \int\frac{1}{1+c_0\gamma s}\, d H(s)\Big] = \frac{1}{c_0\gamma^2}\, ,
  \end{align}
  where that last equality follows from the definition of $c_0$ in Eq.~\eqref{eq:c0def}.
 Comparing with the definition of $cuB_{\sequi}(H,\gamma)$ in Eq.~\eqref{eq:Bequi} yields the desired claim.

\subsection{The case of equicorrelated features}
\label{app:risk_ec}

As an illustration of Corollary \ref{coro:equidistributed}, we consider the case of equicorrelated covariates.
Namellt, for  $\rho \in
[0,1)$, we assume that $\Sigma_{ii}=1$ for all $i$, and $\Sigma_{ij}=\rho$ for all
$i \not= j$.  

\begin{corollary}
\label{cor:risk_ec}
Assume the conditions of Theorem \ref{thm:risk_gen}, and moreover, assume that 
$\Sigma$ has $\rho$-equicorrelation structure for all $n,p$, and some $\rho \in
[0,1)$.  Then as $n,p \to \infty$, with $p/n \to \gamma > 1$, we have almost
surely  
$$
R_X(\hbeta) \to r^2(1-\rho) (1-1/\gamma) + \frac{\sigma^2}{\gamma-1}.    
$$
\end{corollary}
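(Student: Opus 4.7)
The plan is to identify the spectral structure of $\Sigma$, recognize this as a case of equidistributed coefficients along the eigenbasis, apply Corollary \ref{coro:equidistributed}, and evaluate the resulting integrals.

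First, diagonalize the equicorrelation matrix $\Sigma = (1-\rho)I_p + \rho\, \mathbf{1}_p\mathbf{1}_p^T$. It has a single ``spike'' eigenvalue $s_1 = 1 + (p-1)\rho$ with eigenvector $v_1 = \mathbf{1}_p/\sqrt{p}$, together with a $(p-1)$-fold degenerate bulk eigenvalue $1-\rho$ whose eigenvectors span $v_1^\perp$. Consequently the empirical spectral measure $\hH_n = p^{-1}\delta_{s_1} + (1-1/p)\delta_{1-\rho}$ converges weakly to $H = \delta_{1-\rho}$. Under the isotropic prior $\E(\beta)=0$, $\Cov(\beta) = (r^2/p)I_p$ that is implicit in writing $R_X(\hbeta)$ without an explicit argument $\beta$ (as in Theorem \ref{thm:risk_cv}), the weights $(v_i^T\beta)^2/\|\beta\|_2^2$ are, by spherical symmetry, concentrated around $1/p$ for each $i$, so $\hG_n$ also converges weakly to $\delta_{1-\rho}$. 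This places us in the equidistributed regime $G = H$ of Corollary \ref{coro:equidistributed}.

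The main technical obstacle is that Assumption \ref{ass:CovariatesAssumption}(b) requires $\|\Sigma\|_{op} \le M$, which fails here since $s_1 = 1 + (p-1)\rho$ diverges. I would address this by a rank-one perturbation argument: write $\Sigma = \Sigma_0 + \rho p P_1$ with $\Sigma_0 = (1-\rho)I_p$ and $P_1 = \mathbf{1}_p\mathbf{1}_p^T/p$, and show that the contribution of the spike direction to both \eqref{eq:BX0Formula} and \eqref{eq:VX0Formula} vanishes. For the variance, the spike contributes at most $s_1^{-1} = O(1/p)$ to $\Tr(\Sigma S_X^{+})$ beyond the $\Sigma_0$ contribution. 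For the bias, the component $P_1\beta$ has squared norm concentrated at $r^2/p$ under the spherical prior, and a direct estimate using $X = Z\Sigma^{1/2}$ together with standard bounds on $\sigma_{\min}(Z)$ shows that replacing $\Sigma$ by $\Sigma_0$ in $\beta^T\Pi\Sigma\Pi\beta$ changes the bias by $o(1)$. One then applies Theorem \ref{thm:risk_gen_asymp} to the bounded covariance $\Sigma_0$, or equivalently invokes Corollary \ref{coro:equidistributed} with $H = G = \delta_{1-\rho}$.

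Granted the reduction, the rest is a routine computation. Equation \eqref{eq:c0def} with $H = \delta_{1-\rho}$ reduces to
\[
1 - \frac{1}{\gamma} \;=\; \frac{1}{1 + c_0\gamma(1-\rho)},
\]
whence $c_0\gamma(1-\rho) = 1/(\gamma-1)$, i.e., $c_0 = [\gamma(\gamma-1)(1-\rho)]^{-1}$. Corollary \ref{coro:equidistributed} then yields
\[
\cuB_{\sequi}(H,\gamma) \;=\; \frac{r^2}{c_0\gamma^2} \;=\; r^2(1-\rho)\Big(1-\frac{1}{\gamma}\Big),
\]
while the variance integrals collapse against the Dirac mass to give $\cuV(H,\gamma) = \sigma^2/(\gamma-1)$ (as in the isotropic specialization worked out after Corollary \ref{coro:equidistributed}, rescaled by $1-\rho$ through $c_0$). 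Summing these two contributions produces exactly $r^2(1-\rho)(1-1/\gamma) + \sigma^2/(\gamma-1)$, as claimed.
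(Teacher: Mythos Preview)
Your proof is correct and takes a somewhat different route from the paper's own argument. Both begin identically: identify the limiting spectral measure $H=\delta_{1-\rho}$ and note that the diverging top eigenvalue $1+(p-1)\rho$ violates the bounded-spectrum assumption, so one must treat it separately via the rank-one decomposition $\Sigma=(1-\rho)I+\rho\mathbf{1}\mathbf{1}^T$; the paper is no more explicit than you on this step. After the reduction, however, you stay within the $(c_0,\hH_n,\hG_n)$ formalism of Definition~\ref{def:LimitBiasVar} and Corollary~\ref{coro:equidistributed}: solve \eqref{eq:c0def} for $c_0=[\gamma(\gamma-1)(1-\rho)]^{-1}$ and read off $\cuB_{\sequi}=r^2/(c_0\gamma^2)$ and $\cuV$ directly from the Dirac-mass integrals. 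The paper instead works through the companion Stieltjes transform $v_\rho$ and the Silverstein equation \eqref{eq:silverstein}, deriving the scaling identity $(1-\rho)\,v_\rho(z)=v_0\bigl(z/(1-\rho)\bigr)$ that maps the equicorrelated case back to the isotropic one, then evaluates $v_\rho(0)$ and $v'_\rho(0)$ and plugs them into the risk expression. Your approach is shorter and better aligned with the current statement of Theorem~\ref{thm:risk_gen}; the paper's route has the virtue of exhibiting the equicorrelated risk as an explicit rescaling of the isotropic risk, but is otherwise equivalent.
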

\begin{proof}
Let $H_\rho$ denote the weak limit of $F_\Sigma$, when $\Sigma$ has
$\rho$-equicorrelation structure for all $n,p$.  A short calculation shows that 
such a matrix $\Sigma$ has one eigenvalue value equal to $1+(p-1)\rho$, and 
$p-1$ eigenvalues equal to $1-\rho$.  Thus the weak limit of its spectral
measure is simply $H_\rho = 1_{[1-\rho,\infty)}$, i.e.,
$dH_\rho=\delta_{1-\rho}$, a point mass at $1-\rho$ of probability one. 

We remark that the present case, strictly speaking, breaks the conditions
that we assume in Theorem \ref{thm:risk_gen}, because
$\lambda_{\max}(\Sigma)=1+(p-1)\rho$ clearly diverges with $p$.
However, by decomposing $\Sigma=(1-\rho) I + \rho\one\one^T$ (where $\one$  
denotes the vector of all 1s), and correspondingly decomposing the functions 
$f_n,g_n,h_n$ defined in the proof of Theorem \ref{thm:risk_gen}, to handle the
rank one part $\rho\one\one^T$ properly, we can ensure the appropriate
boundedness conditions. Thus, the result in the theorem still holds when
$\Sigma$ has $\rho$-equicorrelation structure.  

Now denote by $v_\rho$ the companion Stieltjes transform of the empirical
spectral distribution \smash{$F_{H_\rho,\gamma}$}, to emphasize its dependence 
on $\rho$. Recall the Silverstein equation \eqref{eq:silverstein}, which for the
equicorrelation case, as $dH_\rho=\delta_{1-\rho}$, becomes
$$
-\frac{1}{v_\rho(z)} = z - \gamma \frac{1-\rho}{1 + (1-\rho) v_\rho(z)},
$$
or equivalently,
$$
-\frac{1}{(1-\rho) v_\rho(z)} = \frac{z}{1-\rho} - \gamma \frac{1}{1 + (1-\rho)
  v_\rho(z)}.
$$
We can hence recognize the relationship 
$$
(1-\rho) v_\rho(z) = v_0\big(z/(1-\rho)\big),
$$
where $v_0$ is the companion Stieltjes transform in the $\Sigma=I$ case, 
the object of study in Appendix \ref{app:risk_iso2}.  From the results for $v_0$
from \eqref{eq:v0} and \eqref{eq:v1}, invoking the relationship in the above
display, we have 
$$
\lim_{z \to 0^+} \, v_\rho(-z) = \frac{1}{(1-\rho)(\gamma-1)}  
\quad \text{and} \quad 
\lim_{z \to 0^+} \, v'_\rho(-z) = \frac{\gamma}{(1-\rho)^2(\gamma-1)^3}.  
$$
Plugging these into the asymptotic risk expression from Theorem
\ref{thm:risk_gen} gives
$$
\frac{r^2}{\gamma} (1-\rho)(\gamma-1) + \sigma^2\bigg( 
\frac{\gamma (1-\rho)^2 (\gamma-1)^2}{(1-\rho)^2(\gamma-1)^3} - 1 \bigg) 
= r^2(1-\rho) \Big(1-\frac{1}{\gamma}\Big) + \frac{\sigma^2}{\gamma-1}.  
$$
as claimed.
\end{proof}

\subsection{Autoregressive features} 
\label{app:risk_ar}

We consider a {\it $\rho$-autoregressive} structure for $\Sigma$, for a constant
$\rho \in [0,1)$, meaning that $\Sigma_{ij}=\rho^{|i-j|}$ for all $i,j$.  In
this case, it is not clear that a closed-form exists for $v(0)$ or $v'(0)$.  
However, we can compute these numerically.  In fact, the strategy we describe 
below applies to any situation in which we are able to perform numerical
integration against $dH$, where $H$ is the weak limit of the spectral measure
$F_\Sigma$ of $\Sigma$.

The critical relationship that we use
is the {\it Silverstein equation} \citep{silverstein1995strong}, which relates
the companion Stieltjes transform $v$ to $H$ via
\begin{equation}
\label{eq:silverstein}
-\frac{1}{v(z)} = z - \gamma \int \frac{s}{1 + sv(z)} \, dH(s).
\end{equation}
Taking $z \to 0^+$ yields 
\begin{equation}
\label{eq:v0_fixed_point}
\frac{1}{v(0)} = \gamma \int \frac{s}{1 + sv(0)} \, dH(s). 
\end{equation}
Therefore, we can use a simple univariate root-finding algorithm (like the
bisection method) to solve for $v(0)$ in \eqref{eq:v0_fixed_point}.  With $v(0)$  
computed, we can compute $v'(0)$ by first differentiating \eqref{eq:silverstein}
with respect to $z$ (see \citealt{dobriban2015efficient}), and then taking $z
\to 0^+$, to yield   
\begin{equation}
\label{eq:v0_deriv}
\frac{1}{v'(0)} = \frac{1}{v(0)^2} - \gamma \int \frac{s^2}{(1 + sv(0))^2} \,
dH(s).
\end{equation}

When $\Sigma$ is of $\rho$-autoregressive form, it is known to have eigenvalues 
\citep{trench1999asymptotic}:
$$
s_i = \frac{1-\rho^2}{1-2\rho\cos(\theta_i)+\rho^2}, \quad i=1,\ldots,p,  
$$
where
$$
\frac{(p-i)\pi}{p+1} < \theta_i < \frac{(p-i+1)\pi}{p+1}, \quad i=1,\ldots,p. 
$$
This allows us to efficiently approximate an integral with respect to $dH$
(e.g., by taking each $\theta_i$ to be in the midpoint of its interval given
above), solve for $v(0)$ in \eqref{eq:v0_fixed_point}, $v'(0)$ in
\eqref{eq:v0_deriv}, and evaluate the asymptotic risk as per Theorem
\ref{thm:risk_gen}.  

Figure \ref{fig:risk_ar} shows the results from using such a numerical scheme to
evaluate the asymptotic risk, as $\rho$ varies from 0 to 0.75.

\begin{figure}[p]
\vspace{-40pt}
\centering
\includegraphics[width=0.725\textwidth]{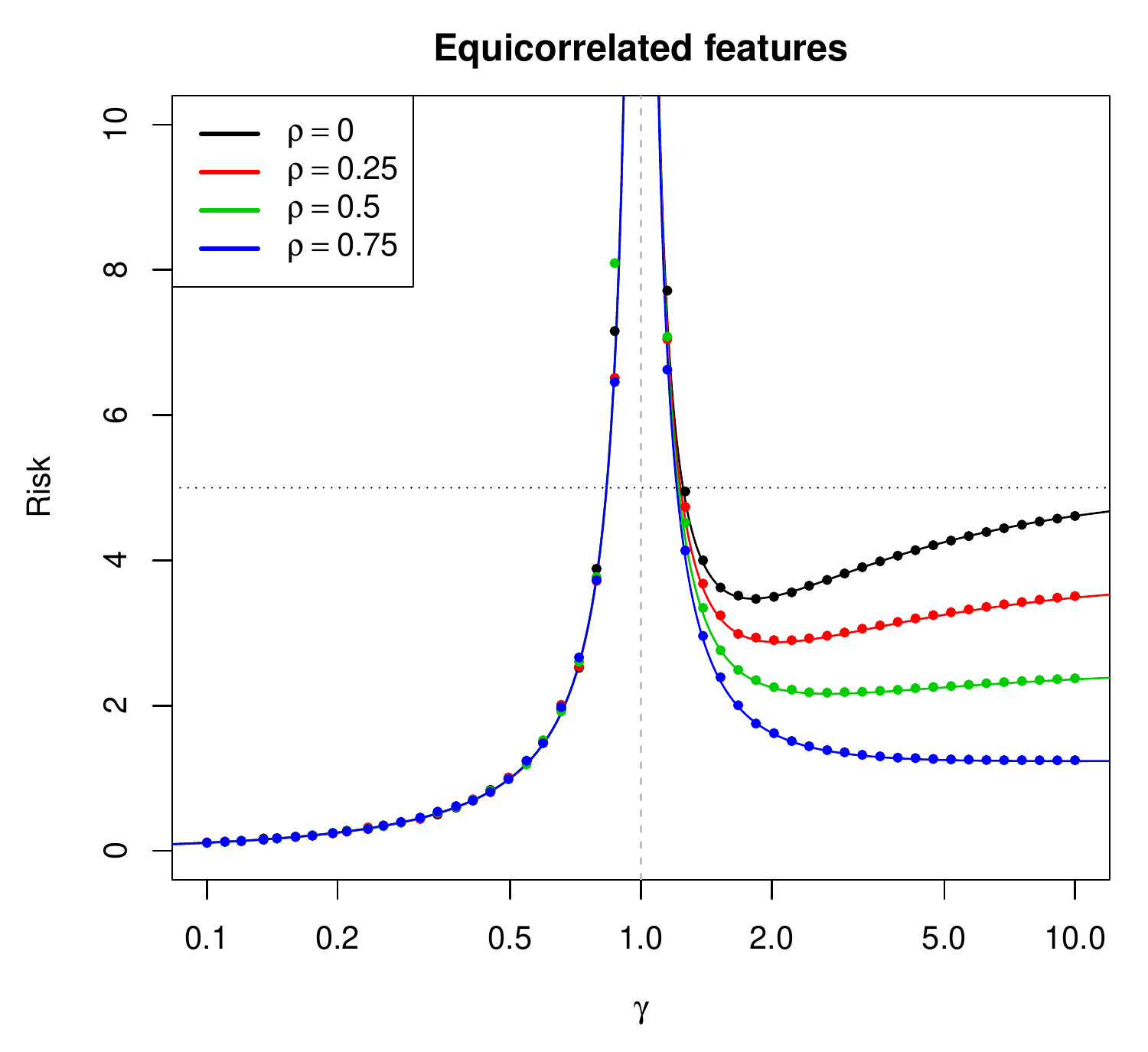} 
\caption{\footnotesize Asymptotic risk curves for the min-norm least squares
  estimator when $\Sigma$ has equicorrelation structure (Theorem
  \ref{thm:risk_lo} for $\gamma<1$, and Corollary \ref{cor:risk_ec} for
  $\gamma>1$), as $\rho$ varies from 0 to 0.75. Here $r^2=5$ and $\sigma^2=1$,
  thus $\snr=5$. The null risk $r^2=5$ is marked as a dotted black line. The
  points denote finite-sample risks, with $n=200$, $p=[\gamma n]$, across
  various values of $\gamma$, computed from appropriately constructed Gaussian
  features.}      
\label{fig:risk_ec} 

\bigskip
\includegraphics[width=0.725\textwidth]{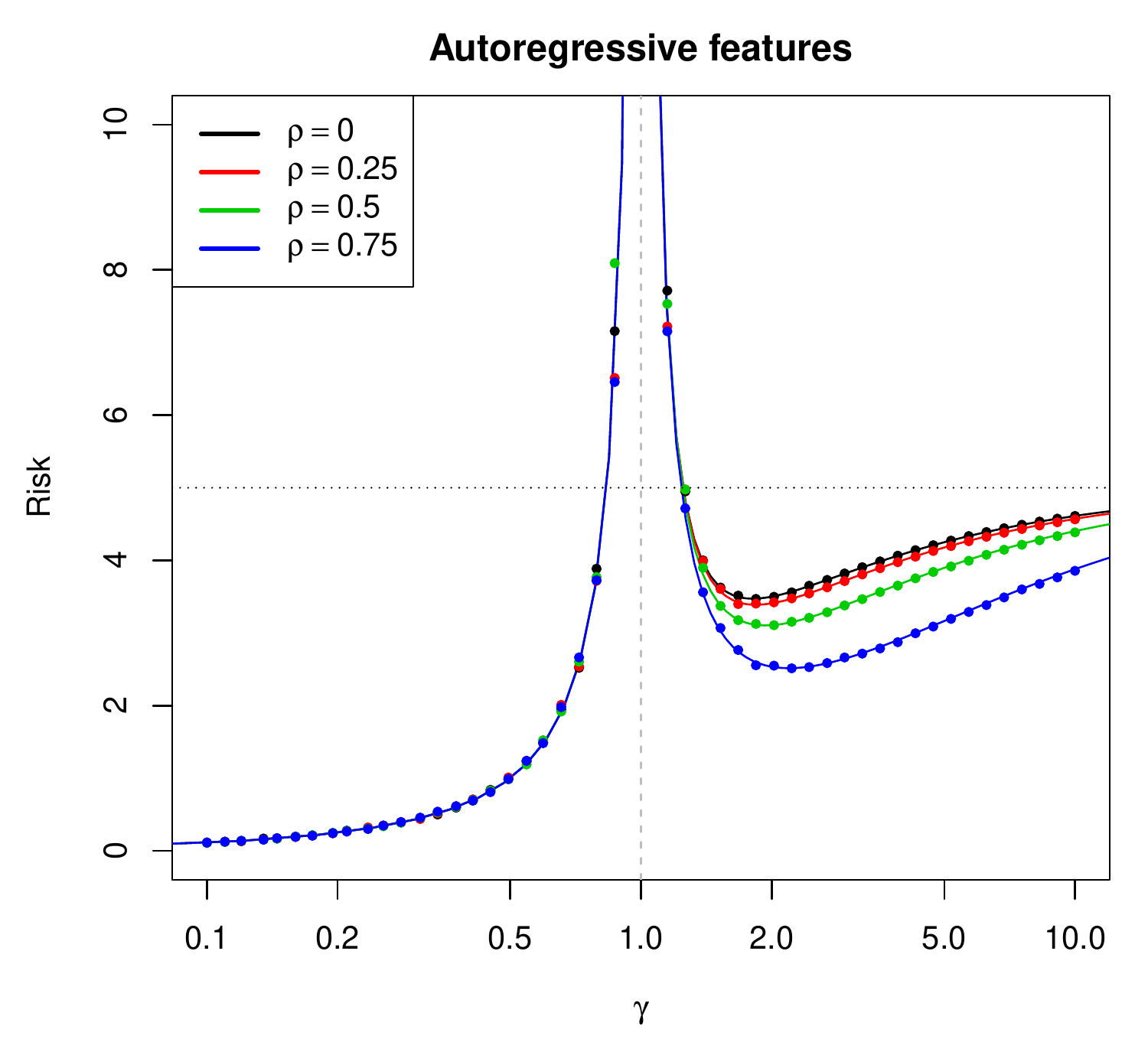}
\caption{\footnotesize Asymptotic risk curves for the min-norm least squares
  estimator when $\Sigma$ has autoregressive structure (Theorem
  \ref{thm:risk_lo} for $\gamma<1$, and Theorem \ref{thm:risk_gen} for
  $\gamma>1$, evaluated numerically, as described in Appendix
  \ref{app:risk_ar}), as $\rho$ varies from 0 to 0.75. Here $r^2=5$ and
  $\sigma^2=1$, thus $\snr=5$. The null risk $r^2=5$ is marked as a dotted black
  line. The points are again finite-sample risks, with $n=200$, $p=[\gamma n]$,
  across various values of $\gamma$, computed from appropriately constructed
  Gaussian features.}      
\label{fig:risk_ar}
\end{figure}

\begin{figure}[htb]
\centering
\includegraphics[width=0.475\textwidth]{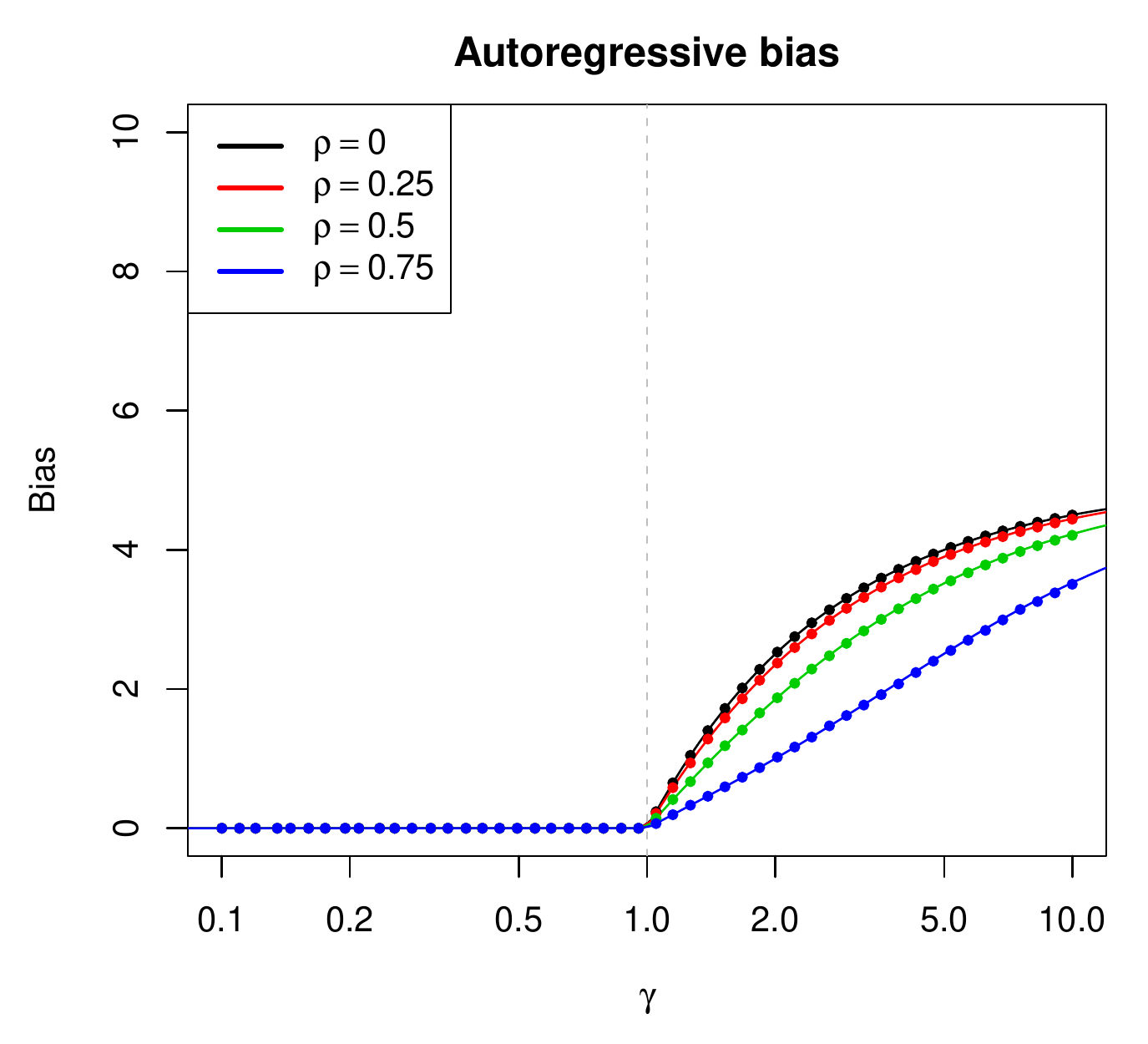} 
\includegraphics[width=0.475\textwidth]{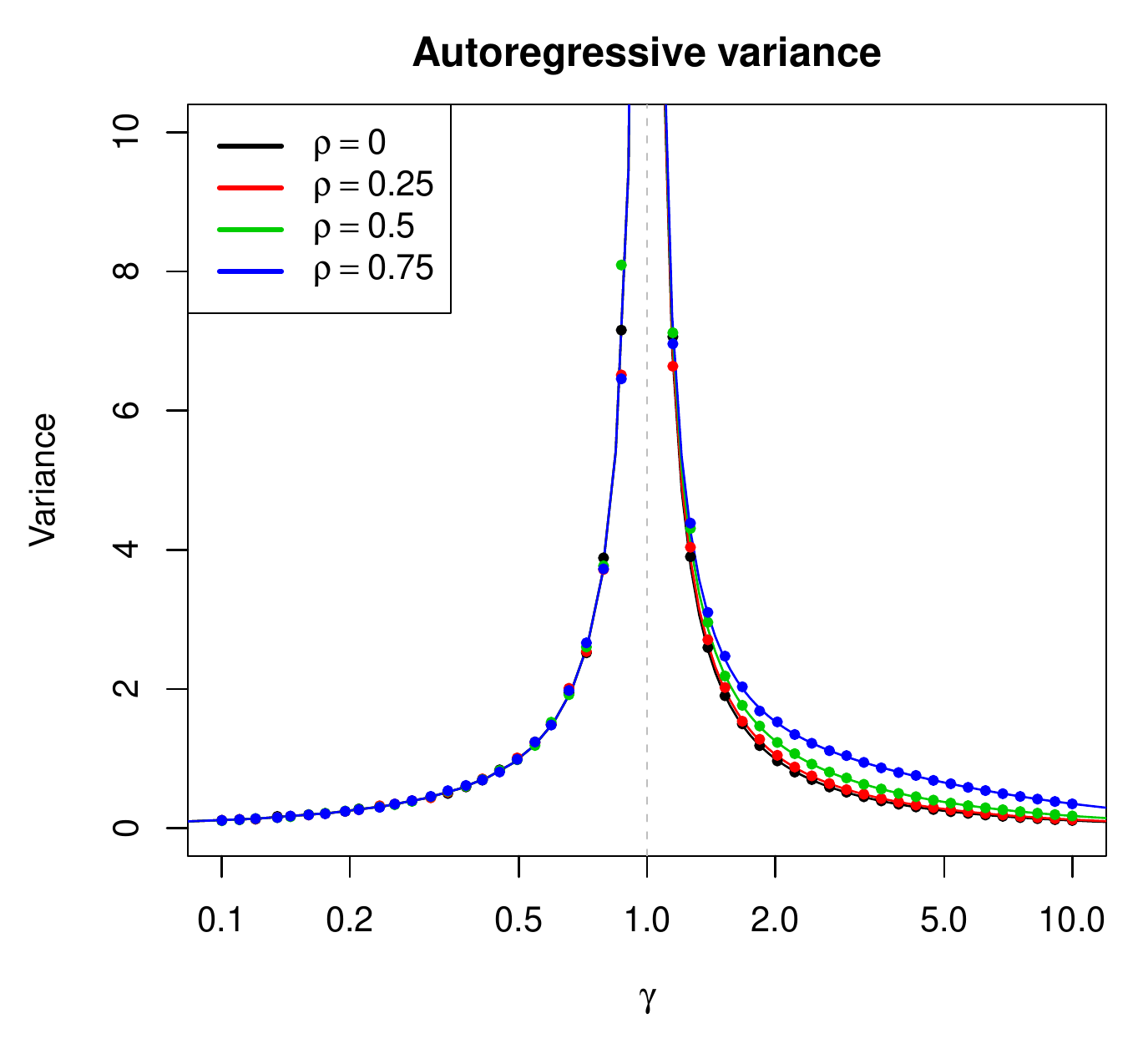} 
\caption{\footnotesize Asymptotic bias (left panel) and variance (right panel)
  for the min-norm least squares estimator when $\Sigma$ has autoregressive  
  structure (Theorem \ref{thm:risk_lo} for $\gamma<1$, and Theorem
  \ref{thm:risk_gen} for $\gamma>1$, evaluated numerically, as described in
  Appendix \ref{app:risk_ar}), as $\rho$ varies from 0 to 0.75. Here $r^2=5$ and  
  $\sigma^2=1$, hence $\snr=5$. The points mark finite-sample biases and 
  variances, with $n=200$, $p=[\gamma n]$, computed from appropriately
  constructed Gaussian features.}   
\label{fig:bias_var_ar} 
\end{figure}

\subsection{Proof of Theorem \ref{thm:risk_cv}}
\label{app:risk_cv}

We begin by recalling an alternative to leave-one-out cross-validation, for
linear smoothers, called {\it generalized cross-validation} (GCV)
\citep{craven1978smoothing,golub1979generalized}.  The GCV error of the ridge  
regression estimator at a tuning parameter value $\lambda$ defined as 
\begin{equation}
\label{eq:gcv}
\gcv_n(\lambda) = 
\frac{1}{n} \sum_{i=1}^n \bigg( \frac{y_i - \hf_\lambda(x_i)}  
{1-\Tr(S_\lambda)/n} \bigg)^2.
\end{equation}
Compared to the shortcut formula for leave-one-out CV in \eqref{eq:cv_shortcut},
we can see that GCV in \eqref{eq:gcv} swaps out the $i$th diagonal element
$(S_\lambda)_{ii}$ in the denominator of each summand with the average diagonal
element $\Tr(S_\lambda)/n$.  This modification makes GCV rotationally invariant
\citep{golub1979generalized}.

It turns out that the GCV error is easier to analyze, compared to the CV error.
Thus we proceed by first studying GCV, and then relating CV to GCV.  We break
up the exposition below into these two parts accordingly.

\subsubsection{Analysis of GCV}  

Let us rewrite the GCV criterion in \eqref{eq:gcv} as 
\begin{equation}
\label{eq:gcv_frac}
\gcv_n(\lambda) = \frac{y^T (I-S_\lambda)^2 y/n}{(1-\Tr(S_\lambda)/n)^2}.  
\end{equation}
We will treat the almost sure convergence of the numerator and denominator
separately.  

\paragraph{GCV denominator.} The denominator is an easier calculation.  Denoting
$s_i=\lambda_i(X^T X/n)$, $i=1,\ldots,p$, we have 
$$
\Tr(S_\lambda)/n = \frac{1}{n} \sum_{i=1}^p \frac{s_i}{s_i+\lambda} 
\to \gamma \int \frac{s}{s+\lambda} \, dF_\gamma(s),
$$
where this convergence holds almost surely as $n,p \to \infty$, a direct
consequence of the Marchenko-Pastur theorem, and $F_\gamma$ denotes the
Marchenko-Pastur law. Meanwhile, we can rewrite this asymptotic limit as  
$$
\gamma \int \frac{s}{s+\lambda} \, dF_\gamma(s) = 
\gamma\big(1 - m(-\lambda)\big), 
$$
where \smash{$m=m_{F_\gamma}$} denotes the Stieltjes transform of the
Marchenko-Pastur law $F_\gamma$, and therefore, almost surely, 
\begin{equation}
\label{eq:gcv_denom}
\big(1-\Tr(S_\lambda)/n\big)^2 \to 
\Big(1 - \gamma\big(1 - m(-\lambda)\big) \Big)^2. 
\end{equation}

\paragraph{GCV numerator.} The numerator requires only a bit more difficult
calculation.  Let $y=X\beta+\epsilon$ and \smash{$c_n=\sqrt{p} (\sigma/r)$}.
Observe   
\begin{align*}
y^T (I-S_\lambda)^2 y / n
&= (\beta, \epsilon)^T \bigg( \frac{1}{n} 
\begin{bmatrix} X \\ I \end{bmatrix}^T 
(I-S_\lambda)^2 
\begin{bmatrix} X \\ I \end{bmatrix} \bigg) 
(\beta, \epsilon) \\
&= \underbrace{
\vphantom{\begin{bmatrix} X \\ c_n I \end{bmatrix}} 
(\beta, \epsilon/c_n)^T}_{\delta^T}
\underbrace{ \bigg( \frac{1}{n}
\begin{bmatrix} X \\ c_n I \end{bmatrix}^T 
(I-S_\lambda)^2 
\begin{bmatrix} X \\ c_n I \end{bmatrix} \bigg)}_{A} 
\underbrace{ 
\vphantom{\begin{bmatrix} X \\ c_n I \end{bmatrix}}
(\beta, \epsilon/c_n)}_{\delta}.
\end{align*}
Note that $\delta$ has independent entries with mean zero and variance $r^2/p$,
and further, note that $\delta$ and $A$ are independent.  Therefore we can
use the almost sure convergence of quadratic forms, from Lemma 7.6 in
\citet{dobriban2018high}, which is adapted from Lemma B.26 in
\citet{bai2010spectral}.\footnote{As written, Lemma 7.6 of
  \citet{dobriban2018high} assumes i.i.d.\ components for the random vector in
  question, which is not necessarily true of $\delta$ in our case.  However, an
  inspection of their proof shows that they only require independent components
  with mean zero and common variance, which is precisely as stated in Lemma B.26
  of \citet{bai2010spectral}.}  This result asserts that, almost surely, 
$$
\delta^T A \delta - (r^2/p) \Tr(A) \to 0.
$$
Now examine
\begin{align*}
r^2 \Tr(A) / p &= \frac{r^2}{p} 
\Tr \big( (I-S_\lambda)^2 (XX^T/n + (c_n^2/n) I\big) \\
&= \underbrace{\frac{r^2}{p} \Tr \big( X^T (I-S_\lambda)^2 X/n \big)}_{a} +  
\underbrace{\frac{\sigma^2}{n} \Tr\big( (I-S_\lambda)^2 \big)}_{b}.  
\end{align*}
A short calculation and application of the Marchenko-Pastur theorem gives that,
almost surely,   
$$
a = \frac{r^2 \lambda^2}{p} \bigg(\sum_{i=1}^p \frac{1}{s_i+\lambda} -  
\lambda \sum_{i=1}^p \frac{1}{(s_i+\lambda)^2} \bigg) \to
r^2 \lambda^2 \big(m(-\lambda) - \lambda m'(-\lambda)\big),
$$
where for the second sum, we used Vitali's theorem to show convergence of the
derivative of the Stieltjes transform of the spectral distribution of $X^T X/n$
to the derivative of the Stieltjes transform of $F_\gamma$ (note that Vitali's
theorem applies as the function in question is bounded and analytic). By a
similar calculation, we have almost surely,    
$$
b = \frac{\sigma^2}{n} \bigg(\sum_{i=1}^p \frac{\lambda^2}{(s_i+\lambda)^2} +
(n-p) \bigg) \to \sigma^2 \gamma \lambda^2 m'(-\lambda) + \sigma^2 (1-\gamma).         
$$
Hence we have shown that, almost surely, 
\begin{equation}
\label{eq:gcv_numer}
y^T (I-S_\lambda)^2 y / n \to \lambda^2 \Big( r^2 
\big(m(-\lambda) - \lambda m'(-\lambda)\big) - 
\sigma^2 \gamma m'(-\lambda) \Big) + \sigma^2(1-\gamma). 
\end{equation}

\paragraph{GCV convergence.} Putting \eqref{eq:gcv_denom}, \eqref{eq:gcv_numer}
together with \eqref{eq:gcv_frac}, we have, almost surely, 
$$
\gcv_n(\lambda) \to \frac{\lambda^2 (r^2 
(m(-\lambda) - \lambda m'(-\lambda)) - 
\sigma^2 \gamma m'(-\lambda)) + \sigma^2(1-\gamma)} 
{(1 - \gamma(1 - m(-\lambda)))^2}.
$$
To show that this matches to the asymptotic prediction error of ridge
regression requires some nontrivial calculations.  We start by
reparametrizing the above asymptotic limit in terms of the companion Stieltjes
transform, abbreviated by \smash{$v = v_{F_\gamma}$}.  This satisfies   
$$
v(z) + 1/z = \gamma \big(m(z) + 1/z\big), 
$$
hence
$$
z v(-z) - 1= \gamma\big( z m(-z) - 1\big),
$$
and also 
$$
z^2 v'(-z) - 1 = \gamma\big( z^2 m'(-z) - 1 \big).
$$
Introducing $\alpha=r^2/(\sigma^2 \gamma)$, the almost sure limit of GCV is 
\begin{align*}
&\frac{\lambda^2 (r^2 (m(-\lambda) - \lambda m'(-\lambda)) -  
\sigma^2 \gamma m'(-\lambda)) + \sigma^2(1-\gamma)} 
{(1 - \gamma(1 - m(-\lambda)))^2} \\
&\qquad\quad= \frac{\sigma^2 \lambda (\alpha \gamma(\lambda m(-\lambda)-1) -
  \alpha \gamma (\lambda^2 m'(-\lambda) - 1) + (\gamma/\lambda) (\lambda^2
  m'(-\lambda) - 1) + \gamma/\lambda + (1-\gamma)/\lambda)}{(1 - \gamma(1 -
  m(-\lambda)))^2} \\    
&\qquad\quad= \frac{\sigma^2 \lambda (\alpha (\lambda v(-\lambda) - 1) - 
  \alpha (\lambda^2 v'(-\lambda) - 1) + (1/\lambda)(\lambda^2 v'(-\lambda) - 1)
  + 1/\lambda)}{\lambda^2 v(-\lambda)^2} \\ 
&\qquad\quad= \frac{\sigma^2 (v(-\lambda) + (\alpha \lambda - 1)(v(-\lambda) - 
  \lambda v'(-\lambda))} {\lambda v(-\lambda)^2},
\end{align*}
where in the second line we rearranged, in the third line we applied the
companion Stieltjes transform facts, and in the fourth line we simplified. 
We can now recognize the above as $\sigma^2$ plus the asymptotic risk of ridge
regression at tuning parameter $\lambda$, either from the proof of Theorem
\ref{thm:risk_gen}, or from Theorem 2.1 in \citet{dobriban2018high}.  In terms
of the Stieltjes transform itself, this is \smash{$\sigma^2 + \sigma^2 \gamma
  (m(-\lambda) - \lambda(1 - \alpha \lambda) m'(-\lambda))$}, which proves the
first claimed result.

\paragraph{Uniform convergence.} It remains to prove the second claimed result,
on the convergence of the GCV-tuned ridge estimator. Denote
$f_n(\lambda)=\gcv_n(\lambda)$, and $f(\lambda)$ for its almost sure limit.
Notice that $|f_n|$ is almost surely bounded on $[\lambda_1,\lambda_2]$, for
large enough $n$, as  
\begin{align*}
|f_n(\lambda)| &\leq \frac{\|y\|_2^2}{n} \frac{\lambda_{\max}(I-S_\lambda)^2}   
{(1-\Tr(S_\lambda)/n)^2} \\
&\leq \frac{\|y\|_2^2}{n} \frac{(s_{\max}+\lambda)^2}{\lambda^2} \\ 
& \leq 2(r^2 + \sigma^2) \frac{(2+\lambda_2)^2}{\lambda_1^2}.
\end{align*}
In the second line, we used \smash{$\Tr(S_\lambda)/n = (1/n) \sum_{i=1}^p
  s_i/(s_i+\lambda) \leq s_{\max} / (s_{\max}+\lambda)$}, with
$s_{\max}=\lambda_{\max}(X^T X/n)$, and in the third line, we used $\|y\|_2^2/n 
\leq 2(r^2+\sigma^2)$ almost surely for sufficiently large $n$, by the strong 
law of large numbers, and $s_{\max} \leq 2$ almost surely for sufficiently large
$n$, by the Bai-Yin theorem \citep{bai1993limit}.  Furthermore, writing
$g_n,h_n$ for the numerator and denominator of $f_n$, respectively, we have
$$
f_n'(\lambda) = \frac{g_n'(\lambda) h_n(\lambda) - g_n(\lambda)
  h_n'(\lambda)}{h_n(\lambda)^2}.
$$
The above argument just showed that $|g_n(\lambda)|$ is upper bounded on  
$[\lambda_1,\lambda_2]$, and $|h_n(\lambda)|$ is lower bounded on
$[\lambda_1,\lambda_2]$; also, clearly $|h_n(\lambda)| \leq 1$; therefore to
show that $|f_n'|$ is almost surely bounded on $[\lambda_1,\lambda_2]$, it
suffices to show that both $|g_n'|,|h_n'|$ are. Denoting by $u_i$,
$i=1,\ldots,p$ the eigenvectors of $X^T X / n$ (corresponding to eigenvalues
$s_i$, $i=1,\ldots,p$), a short calculation shows
$$
|g_n'(\lambda)| = \frac{2\lambda}{n} \sum_{i=1}^p (u_i^T y)^2 
\frac{s_i}{(s_i+\lambda)^3} \leq \frac{2 \lambda}{n} \|y\|_2^2 
\leq 4 \lambda (r^2 + \sigma^2),
$$
the last step holding almost surely for large enough $n$, by the law of large 
numbers.  Also, 
$$
|h_n'(\lambda)| = 2\bigg(1 - \frac{1}{n} \sum_{i=1}^n \frac{s_i}{s_i+\lambda}
\bigg) \frac{1}{n} \sum_{i=1}^n \frac{s_i}{(s_i+\lambda)^2} \leq
\frac{4}{\lambda_1^2},
$$
the last step holding almost surely for large enough $n$, by the Bai-Yin
theorem.  Thus we have shown that $|f_n'|$ is almost surely bounded on 
$[\lambda_1,\lambda_2]$, for large enough $n$, and applying the Arzela-Ascoli
theorem, $f_n$ converges uniformly to $f$.  With \smash{$\lambda_n =
  \arg\min_{\lambda \in [\lambda_1,\lambda_2]} f_n(\lambda)$}, this means for
any for any $\lambda \in [\lambda_1,\lambda_2]$, almost surely 
\begin{align*}
f(\lambda_n) - f(\lambda) &=
\big(f(\lambda_n)-f_n(\lambda_n)\big) + 
\big(f_n(\lambda_n) - f_n(\lambda)\big) + 
\big(f_n(\lambda) - f(\lambda)\big) \\
&\leq \big(f(\lambda_n)-f_n(\lambda_n)\big) + 
\big(f_n(\lambda) - f(\lambda)\big) \to 0,
\end{align*}
where we used the optimality of $\lambda_n$ for $f_n$, and then uniform
convergence.  In other words, almost surely,  
$$
f(\lambda_n) \to f(\lambda^*) = \sigma^2 + \sigma^2 \gamma m(-1/\alpha). 
$$ 
As the almost sure convergence \smash{$R_X(\hbeta_\lambda) +
  \sigma^2 \to f(\lambda)$} is also uniform for $\lambda \in
[\lambda_1,\lambda_2]$ (by similar arguments, where we bound the risk and its 
derivative in $\lambda$), we conclude that almost surely
\smash{$R_X(\hbeta_\lambda) \to  \sigma^2 \gamma m(-1/\alpha)$}, completing the
proof for GCV.

\subsubsection{Analysis of CV}  

Let us rewrite the CV criterion, starting in its shortcut form
\eqref{eq:cv_shortcut}, as 
\begin{equation}
\label{eq:cv_quad}
\cv_n(\lambda) = y^T (I-S_\lambda) D^{-2}_\lambda (I-S_\lambda) y / n,
\end{equation}
where $D_\lambda$ is a diagonal matrix that has diagonal elements
\smash{$(D_\lambda)_{ii} = 1-(S_\lambda)_{ii}$}, $i=1,\ldots,n$.  

\paragraph{CV denominators.} First, fixing an arbitrary $i=1,\ldots,n$, we will
study the limiting behavior of 
$$
1 - (S_{\lambda})_{ii} = 1 - x_i^T (X^T X/n + \lambda I)^{-1} x_i /n. 
$$
Since $x_i$ and $X^T X$ are not independent, we cannot immediately apply the 
almost sure convergence of quadratic forms lemma, as we did in the previous
analysis of GCV.  But, letting $X_{-i}$ denote the matrix $X$ with the $i$th row 
removed, we can write \smash{$(X^T X/n + \lambda I)^{-1} = (X_{-i}^T 
  X_{-i}/n + \lambda I + x_ix_i^T/n)^{-1}$}, and use the
Sherman-Morrison-Woodbury formula to separate out the dependent and
independent parts, as follows.  Letting \smash{$\delta_i = x_i/\sqrt{n}$},
\smash{$A_i= (X_{-i}^T X_{-i}/n + \lambda I)^{-1}$}, and $A = (X^T X/n+\lambda
I)$, we have 
\begin{align*}
1 - (S_{\lambda})_{ii} &= 1 - \delta_i^T A \delta_i \\ 
&= 1 - \delta_i^T \bigg( A_i - \frac{A_i \delta_i \delta_i^T A_i}{1 + \delta_i^T
  A_i \delta} \bigg) \delta_i \\ 
&= \frac{1}{1 + \delta_i^T A_i \delta_i}.
\end{align*}
Note $\delta_i$ and $A_i$ are independent (i.e., $x_i$ and \smash{$X_{-i}^T
  X_{-i}$} are independent), so we can now use the almost sure convergence of 
quadratic forms, from Lemma 7.6 in \citet{dobriban2018high}, adapted
from Lemma B.26 in \citet{bai2010spectral}, to get that, almost surely
\begin{equation}
\label{eq:cv_denom_lim}
\delta_i^T A_i \delta_i - \Tr(A_i) / n \to 0.
\end{equation}
Further, as $\Tr(A_i) /n \to \gamma m(-\lambda)$ almost surely by the
Marchenko-Pastur theorem, we have, almost surely,   
\begin{equation}
\label{eq:cv_denom_lim2}
1 - (S_{\lambda})_{ii} \to \frac{1}{1 + \gamma m(-\lambda)}.  
\end{equation}

\paragraph{Replacing denominators, controlling remainders.} The strategy
henceforth, based on the result in \eqref{eq:cv_denom_lim2}, is to replace the 
denominators \smash{$1 -  (S_{\lambda})_{ii}$}, $i=1,\ldots,n$ in the summands
of the CV error by their asymptotic limits, and then control the remainder
terms.  More precisely, we define \smash{$\bar{D}_\lambda = (1+\gamma
  m(-\lambda))^{-1} I$}, and then write, from \eqref{eq:cv_quad},      
\begin{equation}
\label{eq:cv_quad_diff}
\cv_n(\lambda) = \underbrace{y^T (I-S_\lambda) \bar{D}^{-2}_\lambda
  (I-S_\lambda) y/n}_{a} + \underbrace{y^T (I-S_\lambda) (D^{-2}_\lambda -
  \bar{D}^{-2}_\lambda) (I-S_\lambda) y/n}_{b}.
\end{equation}
We will first show that almost surely $b \to 0$.  Observe, by the
Cauchy-Schwartz inequality,  
\begin{align*}
b &\leq \frac{1}{n} \|(I-S_\lambda) y \|_2^2 \,
\lambda_{\max}(D^{-2}_\lambda - \bar{D}^{-2}_\lambda) \\ 
&\leq 2(r^2+\sigma^2) \underbrace{\max_{i=1,\ldots,n} \, \Big|(1 + \delta_i^T
  A_i \delta_i)^2 - \big(1+\gamma m(-\lambda)\big)^2 \Big|}_{c}, 
\end{align*}
where in the second step, we used $\|(I-S_\lambda)y\|_2^2/n \leq \|y\|_2^2/n
\leq 2(r^2+\sigma^2)$, which holds almost surely for large enough $n$, by the
strong law of large numbers. Meanwhile,   
\begin{multline*}
c \leq \underbrace{\max_{i=1,\ldots,n} \, \Big|(1+\delta_i^T A_i \delta_i)^2 -  
    \big(1+\Tr(A_i)/n\big)^2\Big|}_{d_1} + \underbrace{\max_{i=1,\ldots,n} \,  
    \Big|\big(1+\Tr(A_i)/n\big)^2 -\big(1+\Tr(A)/n\big)^2\Big|}_{d_2} + \\   
 \underbrace{\Big|\big(1+\Tr(A)/n\big)^2 - \big(1+\gamma
  m(-\lambda)\big)^2\Big|}_{d_3}.   
\end{multline*}
By the Marchenko-Pastur theorem, we have $d_3 \to 0$ almost surely. 
Using $u^2-v^2 = (u-v)(u+v)$ on the maximands in $d_2$,    
\begin{align*}
d_2 &= \max_{i=1,\ldots,n} \, \big|\Tr(A_i)/n - \Tr(A)/n\big|
  \big|2 + \Tr(A_i)/n + \Tr(A)/n\big| \\ 
&\leq \frac{2(1+1\lambda)}{n} \max_{i=1,\ldots,n} \, |\Tr(A_i-A)| \\   
&\leq \frac{2(1+1\lambda)}{n} \max_{i=1,\ldots,n} \,
  \frac{|\Tr(A \delta_i\delta_i^T A)|}{|1  - \delta_i^T A \delta_i|} \\ 
&\leq \frac{2(1+1\lambda)}{n} \frac{s_{\max}+\lambda}{\lambda^3}   
  \max_{i=1,\ldots,n} \, \|\delta_i\|_2^2 \\
&\leq \frac{2(1+1\lambda)}{n} \frac{(s_{\max}+\lambda) s_{\max}}{\lambda^3} \\ 
&\leq \frac{4(1+1\lambda)(2+\lambda)}{n \lambda^3} \to 0.
\end{align*}
In the second line above, we used $\Tr(A_i)/n \leq \lambda_{\max}(A_i) \leq
1/\lambda$, $i=1,\ldots,n$, and also $\Tr(A)/n \leq 1/\lambda$.  In the 
third line, we used the Sherman-Morrison-Woodbury formula.  In the fourth
line, for each summand $i=1,\ldots,n$, we upper bounded the numerator by 
$\lambda_{\max}(A)^2 \|\delta_i\|_2^2 \leq \|\delta_i\|_2^2/\lambda^2$, and
lower bounded the denominator by $\lambda/(s_{\max}+\lambda)$, with   
$s_{\max}=\lambda_{\max}(X^T X/n)$ (which follows from a short calculation
using the eigendecomposition of $X^T X/n$). In the fifth line, we used 
$\|\delta_i\|_2^2 = e_i^T (XX^T /n) e_i \leq s_{\max}$, and in the sixth line,
we used $s_{\max} \leq 2$ almost surely for sufficiently large $n$, by the
Bai-Yin theorem \citep{bai1993limit}. 

Now we will show that $d_1 \to 0$ almost surely by showing that the convergence
in \eqref{eq:cv_denom_lim} is rapid enough.  As before, using $u^2-v^2 = 
(u-v)(u+v)$ on the maximands in $d_1$, we have
\begin{align*}
d_1 &= \max_{i=1,\ldots,n} \, \big|\delta_i^T A_i \delta_i- \Tr(A_i)/n\big|
  \big|2 + \delta_i^T A_i \delta_i + \Tr(A_i)/n\big|^2 \\  
&\leq (2 + 3/\lambda) \max_{i=1,\ldots,n} \, 
  \underbrace{\big|\delta_i^T A_i \delta_i-  \Tr(A_i)/n \big|}_{\Delta_i}.   
\end{align*}
Here we used that for $i=1,\ldots,n$, we have $\Tr(A_i)/n \leq
1/\lambda$, and similarly, $\delta_i^T A_i  \delta_i \leq \|\delta_i\|_2^2 /
\lambda \leq s_{\max}/\lambda \leq 2/\lambda$,
with the last inequality holding almost surely for sufficiently large $n$, by
the Bai-Yin  theorem \citep{bai1993limit}.  To show \smash{$\max_{i=1,\ldots,n}
  \Delta_i \to 0$} almost surely, we start with the union bound and Markov's 
inequality, where $q=2+\eta/2$, and $t_n$ is be specified later, 
$$
\P\bigg(\Big(\max_{i=1,\ldots,n} \, \Delta_i\Big) > t_n\bigg) 
\leq \sum_{i=1}^n \E(\Delta_i^{2q}) t_n^{-2q} 
\leq C \lambda^{-q} n p^{-q} t_n^{-2q}.   
$$
In the last step, we used that for $i=1,\ldots,n$, we have $\E(\Delta_i^{2q})
\leq C \lambda_{\max}(A_i)^q p^{-q} \leq C \lambda^{-q} p^{-q}$ for a constant  
$C>0$ that depends only on $q$ and the assumed moment bound, of order 
$2q=4+\eta$, on the entries of $P_x$.  This expectation bound is a consequence of 
the trace lemma in Lemma B.26 of \citet{bai2010spectral}, as explained in the
proof of Lemma 7.6 in \citet{dobriban2018high}.  Hence choosing
$t_n^{-2q}=p^{\eta/4}$, from the last display we have 
$$
\P\bigg(\Big(\max_{i=1,\ldots,n} \, \Delta_i\Big) > t_n\bigg)
 \leq C \lambda^{-q/2} n p^{-2-\eta/2} p^{\eta/4} 
\leq \frac{2C \lambda^{-q/2}}{\gamma} p^{-1-\eta/4}, 
$$
where in the last step we used $n/p \leq 2/\gamma$ for large enough $n$. 
Since the right-hand side above is summable in $p$, we conclude by the
Borel-Cantelli lemma that \smash{$\max_{i=1,\ldots,n} \Delta_i \to 0$} almost  
surely, and therefore also $d_1 \to 0$ almost surely.  This finishes the proof 
that $b \to 0$.

\paragraph{Relating back to GCV.} Returning to \eqref{eq:cv_quad_diff}, observe   
$$
a = y^T (I-S_\lambda)^2 y/n \cdot \big(1+\gamma m(-\lambda)\big)^2.
$$
The first term in the product on the right-hand side above is precisely the
numerator in GCV, whose convergence was established in \eqref{eq:gcv_numer}.
Therefore, to show that the limit of $a$, i.e., of $\cv_n(\lambda)$, is the same
as it was for $\gcv_n(\lambda)$, we will need to show that the second term in
the product above matches the limit for the GCV denominator, in
\eqref{eq:gcv_denom}.  That is, we will need to show that 
\begin{equation}
\label{eq:cv_denom_claim}
\big(1+\gamma m(-\lambda)\big)^2 = \frac{1}{(1-\gamma(1-m(-\lambda)))^2}, 
\end{equation}
or equivalently,
$$
1-\gamma\big(1-m(-\lambda)\big) = \frac{1}{1+\gamma m(-\lambda)}.  
$$
As before, it helps to reparametrize in terms of the companion Stieltjes
transform, giving
$$
v(-\lambda) = \frac{1}{\lambda + \lambda v(-\lambda) + \gamma - 1},
$$
of equivalently,
$$
\big(\lambda + \lambda v(-\lambda) + \gamma - 1\big) v(-\lambda) = 1.  
$$
Adding $v(-\lambda)$ to both sides, then dividing both sides by $1+v(-\lambda)$,
yields 
$$
\frac{(\lambda + \lambda v(-\lambda) + \gamma) v(-\lambda)}{1 + v(-\lambda)} 
= 1,  
$$
and dividing through by $v(-\lambda)$, then rearranging, gives
$$
\frac{1}{v(-\lambda)} = \lambda + \frac{\gamma}{1 + v(-\lambda)}.
$$
This is precisely the {\it Silverstein equation} \citep{silverstein1995strong},
which relates the companion Stieltjes transform $v$ to the weak limit $H$ of the
spectral measure of the feature covariance matrix $\Sigma$, which in the current
isotropic case $\Sigma=I$, is just $H=\delta_1$ (a point mass at 1).  Hence, we
have established the desired claim \eqref{eq:cv_denom_claim}, and from the
limiting analysis of GCV completed previously, we have that, almost surely, 
\smash{$a \to \sigma^2 + \sigma^2 \gamma (m(-\lambda) - \lambda(1 - \alpha
  \lambda) m'(-\lambda))$}.  This is indeed also the almost sure limit of
$\cv_n(\lambda)$, from \eqref{eq:cv_quad_diff} and $b \to 0$
almost surely. The proof of uniform convergence, and thus convergence of the
CV-tuned risk, follows similar arguments to the GCV case, and is omitted.    

\subsection{Misspecified model results}

These results follow because, as argued in Section \ref{sec:risk_mis_iso}, the
misspecified case can be reduced to a well-specified case after we make the 
substitutions in \eqref{eq:subs}.

\subsection{CV and GCV simulations}
\label{app:sims_cv}

Figures \ref{fig:risk_cv} and \ref{fig:risk_cv_ab} investigate the effect of
using CV and GCV tuning for ridge regression, under the same simulation setup as
Figures \ref{fig:risk_rg} and \ref{fig:risk_rg_ab}, respectively.  It is worth
noting that for these figures, there is a difference in how we compute
finite-sample risks, compared to what is done for all of the other figures in
the paper. In all others, we can compute the finite-sample risks exactly,
because, recall, our notion of risk is conditional on $X$,  
$$
R_X(\hbeta;\beta) = \E\big[\|\hbeta-\beta\|_\Sigma^2 \,|\, X \big], 
$$ 
and this quantity can be computed analytically for linear estimators like
min-norm least squares and ridge regression. When we tune ridge by minimizing CV
or GCV, however, we can no longer compute its risk analytically, and so in our
experiments we approximate the above expectation by an average over 20
repetitions. Therefore, the finite-sample risks in Figures \ref{fig:risk_cv} and
\ref{fig:risk_cv_ab} are (only a little bit) farther from their asymptotic
counterparts compared to all other figures in this paper, and we have included
the finite-sample risk of the optimally-tuned ridge estimator in these figures,
when the risk is again computed in the same way (approximated by an average
over 20 repetitions), as a reference.  All this being said, we still see
excellent agreement between the risks under CV, GCV, and optimal tuning, and
these all lie close to their (common) asymptotic risk curves, throughout. 

\begin{figure}[p]
\vspace{-40pt}
\centering
\includegraphics[width=0.725\textwidth]{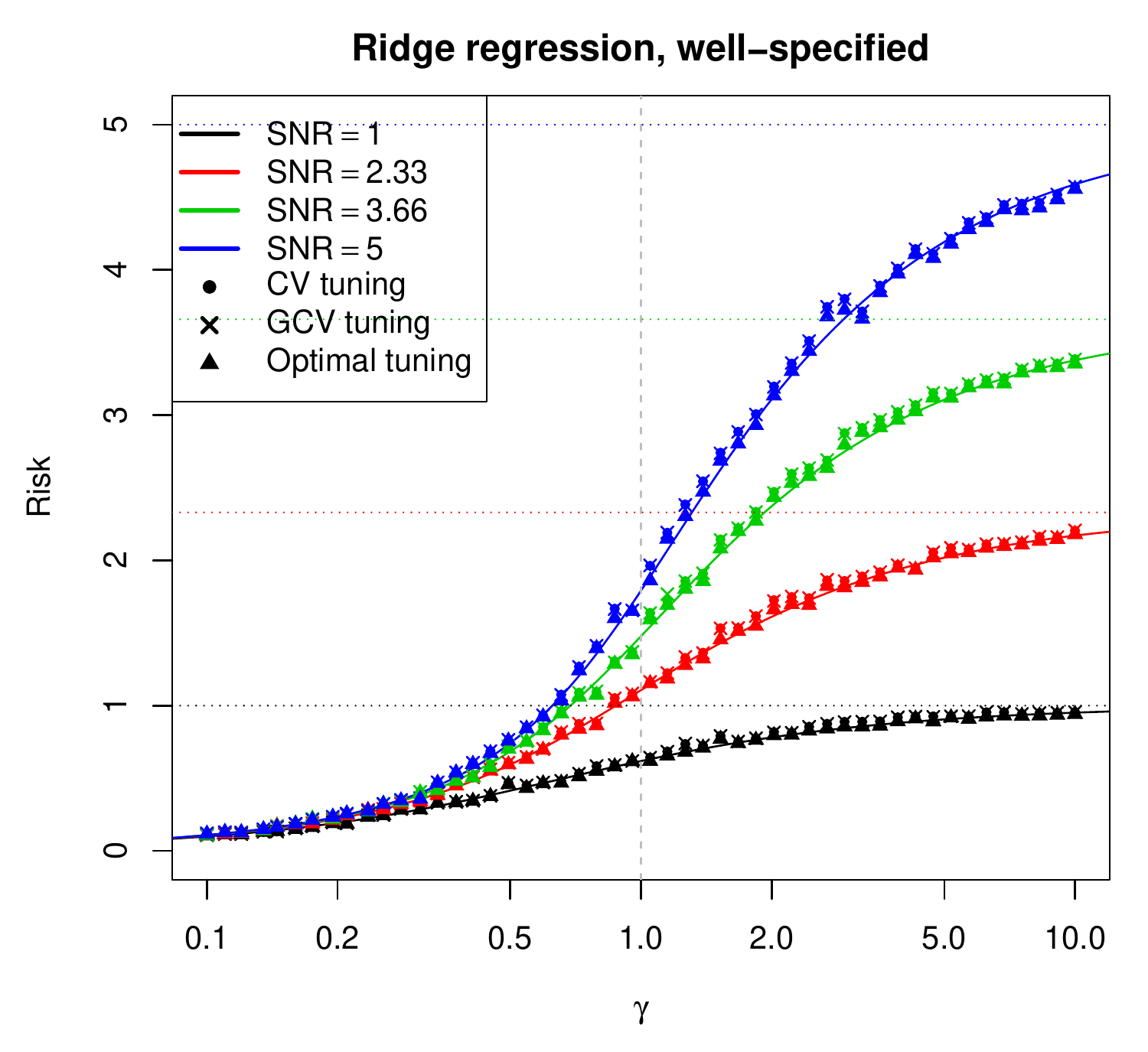} 
\caption{\footnotesize Asymptotic risk curves for the optimally-tuned ridge
  regression estimator (from Theorem \ref{thm:risk_ridge}), under the same setup
  as Figure \ref{fig:risk_rg} (well-specified model).  Finite-sample risks for
  ridge regression under CV, GCV, and optimal tuning are plotted as circles,
  ``x'' marks, and triangles, respectively.  These are computed with $n=200$,
  $p=[\gamma n]$, across various values of $\gamma$, from features $X$ having 
  i.i.d.\ $N(0,1)$ entries.}   
\label{fig:risk_cv}

\bigskip
\includegraphics[width=0.725\textwidth]{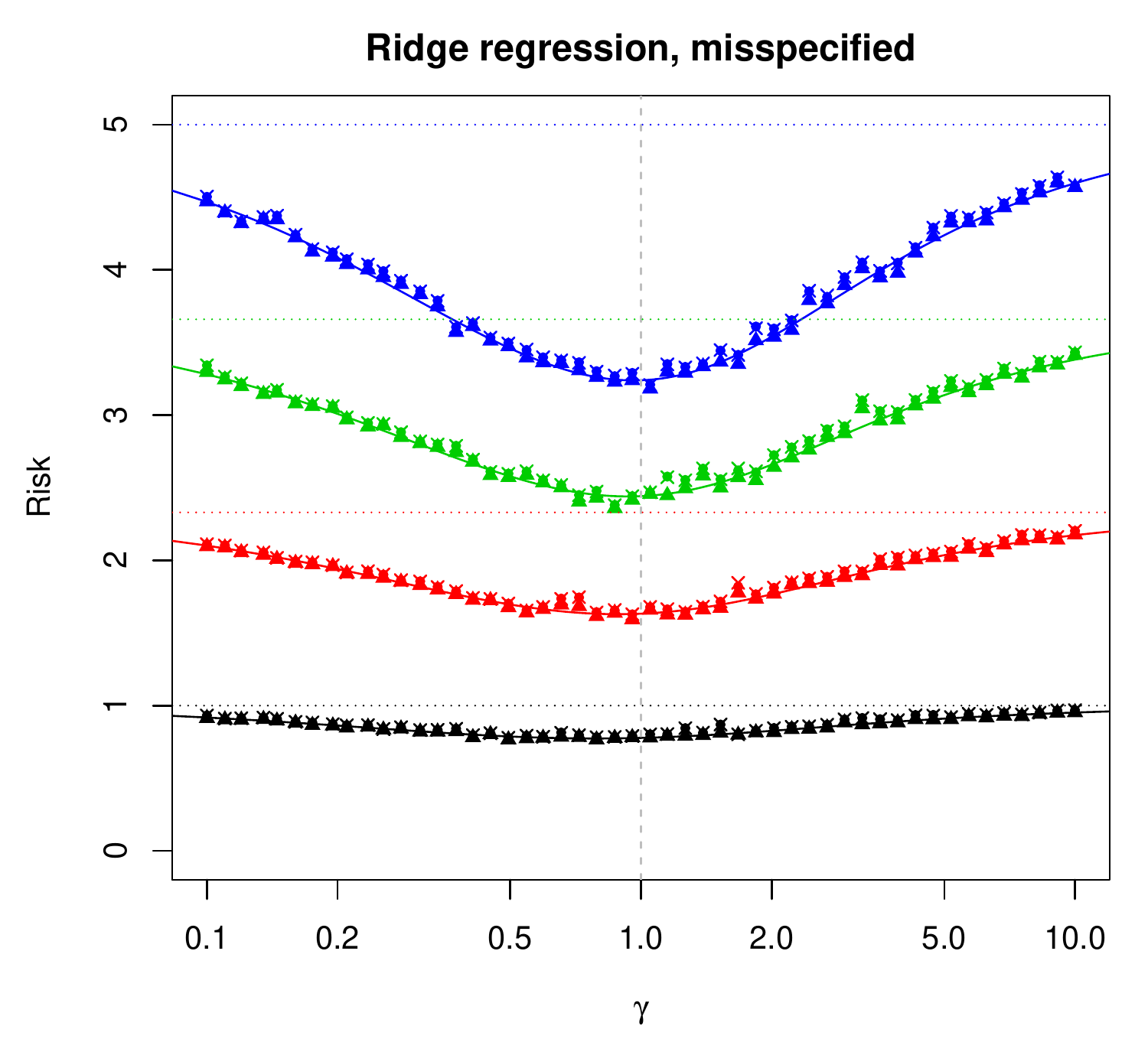}
\caption{\footnotesize Asymptotic risk curves for the optimally-tuned ridge
  regression estimator (from Theorem \ref{thm:risk_ridge}), under the same setup
  as Figure \ref{fig:risk_rg_ab} (misspecified model).  Finite-sample risks for
  ridge regression under CV, GCV, and optimal tuning are again plotted as
  circles, ``x'' marks, and triangles, respectively.  These are again computed
  with $n=200$, $p=[\gamma n]$, across various values of $\gamma$, from features
  $X$ having i.i.d.\ $N(0,1)$ entries.}
\label{fig:risk_cv_ab}
\end{figure}

\section{Nonlinear model: Proof of Theorem \ref{cor:purely_nonlinear}}

In this section we present the proof of  Theorem
\ref{cor:purely_nonlinear}. In fact, in the next section we will state a more general
result which applies to activation functions $\varphi$ eith a non-vanishing linear
component $\E[G\varphi(G)]\neq 0$ (here and below $G$ denotes a standard normal random variable). 

\subsection{A more general theorem}
\label{app:Moregeneral}

We denote by $\bQ \in \reals^{p\times p}$
the Gram matrix of the weight vectors $\bw_i$, $i=1,\ldots,p$ (rows of $W \in
\R^{p \times d}$), with diagonals set to zero. Namely, for each $i,j$,  
$$
Q_{ij} = \<\bw_i,\bw_j\> 1\{i \neq j\}.
$$
Let $N=p+n$ and define the symmetric matrix $\bA(s,t) \in \reals^{N\times N}$, 
for $s \geq t \geq 0$, with the block structure:
\begin{equation}
\bA(s,t) = \begin{bmatrix}
s\id_p +t\bQ & \frac{1}{\sqrt{n}} \bX^{\sT} \\ 
\frac{1}{\sqrt{n}} \bX & 0_pn
\end{bmatrix},\label{eq:Adef}
\end{equation}
where $I_p \in \R^{p\times p}$, $0_n\in\R^{n\times n}$ are the
identity and zero matrix, respectively.
Note that the matrix $A(s)$ introduced in the main text, see Eq.~\eqref{eq:aden}, is recovered by setting $t=0$.
We introduce the following resolvents (as usual, these are defined for
$\Im(\xi)>0$ and by analytic continuation, whenever possible, for $\Im(\xi) =
0$):  
\begin{align*}
m_{1,n}(\xi,s,t) &= 
\E\Big\{\big(\bA(s,t) - \xi\id_{N}\big)^{-1}_{1,1}\Big\} = 
\E M_{1,n}(\xi,s,t),\\   
 M_{1,n}(\xi,s,t) &=
\frac{1}{p}\Tr_{[1,p]}\Big\{\big(\bA(s,t) - \xi\id_{N}\big)^{-1}\Big\},\\  
m_{2,n}(\xi,s,t) &= 
\E\Big\{\big(\bA(s,t) -\xi\id_{N}\big)^{-1}_{p+1,p+1}\Big\} =
\E M_{2,n}(\xi,s,t),\\   
 M_{2,n}(\xi,s,t) &= 
\frac{1}{n}\Tr_{[p+1,p+n]}\Big\{\big(\bA(s,t) - \xi\id_{N}\big)^{-1}\Big\}.  
\end{align*}
Whenever clear from
the context, we will omit the arguments from block matrix and resolvents, and 
write $\bA=\bA(s,t)$, $m_{1,n} =m_{1,n}(\xi,s,t)$, and $m_{2,n}
=m_{2,n}(\xi,s,t)$.    

The next theorem is the the central random matrix results, and characterizes the asymptotics of $m_{1,n}$, $m_{2,n}$.
It provides a generalization of Lemma \ref{thm:ResolventKernel}.
\begin{theorem}\label{thm:ResolventKernel-gen}
Assume the model \eqref{eq:data_x}, \eqref{eq:data_y}, where each
$x_i = \varphi(W z_i) \in \R^p$, for $z_i \in \R^d$ having i.i.d.\
entries from $N(0,1)$, $W \in \R^{p \times d}$ having i.i.d.\ entries from  
$N(0,1/d)$ (with $W$ independent of $z_i$), and for $\varphi$ an activation
function that acts componentwise.  Assume that $|\varphi(x)|\le
c_0(1+|x|)^{c_0}$ for a constant $c_0>0$. Also, for $G \sim N(0,1)$, assume that
the standardization conditions hold: $\E[\varphi(G)]=0$ and
$\E[\varphi(G)^2]=1$.

  Consider $\Im(\xi)>0$
or $\Im(\xi)=0$, $\Re(\xi)<0$, with $s\ge t\ge 0$. Let $m_1$ and $m_2$ be the
unique solutions of the following fourth degree equations: 
\begin{align}
m_2&= \bigg(-\xi-\gamma m_1+\frac{\gamma
     c_1m_1^2(c_1m_2-t)}{m_1(c_1m_2-t)-\psi}\bigg)^{-1},\label{eq:M2}\\ 
m_1&=  \bigg(-\xi-s-\frac{t^2}{\psi}m_1-m_2 +
     \frac{t^2\psi^{-1}m_1^2(c_1m_2-t)-2tc_1m_1m_2+c_1^2m_1m^2_2}
     {m_1(c_1m_2-\psi)-\psi}\bigg)^{-1},\label{eq:M1}   
\end{align}
subject to the condition of being analytic functions for $\Im(z)>0$, and
satisfying $|m_1(z,s,t)|, |m_2(z,s,t)| \le 1/\Im(z)$ for $\Im(z)>C$ (with $C$ a
sufficiently large constant).  Then, as $n,p,d \to \infty$, such that $p/n \to
\gamma$ and $d/p \to \psi$, we have almost surely (and in $L^1$),
\begin{align}
\lim_{n,p,d\to\infty} M_{1,n}(\xi,s,t) &= m_1(\xi,s,t),\label{eq:AS-1}\\ 
\lim_{n,p,d\to\infty} M_{2,n}(\xi,s,t) &= m_2(\xi,s,t).\label{eq:AS-2}
\end{align}
\end{theorem}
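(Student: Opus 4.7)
The plan is to adapt the leave-one-out (Schur complement) strategy of \citet{cheng2013spectrum} to the asymmetric block matrix $\bA(s,t)$, proceeding in three steps: (i) concentrate $M_{i,n}$ around its expectation $m_{i,n}$; (ii) derive a pair of approximate fixed-point equations for $(m_{1,n},m_{2,n})$ by applying Schur's formula to one row/column in each of the two diagonal blocks; (iii) prove uniqueness of the analytic solution to \eqref{eq:M1}--\eqref{eq:M2} and pass to the almost sure limit.

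For step (i), since $(W,Z)$ is jointly Gaussian and the map $(W,Z)\mapsto M_{i,n}(\xi,s,t)$ is locally Lipschitz with constant controlled by $|\Im(\xi)|^{-1}$ together with the polynomial growth bound $|\varphi(x)|\le c_0(1+|x|)^{c_0}$, the Gaussian log-Sobolev inequality delivers $|M_{i,n}-m_{i,n}|\to 0$ almost surely and in $L^1$. For step (ii), the Schur identity applied to index $1$ gives
\[
\bigl(\bA-\xi I\bigr)^{-1}_{11} \;=\; \Bigl(s-\xi - \ba_1^\T(\bA^{(1)}-\xi I)^{-1}\ba_1\Bigr)^{-1},
\]
where $\bA^{(1)}$ is the principal minor of $\bA$ with row/column $1$ removed, and the vector $\ba_1$ splits into a $Q$-piece $t\langle w_1,w_j\rangle$ for $j=2,\ldots,p$ (a function of $w_1$) and an $X$-piece $n^{-1/2}\varphi(w_1^\T z_k)$ for $k=1,\ldots,n$ (a function of $w_1$ and the $z_k$). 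Conditionally on $w_1$, I would decompose $\varphi(w_1^\T z_k)=c_1\, w_1^\T z_k+\tilde\varphi(w_1^\T z_k)$, where $\tilde\varphi$ has vanishing zeroth and first Hermite coefficients. Standard trace-concentration lemmas then collapse the quadratic form $\ba_1^\T(\bA^{(1)}-\xi I)^{-1}\ba_1$ to a rational expression in the block-wise normalized traces of $(\bA^{(1)}-\xi I)^{-1}$, i.e.\ in $(m_{1,n},m_{2,n},c_1,t,\psi)$. A symmetric Schur expansion at index $p+1$ produces the companion equation for $m_{2,n}$; matching leading orders yields \eqref{eq:M1}--\eqref{eq:M2}.

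The hard part will be step (ii): the shared dependence on $W$ of the $Q$-block and of $X=\varphi(ZW^\T)$ creates cross-correlations absent in the classical independent-entries setting. The essential device is that $\tilde\varphi(w_1^\T z_k)$ is $L^2$-orthogonal to every polynomial of degree $\le 1$ in $w_1^\T z_k$, and its Gram matrix across $k$ concentrates to $I_n$ \cite{el2010spectrum,cheng2013spectrum}; this justifies replacing the nonlinear contribution to $\ba_1$ by a Gaussian vector of matching covariance without perturbing the resolvent to leading order. The surviving interaction between the linear piece $c_1 Wz$ and the off-diagonal $tQ$ block is what produces the rational $(c_1,t,\psi)$-dependence in \eqref{eq:M1}--\eqref{eq:M2}; in particular the cross-block contributions require controlling the $(1,p+1)$-type off-diagonal entries of $(\bA^{(1)}-\xi I)^{-1}$, which by exchangeability concentrate to explicit functions of $(m_1,m_2)$. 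For step (iii), uniqueness of the solution satisfying $|m_i(z,s,t)|\le 1/\Im(z)$ for $\Im(z)$ large follows by a contraction argument on a neighborhood of $\infty$ in the upper half-plane, and extension to $\Im(\xi)=0$, $\Re(\xi)<0$ is by analytic continuation (Vitali's theorem), exploiting analyticity of $\xi\mapsto(\bA-\xi I)^{-1}$ off the spectrum.
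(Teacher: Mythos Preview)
Your high-level architecture is right and matches the paper: concentration of $M_{i,n}$ around $m_{i,n}$, Schur complement at one index in each block to obtain approximate self-consistent equations, contraction/uniqueness for large $\Im(\xi)$, and analytic continuation. The difficulty is entirely in step~(ii), and your proposed mechanism there does not close.

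The gap is the claim that ``standard trace-concentration lemmas collapse $\ba_1^\T(\bA^{(1)}-\xi I)^{-1}\ba_1$ to a rational expression in the block-wise normalized traces.'' Conditioning on $w_1$ does not decouple $\ba_1$ from $\bA^{(1)}$: the $Q$-piece $t\langle w_1,w_j\rangle$ depends on $(w_j)_{j\ge 2}$ and the $X$-piece $n^{-1/2}\varphi(w_1^\T z_k)$ depends on $(z_k)_{k\le n}$, and both families sit inside $\bA^{(1)}$. If you integrate out $w_1$ first, the quadratic form becomes $\Tr\big(R\,\E_{w_1}[\ba_1\ba_1^\T]\big)$ with $R=(\bA^{(1)}-\xi I)^{-1}$, and the blocks of $\E_{w_1}[\ba_1\ba_1^\T]$ are (up to small errors) $t^2 d^{-1}W_{-1}W_{-1}^\T$, $n^{-1}(I+c_1 K_{\mathrm{off}})$, and a cross block proportional to $\langle w_j,z_k\rangle$. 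This produces traces such as $\Tr(R_{11}\,W_{-1}W_{-1}^\T)$ and $\Tr(R_{22}K)$ that are \emph{not} functions of $(m_{1,n},m_{2,n})$ alone; trying to reduce them via $tQ_*=A^{(1)}_{11}-sI$ only generates further mixed traces like $\Tr(R_{12}X_*^\T)$. The system does not close on two unknowns this way, and exchangeability of off-diagonal resolvent entries does not rescue it (individual off-diagonal entries are $O(n^{-1/2})$ and their weighted sums here are nontrivial, not zero).

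What the paper does instead is a \emph{geometric} leave-one-out: for the $m_1$ equation it projects every $w_a$ and every $z_i$ onto and orthogonal to $w_p$, writing $w_a=\eta_a\,\hat w_p+\tilde w_a$ and $z_i=\sqrt{n}\,u_i\,\hat w_p+\tilde z_i$. The minor then splits as $\tilde\bB_*+\bDelta+\bE$, where $\tilde\bB_*$ is built from $(\tilde w_a,\tilde z_i)$ alone and is therefore independent of $(\eta,u)$; $\bDelta=\bU\bC\bU^\T$ is an explicit \emph{rank-two} matrix in $(\eta,u)$; and $\|\bE\|_{\op}=o(1)$ (this is where the decomposition $\varphi=a_{1,d}x+\varphi_\perp$ enters, to show the nonlinear perturbation of the argument by $\sqrt n\,\eta_a u_i=O(d^{-1/2})$ is negligible). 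Woodbury on the rank-two $\bDelta$ turns the Schur quadratic form into quadratic forms of the i.i.d.\ vectors $\eta,u$ against $(\tilde\bB_*-\xi I)^{-1}$, and \emph{those} do concentrate to $\psi^{-1}m_{1,n}$ and $m_{2,n}$. The rational corrections in \eqref{eq:M1}--\eqref{eq:M2} come precisely from the $2\times 2$ Woodbury matrix $\bS=\bC^{-1}+\bU^\T(\tilde\bB_*-\xi I)^{-1}\bU$. Without this rank-two extraction you will not obtain a closed pair of equations.

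A smaller point on step~(i): Gaussian log-Sobolev is delicate here because $(W,Z)\mapsto M_{i,n}$ is only locally Lipschitz, with gradient entries involving $\varphi'$ evaluated at unbounded arguments. The paper sidesteps this by a bounded-differences martingale: zeroing one row/column of $\bA$ changes any partial resolvent trace by at most $3/\xi_0$ (their Lemma on rank-one perturbations), independent of entry magnitudes, and Azuma--Hoeffding then gives sub-Gaussian concentration. If you want to keep log-Sobolev you will need a truncation/polynomial-approximation step first (the paper does reduce to polynomial $\varphi$ via an $L^2(\mu_G)$ approximation lemma).
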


The proof of this theorem is given in Appendix \ref{app:ResolventKernel}. Now
define   
\begin{equation}
S_n(z) = \frac{1}{p}\Tr\big((\hSigma-z\id_p)^{-1}\big),\label{eq:Sn}  
\end{equation}
where recall \smash{$\hSigma=X^T X/n$}. As a corollary of the above, we obtain
the asymptotic Stieltjes transform of the eigenvalue distribution of
\smash{$\hSigma$} (this result was first derived in
\cite{pennington2017nonlinear}).   

\begin{corollary}\label{coro:Stieltjes-gen}
Assume the conditions of Theorem \ref{thm:ResolventKernel-gen}.  Consider
$\Im(\xi)>0$. As $n,p,d \to \infty$, with $p/n \to \gamma$ and $d/p \to \psi$,
the Stieltjes transform of spectral distribution of \smash{$\hSigma$} in
\eqref{eq:Sn} satisfies almost surely (and in $L^1$) $S_{n}(\xi)\to s(\xi)$
where $s$ is a nonrandom function that uniquely solves the following
equations (abbreviating $s=s(\xi)$):
\begin{align}
-1-\xi^2 s &= \om_1\om_2-
\frac{c_1^2\om_1^2\om_2^2}{c_1\om_1\om_2-\psi},\label{eq:coroS1}\\  
\om_1 & =  \xi s,\label{eq:coroS2}\\
\om_2 &= \frac{\gamma-1}{\xi}+\gamma \xi s,\label{eq:coroS3} 
\end{align}
subject to the condition of being analytic for $\Im(z)>0$, and satisfying 
$|s(z^2)|<1/\Im(z^2)$ for $\Im(z)>C$ (where $C$ is a large enough constant). 
When $c_1=0$, the function $s$ is the Stieltjes transform of the
Marchenko-Pastur distribution.    
\end{corollary}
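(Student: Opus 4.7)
My plan is to derive the corollary as an algebraic consequence of Theorem \ref{thm:ResolventKernel-gen} specialized to $s=t=0$, combined with a Schur-complement identification of the block-matrix resolvents with the Stieltjes transform of $\hSigma$. At $s=t=0$, the matrix $\bA(0,0)$ is purely off-diagonal, so by the block-matrix inversion formula,
\begin{align*}
\bigl(\bA(0,0)-\xi I_N\bigr)^{-1}_{[1,p],[1,p]} &= \xi(\hSigma-\xi^2 I_p)^{-1},\\
\bigl(\bA(0,0)-\xi I_N\bigr)^{-1}_{[p+1,p+n],[p+1,p+n]} &= \xi(XX^T/n-\xi^2 I_n)^{-1},
\end{align*}
with Schur complements $-\xi I_p+\hSigma/\xi$ and $-\xi I_n+XX^T/(n\xi)$, respectively. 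Taking normalized traces and using the elementary identity $n^{-1}\Tr(XX^T/n-zI_n)^{-1}=\gamma_n S_n(z)+(\gamma_n-1)/z$ (which holds since $\hSigma$ and $XX^T/n$ share their nonzero spectrum, with $\hSigma$ carrying $p-n$ extra zero eigenvalues when $p\ge n$), I obtain
\begin{align*}
M_{1,n}(\xi,0,0) &= \xi S_n(\xi^2),\\
M_{2,n}(\xi,0,0) &= \gamma_n\xi S_n(\xi^2)+(\gamma_n-1)/\xi.
\end{align*}

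Applying Theorem \ref{thm:ResolventKernel-gen} then gives almost-sure (and $L^1$) convergence of these quantities, so $\xi S_n(\xi^2)\to m_1(\xi,0,0)=:\om_1$ and $\gamma_n\xi S_n(\xi^2)+(\gamma_n-1)/\xi\to m_2(\xi,0,0)=:\om_2$. This produces the nonrandom limit $s(\xi^2):=\om_1/\xi$ together with $\om_1=\xi s$ and $\om_2=\gamma\xi s+(\gamma-1)/\xi$, which are exactly \eqref{eq:coroS2} and \eqref{eq:coroS3}.

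The remaining equation \eqref{eq:coroS1} comes from substituting $s=t=0$ into the fixed-point equation \eqref{eq:M1}. With $s=t=0$, that equation collapses to
\[
\om_1\left(-\xi-\om_2+\frac{c_1^2\om_1\om_2^2}{c_1\om_1\om_2-\psi}\right)=1,
\]
which rearranges to $-1-\xi\om_1=\om_1\om_2-c_1^2\om_1^2\om_2^2/(c_1\om_1\om_2-\psi)$; using $\xi\om_1=\xi^2 s$ on the left gives \eqref{eq:coroS1}. Specializing \eqref{eq:M2} at $s=t=0$ provides a consistency check: subtracting the two specialized equations eliminates the rational term and reproduces $\om_2-\gamma\om_1=(\gamma-1)/\xi$, matching the linear relation derived above from the Schur step.

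Finally, uniqueness of $s$ on $\{\Im(z)>0\}$ subject to the bound $|s(z^2)|<1/\Im(z^2)$ for large $\Im(z)$ follows from the corresponding uniqueness statement for $(m_1,m_2)$ in Theorem \ref{thm:ResolventKernel-gen} via the affine change of variables $s\leftrightarrow(\om_1,\om_2)$. For the Marchenko-Pastur assertion, when $c_1=0$ the rational term in \eqref{eq:coroS1} vanishes, and substituting \eqref{eq:coroS2}, \eqref{eq:coroS3} yields $\gamma\xi^2 s^2+(\gamma-1+\xi^2)s+1=0$, the standard quadratic characterizing the Stieltjes transform of the Marchenko-Pastur law evaluated at $\xi^2$. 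The genuinely hard part of the overall argument is already carried out in Theorem \ref{thm:ResolventKernel-gen}; given that input, the present corollary is essentially a short block-matrix computation followed by algebraic rearrangement.
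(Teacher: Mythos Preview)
Your proof is correct and follows essentially the same approach as the paper: specialize to $s=t=0$, identify the block resolvents with $S_n$ via Schur complements on $\bA(0,0)$, apply Theorem~\ref{thm:ResolventKernel-gen}, and reduce the fixed-point equations \eqref{eq:M1}--\eqref{eq:M2}. The only cosmetic difference is that you read off \eqref{eq:coroS2}--\eqref{eq:coroS3} directly from the separate identities $M_{1,n}=\xi S_n(\xi^2)$ and $M_{2,n}=\gamma_n\xi S_n(\xi^2)+(\gamma_n-1)/\xi$, whereas the paper first forms the sum $M_n=\gamma M_{1,n}+M_{2,n}$, takes a linear combination of the two specialized fixed-point equations to obtain \eqref{eq:coroS1}, and then back-substitutes to recover \eqref{eq:coroS2}--\eqref{eq:coroS3}.
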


We refer to Appendix \ref{app:CoroStieltjes} for a proof of this corollary. The
next lemma connects the above resolvents to the variance of min-norm
least squares.
\begin{lemma}\label{lemma:ResolventToVar-gen}
Assume the conditions of Theorem \ref{thm:ResolventKernel-gen}.
Let $m_1$, $m_2$ be the asymptotic resolvents given in Theorem
\ref{thm:ResolventKernel-gen}. Define
$$
m(\xi,s,t) = \gamma m_1(\xi,s,t) + m_2(\xi,s,t).
$$
Then for $\gamma\neq 1$, the following Taylor-Laurent expansion holds around
$\xi=0$: 
\begin{align}
-\partial_x m(\xi,x,c_1x)\big|_{x=0} 
= \frac{D_{-1}}{\xi^2}+D_0+O(\xi^2),\label{eq:Laurent}
\end{align}
with each $D_i=D_i(\gamma,\psi,c_1)$. Furthermore, for the ridge regression 
estimator \smash{$\hbeta_\lambda$} in \eqref{eq:ridge}, as $n,p,d \to \infty$,
such that $p/n \to \gamma \in (0,\infty)$, $d/p \to \psi \in (0,1)$, the
following ridgeless limit holds almost surely:
$$
\lim_{\lambda \to 0^+} \, \lim_{n,p,d\to\infty} \, 
V_X(\hbeta_\lambda;\beta) = D_0.
$$
\end{lemma}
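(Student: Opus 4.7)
My plan is to turn the derivative $-\partial_x m(\xi,x)|_{x=0}$ into an explicit trace formula involving the Stieltjes transform of the sample covariance $S_X = X^T X/n$, and then match its Laurent expansion at $\xi=0$ with the variance formula for ridge regression in the ridgeless limit.

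First I would apply the Schur complement to the matrix $A(0)-\xi I_N$. With $\hat R := (S_X - \xi^2 I_p)^{-1}$, the four blocks of $(A(0)-\xi I_N)^{-1}$ are $\xi\hat R$ (upper-left), $\hat R X^T/\sqrt n$ and $X\hat R/\sqrt n$ (off-diagonal), and $-I_n/\xi + X\hat R X^T/(n\xi)$ (lower-right). Then I would use the resolvent identity $\partial_x(A(x)-\xi I)^{-1}|_{x=0} = -(A(0)-\xi I)^{-1}(I_p \oplus 0_n)(A(0)-\xi I)^{-1}$ to compute the normalized traces of the two diagonal blocks. After simplifying $S_X\hat R = I_p + \xi^2\hat R$, the weighted combination $\gamma\partial_x M_{1,n}+\partial_x M_{2,n}$ collapses to
\[
-\partial_x m(\xi,x)\big|_{x=0} \;=\; \frac{1}{n}\Tr[\hat R] + \frac{2\xi^2}{n}\Tr[\hat R^2] \;=\; \gamma S_n(\xi^2) + 2\xi^2\gamma S_n'(\xi^2),
\]
where $S_n(z)=(1/p)\Tr[(S_X-zI)^{-1}]$. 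Since the right-hand side depends on $\xi$ only through $\xi^2$, the Laurent expansion automatically contains no odd powers of $\xi$, justifying the form \eqref{eq:Laurent}.

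Second, I would split off the contribution of the $p-n$ zero eigenvalues of $S_X$ by writing $S_n(z) = -(1-\gamma^{-1})/z + \tilde S_n(z)$, where $\tilde S_n$ is the Stieltjes transform of the empirical measure of the positive eigenvalues and is analytic in a complex neighborhood of $0$ once $S_X$ has a hard edge away from $0$. Substituting, the singular pieces combine into one, giving
\[
-\partial_x m(\xi,x)\big|_{x=0} \;=\; \frac{\gamma-1}{\xi^2} + \gamma\tilde S_n(\xi^2) + 2\xi^2\gamma\tilde S_n'(\xi^2),
\]
from which one reads off $D_{-1}=\gamma-1$ and $D_0 = \gamma\tilde s(0)$, with $\tilde s := \lim_n \tilde S_n$ obtained from the Marchenko-Pastur limit of Corollary \ref{coro:Stieltjes}. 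To identify $D_0$ with the ridgeless variance, I would invoke $\|\Sigma-I_p\|_\op = o_P(1)$ (a consequence of the standardization conditions and the Hermite expansion argument recalled in the text) to get
\[
V_X(\hbeta_\lambda;\beta) \;=\; \frac{\sigma^2}{n}\Tr[\Sigma S_X(\lambda I + S_X)^{-2}] \;=\; \sigma^2\gamma\big[S_n(-\lambda) - \lambda S_n'(-\lambda)\big] + o_P(1).
\]
After the same decomposition of $S_n$, the $1/\lambda$ singularities cancel, the right-hand side becomes $\sigma^2\gamma[\tilde S_n(-\lambda) - \lambda\tilde S_n'(-\lambda)]$, and letting $\lambda\to 0^+$ gives $\sigma^2\gamma\tilde s(0)=\sigma^2 D_0$, matching the lemma's stated value (the $\sigma^2$ factor is absorbed into $D_0$ in the statement).

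The main obstacle is making the interchange of the limits $n,p,d\to\infty$ and $\lambda\to 0^+$ rigorous. This requires uniform convergence $\tilde S_n\to\tilde s$ on a complex neighborhood of $0$, which in turn reduces to a hard-edge estimate: the smallest \emph{nonzero} singular value of $X/\sqrt n$ must stay bounded below. Because the entries of $X = \varphi(ZW^T)$ are strongly dependent (they are determined by only $d = o(p)$ Gaussian variables through the shared matrix $W$), this cannot be read off from the classical Bai-Yin law; one has instead to extract the hard edge from the quadratic equations of Theorem \ref{thm:ResolventKernel} and the fact that the limiting Marchenko-Pastur law has support bounded away from $0$ at $(1-1/\sqrt\gamma)^2$ when $\gamma>1$. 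Secondary technical points are the Hermite-expansion-based justification of $\Sigma\approx I_p$ inside the trace, and the passage from the almost-sure convergence of $M_{i,n}$ to a corresponding statement for the derivative $\partial_x$, which can be handled by Cauchy's integral formula together with uniform resolvent bounds in a neighborhood of $x=0$.
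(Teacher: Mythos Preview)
Your proposal covers only the purely nonlinear case $c_1=0$ and leaves a genuine gap for the general lemma. The $c_1$ contribution is dropped in two matching places.

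First, in the resolvent derivative. The block matrix is $A(s,t)$ with upper-left block $sI_p+tQ$, and the lemma differentiates along the ray $(s,t)=(x,c_1x)$. Hence
\[
\partial_x A(x,c_1x)\big|_{x=0} \;=\; (I_p+c_1Q)\oplus 0_n \;=\; \Sigma_0\oplus 0_n,
\]
not $I_p\oplus 0_n$ as you wrote. The correct trace formula is
\[
-\partial_x m_n(\xi,x,c_1x)\big|_{x=0}
\;=\; \frac{1}{n}\,\Tr\!\big[(\hat\Sigma+\xi^2 I_p)(\hat\Sigma-\xi^2 I_p)^{-2}\,\Sigma_0\big],
\]
which cannot be reduced to a functional of $S_n(\xi^2)$ alone because $\Sigma_0$ and $\hat\Sigma$ do not commute. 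The paper handles this by introducing the \emph{weighted} spectral measure $\mu_n=p^{-1}\sum_i\langle v_i,\Sigma_0 v_i\rangle\,\delta_{\lambda_i}$, with $(\lambda_i,v_i)$ the eigenpairs of $\hat\Sigma$, and deduces $\mu_n\Rightarrow\mu_\infty$ from the almost-sure convergence of the resolvent expression itself.

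Second, in the variance. For $c_1\neq 0$ the population covariance satisfies $\Sigma\approx I_p+c_1Q=\Sigma_0$, and $\|c_1Q\|_{\rm op}$ is of order one (since $Q$ is essentially $WW^T$ minus its diagonal, with $WW^T$ having Marchenko--Pastur eigenvalues at aspect ratio $p/d=1/\psi$). The Hermite argument you cite gives $\|\Sigma-I_p\|_{\rm op}=o_P(1)$ only when $\lambda_1(\varphi)=\sqrt{c_1}=0$. The right substitution inside $V_X$ is $\Sigma\to\Sigma_0$, with the error controlled in Frobenius norm; this is exactly Lemma~\ref{lemma:Sigma-Sigma0} in the paper. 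After this replacement both sides carry the same weight $\Sigma_0$, and the matching goes through for the weighted measure $\mu_n$ rather than for the plain Stieltjes transform.

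With these two fixes your overall architecture --- split off the point mass at zero, read $D_0$ as the regular part at the origin, match with the $\lambda\to 0^+$ variance --- is the paper's. Your diagnosis of the hard-edge issue is also correct: the paper does not use Bai--Yin but extracts the gap from the limiting Stieltjes transform (Lemma~\ref{lemma:Spectrum}).
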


The proof of this lemma can be found in Appendix
\ref{app:ResolventToVar}.

\subsection{Proof of Theorem \ref{thm:ResolventKernel-gen}}
\label{app:ResolventKernel}

The proof follows the approach developed by \cite{cheng2013spectrum} to determine the asymptotic spectral measure
of symmetric kernel random matrices. The latter is in turn inspired to the classical resolvent proof of the semicircle law for Wigner matrices, see 
\cite{anderson2009introduction}.  The present calculation  is somewhat longer because of the block structure of the matrix $\bA$, 
but does not present technical difficulties. We will therefore provide a proof outline, referring to \cite{cheng2013spectrum} for further detail.

Let $\mu_d$ be the distribution of $\<\bw,\bx\>$ when $\bw\sim\normal(0,\id_d/d)$, $\bx\sim\normal(0,\id_d)$. 
By the central limit theorem $\mu_d$ converges weakly to $\mu_G$ (the standard Gaussian measure) as $d\to\infty$.
Further  $\E\{f(\<\bw,\bx\>)\}\to \int f(z)\, \mu_G(\de z)$ for any continuous function $f$, with $|f(z)|\le C(1+|z|^{C})$ for some constant $C$. 
As shown in \cite{cheng2013spectrum},
we can construct the following orthogonal decomposition of  the activation function $\varphi$ in $L_2(\reals,\mu_d)$:
\begin{align}
\varphi(x) = a_{1,d}\, x +\varphi_{\perp}(x)\, .
\end{align}
This decomposition satisfies the following properties:
\begin{enumerate}
\item As mentioned, it is an orthogonal decomposition: $\int x \varphi_{\perp}(x)\, \mu_d(\de x) =0$. Further,
by symmetry and the normalization assumption, $\int x \, \mu_d(\de x) =\int  \varphi_{\perp}(x)\, \mu_d(\de x) =0$.
\item $a_{1,d}^2\to c_1$ as $d\to\infty$.
\item $\int \varphi_{\perp}(x)^2\, \mu_d(\de x) \to 1-c_1$, $\int \varphi_{\perp}(x)^2\, \mu_G(\de x) \to 1-c_1$,   as $d\to\infty$.
\end{enumerate}

Finally, recall the following definitions for the random resolvents:
\begin{align}
M_{1,n}(\xi,s,t) =  \frac{1}{p}\Tr_{[1,p]}\big[(\bA(n)-\xi\id_{N})^{-1}\big]\, ,\;\;\;\;\;
M_{2,n}(\xi,s,t) =  \frac{1}{n}\Tr_{[p+1,p+n]}\big[(\bA(n)-\xi\id_{N})^{-1}\big]\, .\label{eq:Mdef}
\end{align}

In the next section,  we will summarize some basic  facts about the resolvent $m_{1,n}$, $m_{2,n}$
and their analyticity, and some concentration properties of $M_{1,n}, M_{2,n}$. 
We will then derive Eqs.~(\ref{eq:M2}) and (\ref{eq:M1}). Thanks to the analyticity properties, we will assume throughout these
derivations $\Im(\xi)\ge C$ for $C$ a large enough constant. Also, we will assume $\gamma, \psi, \varphi$ to be fixed throughout, and will not 
explicitly point out the dependence with respect to these arguments.

\subsubsection{Preliminaries}

The functions $m_{1,n}$, $m_{2,n}$ are Stieltjes transform
of suitably defined probability measures on $\reals$, and therefore enjoy some important properties.
\begin{lemma}\label{lemma:BasicProperties}
Let $\complex_+=\{z:\; \Im(z)>0\}$ be the upper half of the complex plane.
The functions $\xi\mapsto m_{1,n}(\xi)$, $\xi\mapsto m_{2,n}(\xi)$ have the following properties:
\begin{enumerate}
\item[$(a)$] $\xi\in\complex_+$, then $|m_{1,n}|$, $|m_{2,n}|\le 1/\Im(\xi)$.
\item[$(b)$] $m_{1,n}$, $m_{2,n}$ are analytic on $\complex_+$ and map $\complex_+$ into $\complex_+$.
\item[$(c)$] Let $\Omega\subseteq\complex_+$ be a set with an accumulation point. If $m_{a,n}(\xi)\to m_a(\xi)$ for all $\xi\in\Omega$,
then $m_a(\xi)$ has a unique analytic continuation to $\complex_+$ and  $m_{a,n}(\xi)\to m_a(\xi)$ for all $\xi\in\complex_+$.
\end{enumerate}
\end{lemma}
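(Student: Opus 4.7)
The plan is to recognize that both $m_{a,n}$ ($a \in \{1,2\}$) are expectations of Stieltjes transforms of random probability measures on $\reals$, and then invoke standard facts about such transforms.

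First I would fix the spectral decomposition. Since $\bA(s,t)$ is real symmetric, write $\bA = \sum_{k=1}^N \lambda_k u_k u_k^{\sT}$ with $\lambda_k \in \reals$ and orthonormal $u_k$. Then for any coordinate $i \in [1,N]$,
\begin{equation*}
(\bA - \xi \id_N)^{-1}_{ii} = \sum_{k=1}^N \frac{u_k(i)^2}{\lambda_k - \xi} = \int \frac{1}{x-\xi}\, d\mu^{(i)}(x),
\end{equation*}
where $\mu^{(i)} = \sum_k u_k(i)^2\, \delta_{\lambda_k}$ is a random probability measure on $\reals$ (total mass $\sum_k u_k(i)^2 = 1$ since the $u_k$ are orthonormal). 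Averaging over $i \in [1,p]$ (resp.\ $i \in [p+1,p+n]$) exhibits $M_{1,n}$ (resp.\ $M_{2,n}$) as the Stieltjes transform of a random probability measure, and taking $\E$ gives that $m_{1,n}$ and $m_{2,n}$ are Stieltjes transforms of (non-random) probability measures, call them $\nu_{1,n}$ and $\nu_{2,n}$.

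For (a), since the $\lambda_k$ are real, $|x - \xi| \ge \Im(\xi)$ for all $x \in \reals$, so $|(\bA - \xi \id_N)^{-1}_{ii}| \le 1/\Im(\xi)$ pointwise in $\omega$. The bounds on $M_{a,n}$ follow by averaging, and those on $m_{a,n}$ by taking expectation. For (b), analyticity on $\complex_+$ follows by differentiating under the integral (the integrand is analytic in $\xi$ and uniformly dominated by $1/\Im(\xi)^2$ in any compact subset of $\complex_+$). That the image lies in $\complex_+$ is the standard computation
\begin{equation*}
\Im\, m_{a,n}(\xi) = \int \frac{\Im(\xi)}{|x-\xi|^2}\, d\nu_{a,n}(x) > 0 \quad \text{for } \Im(\xi) > 0.
\end{equation*}

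For (c), I would invoke the Vitali--Porter theorem (equivalently, Montel's theorem together with the identity theorem). The sequence $\{m_{a,n}\}$ is locally uniformly bounded on $\complex_+$ by part (a), hence a normal family. Any subsequential limit is analytic on $\complex_+$; since $m_{a,n} \to m_a$ on $\Omega$, which has an accumulation point, the identity theorem forces all subsequential limits to coincide on $\Omega$ and therefore everywhere on $\complex_+$. Thus $m_a$ admits a unique analytic extension to $\complex_+$ and $m_{a,n}(\xi) \to m_a(\xi)$ pointwise (in fact locally uniformly) on $\complex_+$.

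There is no real obstacle here: the entire argument is a textbook invocation of Stieltjes transform properties and Vitali's theorem. The only small point requiring care is that $m_{a,n}$ is an \emph{expectation} of a random Stieltjes transform rather than a deterministic one, but this is harmless since all the relevant inequalities (bound by $1/\Im(\xi)$, positivity of imaginary part, analyticity via dominated differentiation) survive under $\E$.
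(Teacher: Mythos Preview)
Your proposal is correct and follows essentially the same approach as the paper: both recognize $m_{a,n}$ as the Stieltjes transform of a (non-random) probability measure obtained from the spectral decomposition of $\bA$ by averaging the weights $u_k(i)^2$ over the relevant block of coordinates and then taking expectation, after which properties (a)--(c) are standard. The paper simply cites these as standard consequences (referring to \citet{anderson2009introduction}), whereas you spell out the Vitali--Porter argument for (c) explicitly; otherwise the arguments coincide.
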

\begin{proof}
Consider, to be definite $m_{1,n}$.
Denoting by $(\lambda_i)_{i\le N}$, $(\bv_i)_{i\le N}$,  the eigenvalues and eigenvectors of $\bA(n)$, we have
\begin{align*}
m_{1,n}(\xi) &= \E\frac{1}{p}\Tr_{[1,p]}\big[(\bA(n)-\xi\id_{N})^{-1}\big]\\
 &= \E\sum_{i=1}^N\frac{1}{\lambda_i-\xi}\, \frac{1}{p}\|\Proj_{[1,p]}\bv_i\|^2\\
&= \int \frac{1}{\lambda-\xi} \, \mu_{1,n}(\de\lambda)\, .
\end{align*}
Where we defined the probability measure
\begin{align*}
\mu_{1,n} \equiv \E\frac{1}{p}\sum_{i=1}^N \|\Proj_{[1,p]}\bv_i\|^2\delta_{\lambda_i}\, .
\end{align*}
Properties $(a)$-$(c)$ are then standard consequences of $m_{1,n}$ being a Stieltjes transform (see, for instance, \cite{anderson2009introduction})
\end{proof}

\begin{lemma}\label{eq:BD-Diff}
Let $\bW\in\reals^{N\times N}$ be a symmetric positive-semidefinite matrix $\bW\succeq \bfzero$, and denote by $\bw_i$ its $i$-th column,
with the $i$-th entry set to $0$. Let $\bW^{(i)} \equiv \bW-\bw_i\bfe_i^{\sT}-\bfe_i\bw_i^{\sT}$, where $\bfe_i$ is the $i$-th element of the canonical basis
(in other words, $\bW^{(i)}$ is obtained from $\bW$ by zeroing all elements in the $i$-th row and column except on the diagonal). 
Finally, let $\xi\in\complex$ with $\Im(\xi)\ge\xi_0>0$ or $\Im(\xi) =0$ and $\Re(\xi)\le -\xi_0<0$. 

Then
\begin{align}
\Big|\Tr_{S}\big[(\bW-\xi\id_N)^{-1}\big]- \Tr_{S}\big[(\bW^{(i)}-\xi\id_N)^{-1}\big]\Big|\le \frac{3}{\xi_0}\, .
\end{align}
\end{lemma}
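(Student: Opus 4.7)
The plan is to exploit the rank-two structure of $\bW-\bW^{(i)}$ via a block decomposition around the $i$-th index. After permuting $i$ to be the first index, write
\begin{equation*}
\bW=\begin{pmatrix}a & \bw_i^{\sT}\\ \bw_i & W_0\end{pmatrix},\qquad \bW^{(i)}=\begin{pmatrix}a & 0\\ 0 & W_0\end{pmatrix},
\end{equation*}
where $a=\bW_{ii}\ge 0$ and $W_0$ is the principal $(N-1)\times(N-1)$ minor (itself PSD, as a principal submatrix of a PSD matrix). Set $R_0:=(W_0-\xi\id)^{-1}$, $g:=R_0\bw_i$, and let $S:=(a-\xi)-\bw_i^{\sT}R_0\bw_i$ be the Schur complement. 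The block inverse formula gives $R_{ii}=S^{-1}$ and $R_{jj}=(R_0)_{jj}+S^{-1}g_j^{2}$ for $j\neq i$ (with the obvious re-indexing), whereas $(R^{(i)})_{ii}=1/(a-\xi)$ and $(R^{(i)})_{jj}=(R_0)_{jj}$. Summing over the diagonal indices in $S\subseteq[N]$ yields the compact identity
\begin{equation*}
\Tr_S\!\big(R-R^{(i)}\big)=\bfone_{\{i\in S\}}\Big(S^{-1}-\tfrac{1}{a-\xi}\Big)+S^{-1}\!\!\!\sum_{j\in S\setminus\{i\}}\!g_j^{2}.
\end{equation*}

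The first contribution is bounded by $2/\xi_0$ using only the elementary estimates $|S^{-1}|=|R_{ii}|\le \|R\|_{\op}\le 1/\xi_0$ and $|(a-\xi)^{-1}|\le 1/\xi_0$. Both bounds are immediate from the spectrum of $\bW$ in each of the two regimes: when $\Im(\xi)\ge \xi_0$, Hermiticity of $\bW$ gives $|\lambda_k(\bW)-\xi|\ge \Im(\xi)$; when $\xi\le -\xi_0$ is real, $\bW\succeq 0$ gives $\lambda_k(\bW)-\xi\ge \xi_0$. In either case $|a-\xi|\ge \xi_0$ since $a\ge 0$.

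The main obstacle is the second contribution: the naive bound $|S^{-1}|\,\|g\|^2$ depends on $\|\bw_i\|$, which is \emph{not} uniformly controlled. The key observation is that the Schur-complement denominator $S$ carries precisely the factor $\|g\|^2$ needed to cancel the numerator. Diagonalising $W_0=\sum_k\mu_k v_kv_k^{\sT}$ with $\mu_k\ge 0$, one checks the identity
\begin{equation*}
\Im\!\big(\bw_i^{\sT}R_0\bw_i\big)=\Im(\xi)\,\|g\|^2,\qquad \|g\|^2=\sum_k\frac{(v_k^{\sT}\bw_i)^2}{|\mu_k-\xi|^2}.
\end{equation*}
In the regime $\Im(\xi)\ge \xi_0$ this gives $|\Im S|=\Im(\xi)(1+\|g\|^2)$, hence $|S^{-1}|\le [\Im(\xi)(1+\|g\|^2)]^{-1}$ and $|S^{-1}|\,\|g\|^2\le 1/\Im(\xi)\le 1/\xi_0$. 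In the real regime $\xi\le -\xi_0$, $R$ is PSD, and the block identities $R_{ji}=-S^{-1}g_j$ ($j\neq i$), $R_{ii}=S^{-1}$, yield $\|Re_i\|^2=R_{ii}^2(1+\|g\|^2)=(R^2)_{ii}\le \|R\|_{\op}R_{ii}$; dividing by $R_{ii}$ gives $S^{-1}\|g\|^2\le \|R\|_{\op}-R_{ii}\le 1/\xi_0$. Adding the two contributions yields $|\Tr_S(R-R^{(i)})|\le 3/\xi_0$, as claimed.
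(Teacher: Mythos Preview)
Your proof is correct and uses the same Schur-complement block decomposition as the paper: separate the contribution from the $i$-th diagonal entry (bounded by $2/\xi_0$) and the contribution $S^{-1}\sum_{j\in S\setminus\{i\}}g_j^2$ from the remaining indices (bounded by $1/\xi_0$). In the complex case $\Im(\xi)\ge\xi_0$, your argument and the paper's are essentially identical: both exploit $|\Im S|=\Im(\xi)(1+\|g\|^2)$ to cancel the $\|g\|^2$ in the numerator.

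The genuine difference is in the real case $\xi\le-\xi_0$. The paper passes through the representation $R=(\tilde W_*-\xi\id)^{-1}$ with $\tilde W_*=W_0-\tfrac{\bw_i\bw_i^{\sT}}{a-\xi}$, argues that $\tilde W_*\succeq 0$ (from $\bW\succeq 0$ and $\xi<0$), invokes eigenvalue interlacing between $W_0$ and its rank-one perturbation $\tilde W_*$, and bounds the trace difference by the total variation of $x\mapsto(x-\xi)^{-1}$ on $[0,\infty)$. Your route is more direct: since $R\succ 0$ you use the operator inequality $R^2\preceq\|R\|_{\op}R$ to get $(R^2)_{ii}\le\|R\|_{\op}R_{ii}$, and read off $\|Re_i\|^2=R_{ii}^2(1+\|g\|^2)$ from the block formula, which immediately yields $S^{-1}\|g\|^2\le\|R\|_{\op}\le 1/\xi_0$. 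This bypasses interlacing entirely and avoids having to verify that $\tilde W_*\succeq 0$; it is a cleaner argument for the real case. The paper's interlacing route, on the other hand, makes the bound slightly more transparent as a ``variation of the Stieltjes transform under rank-one perturbation'' statement, which is the viewpoint used elsewhere in the resolvent analysis.
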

\begin{proof}
Without loss of generality, we will assume $i=N$, and write $\bW_*= (W_{ij})_{i,j\le N-1}$ $\bw = (\bw_{N,i})_{i\le N-1}$, $\omega = W_{NN}$.
 Define
\begin{align}
\left[
\begin{matrix}
\bR & \br\\
\br^{\sT} & \rho
\end{matrix}\right]\equiv (\bW-\xi\id_N)^{-1}\, ,
\end{align}
and $\bR^{(N)}$, $\rho^{(N)}$ the same quantities when $\bW$ is replaced by $\bW^{(N)}$.
 Then, by Schur complement formula, it is immediate to get the formulae
\begin{align}
\bR &= (\tbW_*-\xi\id_{N-1})^{-1} \, ,\;\;\;\; \tbW_* \equiv \bW-\frac{\bw\bw^{\sT}}{\omega-\xi}\, ,\\
\bR^{(N)} & = (\bW_*-\xi\id_{N-1})^{-1}\, ,\\
\rho & = \frac{1}{\omega-\xi-\bw^{\sT}\bR^{(N)}\bw} \, ,\\
\rho^{(N)} & = \frac{1}{\omega-\xi}\, .
\end{align}
Notice that since $\bW\succeq 0$. we must have $\omega\ge \bw^{\sT}\bW_*^{-1}\bw$.
For $\xi\in \reals$, $\xi\le -\xi_0$, this implies $\omega\ge \bw^{\sT}\bR^{(N)}\bw$, and therefore 
$\rho\le 1/\xi_0$. For $\Im(\xi)\ge \xi_0$, we get $\Im(\bw^{\sT}\bR^{(N)}\bw)\ge 0$, and therefore $\Im(\omega-\xi-\bw^{\sT}\bR^{(N)}\bw)\le -\xi_0$.
We thus conclude that, in either case
\begin{align}
|\rho|\le \frac{1}{\xi_0}\, .
\end{align}
Let $S_0\equiv  S\setminus \{N\}$ and $S_1 \equiv S\cap\{N\}$ (i.e. $S_1=\{N\}$ or $S_1=\emptyset$ depending whether $N\in S$ or not).
Define $\Delta_A \equiv \big|\Tr_{A}\big[(\bW-\xi\id_N)^{-1}\big]- \Tr_{A}\big[(\bW^{(N)}-\xi\id_N)^{-1}\big]\big|$. Then, using the bounds given above,
\begin{align}
\Delta_{S_1} &\le  \left|\rho-\rho^{(N)}\right| \le |\rho|+|\rho^{(N)}|\le \frac{2}{\xi_0} \, .\label{eq:DeltaS1}
\end{align}

Next consider $\Delta_{S_0}$.  Notice that
\begin{align}
\bR-\bR^{(N)} & = \rho\, \bR^{(N)}\bw (\bR^{(N)}\bw )^{\sT}\\
& = \frac{(\bW_*-\xi\id)^{-1}\bw[(\bW_*-\xi\id)^{-1}\bw]^{\sT}}{\omega-\xi-\bw^{\sT}(\bW_*-\xi\id_{N-1})\bw}\, .\label{ew:DiffR}
\end{align}
We will distinguish two cases.

\noindent{\bf Case I: $\xi\in\reals$, $\xi\le -\xi_0<0$.} 
Notice that 
\begin{align}
\Delta_{S_0} &= \big|\Tr_{S_0}(\bR-\bR^{(N)})\big|  = |\rho| \sum_{i\in S_0} (\bR^{(N)}\bw )_i^2\\
&\le  \big|\Tr_{[N-1]}(\bR-\bR^{(N)})\big|.
\end{align}
Denoting by $\{\tlambda_i\}_{i\le N}$ the eigenvalues of $\tbW_*$, we thus get
\begin{align}
\Delta_{S_0} &\le \left|\sum_{i=1}^N\frac{1}{\tlambda_i-\xi}-\sum_{i=1}^N\frac{1}{\lambda_i-\xi} \right|\, .
\end{align}
Since $\tbW_*$ is a rank-one deformation of $\bW_*$, the eigenvalues $\tlambda_i$ interlace the $\lambda_i$'s.
Since both sets of eigenvalues are nonnegative, the difference above is upper bounded by the total variation of the function
$x\mapsto (x-\xi)^{-1}$ on $\reals_{\ge 0}$ which is equal to $1/\xi_0$. We thus get
\begin{align}
\Delta_{S_0} &\le \frac{1}{\xi_0}\, .\label{eq:DeltaS0}
\end{align}

\noindent{\bf Case II: $\xi\in\complex$, $\Im(\xi)\ge \xi_0>0$.} Let $\Proj_{S_0}:\reals^{N-1}\times\reals^{N-1}$ the projector which zeroes the coordinates outside $S_0$
(i.e. $\Proj_{S_0} = \sum_{i\in S_0}\bfe_i\bfe_i^{\sT}$).
Denote by $(\lambda_i,\bv_i)$ the eigenpairs of $\bW_*$.
Using Eq.~\eqref{ew:DiffR}, we get
\begin{align}
\Delta_{S_0}&\equiv  \frac{A_1}{A_2}\, ,\\
A_1 &\equiv \left|\sum_{i,j=1}^N \<\bv_i,\Proj_{S_0}\bv_j\>\frac{\<\bv_i,\bw\>\<\bv_j,\bw\>}{(\lambda_i-\xi)(\lambda_j-\xi)}\right|\, .\\
A_2 &\equiv \left|\omega-\xi-\sum_{i=1}^N \frac{\<\bv_i,\bw\>^2}{\lambda_i-\xi}\right|\, .
\end{align}
Letting $V_{ij} =  \<\bv_i,\Proj_{S_0}\bv_j\>$, we have $\|\bV\|_{\op}\le \|\Proj_{S_0}\|_{\op}\le 1$. Therefore, defining $u_i= \<\bw,\bv_i\>/(\lambda_i-\xi)$,
\begin{align}
A_1&= \<\bu,\bV\bu\>\le \|\bu\|_2^2\\
& \le \sum_{i=1}^N \frac{\<\bv_i,\bw\>^2}{|\lambda_i-\xi|^2}\\
& \le \sum_{i=1}^N \frac{\<\bv_i,\bw\>^2}{(\lambda_i-\Re(\xi))^2+\Im(\xi)^2}\, .
\end{align}
Further
\begin{align}
A_2 &\ge \left|\Im(\xi)+\sum_{i=1}^N \Im\left(\frac{\<\bv_i,\bw\>^2}{\lambda_i-\xi}\right)\right|\\
&\ge \left|\Im(\xi)+\sum_{i=1}^N \frac{\<\bv_i,\bw\>^2\Im((\xi)}{(\lambda_i-\Re(\xi))^2+\Im(\xi)^2}\right|\\
&\ge \xi_0 A_1\, ,
\end{align}
which implies that Eq.~\eqref{eq:DeltaS0} also holds in this case.

Using Eqs.~\eqref{eq:DeltaS1} and \eqref{eq:DeltaS0} yields the desired claim.
\end{proof}

\begin{lemma}\label{lemma:Concentration}
If $\Im(\xi) \ge \xi_0>0$, or $\Re(\xi)\le -\xi_0<0$, with $s\ge t\ge 0$. Then there exists $c_0= c_0(\xi_0)$ such that, for $i\in\{1,2\}$,
\begin{align}
\prob\big(\big|M_{i,n}(\xi,s,t) -m_{i,n}(\xi,s,t)\big|\ge u\big) \le 2\, e^{-c_0n u^2}\, .\label{eq:Concentration}
\end{align}
In particular, for $\Im(\xi) >0$, or $\Re(\xi)<0$, then $|M_{i,n}(\xi,s,t) -m_{i,n}(\xi,s,t)|\to 0$ almost surely.
\end{lemma}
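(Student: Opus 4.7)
\textbf{Proof plan for Lemma \ref{lemma:Concentration}.}
The plan is to use the standard Doob-martingale/Azuma--Hoeffding scheme for linear statistics of resolvents, where the ``coordinates'' are the rows of $Z$ and of $W$. The argument is essentially the same one used in \cite{cheng2013spectrum} for kernel random matrices, adapted to the block matrix $\bA(s,t)$ in \eqref{eq:Adef}.

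First, I would order the independent Gaussian rows $z_1,\dots,z_n\in\reals^d$ and $w_1,\dots,w_p\in\reals^d$ as a single sequence $\zeta_1,\dots,\zeta_{n+p}$, and let $\cF_k=\sigma(\zeta_1,\dots,\zeta_k)$. Setting $D_k := \E[M_{i,n}\mid \cF_k]-\E[M_{i,n}\mid\cF_{k-1}]$, the sequence $\{D_k\}$ is a bounded martingale difference sequence with $\sum_{k=1}^{n+p}D_k = M_{i,n}-m_{i,n}$. If $\zeta'_k$ denotes an independent copy of $\zeta_k$, and $M^{(k)}_{i,n}$ the value of $M_{i,n}$ when $\zeta_k$ is replaced by $\zeta'_k$, then $|D_k|\le \E[|M_{i,n}-M^{(k)}_{i,n}|\mid \cF_k,\zeta'_k]$, so it suffices to control the effect on $M_{i,n}$ of resampling a single row.

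The key step is the rank-perturbation bound. Whether $\zeta_k$ is a row $z_\ell$ of $Z$ or a row $w_\ell$ of $W$, resampling it changes $\bA(s,t)$ only in one row and the corresponding column: changing $z_\ell$ alters the $\ell$-th row of $X$, i.e.\ row and column $p+\ell$ of $\bA$; changing $w_\ell$ alters row $\ell$ of $Q$ and column $\ell$ of $X^{\sT}$, i.e.\ row and column $\ell$ of $\bA$. Thus $\bA-\bA^{(k)}$ has rank at most $2$, and by Lemma \ref{eq:BD-Diff} (for $\Im(\xi)\ge\xi_0$ this bound goes through directly from $\|(\bA-\xi\id)^{-1}\|_{\op}\le 1/\xi_0$ combined with the interlacing-based inequality $|\Tr[(B-\xi\id)^{-1}]-\Tr[(B'-\xi\id)^{-1}]|\le Cr/\xi_0$ for Hermitian rank-$r$ perturbations), we obtain $|\Tr_{S}[(\bA-\xi\id)^{-1}]-\Tr_{S}[(\bA^{(k)}-\xi\id)^{-1}]|\le C/\xi_0$ for $S=[1,p]$ or $[p+1,p+n]$. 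Dividing by $p$ or $n$ gives $|M_{i,n}-M^{(k)}_{i,n}|\le C/(n\xi_0)$, hence $|D_k|\le C/(n\xi_0)$. Applying Azuma--Hoeffding and using $n+p=O(n)$ then yields
\[
\prob\bigl(|M_{i,n}-m_{i,n}|\ge u\bigr)\le 2\exp\!\left(-\frac{u^2}{2\sum_k |D_k|^2}\right)\le 2\,e^{-c_0(\xi_0)\,nu^2},
\]
which is the claimed bound \eqref{eq:Concentration}. The almost sure convergence follows by Borel--Cantelli applied, e.g., to the threshold $u_n=n^{-1/3}$.

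The main obstacle, and the only point requiring actual care, is the case $\Im(\xi)=0$, $\Re(\xi)\le-\xi_0<0$, since $\bA(s,t)$ is \emph{not} positive semidefinite (it has both positive and negative eigenvalues, as is immediate from its block structure when $t=0$), so the resolvent need not be operator-norm bounded by $1/\xi_0$. I would handle this by first proving the concentration inequality for $\Im(\xi)\ge\xi_0$, where all bounds are clean, and then extending to the real negative line either by analytic continuation of the almost-sure identity $M_{i,n}(\xi)-m_{i,n}(\xi)\to 0$ (the left hand side being analytic in a strip around the negative real axis once we exclude a vanishing-probability event on which $\bA$ has spurious eigenvalues there), or by invoking the PSD variant of Lemma \ref{eq:BD-Diff} applied to the non-negative matrix $\bA(s,t)^2$ together with the partial-fraction identity $(\bA-\xi\id)^{-1}=(\bA+\xi\id)(\bA^2-\xi^2\id)^{-1}$, which reduces everything to a resolvent of a genuinely PSD matrix to which the hypothesis of Lemma \ref{eq:BD-Diff} applies.
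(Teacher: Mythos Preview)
Your proposal is correct and follows the paper's own proof essentially verbatim: construct the Doob martingale over the independent rows $\{w_a\}_{a\le p}$ and $\{z_i\}_{i\le n}$, bound each martingale difference by $C/(n\xi_0)$ via the rank-two perturbation estimate of Lemma~\ref{eq:BD-Diff}, apply Azuma--Hoeffding, and conclude almost sure convergence by Borel--Cantelli. Your caution about the case $\Im(\xi)=0,\ \Re(\xi)\le-\xi_0$ is in fact well placed---the paper simply invokes Lemma~\ref{eq:BD-Diff}, whose PSD hypothesis $\bA(s,t)$ does not literally satisfy---and either of your suggested workarounds (analytic continuation from $\complex_+$, or passing to the PSD matrix $\bA^2$) is a reasonable way to close that gap.
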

\begin{proof}
The proof is completely analogous to \cite[Lemma 2.4]{cheng2013spectrum}, except that we use Azuma-Hoeffding instead of Burkholder's inequality. 
Consider $M_{1,n}(\xi)$ (we omit the arguments $s,t$ for the sake of simplicity). We construct the martingale
\begin{align}
Z_{\ell} = \begin{cases}
\E\{M_{1,n}(\xi)|\{\bw_a\}_{a\le \ell}\}& \;\; \mbox{ if $1\le \ell\le p$,}\\
\E\{M_{1,n}(\xi)|\{\bw_a\}_{a\le p}, \{\bz_{i}\}_{i\le \ell-p}\}& \;\; \mbox{ if $p+1\le \ell\le N$,}
\end{cases}
\end{align}
By Lemma \ref{eq:BD-Diff} $Z_{\ell}$ has bounded differences $|Z_{\ell}-Z_{\ell-1}|\le 3/\xi_0$, whence the claim \eqref{eq:Concentration} follows.
The almost sure convergence of $|M_{i,n}(\xi,s,t) -m_{i,n}(\xi,s,t)|\to 0$ is implied by Borel-Cantelli. 
\end{proof}

\begin{lemma}\label{lemma:Contraction}
Let $\bF:\complex^2\to \complex^2$ be the mapping $(m_1,m_2)\mapsto \bF(m_1,m_2)$ defined by the right-hand side of Eqs.~\eqref{eq:M1}, \eqref{eq:M2},
namely
\begin{align}
F_1(m_1,m_2) & \equiv  \left(-\xi-s-\frac{t^2}{\psi}m_1-m_2+\frac{t^2\psi^{-1}m_1^2(c_1m_2-t)-2tc_1m_1m_2+c_1^2m_1m^2_2}{m_1(c_1m_2-\psi)-\psi}\right)^{-1}\, ,\\
F_2(m_1,m_2) & \equiv \left(-\xi-\gamma m_1+\frac{\gamma c_1m_1^2(c_1m_2-t)}{m_1(c_1m_2-t)-\psi}\right)^{-1}\, .
\end{align}
Define $\disk(r) = \{z:\;  |z|<r\}$ to be the disk of radius $r$ in the complex plane. Then,
there exists $r_0>0$ such that,  for any $r, \delta>0$  there exists $\xi_0=\xi_0(s,t,r,\delta)>0$ such that, if $\Im(\xi)>\xi_0$, 
then $\bF$ maps $\disk(r_0)\times\disk(r_0)$ into $\disk(r)\times\disk(r)$ and further is Lipschitz continuous, with Lipschitz constant at most $\delta$
on that domain.

In particular Eqs.~\eqref{eq:M1}, \eqref{eq:M2} admit a unique solution with $|m_1|,|m_2|<r_0$ for $\xi>\xi_0$. 
\end{lemma}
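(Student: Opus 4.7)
The plan is to verify that $\bF$ is a self-map on a small polydisk and a contraction there, provided $\Im(\xi)$ is taken sufficiently large. Uniqueness in $\disk(r_0)\times\disk(r_0)$ then follows from the Banach fixed-point theorem.

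First I would choose $r_0>0$ depending only on $\psi,c_1,t$ so that the \emph{inner} denominators that appear inside the bracket expressions of $F_1$ and $F_2$ — namely $m_1(c_1m_2-\psi)-\psi$ and $m_1(c_1m_2-t)-\psi$ — are bounded away from zero uniformly for $(m_1,m_2)\in\disk(r_0)\times\disk(r_0)$. At $m_1=m_2=0$ each equals $-\psi\ne 0$, so by continuity there exists $r_0=r_0(\psi,c_1,t)$ such that both inner denominators are at least $\psi/2$ in absolute value on $\disk(r_0)^2$. This also makes the rational expressions inside the brackets of $F_1$ and $F_2$ holomorphic and uniformly bounded by some constant $K=K(r_0,s,t,c_1,\psi,\gamma)$ on that polydisk.

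Next, for the self-map property, I would write each $F_i$ in the form $F_i=1/(-\xi+R_i(m_1,m_2))$, where $R_i$ collects all remaining terms. By the bound from the previous step, $|R_i(m_1,m_2)|\le K'$ on $\disk(r_0)^2$ for some constant $K'=K'(r_0,s,t,c_1,\psi,\gamma)$. Hence
\begin{equation*}
|F_i(m_1,m_2)| \;\le\; \frac{1}{\Im(\xi)-K'}\, .
\end{equation*}
Choosing $\xi_0$ large enough so that $1/(\xi_0-K')<\min(r,r_0)$, we obtain $\bF(\disk(r_0)^2)\subseteq \disk(r)^2\subseteq\disk(r_0)^2$ for all $\Im(\xi)>\xi_0$.

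Third, for the Lipschitz estimate, I would compute the four partial derivatives $\partial_{m_j}F_i$ on $\disk(r_0)^2$. By the chain rule each takes the schematic form
\begin{equation*}
\partial_{m_j}F_i \;=\; -F_i^2\cdot \partial_{m_j}R_i,
\end{equation*}
and $\partial_{m_j}R_i$ is uniformly bounded on $\disk(r_0)^2$ — this is where the bound on inner denominators is used to differentiate the rational expressions. Since $|F_i|^2\le (\Im(\xi)-K')^{-2}$, all four partials are $O(\Im(\xi)^{-2})$, and by enlarging $\xi_0$ if necessary we make the operator norm of the complex Jacobian (say in the $\ell^\infty$ norm on $\complex^2$) at most $\delta$. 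Integrating along straight lines inside $\disk(r_0)^2$ gives Lipschitz constant at most $\delta$.

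The final assertion is then the Banach fixed-point theorem applied with $\delta<1$: $\bF$ is a strict contraction of the complete metric space $\overline{\disk(r_0)}\times\overline{\disk(r_0)}$ into itself, hence has a unique fixed point there, which is exactly the unique solution of \eqref{eq:M1}–\eqref{eq:M2} with $|m_1|,|m_2|<r_0$. The main (purely bookkeeping) obstacle will be keeping track of the explicit constants $K,K'$ in the bounds on the rational functions $R_1,R_2$ and their derivatives; no conceptual difficulty arises, since the singularity structure is isolated at $m_1(c_1m_2-\psi)=\psi$ which we avoided by our choice of $r_0$.
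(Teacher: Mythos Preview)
Your proposal is correct and follows essentially the same route as the paper: write $F_i(m_1,m_2)=(-\xi+G_i(m_1,m_2))^{-1}$ with $G_i$ holomorphic and bounded (indeed Lipschitz) on a small polydisk around the origin, then bound $|F_i|$ and $\|\nabla F_i\|$ by $O(1/\Im(\xi))$ and $O(1/\Im(\xi)^2)$ respectively and take $\Im(\xi)$ large. Your treatment is in fact slightly more explicit than the paper's, since you spell out why the inner denominators $m_1(c_1m_2-\psi)-\psi$ and $m_1(c_1m_2-t)-\psi$ stay bounded away from zero (they equal $-\psi$ at the origin), which is the step that justifies the analyticity and Lipschitz continuity of the $G_i$.
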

\begin{proof}
Setting $\bm \equiv (m_1,m_2)$, we note that $\bF(\bm) = (-\xi+\bG(\bm))^{-1}$, where $\bm\mapsto \bG(\bm)$
is $L$-Lipschitz continuous in a neighborhood of of $\bfzero$, $\disk(r_0)\times\disk(r_0)$, with $\bG(\bfzero) =(s,0)$. We therefore have, for $|m_i|\le r_0$,
\begin{align}
|F_i(\bm)|\le \frac{1}{|\xi|-|F_i(\bm)|} \le \frac{1}{\xi_0-|s|-2Lr_0} \, .
\end{align}
whence $|F_i(\bm)|< r$  by taking $\xi_0$ large enough. Analogously
\begin{align}
\|\nabla F_i(\bm)\|\le \frac{1}{(|\xi|-|F_i(\bm)|)^2}\,  \|\nabla G_i(\bm)\| \le \frac{L}{(\xi_0-|s|-2Lr_0)^2} \, .
\end{align}
Again, the claim follows by taking $\xi_0$ large enough.
\end{proof}

The next lemma allow to restrict ourselves to cases in which $\varphi$
is polynomial with degree independent of  $n$. 
\begin{lemma}\label{lemma:ApproxPhi}
Let $\varphi_A$, $\varphi_B$ be two activation functions, and denote by $m_{1,n}^A$, $m_{2,n}^A$, $m_{1,n}^B$, $m_{2,n}^B$, denote
the corresponding resolvents as defined above. Assume $\xi$ to be such that either $\xi\in\reals$, $\xi\le -\xi_0<0$, or 
$\Im(\xi)\ge \xi_0>0$. Then there exists a constant $C(\xi_0)$ such that, for all $n$ large enough
\begin{align}
\big|m_{1,n}^A(\xi)- m_{1,n}^B(\xi)\big|& \le C(\xi_0) \E\big\{[\varphi_A(G)-\varphi_B(G)]^2\big\}^{1/2}\, ,\\
\big|m_{2,n}^A(\xi)- m_{2,n}^B(\xi)\big|& \le C(\xi_0) \E\big\{[\varphi_A(G)-\varphi_B(G)]^2\big\}^{1/2}\, .
\end{align}
\end{lemma}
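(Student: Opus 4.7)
The plan is to write $m^A_{i,n} - m^B_{i,n}$ as an expectation of a trace of the difference of resolvents, apply the standard resolvent identity to convert this into a trace involving $D := \bA^A - \bA^B$, and then bound that trace using Cauchy--Schwarz for the Frobenius inner product together with a direct computation of $\E\|D\|_F^2$ in terms of the $L^2$ distance between the activations.

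\textbf{Step 1 (resolvent identity).} Let $X^A_{ij} = \varphi_A(\langle \bw_j, \bz_i\rangle)$, $X^B_{ij} = \varphi_B(\langle \bw_j, \bz_i\rangle)$, and let $\bA^A$, $\bA^B$ be the block matrices from \eqref{eq:Adef} built with $X^A$ and $X^B$ respectively. Because the top-left block $s I_p + tQ$ depends only on $W$, the difference $D := \bA^A - \bA^B$ has zero diagonal blocks and off-diagonal blocks equal to $(X^A-X^B)/\sqrt{n}$, so
\begin{equation*}
\|D\|_F^2 \;=\; \tfrac{2}{n}\, \|X^A - X^B\|_F^2.
\end{equation*}
Denoting $R^\bullet = (\bA^\bullet - \xi I_N)^{-1}$, the identity $R^A - R^B = -R^A D R^B$ gives
\begin{equation*}
M_{1,n}^A - M_{1,n}^B \;=\; -\tfrac{1}{p}\,\mathrm{Tr}\bigl[\Pi_1 R^A D R^B\bigr],
\end{equation*}
where $\Pi_1$ is the projector onto the first $p$ coordinates; the analogous identity with $\Pi_2$ (projector onto coordinates $p+1,\dots,p+n$) holds for $M_{2,n}^A - M_{2,n}^B$.

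\textbf{Step 2 (trace bound).} Applying Cauchy--Schwarz for the Frobenius inner product and the submultiplicativity $\|XY\|_F \le \|X\|_{op}\|Y\|_F$, together with $\|\Pi_1 R^A\|_F^2 \le N \|R^A\|_{op}^2$, we get
\begin{equation*}
|M_{1,n}^A - M_{1,n}^B| \;\le\; \tfrac{1}{p}\,\|\Pi_1 R^A\|_F\,\|R^B\|_{op}\,\|D\|_F \;\le\; \tfrac{\sqrt{N}}{p}\,\|R^A\|_{op}\|R^B\|_{op}\,\|D\|_F.
\end{equation*}
Under the hypothesis on $\xi$ we have $\|R^{A,B}\|_{op} \le C(\xi_0)$ (immediate for $\Im(\xi)\ge \xi_0$; for real $\xi\le -\xi_0$ one needs to verify that the spectrum of $\bA^{A,B}$ stays bounded away from $\xi$, which follows from the fact that $\|\bA^{A,B}\|_{op}$ is controlled by $s$, $t\|Q\|_{op}$, and $\|X^{A,B}\|_{op}/\sqrt{n}$, and these are all bounded in probability by the moment/growth assumptions).

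\textbf{Step 3 (taking expectations).} Since $m_{i,n}^\bullet = \E M_{i,n}^\bullet$, applying $\E|\cdot| \le \sqrt{\E|\cdot|^2}$ yields
\begin{equation*}
|m^A_{1,n}-m^B_{1,n}| \;\le\; \tfrac{C(\xi_0)\sqrt{N}}{p}\,\sqrt{\E\|D\|_F^2} \;=\; \tfrac{C(\xi_0)\sqrt{2N}}{p\sqrt{n}}\,\sqrt{\E\|X^A-X^B\|_F^2}.
\end{equation*}
Since the entries of $X^A-X^B$ are identically distributed,
\begin{equation*}
\E\|X^A-X^B\|_F^2 \;=\; np\cdot \E\bigl[(\varphi_A(\<\bw_1,\bz_1\>)-\varphi_B(\<\bw_1,\bz_1\>))^2\bigr].
\end{equation*}
Conditionally on $\bw_1$, $\<\bw_1,\bz_1\>\sim N(0,\|\bw_1\|^2)$, and $\|\bw_1\|^2 \to 1$ almost surely; combining this concentration with the polynomial growth $|\varphi(x)|\le c_0(1+|x|)^{c_0}$ shows that the $\mu_d$-expectation on the right is bounded by a constant multiple of $\E[(\varphi_A(G)-\varphi_B(G))^2]$ for all large $n$. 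Substituting back and using $N/p = 1 + n/p$ bounded gives the asserted inequality; the bound for $m_{2,n}$ is identical with $\Pi_2$ in place of $\Pi_1$.

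\textbf{Main obstacle.} The cleanest part of the argument is the resolvent identity and the Frobenius/operator-norm bookkeeping. The delicate point is the uniform control $\|R^{A,B}\|_{op} \le C(\xi_0)$ in the real case $\xi\le -\xi_0$: because $\bA(s,t)$ is indefinite (its $(2,2)$-block vanishes), one must verify that $\xi$ stays at distance at least $\xi_0/2$, say, from the spectrum of $\bA^{A,B}$ with overwhelming probability, and patch the low-probability event using the deterministic bound $\|R\|_{op}\le \|D\|_F\cdot |\xi-\mathrm{spec}|^{-1}$ combined with a moment truncation. The second (much milder) technical point is relating $\E_{\mu_d}[(\varphi_A-\varphi_B)^2]$ to $\E_{\mu_G}[(\varphi_A-\varphi_B)^2]$, which is routine once polynomial growth is exploited.
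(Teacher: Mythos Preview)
Your resolvent-identity approach for the case $\Im(\xi)\ge\xi_0$ is correct and is exactly the standard route (the paper simply cites Lemma~4.4 of Cheng--Singer, which proceeds in the same way). The Frobenius/operator-norm bookkeeping and the computation of $\E\|D\|_F^2$ in terms of $\E_{\mu_d}[(\varphi_A-\varphi_B)^2]$ are fine, and the passage from $\mu_d$ to $\mu_G$ via the polynomial growth hypothesis is routine.

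The genuine gap is your treatment of real $\xi\le -\xi_0$. Your claim that ``the spectrum of $\bA^{A,B}$ stays bounded away from $\xi$ because $\|\bA^{A,B}\|_{\op}$ is bounded'' is false in general: bounding $\|\bA\|_{\op}\le C$ only traps the spectrum in $[-C,C]$, and the block matrix $\bA(s,t)$ (which has a zero $(2,2)$-block) \emph{does} have negative eigenvalues. For $s=t=0$ they are exactly $\pm\sigma_i(X/\sqrt{n})$, so the spectrum reaches down to roughly $-(1+\sqrt{\gamma})$ almost surely. Hence for small $\xi_0$ the point $\xi$ can sit inside the spectrum with probability one, $\|R^{A,B}\|_{\op}$ is not bounded by any $C(\xi_0)$, and the ``low-probability patch'' you propose cannot work (the expression $\|R\|_{\op}\le \|D\|_F\cdot|\xi-\mathrm{spec}|^{-1}$ is also not a meaningful bound). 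The same issue is behind the PSD hypothesis in Lemma~\ref{eq:BD-Diff}, which $\bA(s,t)$ does not satisfy.

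This does not damage the application: in the paper the fixed-point derivation is carried out only for $\Im(\xi)\ge C$ large (see the opening of Appendix~\ref{app:ResolventKernel}), and Lemma~\ref{lemma:ApproxPhi} is invoked solely in that regime, the extension to all of $\complex_+$ being via analyticity (Lemma~\ref{lemma:BasicProperties}.(c)). So the clean fix is to state and prove the lemma only for $\Im(\xi)\ge\xi_0$, where your argument goes through verbatim with $\|R^{A,B}\|_{\op}\le 1/\xi_0$ deterministically; the real-$\xi$ clause, as written, would require a separate argument controlling the location of the spectrum of $\bA(s,t)$, not just its norm.
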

\begin{proof}
The proof is essentially the same as for Lemma 4.4 in \cite{cheng2013spectrum}.
\end{proof}

\begin{lemma}
Let $m_{1,n,p}(\xi)$ and $m_{2,n,p}(\xi)$ be the resolvent defined above where we made explicit the dependence upon the
dimensions $n$, $p$.  Assume $\xi$ to be such that either $\xi\in\reals$, $\xi\le -\xi_0<0$, or 
$\Im(\xi)\ge \xi_0>0$. Then, there exist $C=C(\xi_0)<\infty$ such that
\begin{align}
\big|m_{1,n,p}(\xi)-m_{1,n-1,p}(\xi)\big|+\big|m_{1,n,p}(\xi)-m_{1,n,p-1}(\xi)\big|& \le \frac{C}{n}\, ,\\
\big|m_{2,n,p}(\xi)-m_{2,n-1,p}(\xi)\big|+\big|m_{2,n,p}(\xi)-m_{2,n,p-1}(\xi)\big|& \le \frac{C}{n}\, .
\end{align}
(Here $d$ is understood to be the same in each case.)
\end{lemma}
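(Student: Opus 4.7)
The key tool is Lemma \ref{eq:BD-Diff}, which bounds the effect of zeroing out one row and column of a symmetric matrix on the trace of its resolvent. My plan is to apply this lemma to each of the four comparisons, choosing an appropriate row/column to zero out, then to identify the resulting matrix (after removing the isolated diagonal point) with the smaller $\bA$ matrix by exchangeability of the rows of $W$ (resp.\ the samples $z_i$). Because $p$ and $n$ differ only by bounded multiplicative factors in the regime under consideration, any bound of the form $C/p$ converts into $C/n$.

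I would begin with the representative case $|m_{1,n,p}(\xi)-m_{1,n,p-1}(\xi)|$. Let $\bA$ be the matrix $\bA(s,t)$ for dimensions $(n,p)$, of size $N\times N$ with $N=p+n$, and let $\bA^{(p)}$ be obtained by zeroing out the $p$-th row and column off the diagonal. Lemma \ref{eq:BD-Diff} gives
\[
\Big|\Tr_{[1,p]}\bigl((\bA-\xi I)^{-1}\bigr)-\Tr_{[1,p]}\bigl((\bA^{(p)}-\xi I)^{-1}\bigr)\Big|\le \frac{3}{\xi_0}.
\]
Since $\bA^{(p)}$ is block-diagonal with an isolated diagonal entry $s$ at position $p$ and a $(p+n-1)\times(p+n-1)$ block which, by exchangeability of the rows of $W$, has the same distribution as the matrix $\bA_{\mathrm{new}}$ associated with $(n,p-1)$, one obtains
\[
\Tr_{[1,p]}\bigl((\bA^{(p)}-\xi I)^{-1}\bigr)=\Tr_{[1,p-1]}\bigl((\bA_{\mathrm{new}}-\xi I)^{-1}\bigr)+\frac{1}{s-\xi}.
\]
Dividing by $p$ and taking expectations,
\[
m_{1,n,p}(\xi)=\frac{p-1}{p}\,m_{1,n,p-1}(\xi)+\frac{1}{p(s-\xi)}+E,\qquad |E|\le \frac{3}{p\,\xi_0}.
\]
Since $|m_{1,n,p-1}(\xi)|\le 1/\xi_0$ by Lemma \ref{lemma:BasicProperties}, all three correction terms are $O(1/n)$, giving the desired bound.

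The remaining three comparisons follow the same template, with different choices of index and a different trivial block. For $|m_{1,n,p}-m_{1,n-1,p}|$ I would zero out the $(p+n)$-th row/column: the isolated diagonal is $0$, the trace runs over feature indices so no boundary term appears, and the remaining principal block matches $\bA$ for $(n-1,p)$ up to sample exchangeability, yielding a bound $3/(p\xi_0)$. For $|m_{2,n,p}-m_{2,n-1,p}|$ I would again zero out the $(p+n)$-th row/column, this time picking up a contribution $-1/(n\xi)$ from the isolated sample slot together with the normalization factor $(n-1)/n$. Finally for $|m_{2,n,p}-m_{2,n,p-1}|$ I would zero out the $p$-th row/column: the sample block is preserved, no boundary term appears in the trace over samples, and the bound is directly $3/(n\xi_0)$.

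I do not foresee a serious obstacle: the statement is essentially a stability property of the leave-one-out operation, and the uniform resolvent bound $|m_{i,n,p}|\le 1/\xi_0$ from Lemma \ref{lemma:BasicProperties} controls every auxiliary term. The only point requiring a little care is that exchangeability of features (resp.\ samples) must be invoked when identifying the expectation of the principal block of $\bA^{(i)}$ with $m_{j,n',p'}$, so that the reindexing induced by deleting a row/column does not alter the expectation.
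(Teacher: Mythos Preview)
Your proposal is correct and follows essentially the same approach as the paper: zero out the row/column corresponding to the removed feature or sample, apply Lemma~\ref{eq:BD-Diff} to bound the partial-trace difference by $3/\xi_0$, and identify the remaining principal block with the smaller $\bA$ via exchangeability. The paper's own proof is a one-line sketch of the same argument (treating only the case $|m_{1,n,p}-m_{1,n,p-1}|$), whereas you have spelled out the isolated-diagonal contributions and the renormalization factors $(p-1)/p$ and $(n-1)/n$, which the paper leaves implicit; this extra care is exactly what is needed to turn the sketch into a full proof.
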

\begin{proof}
This follows immediately from Lemma \ref{eq:BD-Diff}. Denote, with a slight abuse of notation, by $\bA(n,p)$ the matrix in 
Eq.~\eqref{eq:Adef}.
Consider for instance the difference 
\begin{align}
\big|m_{1,n,p}(\xi)-m_{1,n,p-1}(\xi)\big| &= \frac{1}{p}\left|\E \Tr_{[1,p]}\big[(\bA(n,p)-\xi\id)^{-1}\big]-\E \Tr_{[1,p]}\big[(\bA(n,p-1)-\xi\id)^{-1}\big]\right|\\
&= \frac{1}{p}\left|\E \left\{\Tr_{[1,p]}\big[(\bA(n,p)-\xi\id)^{-1}\big]-\Tr_{[1,p]}\big[(\bA^*(n,p)-\xi\id)^{-1}\big]\right\}\right|\, ,
\end{align}
where $\bA^*(n,p)$ is obtained from $\bA(n,p)$ by zero-ing the last row and column. Lemma \ref{eq:BD-Diff} then implied 
$|m_{1,n,p}(\xi)-m_{1,n,p-1}(\xi)|\le 3/\xi_0$. The other terms are treated analogously.
\end{proof}

\subsubsection{Derivation of Eqs.~(\ref{eq:M2})}

Throughout this appendix, we will assume $\Im(\xi)\ge C$ a large enough constant. This is sufficient by Lemma \ref{lemma:BasicProperties}.$(c)$. 
Also, we can restrict ourselves to $\varphi$ a fixed finite polynomial (independent of $n$). This is again sufficient by Lemma \ref{lemma:ApproxPhi}
(polynomials are dense in $L^2(\reals,\mu_G)$).

We denote by $\bA_{\dot,m}\in\reals^{N-1}$ the $m$-th column of $\bA$, with the $m$-th entry removed. By the Schur complement formula, we have
\begin{align}
m_{2,n} = \E\left\{\big(-\xi-\bA_{\cdot,n+p}^{\sT}(\bA_*-\xi\id_{N-1}) \bA_{\cdot,n+p}\big)^{-1}\right\}\, ,\label{eq:SchurM2}
\end{align}
where $\bA_*\in\reals^{(N-1)\times (N-1)}$ is the submatrix comprising the first $N-1$ rows and columns of $\bA$.

We let $\eta_a$ denote the projection of $\bw_a$ along $\bz_n$, and by $\tbw_a$ the orthogonal component. Namely we write
\begin{align}
\bw_a = \eta_a\frac{\bz_n}{\|\bz_n\|}+\tbw_a\, ,
\end{align}
where $\<\tbw_a,\bz_n\> = 0$.
We further let $\bX_*\in\reals^{(n-1)\times p}$ denote the submatrix of $\bX$ obtained by removing the last row.
With these notations, we have
\begin{align}
\bA_*&= \left[\begin{matrix}
s\id_p +t\bQ & \frac{1}{\sqrt{n}} \bX_*^{\sT}\\
\frac{1}{\sqrt{n}} \bX_* & \bfzero
\end{matrix}\right]\, ,\\
Q_{ab} &= \eta_a\eta_b \bfone_{a\neq b}+\<\tbw_a,\tbw_b\>\bfone_{a\neq b}\, ,\;\;\;\;\; 1\le a,b\le p,\\
X_{ja} & = \varphi\left(\<\tbw_a,\bz_j\> +\eta_a\frac{\<\bz_j,\bz_n\>}{\|\bz_n\|}\right)\, ,\;\;\;\;\; 1\le a\le p,\; 1\le j\le n-1\, ,
\end{align}
and
\begin{align}
\bA_{\cdot,n+p} &= \left[\begin{matrix}
\frac{1}{\sqrt{n}}\varphi(\|\bz_n\|\eta_1)\\
\vdots\\
\frac{1}{\sqrt{n}}\varphi(\|\bz_n\|\eta_p)\\
0\\
\vdots\\
0
\end{matrix}\right]\, ,
\end{align}

We next decompose
\begin{align}
X_{ja} & = \tX_{ja}+ a_{1,d}u_j\eta_a + \sqrt{n}\, E_{1,ja}\, ,\\
\tX_{ja}& \equiv \varphi\left(\<\tbw_a,\bz_j\>\right)\, ,\\
u_j & \equiv \frac{1}{\sqrt{n}}\frac{\<\bz_j,\bz_n\>}{\|\bz_n\|}\, ,\\
E_{1,ja} & \equiv \frac{1}{\sqrt{n}}\varphi_{\perp}\left(\<\tbw_a,\bz_j\> +\eta_a\frac{\<\bz_j,\bz_n\>}{\|\bz_n\|}\right)- \frac{1}{\sqrt{n}} \varphi_{\perp}\left(\<\tbw_a,\bz_j\>\right)\, .
\end{align}
and 
\begin{align}
Q_{ab} & = \tQ_{ab}+ \eta_a\eta_b + E_{0,ab}\, ,\\
\tQ_{ab} & \equiv \<\tbw_a,\tbw_b\>\, ,\\
E_{0,ab} &  \equiv  -\eta_a^2\bfone_{a=b}\, .
\end{align}
We therefore get
\begin{align}
\bA_* &= \tbA_* + \bDelta +\bE\, ,\\
\tbA_* & = \left[\begin{matrix}
s\id_p +t\tbQ & \frac{1}{\sqrt{n}} \tbX_*^{\sT}\\
\frac{1}{\sqrt{n}} \tbX_* & \bfzero
\end{matrix}\right]\, ,\;\;\;\;\;\;
\bDelta   = \left[\begin{matrix}
t\etab\etab^{\sT} & a_{1,d}\etab\bu^{\sT}\\
a_{1,d}\bu\etab^{\sT}& \bfzero
\end{matrix}\right]\, ,\;\;\;\;\;
\bE  = \left[\begin{matrix}
\bE_0 & \bE_1^{\sT}\\
\bE_1 & \bfzero
\end{matrix}\right]\, ,
\end{align}
It is possible to show that $\|\bE\|_{\sop}\le \eps_n \equiv (\log n)^M/n^{1/2}$ with probability at least $1-O(n^{-1})$, where $M$ is an absolute constant
(this can be done as in Section 4.3, Step 2 of \cite{cheng2013spectrum}, using intermediate value theorem). 
Further $\bDelta$ is a rank-$2$ matrix that can be written as $\bDelta = \bU\bC\bU^{\sT}$, where
$\bU\in\reals^{(N-1)\times 2}$ and $\bC\in\reals^{2\times 2}$  are given by
\begin{align}
\bU = \left[\begin{matrix}\etab &\bzero\\ \bzero & \bu \end{matrix}\right]\, ,\;\;\;\;\;\;\;\;\;\;
\bC=  \left[\begin{matrix}t & a_{1,d}\\ a_{1,d} & 0 \end{matrix}\right]\, .\label{eq:UC-def}
\end{align}
Using Eq.~\eqref{eq:SchurM2}, and Woodbury's formula, we get (writing for simplicity $\bv=\bA_{\cdot,n+p}$
\begin{align}
&m_{2,n} = \E\left\{\left(-\xi-\bv^{\sT}(\tbA_*+\bDelta+\bE-\xi\id_{N-1})^{-1} \bv\right)^{-1}\right\}\nonumber\\
&=  \E\left\{\left(-\xi-\bv^{\sT}(\tbA_*+\bDelta+\bE-\xi\id_{N-1})^{-1} \bv\right)^{-1}\bfone_{\|bE\|_{\op}\le\eps_n}\right\}\nonumber\\
&\phantom{AAAA}+\frac{C}{\Im(\xi)} O\left(\prob(\|bE\|_{\op}>\eps_n)\right) \label{eq:Woodbury}\\
&= \E\left\{\left(-\xi-\bv^{\sT}(\tbA_*+\bDelta-\xi\id_{N-1})^{-1} \bv\right)^{-1}\bfone_{\|bE\|_{\op}\le\eps_n}\right\}+O(\eps_n)\nonumber\\
& = \E\left\{\left(-\xi-\bv^{\sT}(\tbA_*-\xi\id_{N-1})^{-1} \bv+
\bv^{\sT} (\tbA_*-\xi\id_{N-1})^{-1} \bU \bS^{-1}\bU^{\sT}(\tbA_*-\xi\id_{N-1})^{-1} \bv
\right)^{-1}\right\}+O(\eps_n)\nonumber\\
&\bS \equiv \bC^{-1} +\bU^{\sT}(\tbA_*-\xi\id_{N-1})^{-1} \bU\, . \label{eq:Woodbury1}
\end{align}

Note that, conditional on $\|\bz_n\|_n$, $\bv$ is independent of $\tbA_*$ and has independent entries. Further its entries
are independent with $n\E\{v_i^2|\|\bz_n\|_2\} = n\E\{\varphi(\|\bz_n|_2G/\sqrt{d})^2|\|\bz_n\|_2\} = 1+O(n^{-1/2})$.
Hence 
\begin{align}
\E\big\{\bv^{\sT}(\tbA_*-\xi\id_{N-1})^{-1} \bv\big\} &= \frac{p}{n}\E\left\{\frac{1}{p}\Tr [(\tbA_*-\xi\id_{N-1})^{-1}]\right\}\\
& = \gamma \, m_{1,n}+ O(\eps_n)\, ,
\end{align}
By a concentration of measure argument (see, e.g., \cite[Section 2.4.3]{tao2012topics}),  the same statement also holds with high probability
\begin{align}
\bv^{\sT}(\tbA_*-\xi\id_{N-1})^{-1} \bv = \gamma \, m_{1,n}+ O_P(\eps_n)\label{eq:vAv}
\end{align}

We next consider $\bv^{\sT}(\tbA_*-\xi\id_{N-1})^{-1} \bU$. We decompose $\bv = \bv_{1}+\bv_2$, where 
\begin{align}
v_{1,i} = \frac{a_{1,d}\|\bz_n\|}{\sqrt{n}} \, \eta_i\, ,\;\;\;\; v_{2,i} = \frac{1}{\sqrt{n}} \, \varphi_{\perp}(\|\bz_n\|\eta_i)\, ,\;\;\; 1\le i\le p\, ,
\end{align}
Notice, that conditional on $\|\bz_n\|$, the pairs $\{(v_{1,i},v_{2,i})\}_{i\le p}$ are mutually independent.
Further $n\E\{v_{1,i}v_{2,i}|\|\bz_n\|\} = \E\{a_{1,d} G\varphi_\perp(G)\} +O(\eps_n) = O(\eps_n)$. Finally, again conditionally
on $\bz_n$, $\etab$ and $\bu$ are independent.
Therefore 
\begin{align}
\E\big\{\bv^{\sT}(\tbA_*-\xi\id_{N-1})^{-1} \bU\big\} &= \E\left\{\Big[\bv_1^{\sT}(\tbA_*-\xi\id_{N-1})^{-1}_{[1,p],[1,p]} \etab\Big|  0\Big]\right\}+O(\eps_n)\\
& = \frac{a_{1,d}\sqrt{d}}{\sqrt{n}}\left[\frac{1}{d}\E\Tr_{[1,p]}\big[ (\tbA_*-\xi\id_{N-1})^{-1}\big]\Big| 0 \right]+O(\eps_n)\\
& =\sqrt{c_1\gamma\psi^{-1}} \big[ m_{1,n}\, , \, 0\big] +O(\eps_n)\, .
\end{align}
By a concentration of measure argument, we also have
\begin{align}
\bv^{\sT}(\tbA_*-\xi\id_{N-1})^{-1} \bU =  \sqrt{c_1\gamma\psi^{-1}} \big[ m_{1,n}\, , \, 0\big] +O(\eps_n)\, .\label{eq:vAU}
\end{align}

We next consider $\bU^{\sT}(\tbA_*-\xi\id_{N-1})^{-1} \bU$. Since $\etab$ and $\bu$ are independent (and independent of $\tbA_*$) and zero mean, 
with $d\E\{\eta_a^2\} =1$, $n\E\{u_j^2\}= 1+O(\eps_n)$, we have
\begin{align*}
\E\big\{\bU^{\sT}(\tbA_*-\xi\id_{N-1})^{-1}& \bU\big\} =\\ 
=&\left[\begin{matrix}
\frac{1}{d}\E\Tr_{[1,p]} \{(\tbA_*-\xi\id_{N-1})^{-1}\} & 0\\
0 & \frac{1}{n}\E\Tr_{[p+1,N-1]} \{(\tbA_*-\xi\id_{N-1})^{-1}\} 
\end{matrix}\right] +O(\eps_n)\\
= &\left[\begin{matrix}
\psi^{-1}m_1 & 0\\
0 & m_2
\end{matrix}\right] +O(\eps_n) \, .
\end{align*}
As in the previous case, this estimate also holds for $\bU^{\sT}(\tbA_*-\xi\id_{N-1})^{-1} \bU$ (not just its expectation)
with high probability.  Substituting this in Eq.~\eqref{eq:Woodbury}, we get
\begin{align}
\bS = \left[\begin{matrix}
\psi^{-1}m_1 & 1/a_{1,d}\\
1/a_{1,d} & m_2-(t/a_{1,d}^2)
\end{matrix}\right] +O(\eps_n) 
\end{align}
By using this together with Eqs.~\eqref{eq:vAv} and \eqref{eq:vAU}, we get
\begin{align}
 m_{2,n} &= \left(-\xi-\gamma m_{1,n}+\frac{\gamma c_1m_{1,n}^2(c_1m_{2,n}-t)}{m_1(c_1m_{2,n}-t)-\psi}\right)^{-1}+O(\eps_n)\, .\label{eq:M2n}
\end{align}

\subsubsection{Derivation of Eqs.~(\ref{eq:M1})}

The derivation si analogous to the one in the previous section (notice that we will redefine some of the notations used in the last section),
and hence we will only outline the main steps.

Let $\bA_{\dot,m}\in\reals^{N-1}$ be the $m$-th column of $\bA$, with the $m$-th entry removed, and 
$\bB_*\in\reals^{(N-1)\times (N-1)}$ be the submatrix obtained by removing the $p$-th column and $p$-th row from $\bA$.
By the Schur complement formula, we have
\begin{align}
m_{1,n} = \E\left\{\big(-\xi-s-\bA_{\cdot,p}^{\sT}(\bB_*-\xi\id_{N-1}) \bA_{\cdot,p}\big)^{-1}\right\}\, ,\label{eq:SchurM2_2}
\end{align}
where $\bA_*\in\reals^{(N-1)\times (N-1)}$ is the submatrix comprising the first $N-1$ rows and columns of $\bA$.

We decompose $\bw_a$, $1\le a\le p-1$, and $\bz_i$, $1\le i\le n$ into their components along $\bw_p$, and the orthogonal
complement:
\begin{align}
\bw_a = \eta_a\frac{\bw_p}{\|\bw_p\|}+\tbw_a\, ,\;\;\;\;\;\; \bz_i = \sqrt{n}\, u_i\frac{\bw_p}{\|\bw_p\|}+\tbz_i\, ,
\end{align}
where $\<\tbw_a,\bw_p\> = \<\tbz_i,\bw_p\>=0$ (the factor $\sqrt{n}$ in the second expression is introduced for future
convenience). With these notations, we have
\begin{align}
\bB_*&= \left[\begin{matrix}
s\id_p +t\bQ_*  & \frac{1}{\sqrt{n}} \bX^{\sT}\\
\frac{1}{\sqrt{n}} \bX & \bfzero
\end{matrix}\right]\, ,\\
Q_{ab} &= \eta_a\eta_b \bfone_{a\neq b}+\<\tbw_a,\tbw_b\>\bfone_{a\neq b}\, ,\;\;\;\;\; 1\le a,b\le p-1,\\
X_{ja} & = \frac{1}{\sqrt{n}}\varphi\left(\<\tbw_a,\tbz_j\> +\sqrt{n}\, u_j\eta_a\right)\, ,\;\;\;\;\; 1\le a\le p,\; 1\le j\le n\, ,
\end{align}
and 
\begin{align}
\bA_{\cdot,p} =\left[\begin{matrix}
t\eta_1\|\bw_p\|\\
\vdots\\
t\eta_{p-1}\|\bw_p\|\\
\frac{1}{\sqrt{n}}\varphi(\sqrt{n}  u_1\|\bw_p\|)\\
\vdots\\
\frac{1}{\sqrt{n}}\varphi(\sqrt{n}  u_n\|\bw_p\|)
\end{matrix}\right]\equiv \bh\, .
\end{align}

We next decompose $\bB_*$ as follows 
\begin{align}
\bB_*&= \tbB_* + \bDelta+\bE\, ,\\
\tbB_*& \equiv \left[\begin{matrix}
s\id_p +t\tbQ_*  & \frac{1}{\sqrt{n}} \tbX^{\sT}\\
\frac{1}{\sqrt{n}} \tbX & \bfzero
\end{matrix}\right]\,,\;\;\;\;\;
\bDelta \equiv \left[\begin{matrix}
t\etab\etab^{\sT}  & a_{1,d}\etab\bu^{\sT}\\
 a_{1,d}\bu \etab^{\sT} & \bfzero
\end{matrix}\right]\,,\;\;\;\;\;
\bE \equiv \left[\begin{matrix}
\bE_0 & \bE_1^{\sT}\\
 \bE_1 & \bfzero
\end{matrix}\right]\,,
\end{align}
where we defined matrices $\tbQ_*$, $\tbX$, $\bE_0$, $\bE_1$ with the following entries:
\begin{align}
\tQ_{*,ab} &= \<\tbw_a,\tbw_b\>\bfone_{a\neq b}\, ,\;\;\;\;\;\;\; 1\le a,b\le p-1\, ,\\
\tX_{ja} & = \varphi(\<\tbw_a,\tbz_j\>) \, ,\;\;\;\;\;\;\; 1\le a\le p-1,\, 1\le j\le n\, ,\\
E_{0,ab}& =  -t\eta_a^2\bfone_{a=b}\, ,\;\;\;\;\;\;\; 1\le a,b\le p-1\, ,\\
E_{1,ja} & = \frac{1}{\sqrt{n}} \varphi_{\perp}\big(\<\tbw_a,\tbz_j\>+\sqrt{n}\, \eta_au_j\big) -\frac{1}{\sqrt{n}} \varphi_{\perp}\big(\<\tbw_a,\tbz_j\>\big) \, ,
\;\;\;\; 1\le a\le p-1,\, 1\le j\le n\, .
\end{align}
As in the previous section, it can be shown that $\|\bE\|_{\sop}\le
\eps_n \equiv(\log n)^c/\sqrt{n}$ (with $c$ an absolute constant), and therefore Eq.~\eqref{eq:SchurM2_2} yields 
\begin{align}
m_{1,n} = \E\left\{\big(-\xi-s-\bA_{\cdot,p}^{\sT}(\tbB_*+\bDelta-\xi\id_{N-1}) \bA_{\cdot,p}\big)^{-1}\right\}+O(\eps_n)\, .\label{eq:SchurM2_err}
\end{align}

Note that $\bDelta = \bU\bC\bU^{\sT}$, where $\bU\in\reals^{(N-1)\times 2}$ and $\bC\in\reals^{2\times 2}$ take the same form as
in Eq.~\eqref{eq:UC-def}.  Hence, by Woodbury's formula  and recalling
the notation $\bh = \bA_{\cdot,p}$ w
\begin{align}
m_{1,n}= &\E\left\{\left(-\xi-\bh^{\sT}(\tbB_*-\xi\id_{N-1})^{-1} \bh+
\bh^{\sT} (\tbB_*-\xi\id_{N-1})^{-1} \bU \bS^{-1}\bU^{\sT}(\tbB_*-\xi\id_{N-1})^{-1} \bh
\right)^{-1}\right\}\nonumber\\
&+O(\eps_n) \label{eq:M1Woodbury}\\
\bS \equiv& \bC^{-1} +\bU^{\sT}(\tbB_*-\xi\id_{N-1})^{-1} \bU\ ,.\nonumber
\end{align}
We then proceed to compute the various terms on the right-hand side. The calculation is very similar to the one in the previous section,
and we limit ourselves to reporting the results:
\begin{align}
\bh^{\sT}(\tB_*-\xi\id_{N-1})^{-1}\bh &= t^2\psi^{-1}m_{1,n}+m_{2,n} +O_P(\eps_n)\, ,\label{eq:M1hh}\\
\bh^{\sT}(\tB_*-\xi\id_{N-1})^{-1}\bU &= \left[t\psi^{-1}\, m_{1,n}, \, a_{1,d} m_{2,n}\right] +O_P(\eps_n)\, ,\\
\bU^{\sT}(\tB_*-\xi\id_{N-1})^{-1}\bU &= \left[\begin{matrix}
\psi^{-1}\, m_{1,n} & 0\\
0& m_{2,n}
\end{matrix}\right] +O_P(\eps_n)\, ,
\end{align}
whence
\begin{align}
\bS = \left[\begin{matrix}
\psi^{-1}\, m_{1,n} & a_{1,d}^{-1}\\
a_{1,d}^{-1}& -ta^{-2}_{1,d} +m_{2,n}
\end{matrix}\right] +O_P(\eps_n)\, .\label{eq:M1S}
\end{align}
Substituting Eqs.~\eqref{eq:M1hh} to \eqref{eq:M1S} into Eq.~\eqref{eq:M1Woodbury}, we get 
\begin{align}
m_{1,n}&= \left(-\xi-s-\frac{t^2}{\psi}m_{1,n}-m_{2,n}+\frac{t^2\psi^{-1}m_{1,n}^2(c_1m_{2,n}-t)-2tc_1m_{1,n}m_{2,n}+c_1^2m_{1,n}m^2_{2,n}}{m_{1,n}(c_1m_{2,n}-\psi)-\psi}\right)^{-1}+
O(\eps_n)\, .\label{eq:M1n}
\end{align}

\subsubsection{Completing the proof}

Let $\varphi_{L}$ be a degree $L=L(\eps)$ polynomial such that $\E\{|\varphi(G)-\varphi_L(G)|^2\}\le \eps^2$, for $G\sim\normal(0,1)$.
We will denote by $\bm^L_n = (m_{1,n}^L, m_{2,n}^L)$ the corresponding expected resolvents.

Consider $\Im(\xi)\ge C_{0}$ a large enough constant. By Eqs.~\eqref{eq:M2n}, \eqref{eq:M2n}, we have
\begin{align}
\bm_n^L = \bF(\bm_n^L) +o_n(1)\, .
\end{align}
By Lemma \ref{lemma:BasicProperties} and Lemma \ref{lemma:Contraction}, taking $C_0$ sufficiently large, we can ensure that $\bm_n^L\in \disk(r_0)\times\disk(r_0)$,
and that $\bF$ maps  $\disk(r_0)\times\disk(r_0)$ into  $\disk(r_0/2)\times\disk(r_0/2)$, with Lipschitz constant at most $1/2$.
Letting $\bm^L$ denote the unique solution of $\bm^L=\bF(\bm^L)$, we have
\begin{align}
\|\bm_n^L- \bm^L\|= \|\bF(\bm_n^L) +o_n(1)-\bF(\bm^L)\|\le \frac{1}{2}\|\bm_n^L- \bm^L\|+o_n(1) \, ,
\end{align}
whence $\bm^L_n(\xi)\to\bm^L(\xi)$ for all $\Im(\xi)>C_0$.
Since $eps$ is arbitrary, and using Lemma \ref{lemma:ApproxPhi}, we get $m_{1,n}(\xi)\to m_1(\xi)$, $m_{2,n}(\xi)\to m_2(\xi)$ for all $\Im(\xi)>C_0$.
 By Lemma \ref{lemma:BasicProperties}.$(c)$, we have $m_{1,n}(\xi)\to m_1(\xi)$, $m_{2,n}(\xi)\to m_2(\xi)$ for all $\xi\in\complex_+$. 
Finally, by Lemma \ref{lemma:Concentration}, the almost sure convergence of Eqs.~\eqref{eq:AS-1}, \eqref{eq:AS-2} holds as well.

\subsection{Proof of Lemma \ref{lemma:ResolventToVar-gen}}
\label{app:ResolventToVar}

We begin approximating the population covariance $\bSigma=\frac{1}{n}\E\{\bX^{\sT}\bX|\bw\}$ by $\bSigma_0\equiv \id_p+c_1\bQ\in\reals^{p\times p}$.
\begin{lemma}\label{lemma:Sigma-Sigma0}
With the above definitions there exists a constant $C$ such that 
\begin{align}
&\prob\big\{\|\bSigma-\bSigma_0\|_F\ge (\log n)^C\big\}\ge 1-e^{-(\log n)^2/C}, ,\label{eq:ProbF-diff}\\
&\E\{\|\bSigma-\bSigma_0\|_F^2\big\}\le (\log n)^C\, .\label{eq:ExpF-diff}
\end{align}
\end{lemma}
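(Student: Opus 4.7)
My approach combines an entry-wise Mehler expansion of $\bSigma$ with a Hermite truncation of $\varphi$ to polynomial degree $L=O(\log n)$, followed by an estimate of the resulting Gaussian chaos in Frobenius norm.

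By the polynomial-growth hypothesis on $\varphi$, the Hermite partial sum $\varphi_L$ of degree $L=\lceil C\log n\rceil$ satisfies $\E[(\varphi-\varphi_L)(G)^2]\le n^{-100}$, and a Cauchy--Schwarz argument, in the spirit of Lemma \ref{lemma:ApproxPhi}, shows that the associated population covariances differ in Frobenius norm by $n^{-10}$ with exponential probability. So without loss of generality $\varphi$ is polynomial of degree $L=O(\log n)$. Condition now on $W$: for each pair $i,j$, the vector $(\bw_i^{\sT}\bz,\bw_j^{\sT}\bz)$ is centered Gaussian with variances $\sigma_i^2,\sigma_j^2$ and covariance $q_{ij}:=\<\bw_i,\bw_j\>$, where $\sigma_i:=\|\bw_i\|$. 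Writing $\rho_{ij}:=q_{ij}/(\sigma_i\sigma_j)$ and applying Mehler's formula,
\begin{align*}
\Sigma_{ij}=\sum_{k=0}^L a_k(\sigma_i)a_k(\sigma_j)\rho_{ij}^k,\qquad a_k(\sigma):=\E[\varphi(\sigma G)H_k(G)],
\end{align*}
with $H_k$ the normalized probabilists' Hermite polynomials. The standardization gives $a_0(1)=0$, $a_1(1)^2=c_1+O(1/\sqrt d)$ (the correction being the Berry--Esseen rate separating $a_{1,d}$ in $L^2(\mu_d)$ from $a_1(1)$ in $L^2(\mu_G)$), and $\sum_k a_k(1)^2=1$. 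For $i\neq j$ this decomposes as
\begin{align*}
\Sigma_{ij}-c_1 q_{ij}=[a_1(\sigma_i)a_1(\sigma_j)-c_1]\, q_{ij}+\sum_{k\ge 2} a_k(\sigma_i)a_k(\sigma_j)\,q_{ij}^k (\sigma_i\sigma_j)^{-(k-1)},
\end{align*}
while $\Sigma_{ii}-1=\sum_k[a_k(\sigma_i)^2-a_k(1)^2]$ is smooth in $\tau_i:=\sigma_i^2-1$ and vanishes at $\tau_i=0$.

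For the Frobenius-norm bound the $k\ge 2$ tail uses the Gaussian moment estimate $\E[q_{ij}^{2k}]\le C_kk!/d^k$, and gives $\sum_{i\ne j,k\ge 2}\E[\cdots]=O(L^{O(1)})$ because $p\asymp d$ makes the $k=2$ term $p^2/d^2$ dominant; the diagonal contribution is $O(p/d)=O(1)$ by $\chi^2$ concentration of $\sigma_i^2$. The main obstacle, and the critical estimate, is the linear term $A_{ij}:=[a_1(\sigma_i)a_1(\sigma_j)-c_1]q_{ij}$: bounding $|a_1(\sigma_i)a_1(\sigma_j)-c_1|$ by a constant naively would yield $\sum_{i\neq j}\E[A_{ij}^2]\asymp p^2/d=O(p)$, far too large. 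The fix is to Taylor-expand $a_1(\sigma)^2$ around $\sigma=1$, giving $a_1(\sigma_i)a_1(\sigma_j)-c_1=(a_1(1)^2-c_1)+O(|\tau_i|+|\tau_j|)$; combined with $|a_1(1)^2-c_1|=O(1/\sqrt d)$, $\E[\tau_i^2]=O(1/d)$ and the conditional identity $\E[q_{ij}^2|\bw_i]=\sigma_i^2/d$, this yields $\E[A_{ij}^2]=O(1/d^2)$ and hence $\sum_{i\ne j}\E[A_{ij}^2]=O(p^2/d^2)=O(1)$. Summing all pieces proves $\E\|\bSigma-\bSigma_0\|_F^2\le(\log n)^C$. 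Finally, $\|\bSigma-\bSigma_0\|_F^2$ is a polynomial of degree $4L$ in the i.i.d.\ Gaussian entries of $\sqrt{d}\,W$, so Latala's higher-order Gaussian-chaos inequality delivers sub-exponential tails at scale $\exp(-t^{1/(2L)})$, which with $t=(\log n)^{C'}$ yields $\P\{\|\bSigma-\bSigma_0\|_F\le(\log n)^C\}\ge 1-e^{-(\log n)^2/C}$.
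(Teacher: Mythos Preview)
Your argument has two genuine gaps, both tied to the polynomial truncation.

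\emph{Gap 1: the truncation rate.} The hypothesis $|\varphi(x)|\le c_0(1+|x|)^{c_0}$ only places $\varphi$ in $L^2(\mu_G)$; it gives no decay rate on the Hermite coefficients. For instance the activation $\varphi(x)=a(|x|-b)$ used in the paper's own simulations has Hermite tail $\sum_{k>L}c_k^2\asymp L^{-1/2}$, so at $L=\lceil C\log n\rceil$ one gets $(\log n)^{-1/2}$, not $n^{-100}$. Your reduction to a polynomial $\varphi$ of degree $O(\log n)$ is therefore unjustified under the stated assumptions.

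\emph{Gap 2: the concentration step.} Even granting the truncation, a Gaussian chaos of degree $q=4L\asymp\log n$ has at best a tail of the form $\exp(-c\,t^{2/q})$. At $t=(\log n)^{C'}$ the exponent is $t^{1/(2L)}=\exp\bigl(\tfrac{C'\log\log n}{2L}\bigr)\to 1$, so the bound degenerates to a constant rather than $e^{-(\log n)^2/C}$. Lata{\l}a's inequality only delivers useful concentration when the degree stays bounded in $n$.

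The paper takes a shorter route that sidesteps both issues. It works on the event
\[
\cG=\Bigl\{\max_i \bigl|\|\bw_i\|-1\bigr|\le \eps_n,\ \max_{i\neq j}|\langle\bw_i,\bw_j\rangle|\le\eps_n\Bigr\},\qquad \eps_n=(\log n)^c/\sqrt n,
\]
which holds with probability $1-e^{-(\log n)^2/C}$ by Gaussian concentration and a union bound over $O(p^2)$ events. On $\cG$, with \emph{no} truncation of $\varphi$, the orthogonal decomposition $\varphi_t(x)=\alpha_1(t)\,x+\varphi_{t,\perp}(x)$ gives directly $\Sigma_{ij}=c_1\langle\bw_i,\bw_j\rangle+O(\langle\bw_i,\bw_j\rangle^2)$ for $i\ne j$ (since $\varphi_{t,\perp}$ has no constant or linear Hermite component, its contribution is $O(\rho_{ij}^2)$) and $|\Sigma_{ii}-1|=O(\eps_n)$. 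Hence $\|\bSigma-\bSigma_0\|_F^2\le p\cdot O(\eps_n^2)+p^2\cdot O(\eps_n^4)=O((\log n)^{C})$, which is the probability bound; the expectation bound then follows by combining this with a crude moment estimate on $\cG^c$. Note that your ``linear-term obstacle'' never arises on $\cG$: there $|a_1(\sigma_i)-a_1(1)|=O(\eps_n)$ and $|q_{ij}|\le\eps_n$ already force $A_{ij}^2=O(\eps_n^4)$ entrywise, with no need to integrate over the fluctuations of $\tau_i$.
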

\begin{proof}
First notice that
\begin{align}
\Sigma_{ij} &= \E\{\varphi(\<\bw_i,\bz\>\varphi(\<\bw_j,\bz\>)|\bw_i,\bw_j\} = \E\{\varphi_{\|\bw_i\|}(G_1)\varphi_{\|\bw_j\|}(G_2)\}\, ,\\
\left(\begin{matrix}
G_1\\
G_2
\end{matrix}\right) &\sim \normal\left(\bfzero,\left[\begin{matrix}1 & s_{ij}\\
s_{ij}& 1\end{matrix}\right]\right)\, ,\\
\varphi_t(x) &\equiv \varphi(tx)\,,
\end{align}
where $s_{ij} = \<\bw_i,\bw_j\>/\|\bw_i\|\|\bw_j\|$.
Let   $\varphi_t(x) = \alpha_{1}(t) x+ \varphi_{t,\perp}(x)$ be the orthogonal decomposition of $\varphi_t$ in $L_2(\reals,\mu_G)$,
and notice that $\alpha_{1}(t) = \alpha_1(1)t$, $\alpha_1(1)^2 = c_1$. On the event $\cG = \{ |\|\bw_i\|-1| \le \eps_n\}$ (with $\eps_n=(\log n)^c/\sqrt{n}$), we obtain
\begin{align}
\Sigma_{ij} & = a_{1,d}(\|\bw_i\|) a_{1,d}(\|\bw_j\|) s_{ij} +\E\{\varphi_{\|\bw_i\|,\perp}(G_1)\varphi_{\|\bw_j\|,\perp}(G_2)\}\\
& = c_1\<\bw_i,\bw_j\> +O(\<\bw_i,\bw_j\>^2)\, .
\end{align}
Therefore, on the same event (for a suitable constant $C$)
\begin{align}
\big\|\bSigma-\bSigma_0\big\|_F^2  &= \sum_{i=1}^p(\Sigma_{ii}-\Sigma_{0,ii})^2+2 \sum_{i<j}^p(\Sigma_{ii}-\Sigma_{0,ii})^2\\
&\le p\, O(\eps^2_n) + C\sum_{i<j}\<\bw_i,\bw_j\>^4\, .
\end{align}
This implies Eq.~(\ref{eq:ProbF-diff}) because with the stated probability we have $\|\bw_i\|\le 1+\eps_n$ for all $i$ and
$|\<\bw_i,\bw_j\>|\le \eps_n$ for all $i\neq j$.

Equation (\ref{eq:ProbF-diff})  follows by the above, together with
the remark that  $\E(\|\bSigma-\bSigma_0\|_F^{2+c_0})\le p^{C_0}$ by
the assumption that $\E(x_{ij}^{4+c_0}) < \infty$.
\end{proof}

We have
\begin{align}
&\left|V_{\bX}(\hbeta_\lambda;\bbeta) -\frac{1}{n}\Tr\left\{(\hSigma+\lambda\id)^{-2}\hSigma\bSigma_0\right\}\right|=
\left|\frac{1}{n}\Tr\left\{(\hSigma+\lambda\id)^{-2}\hSigma(\bSigma-\bSigma_0\right\}\right|\\
&\le \frac{1}{n}\sqrt{p}\big\|(\hSigma+\lambda\id)^{-2}\hSigma\big\|_{\op}\|\bSigma-\bSigma_0\|_{F}\\
&\le \sqrt{\frac{\gamma}{n}}\, \frac{1}{\lambda}\, \|\bSigma-\bSigma_0\|_{F}\, .
\end{align}
And therefore, by Lemma \ref{lemma:Sigma-Sigma0}, we obtain
\begin{align}
\lim_{n\to\infty} \left|V_{\bX}(\hbeta_\lambda;\bbeta)-
  \frac{1}{n}\Tr\left\{(\hSigma+\lambda\id)^{-2}\hSigma\bSigma_0\right\}\right|
= 0\, ,\label{eq:VVV}
\end{align}
where the convergence takes place almost surely and in $L^1$. 

Denote by $(\lambda_i)_{i\le p}$, $(\bv_i)_{i\le p}$ the eigenvalues and eigenvectors of $\hSigma$. The following qualitative behavior can be 
extracted from the asymptotic of the Stieltjes transform as stated in Corollary \ref{coro:Stieltjes}.
\begin{lemma}\label{lemma:Spectrum}
For any $\gamma\neq 1$, $c_1\in[0,1)$, there exists $\rho_0>0$ such that the following happens.
Let $S_+ \equiv \{i\in [p] :\; |\lambda_i|>\rho_0\}$, $S_- \equiv \{i\in [p] :\; |\lambda_i|\le\rho_0\}$. Then, the following limits hold almost surely 
\begin{align}
&\lim_{n\to\infty}\frac{1}{n}|S_+|  = (\gamma \vee 1)\, ,\\
&\lim_{n\to\infty}\frac{1}{n}|S_-|  = (\gamma-1)_{+}\, ,\\
&\gamma>1\;\;\;\Rightarrow\;\;\; \frac{1}{|S_-|}\sum_{i\in S_-}\delta_{\lambda_i} \Rightarrow \delta_0\, .
\end{align}
(Here $\Rightarrow$ denotes weak convergence of probability measures.)
\end{lemma}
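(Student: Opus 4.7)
The plan is to derive all three claims from the almost sure weak convergence of the empirical spectral distribution of $\hSigma$, which follows from Corollary \ref{coro:Stieltjes-gen} via the Stieltjes continuity theorem. Set $\mu_n = p^{-1}\sum_{i=1}^p \delta_{\lambda_i}$ and let $\mu$ be the (nonrandom) probability measure with Stieltjes transform $s(\xi)$ given by \eqref{eq:coroS1}--\eqref{eq:coroS3}; the first step is to record that $\mu_n \Rightarrow \mu$ almost surely, which is immediate from the almost sure pointwise convergence of $S_n$.

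The heart of the argument is to identify two structural features of $\mu$. First, $\mu$ carries an atom at the origin of mass $(1-1/\gamma)_+$. The lower bound is deterministic: $\hSigma = X^\top X/n$ has rank at most $n$, so $\mu_n(\{0\}) \ge (1-n/p)_+$ almost surely, forcing $\mu(\{0\}) \ge (1-1/\gamma)_+$. The matching upper bound comes from $\mu(\{0\}) = \lim_{\xi \downarrow 0} -\xi \, s(-\xi)$, which I would extract from \eqref{eq:coroS1}--\eqref{eq:coroS3} by writing $s(-\xi) \sim -A/\xi + B + O(\xi)$, substituting, and solving the resulting algebraic system for $A$; the answer equals $(1-1/\gamma)_+$. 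Second, away from the atom, $\mathrm{supp}(\mu) \subseteq \{0\} \cup [a_\star, b_\star]$ with $0 < a_\star \le b_\star < \infty$. This is needed so that a threshold $\rho_0 \in (0, a_\star)$ sits in a spectral gap of $\mu$.

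With these two features in hand, I would pick $\rho_0 \in (0,a_\star)$; then $\rho_0$ is a continuity point of the cumulative distribution of $\mu$, and weak convergence yields $\mu_n([\rho_0,\infty)) \to \mu([\rho_0,\infty)) = 1 - (1-1/\gamma)_+$ almost surely, so multiplying by $p/n \to \gamma$ gives $|S_+|/n \to \gamma - (\gamma-1)_+$ and $|S_-|/n \to (\gamma-1)_+$. For the third claim, when $\gamma > 1$ the deterministic rank bound provides at least $p-n$ exact zero eigenvalues, all of which lie in $S_-$; since $|S_-|/n$ and $(p-n)/n$ share the same limit $\gamma - 1$, the number of non-zero eigenvalues in $S_-$ is $o(n) = o(|S_-|)$, and combined with $\lambda_i \in [0,\rho_0]$ for $i \in S_-$ this forces $|S_-|^{-1}\sum_{i\in S_-}\delta_{\lambda_i} \Rightarrow \delta_0$ by a simple tightness-plus-Portmanteau argument.

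The main obstacle is establishing the spectral gap, i.e., the existence of $a_\star > 0$ such that $\mu$ places no continuous mass on $(0,a_\star)$. For the pure Marchenko--Pastur case ($c_1 = 0$) this is classical and $a_\star$ is given by the left edge $(1-\sqrt{1/\gamma})^2$ for $\gamma > 1$, or any positive number for $\gamma < 1$. In the general nonlinear case ($c_1 > 0$), $\mu$ is a deformation of the free multiplicative convolution of a Marchenko--Pastur law with a semicircle, and the argument must be purely algebraic: after subtracting the pole at $0$ in $s(-\xi)$, one shows that the branch of \eqref{eq:coroS1}--\eqref{eq:coroS3} selected by the condition $\Im s(\xi) > 0$ for $\Im \xi > 0$ extends to a real-analytic function on an interval $(-a_\star, 0)$ of the real axis by tracking the sign of the relevant discriminants. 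Once that extension exists, the Stieltjes inversion formula shows $\mu$ has no density on $(0, a_\star)$, completing the plan.
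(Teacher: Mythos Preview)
Your approach is exactly what the paper has in mind. The paper does not supply a proof of this lemma but only remarks that ``the following qualitative behavior can be extracted from the asymptotic of the Stieltjes transform as stated in Corollary~\ref{coro:Stieltjes}'', and your proposal carries out precisely this extraction: almost sure weak convergence of the empirical spectral distribution via the Stieltjes continuity theorem, identification of the atom at the origin of mass $(1-1/\gamma)_+$ (lower bound from the deterministic rank constraint $\rank(\hSigma)\le n$, upper bound from $\lim_{\xi\downarrow 0}(-\xi)\, s(-\xi)$), existence of a spectral gap above $0$, and then the Portmanteau lemma at a continuity point $\rho_0$ inside the gap. Your argument for the third claim via the rank bound is also the intended one.

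One point worth recording: your computation gives $|S_+|/n \to \gamma - (\gamma-1)_+ = \gamma \wedge 1$, whereas the lemma as printed has $\gamma \vee 1$. Since $|S_+|+|S_-|=p$ and $p/n\to\gamma$, the two limits stated in the paper cannot both hold; the correct value is $\gamma\wedge 1$, so this is a typo in the paper and your answer is the right one. Regarding the ``main obstacle'' you flag (the spectral gap for $c_1>0$): the paper does not address it either, and for the theorem that is actually proved (Theorem~\ref{cor:purely_nonlinear}, purely nonlinear activation, $c_1=0$) the limiting law is Marchenko--Pastur and the gap is classical, so your plan is complete for the case that matters.
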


Note that
\begin{align}
m_n(\xi,s,t) \equiv \gamma \, m_{1,n}(\xi,s,t) +m_{2,n}(\xi,s,t) = \frac{1}{n}\E\Tr\big[(\bA(n)-\xi\id_N)^{-1}\big]\, .
\end{align}
Denote by $\tbSigma_0$ the $N\times N$ matrix whose
principal minor corresponding to the first $p$ rows and columns is given by $\bSigma_0$. By simple linear algebra (differentiation inside the integral is allowed for $\Im(\xi)>0$
by dominated convergence and by analyticity elsewhere), we get
\begin{align}
 -\partial_x m_n(\xi,x,c_1x)\big|_{x=0}&= \left. \frac{1}{n}\E\Tr\big[(\bA-\xi\id)^{-1}\tbSigma_0 (\bA-\xi\id)^{-1}\big]\right|_{x=0}\\
& = \frac{1}{n}\E\Tr\left[\left(\xi^2\id_p+\hSigma-4\xi^2(\hSigma+\xi^2\id_p)^{-1}\hSigma\right)^{-1}\bSigma_0\right]\\
& = \frac{1}{n}\E\Tr\left[\left(\hSigma +\xi^2\id_p\right)\left(\hSigma-\xi^2\id_p\right)^{-2}\bSigma_0\right]\, 
\end{align}
Note that $m_n(\xi,x,c_1x)\to m(\xi,x,c_1x)$ as $n\to\infty$. Further, for $\Im(\xi)>0$ or $\Re(\xi)<0$, it it is immediate to show 
tha $\partial^2_x m_n(\xi,x,c_1x)$ is bounded in $n$ (in a neighborhood of $x=0$). Hence
\begin{align}
\left. \lim_{n\to\infty} \partial_x m_n(\xi,x,c_1x)\right|_{x=0}= \left.\partial_x m(\xi,x,c_1x)\right|_{x=0} \equiv q(\xi)\, ,
\end{align}
 and therefore
\begin{align}
q(\xi)&=\lim_{n\to \infty}  \E Q_n(\xi) \, \\
Q_n(\xi)&= \frac{1}{n}\Tr\left[\left(\hSigma +\xi^2\id_p\right)\left(\hSigma-\xi^2\id_p\right)^{-2}\bSigma_0\right]\\
& = \frac{1}{n}\sum_{i=1}^p \frac{\lambda_i+\xi^2}{(\lambda_i-\xi^2)^2} \, \<\bv_i,\bSigma_0\bv_i\>\,.
\end{align}
Since the convergence of $M_n = \gamma M_{n,1}+M_{n,2}$
(cf. Eq.~\eqref{eq:Mdef})  is almost sure, we also have $Q_n(\xi)\to q(\xi)$ almost surely. Define
the probability measure on $\reals_{ge 0}$
\begin{align}
\mu_n = \frac{1}{p}\sum_{i=1}^n\delta_{\lambda_i} \<\bv_i,\bSigma\bv_i\>\, .
\end{align}
Since $Q_n(\xi)\to q(\xi)$, a weak convergence argument implies $\mu_n\Rightarrow \mu_{\infty}$ almost surely.
Further, defining $\mu^+_n = \bfone_{(\rho_0,\infty)}\mu_n$, $\mu^-_n = \bfone_{[0,\rho_0]}\mu_n$, Lemma \ref{lemma:Spectrum} implies
$\mu^+_n \Rightarrow \mu^+_{\infty}$, $\mu^-_n \Rightarrow  c_0\delta_0$, where $\mu^+_{\infty}$ is a measure supported on $[\rho_0,\infty)$,
with $\mu^+_{\infty}([\rho_0,\infty)) = 1-c$. This in turns implies
\begin{align}
q(\xi) = \frac{\gamma c}{\xi^2}+q_+(\xi)\,,\;\;\;\;\; q_+(\xi) = \gamma\int_{[\rho_0,\infty)} \frac{x+\xi^2}{(x-\xi^2)^2} \mu^+_{\infty}(\de x)\, ,
\end{align}
In particular, $q_+$ is analytic in a neighborhood of $0$. This proves Eq.~\eqref{eq:Laurent}, with  $q_+(0) = D_0$, $\gamma c=D_{-1}$.

Further, we have
\begin{align}
D_0 = q_+(0) &= \gamma\int_{[\rho_0,\infty)} \frac{x+\xi^2}{(x-\xi^2)^2} \mu^+_{\infty}(\de x)\, .
\end{align}
On the other hand by Eq.~\eqref{eq:VVV}
\begin{align}
\lim_{n\to\infty}V_{\bX}(\hbeta_\lambda;\bbeta) &= \lim_{n\to \infty}\frac{1}{n}\sum_{i=1}^p \frac{\lambda_i}{(\lambda_i+\lambda)^2} \, \<\bv_i,\bSigma_0\bv_i\>\\
& = \gamma \int \frac{x}{(x+\lambda)^2}  \mu_{\infty}(\de x) =\gamma \int \frac{x}{(x+\lambda)^2}  \mu^+_{\infty}(\de x)\, .
\end{align}
Comparing the last two displays, we obtain our claim
\begin{align}
\lim_{\lambda \to 0^+}\,\lim_{n\to\infty}\,V_{\bX}(\hbeta_\lambda;\bbeta) &= D_0\, .
\end{align}

\subsection{Proof of Corollary \ref{coro:Stieltjes-gen} and Corollary \ref{coro:Stieltjes}}
\label{app:CoroStieltjes}

Throughout this section,  set $s = t = 0$ (and drop these arguments for the various functions), and let $M_n(\xi) \equiv \gamma M_{1,n}(\xi)+M_{2,n}(\xi)$,
$m_n(\xi) \equiv \gamma m_{1,n}(\xi)+m_{2,n}(\xi)$, $m(\xi) = \gamma m_1(\xi)+m_2(\xi)$. In this case we have
\begin{align}
\bA= \left[\begin{matrix}
\bfzero & \frac{1}{\sqrt{n}} \bX^{\sT}\\
\frac{1}{\sqrt{n}} \bX & \bfzero
\end{matrix}\right]\, .\label{eq:Adef-s0}
\end{align}
and therefore, a simple linear algebra calculation yields
\begin{align}
M_n(z) = 2z\Big[\gamma\, S_n(z^2) +\frac{1}{2}(\gamma-1)\frac{1}{z^2}\Big]\, .
\end{align}
Theorem \ref{thm:var_nonlinear} immediately implies $S_n(z^2)\to s(z^2)$ (almost surely and in $L^1$), 
where
\begin{align}
m(z) = 2z\Big[\gamma\, s(z^2) +\frac{1}{2}(\gamma-1)_+\frac{1}{z^2}\Big]\, .\label{eq:MS}
\end{align}
Equations \eqref{eq:M2} and \eqref{eq:M1} simplify for the case $s=t=0$ (setting $\om_j=m_j(z,s=0,t=0)$) to yield 
\begin{align}
-z\om_1-\om_1\om_2+\frac{c_1^2 \om_1^2\om_2^2}{c_1\om_1\om_2-\psi} & =1\, ,\label{eq:OM1}\\
-z\om_2-\gamma\om_1\om_2+\frac{\gamma c_1^2 \om_1^2\om_2^2}{c_1\om_1\om_2-\psi} & =1\, .\label{eq:OM2}
\end{align}
Taking a linear combination of these two equations, we get
\begin{align}
-zm-2\gamma\om_1\om_2+\frac{2\gamma c_1^2 \om_1^2\om_2^2}{c_1\om_1\om_2-\psi} & =1+\gamma\, .
\end{align}
Comparing this with Eq~\eqref{eq:MS}, we get Eq.~\eqref{eq:coroS1}. Substituting the latter in Eqs.~\eqref{eq:OM1}, \eqref{eq:OM2},
we get Eqs.~\eqref{eq:coroS2}, \eqref{eq:coroS3}.

Finally,  Corollary \ref{coro:Stieltjes} follows by taking $c_1=0$.u

\subsection{Proof of Theorem \ref{cor:purely_nonlinear}}
\label{app:purely_nonlinear}

First notice that Lemma \ref{thm:ResolventKernel} and \ref{lemma:ResolventToVar}
follow by taking $t=0$, $c_1=0$ in Theorem \ref{thm:ResolventKernel-gen} and Lemma  \label{lemma:ResolventToVar-gen}.

The variance result follows simply by taking $c_1\to 0$ in Theorem \ref{thm:var_nonlinear}.
Solving the quadratic equations \eqref{eq:M1}, we get
\begin{align}
  m_2(\xi,s) & = \frac{\gamma-\xi^2-\xi s-1-\sqrt{(\gamma-xi^2-\xi s-1)^2-4\xi(\xi+s)}}{2\xi}\, ,\\
  \partial_x m_2(\xi,x)|_{x=0}& = -\frac{1}{2}+\frac{\gamma-\xi^2+1}{2 \sqrt{(\gamma-xi^2-\xi s-1)^2-4\xi(\xi+s)}}\, ,\\
  m_1(\xi,s) & = (-\xi-s-m_2(\xi,s))^{-1}\, ,\\
\partial_x m_1(\xi,x)|_{x=0}& = \frac{1+\partial_x m_2(\xi,x)|_{x=0}}{(\xi+m_2(\xi,0))^{2}}\, .
\end{align}
The claimed formula for the variance is obtained by using these expressions in Lemma \ref{lemma:ResolventToVar}.

For the bias, recall that
\begin{align}
B_{\bX}(\hbeta_{\lambda}) = \frac{r^2}{p}\Tr\left[\lambda^2(\hSigma_{\bX}+\lambda\id_p)^{-2}\bSigma\right]\, .
\end{align}
Define
\begin{align}
\tB_{\bX}(\hbeta_{\lambda}) = \frac{r^2}{p}\Tr\left[\lambda^2(\hSigma_{\bX}+\lambda\id_p)^{-2}\right]\, .
\end{align}
By Lemma \ref{lemma:Sigma-Sigma0}, and using the fact that $\bSigma_0=\id_p$ when $c_1=0$, we get
\begin{align}
\left|B_{\bX}(\hbeta_{\lambda}) - \tB_{\bX}(\hbeta_{\lambda}) \right| &\le 
\frac{r^2}{p}\|\lambda^2(\hSigma_{\bX}+\lambda\id_p)^{-2}\|_F\|\bSigma-\id_p\|_F\\
&\le 
\frac{C}{p}\|\lambda^2(\hSigma_{\bX}+\lambda\id_p)^{-2}\|_{\sop}\sqrt{p}(\log n)^C\\
& \le \frac{C}{\sqrt{n}}(\log n)^C\, .
\, .
\end{align}
with probability larger than $1-1/n^2$.
Therefore, by Borel-Cantelli it is sufficient to estabilish the claim for $\tB_{\bX}(\hbeta_{\lambda})$. 
By Corollary \ref{coro:Stieltjes}, for $c_1=0$, the empirical spectral distribution of $\hSigma$ converges almost surely (in the weak topology)
to the Marchenko-Pastur law $\mu_{MP}$. Hence
 (for $(\lambda_i)_{i\le p}$ the eigenvalues of $\hSigma$)
\begin{align}
\lim_{n\to\infty}\tB_{\bX}(\hbeta_{\lambda}) &= r^2\, 
\lim_{n\to\infty}\frac{1}{p}\sum_{i=1}^p\frac{\lambda^2}{(\lambda+\lambda_i)^2}\\
& = r^2 \int \frac{\lambda^2}{(\lambda+x)^2}\, \mu_{MP}(\de x) = r^2\lambda^2s'(-\lambda)\, .
\end{align}
Hence the asymptotic bias is the same as in the linear model (for random isotropic features). The claim hence follows by
the results of Section \ref{sec:risk_mis_iso}. Alternatively, we may simply recall  that $\mu_{MP}(\{0\}) = (1-\gamma^{-1})_+$ 
and use dominated convergence.

\bibliographystyle{plainnat}

\end{document}